\definecolor{red}{rgb}{0.7,0.15,0.15}
\definecolor{green}{rgb}{0,0.5,0}
\definecolor{blue}{rgb}{0,0,0.7}
\makeatletter \@addtoreset{equation}{section}
\newtheorem{theorem}{Theorem}[section]
\newtheorem{assumption}[theorem]{Assumption}
\newtheorem{corollary}[theorem]{Corollary}
\newtheorem{lemma}[theorem]{Lemma}
\newtheorem{proposition}[theorem]{Proposition}
\newtheorem{definition}[theorem]{Definition}
\newtheorem{remark}[theorem]{Remark}
\def \E{\mathbb{E}}
\def \F{\mathbb{F}}
\def \G{\mathbb{G}}
\def \H{\mathbb{H}}
\def \M{\mathbb{M}}
\def \N{\mathbb{N}}
\def \P{\mathbb{P}}
\def \Q{\mathbb{Q}}
\def \R{\mathbb{R}}
\def \S{\mathbb{S}}
\def \Z{\mathbb{Z}}
\def \Pr{\mathrm{P}}
\def\Ac{{\cal A}}
\def\Bc{{\cal B}}
\def\Cc{{\cal C}}
\def\Fc{{\cal F}}
\def\Gc{{\cal G}}
\def\Hc{{\cal H}}
\def\Kc{{\cal K}}
\def\Lc{{\cal L}}
\def\Pc{{\cal P}}
\def\Rc{{\cal R}}
\def\Sc{{\cal S}}
\def\Vc{{\cal V}}
\def\Wc{{\cal W}}
\def\Fb{{\bar F}}
\def\Gb{{\overline \G}}
\def\Gcb{\overline \Gc}
\def\x{\times}
\def\Om{\Omega}
\def\Omt{\widetilde{\Omega}}
\def\Omh{\widehat{\Omega}}
\def\om{\omega}
\def\Omb{\overline{\Om}}
\def\Fb{\overline{\F}}
\def\Gcb{\overline{\Gc}}
\def\Fcb{\overline{\Fc}}
\def\Bt{\widetilde{B}}
\def\Gct{\widetilde{\Gc}}
\def\Fct{\widetilde{\Fc}}
\def\Ft{\widetilde{\F}}
\def\Gt{\widetilde{\G}}
\def\Xt{\widetilde{X}}
\def\0{\mathbf{0}}
\def \bb{\mathbf{b}}
\def \qb{\mathbf{q}}
\def \xb{\mathbf{x}}
\def \mub{\overline{\mu}}
\def \muh{\widehat{\mu}}
\def \mut{\widetilde{\mu}}
\def \nub{\bar{\nu}}
\def\normeL2#1{\left\|{#1}\right\|_{L^2}}
\def\Pcb{\overline \Pc}
\def \Prt{\widetilde{\Pr}}
\def\Bh{\widehat B}
\def\Mh{\widehat M}
\def\Xh{\widehat X}
\def\Wh{\widehat W}
\def\Zh{\widehat Z}
\def\gammah{\widehat \gamma}
\def\Phih{\widehat \Phi}
\def\Yt{\widetilde{Y}}
\def \Lim{\displaystyle\lim}
\def \Liminf{\displaystyle\liminf}
\def \Limsup{\displaystyle\limsup}
\def \xf{\mathsf{x}}
\def \alphab {\boldsymbol{\alpha}}
\def \Xbb{\mathbf{X}}
\def \Ybb{\mathbf{Y}}
\def \Qr{\mathrm{Q}}
 \title{Large population games with interactions through controls and common noise: convergence results and equivalence between $open$--$loop$ and  $closed$--$loop$ controls}
\author{
    Mao Fabrice {\sc Djete}\footnote{Ecole Polytechnique Paris, Centre de Math\'ematiques Appliqu\'ees, mao-fabrice.djete@polytechnique.edu.}
    }
             \date{\today}
\begin{document}

\maketitle
 
\begin{abstract}
    In the presence of a common noise, we study the convergence problems in mean field game (MFG) and mean field control (MFC) problem where the cost function and the state dynamics depend upon the joint conditional distribution of the controlled state and the control process. In the first part, we consider the MFG setting. We start by recalling the notions of $measure$--$valued$ MFG equilibria and of approximate $closed$--$loop$ Nash equilibria associated to the corresponding $N$--player game. Then, we show that all convergent sequences of approximate $closed$--$loop$ Nash equilibria, when $N \to \infty,$ converge to $measure$--$valued$ MFG equilibria. And conversely, any $measure$--$valued$ MFG equilibrium is the limit of a sequence of approximate $closed$--$loop$ Nash equilibria. In other words, $measure$--$valued$ MFG equilibria are the accumulation points of the approximate $closed$--$loop$ Nash equilibria. Previous work has shown that $measure$--$valued$ MFG equilibria are the accumulation points of the approximate $open$--$loop$ Nash equilibria. Therefore, we obtain that the limits of approximate  $closed$--$loop$ Nash equilibria and approximate $open$--$loop$ Nash equilibria are the same. In the second part, we deal with the MFC setting. After recalling the $closed$--$loop$ and $open$--$loop$ formulations of the MFC problem, we prove that they are equivalent. We also provide some convergence results related to approximate $closed$--$loop$ Pareto equilibria.
\end{abstract}
\section{Introduction}

\medskip
Our primarily goal in this paper is to discuss the convergence problem of $closed$--$loop$ Nash equilibria in the setting of mean field game of controls (MFGC) or extended mean field game. Let us briefly explained the mathematical framework that we consider. The full details explanation are given in \Cref{sec:finite-player-game}. 
We consider that $N$ players have private state processes $\Xbb^N:=(X^1,\dots,X^N)$ given by the stochastic differential equations (SDEs) system
\begin{align} \label{intro:Nplayer}
        \mathrm{d}X^{i}_t
        &= b\big(t,X^{i}_{t},\overline{\varphi}^{N}_{t}[\alphab^N] ,\alpha^i(t,\Xbb^N_t) \big) \mathrm{d}t 
        +
        \sigma \big(t,X^{i}_{t} \big) \mathrm{d}W^i_t
        +
        \sigma_0 \mathrm{d}B_t,\;t \in [0,T],
        \\
        \alphab^N&:=(\alpha^1,\cdots,\alpha^N),\;
        \overline{\varphi}^{N}_{t}[\alphab^N]:= \frac{1}{N}\sum_{i=1}^N \delta_{(X^{i}_{t},\alpha^i(t,\Xbb^N_t) )}~\mbox{and}~\varphi^{N}_{t}[\alphab]:= \frac{1}{N}\sum_{i=1}^N \delta_{X^{i}_{t}},
    \end{align}
where $T>0$ is a fixed time horizon, $(B,W^1,\dots,W^N)$ are independent Brownian motions where $B$ is called the common noise, and $\alpha^i$ is a Borel measurable function playing the role of the control of player $i$. An important feature here is the presence in the dynamics $X^i$ of player $i$ of the empirical distribution $\overline{\varphi}^{N}[\alphab^N]$ of states and controls of all players.
Given a strategy $(\alpha^1,\dots,\alpha^N),$ the reward to the player $i$ is 
\begin{align*}
    J^N_i(\alpha^1,\dots,\alpha^N)
    :=
    \E \bigg[
        \int_0^T L\big(t,X^{i}_{t},\overline{\varphi}^{N}_{t}[\alphab] ,\alpha^i(t,\Xbb^N_t) \big) \mathrm{d}t 
        + 
        g \big( X^{i}_{T}, \varphi^{N}_{T}[\alphab] \big)
        \bigg].
\end{align*}
For $\varepsilon_N \ge 0,$ the strategy $(\alpha^1,\dots,\alpha^N)$ will be called an $\varepsilon_N$--$closed$--$loop$ Nash equilibrium if for any admissible control $\beta,$ and each $i \in \{1,\cdots,N\},$
\begin{align} \label{eq:opti-player_i}
    J^N_i(\alpha^1,\dots,\alpha^N)
    \ge
    J^N_i(\alpha^1,...,\alpha^{i-1},\beta,\alpha^{i+1},\dots,\alpha^N) - \varepsilon_N.
\end{align}
The presence of the term $closed$--$loop$ indicates the fact that we consider controls which are Borel maps of $[0,T] \x (\R^n)^N$ into $U$. The convergence problem here consists in characterizing the Nash equilibria when the number of players $N$ goes to infinity.
It is now well known that, when $N$ tends to infinity, the Nash equilibria are related to the MFG here called mean field game of controls (MFGC) or Extended mean field game, which has the following structure (the precise definition is given in \Cref{section:strong_formulation}): for $\varepsilon \ge 0,$ a $(\sigma\{B_s,s \le t\})_{t \in [0,T]}$--predictable measure--valued process $(\mub_t)_{t \in [0,T]}$ is an $\varepsilon$--strong Markovian MFG equilibrium (or approximate strong Markovian MFG equilibrium) if for all $t \in [0,T],$ $\mub_t=\Lc(X_t,\alpha(t,X_t,\mub_t)|B),$ where the state process $X$ is governed by
\begin{align*}
		\mathrm{d}X_t
		&= 
		b \big(t, X_t,\mub_t, \alpha(t,X_t,\mub_t) \big) \mathrm{d}t
		+
	    \sigma\big(t, X_t \big) \mathrm{d} W_t
		+ 
		\sigma_0 \mathrm{d}B_t,,~~t \in [0,T]
		\\
		\mu_t&:=\Lc(X_t|B),
	\end{align*}
	and one has
	\begin{align} \label{def-epsilon-MFG}
		    \begin{cases} \displaystyle \E \bigg[
				\int_0^T L(t, X_t, \mub_t,\alpha(t,X_t,\mub_t)) \mathrm{d}t 
				+ 
				g(X_T, \mu_T) 
			\bigg]
			\ge
			\sup_{\alpha'}\;\E \bigg[
				\int_0^T L(t, X'_t, \mub_t,\alpha'(t,X'_t,\mub_t)) \mathrm{d}t 
				+ 
				g(X'_T, \mu_T) 
			\bigg] - \varepsilon,
\\[0.7em]
		    \displaystyle  \mbox{where  {\color{black} the optimization is over the solutions}}\;\; 
		    \mathrm{d}X'_t
		    = 
		    b \big(t, X'_t, \mub_t, \alpha'(t,X'_t,\mub_t) \big) \mathrm{d}t
		    +
	        \sigma\big(t, X'_t\big) \mathrm{d} W_t
		    + 
		    \sigma_0 \mathrm{d}B_t.
		    \end{cases}
		\end{align} 

\paragraph*{Discussion about the problematic and related studies}

The now well--known MFG was introduced in the seminal works of \citeauthor*{lasry2006jeux} \cite{lasry2007mean} and \citeauthor*{huang2003individual} \cite{huang2006large} as a way of studying the $N$--player game described here above, when the number of players $N$ is very large.
Since these pioneering works, this topic has been the subject of much research in the field of applied mathematics, in particular for its wide variety of application (see \citeauthor*{carmona2018probabilisticI} \cite{carmona2018probabilisticI} for examples of applications of MFG). The initial formulations of MFG did not consider the presence of the empirical distribution of controls like that formulated above in \Cref{intro:Nplayer}. Only the presence of the empirical distribution of states was envisaged. In order to deal with some modeling issues occurring in finance for instance, a natural $``$extension$"$ of MFG known as extended MFG or MFG of controls has been formulated and studied by many studies these recent years, see \citeauthor*{DiogoVardan-ExtMFG} \cite{DiogoVardan-ExtMFG}, \citeauthor*{carmona2015probabilistic} \cite{carmona2015probabilistic}, \citeauthor*{Lehalle-card} \cite{Lehalle-card}, {\color{black} \citeauthor*{GOMES201449} \cite{GOMES201449}, \citeauthor*{bonnans_pfeifer2019} \cite{bonnans_pfeifer2019}, \citeauthor*{P-JamesonGraber} \cite{P-JamesonGraber}, \citeauthor*{Alasseur2020} \cite{Alasseur2020}, \citeauthor*{Kobeissi} \cite{Kobeissi}. 
Our study in this paper treats of extended MFG (or MFG of controls) by giving some convergence and equivalence results. 

\medskip
As mentioned in the beginning, our primarily goal is to make a rigorous connection between the $N$--player game and the extended mean field game. More precisely, ideally, we want to show two main results. First, the convergence result i.e. given an $\varepsilon_N$--Nash equilibrium $\alphab^N:=(\alpha^{1,N},\cdots,\alpha^{N,N})$, the sequence of empirical distribution of states and controls $(\overline{\varphi}^N[\alphab^N])_{N \in \N^*}$ $``$converges$"$ to a solution of the MFG of controls when $N \to \infty,$ with $\Lim_{N \to \infty} \varepsilon_N=0$. Second, the $converse$ convergence result i.e. any solution of the MFG of controls is the $``$limit$"$, when $N \to \infty,$ of a sequence of empirical distribution of states and controls $(\overline{\varphi}^N[\alphab^N])_{N \in \N^*}$ associated to an $\varepsilon_N$--Nash equilibrium $\alphab^N:=(\alpha^{1,N},\cdots,\alpha^{N,N}),$ for some $\varepsilon_N$ satisfying $\Lim_{N \to \infty} \varepsilon_N=0.$   

\medskip
Establishing this type of connection justifies the interpretation of MFG as the right limit formulation of the $N$--player game. Besides the presence of the empirical distribution of states and controls, another  important feature of our setting is the consideration of $closed$--$loop$ controls i.e. controls depending on the position of the players. Indeed, the other type of controls usually considered is the $open$--$loop$ controls i.e. controls adapted to the filtration generated by the initial values and the Brownian motions. When $N$($ \ge 2,$ the number of players) is fixed, the situation generated by these two concepts of equilibrium is very different, see the discussion in \cite[Section 2.1.2]{carmona2018probabilisticI}.

\medskip
In the setting of $open$--$loop$ controls, the connection between $N$--player game and MFG (convergence result and $converse$ convergence result) is now well--known and established. In the situation without the empirical distribution of controls, under relatively general assumptions, a complete picture has been proposed by \citeauthor*{M-Fisher} \cite{M-Fisher} and \citeauthor*{lacker2016general} \cite{lacker2016general} (for the case with common noise) using the notions of relaxed MFG equilibrium. For the extended MFG, while allowing the volatility to be controlled, a similar study to \cite{lacker2016general} is provided by \citeauthor*{MFD-2020_MFG} \cite{MFD-2020_MFG} thanks to the notion of measure--valued MFG equilibrium. With stronger assumptions but by providing convergence rates, \citeauthor*{M_Lauriere-Tangpi} \cite{M_Lauriere-Tangpi} study these convergence problems in the situation without common noise using notably a notion of backward propagation of chaos.

\medskip
The convergence problems in the case of $closed$--$loop$ controls turn out to be much more problematic than in the case of $open$--$loop$ controls. Indeed, as discussed in \cite[Section 2.1.2]{carmona2018probabilisticI}, $open$--$loop$ Nash equilibrium and $closed$--$loop$ Nash equilibrium behave differently. 
Without taking into account the empirical distribution of states and controls (no extended MFG), a first answer of the convergence problems is provided by \citeauthor*{cardaliaguet2015master} \cite[Section 3.7]{cardaliaguet2015master}. They use notably an {\color{black}infinite dimensional PDE} associated to the limit problem, the so--called $master\;equation$. An important work is done to ensure that the master equation is correctly defined, in particular by using the now well--known Lasry--Lions monotonicity condition. The master equation appears to be a powerful tools to the study of MFG see for instance \citeauthor*{cardaliaguet2010notes} \cite{cardaliaguet2010notes}, \citeauthor*{GANGBO20156573} \cite{GANGBO20156573}, \citeauthor*{bensoussanYam-2019} \cite{bensoussanYam-2019}, \cite{DiogoVardan-ExtMFG}, \citeauthor*{carmona2018probabilisticII} \cite{carmona2018probabilisticII}, \cite{Kobeissi}, \citeauthor*{delarueLackerRamanan2020} \cite{delarueLackerRamanan2020,delarueLackerRamanan2020--2}, \citeauthor*{BAYRAKTAR202198} \cite{BAYRAKTAR202198}, \citeauthor*{bertucci_2021_mastercN} \cite{bertucci_2020_mastercN,bertucci_2021_mastercN}. Unfortunately, for the convergence problems, the need of uniqueness of the MFG equilibrium (through Lasry--Lions monotonicity condition) makes its use unsatisfactory. Indeed, it is well known that there are situations where there are multiple MFG equilibria. A breakthrough has been made by \citeauthor*{lacker2020-closed} \cite{lacker2020-closed} for the convergence problem of $closed$--$loop$ controls without requirement of uniqueness of MFG equilibrium. In a non--common noise setting while imposing a non--degenerate volatility $\sigma,$ \cite{lacker2020-closed} proves the convergence of $closed$--$loop$ Nash equilibria to a $weak$ MFG equilibrium. In particular, using some probability changes by Girsanov's Theorem. As pointed out by \citeauthor*{Cardaliaguet2020} \cite{Cardaliaguet2020}, to avoid a situation like in Folk Theorem, a non--degenerate volatility seems to be crucial for this convergence. Although answering the question of the convergence of Nash equilibria to MFG equilibria, \cite{lacker2020-closed} is not able to give a complete answer for the $converse$ convergence result. Indeed, \cite{lacker2020-closed} cannot show that any $weak$ MFG equilibrium is the limit of a sequence of approximate Nash equilibria. In the same spirit, \citeauthor*{LeflemLacker-2021} \cite{LeflemLacker-2021} provide, first, a result relating to the convergence of $closed$--$loop$ Nash equilibria in a common noise setting. Second, by considering an extension of the notion of $closed$--$loop$ Nash equilibrium, unlike \cite{lacker2020-closed}, they are able to show the $converse$ convergence result. However, they do not answer completely the shortcoming of \cite{lacker2020-closed} regarding the converse convergence result. Indeed, while being appropriate for the study, their extension of $closed$--$loop$ Nash equilibria is not what we can naturally expected (see their own comment \cite[Remark 2.11,Theorem 2.12]{LeflemLacker-2021}). In the situation of MFG of controls without common noise, the only article to our knowledge dealing with convergence issues for the $closed$--$loop$ controls is \citeauthor*{tangpipossamai2021} \cite{tangpipossamai2021}. They use the result of backward propagation of chaos of \cite{M_Lauriere-Tangpi} to deal with this issue among many others. Nevertheless, it seems that no many cases of non--unique MFG equilibria are considered because of their assumptions of existence and uniqueness for a McKean--Vlasov BSDE. In this paper, in a MFG of controls setting while allowing a common noise, we will give a complete characterization of the $closed$--$loop$ case. The convergence of Nash equilibria and $converse$ convergence is provided. In particular, we are able to prove that any $weak$ MFG is the limit of Nash equilibria without extending the notion of Nash equilibria as in \cite{LeflemLacker-2021}.

\medskip
{\color{black} Our secondary objective} is to provide some equivalence results. 
More precisely, equivalence results between $open$--$loop$ and $closed$--$loop$ controls are showed in both MFG of controls and extended mean field control (MFC). Indeed, first, we prove that the limit of $open$--$loop$ Nash equilibria and $closed$--$loop$ Nash equilibria are the same. Even if when the number of players is finite both equilibria can be different, in the infinite players setting, they are the same. Second, we show that the $open$--$loop$ formulation and the $closed$--$loop$ formulation of the MFC problem are the same. This result appears to be the first of this kind in the literature. Especially, we are able to prove the equivalence without using any convexity assumptions as it is usually done in the literature (see for instance \citeauthor*{Filippov} \cite{Filippov}, \citeauthor*{Roxin} \cite{Roxin}, \citeauthor*{el1987compactification} \cite{el1987compactification}, \citeauthor*{lacker2017limit} \cite{lacker2017limit}).

\medskip
\paragraph*{Main contributions}
In the first part, we begin by investigating the convergence problems in the setting of extended mean field game or mean field game of controls. \Cref{thm:limit_thm_closed-loop} is our first result in this framework and shows two main results. First, in an appropriate space (see \Cref{thm:limit_thm_closed-loop} for details), the sequence of empirical distribution $(\varphi^N[\alphab^N],\overline{\varphi}^N[\alphab^N])_{N \in \N^*}$ is relatively compact, and when $\varepsilon_N \to 0,$ any limit point is a measure--valued MFG solution (see \Cref{def:measure--valued--MFG}). Recall that the notion of measure--valued MFG solution is similar to the one used in \cite{MFD-2020_MFG} (see equivalence in \Cref{prop:eq--measure--valuedMFG_control} and \Cref{rem:eq--measure--valuedMFG_control}) to treat the convergence of $open$--$loop$ Nash equilibria. While being the first result dealing with convergence of $closed$--$loop$ Nash equilibria for MFG of controls with common noise, this first result of \Cref{thm:limit_thm_closed-loop} shows that the limits of $open$--$loop$ and $closed$--$loop$ Nash equilibria are exactly the same. This result is achieved by adapting the arguments of \cite{lacker2020-closed} in the setting of MFGC with common noise, and by using a delicate estimate of the regularity of the Fokker--Planck equation proved by \citeauthor{AronsonSerrin67} \cite[Theorem 4]{AronsonSerrin67}. It is worth mentioning that this result contains part of those of \cite{LeflemLacker-2021} in the classical MFG framework but allow in addition $\sigma$ to be non--constant. Second, similarly to the convergence of $closed$--$loop$ Nash equilibria, \Cref{thm:limit_thm_closed-loop} shows the convergence of approximate strong Markovian MFG equilibria to the measure--valued MFG equilibria.

\medskip
The second main result for the MFGC is \Cref{thm:converse-limit} dealing with the $converse$ convergence problem. We first show that any measure--valued MFG equilibrium is the limit of sequence of approximate strong Markovian MFG equilibria. This result combined with the convergence result of approximate strong Markovian MFG equilibria mentioned in \Cref{thm:limit_thm_closed-loop} proves that the notion of approximate strong Markovian MFG equilibrium is the correct infinite players formulation equivalent to the notion of measure--valued MFG equilibrium or more generally the correct infinite players formulation equivalent to any notion of weak MFG equilibrium. In other words, considering measure--valued MFG equilibrium (or weak MFG) is equivalent to considering approximate strong Markovian MFG equilibrium. The approximate strong Markovian MFG equilibrium has the advantage to be close to the usual notion considered in the literature. The only difference is for the approximate strong Markovian MFG equilibrium, the optimality is not an exact one but an $\varepsilon$--optimality (see above \Cref{def-epsilon-MFG}).  In a second time, \Cref{thm:converse-limit} shows that any measure--valued MFG equilibrium is the limit of approximate $closed$--$loop$ Nash equilibria. Even in the framework of classical MFG, this result seems to be the first of this kind. Indeed, the $converse$ convergence result of \cite{LeflemLacker-2021} needs to be done with a slight extension of the notion of $closed$--$loop$ Nash equilibria which is called in \cite{LeflemLacker-2021} $S$--$closed\;loop$ Nash equilibria. Our result shows that it is possible to avoid this extension. Mention that \Cref{thm:limit_thm_closed-loop} and \Cref{thm:converse-limit} are done without any use of convexity assumptions.

\medskip
In the second part, we treat the case of extended mean field control problem and obtain \Cref{thm:equivalence_mckv_cl} and \Cref{thm:limit_mckv_cl}.  In \Cref{thm:equivalence_mckv_cl}, we show that the $open$--$loop$ and the $closed$--$loop$ formulation of the extended mean field control problem are exactly the same. A notable point in this result is the fact that there is no use of convexity assumptions. This appears to be the first such result, even for the classical mean field control problem. Next, in \Cref{thm:limit_mckv_cl}, we show some limit theory results, namely: any sequence of approximate Pareto equilibria (see \Cref{def:Nplayers--equilibria}) is relatively compact and any limit point is the limit of sequence of approximate strong Markovian McKean--Vlasov (see \Cref{sec:open-closed-McKVl}).

\medskip
In the assumptions we use in this paper, the condition: $\sigma_0$ is invertible needs to be underlined. Although we believe that there must be a way of proving our results without this condition, in this article, this condition is important. Indeed, under this assumption, the filtration generated by the conditional distribution of state $(\mu_t)_{t \in [0,T]}$ is very $``$close$"$ to the filtration generated by the common noise $(B_t)_{t \in [0,T]}$. They are even equal when the control $\alpha$ has some regularities $\alpha$ (see for instance approximation in \Cref{prop:appr_general_closed} and \Cref{rm:equal-filtration}). This fact is actually quite classic. Indeed, we know that when a volatility $\sigma$ is non--degenerated, the unique strong solution $X$ of $\mathrm{d}X_t=\tilde b(t,X_t)\mathrm{d}t + \sigma(t,X_t)\mathrm{d}W_t,$ $X_0=0,$ has his natural filtration equal to the filtration generated by the Brownian motion $W.$ This kind of phenomenon appears in our framework. Making the presence of a common noise obligatory allows us to prove our results especially the $converse$ convergence result and the equivalence between $closed$--$loop$ and $open$--$loop$ formulations for the MFC problem. This effect of the common noise has been observed by many authors for various purposes (see for instance \citeauthor*{Delarue2019} \cite{Delarue2019}, \citeauthor*{Foguen2018} \cite{Foguen2018}, \cite{BAYRAKTAR202198}, \citeauthor*{delarueVas2021} \cite{delarueVas2021}).

\paragraph*{Outline of the paper}

The paper is structured as follows. After introducing some notations, \Cref{sec:set-resutls} introduces both the MFG and MFC frameworks, defines all the notions and concepts of equilibrium, and states the main results of the article. \Cref{sec:proofs} is devoted to the proofs. Namely, first, \Cref{section:relaxed_formulation} provides, over a canonical space, an equivalence of our notions appropriate for the proofs. Then, \Cref{sec:limit_NashEquilibria} characterizes the limit of approximate $closed$--$loop$ Nash equilibria. Next, \Cref{sec:limitset_converse} characterizes the limit of approximate strong Markovian MFG equilibria and provides the $converse$ convergence result. Finally, \Cref{sec:proof_Mc-Vl} gives the proofs relating to the McKean--Vlasov control problem.


\vspace{1.5mm}

{\bf \large Notations}.
	$(i)$
	Given a {\color{black}Polish} space $(E,\Delta)$ and $p \ge 1,$ we denote by $\Pc(E)$ the collection of all Borel probability measures on $E$,
	and by $\Pc_p(E)$ the subset of Borel probability measures $\mu$ 
	such that $\int_E \Delta(e, e_0)^p  \mu(de) < \infty$ for some $e_0 \in E$.
	We equip $\Pc_p(E)$ with the Wasserstein metric $\Wc_p$ defined by
	\[
		\Wc_p(\mu , \mu') 
		~:=~
		\bigg(
			\inf_{\lambda \in \Lambda(\mu, \mu')}  \int_{E \x E} \Delta(e, e')^p ~\lambda( \mathrm{d}e, \mathrm{d}e') 
		\bigg)^{1/p},
	\]
	where $\Lambda(\mu, \mu')$ denote the collection of all probability measures $\lambda$ on $E \x E$ 
	such that $\lambda( \mathrm{d}e, E) = \mu$ and $\lambda(E,  \mathrm{d}e') = \mu'( \mathrm{d}e')$. Equipped with $\Wc_p,$ $\Pc_p(E)$ is a Polish space (see \cite[Theorem 6.18]{villani2008optimal}). For any  $\mu \in \Pc(E)$ and $\mu$--integrable function $\varphi: E \to \R,$ we define
	\begin{align*}
	    \langle \varphi, \mu \rangle
	    =
	    \langle \mu, \varphi \rangle
	    :=
	    \int_E \varphi(e) \mu(\mathrm{d}e),
	\end{align*}
	and for another metric space $(E^\prime,\Delta^\prime)$, we denote by $\mu \otimes \mu^\prime \in \Pc(E \x E')$ the product probability of any $(\mu,\mu^\prime) \in \Pc(E) \x \Pc(E^\prime)$.
	
	Given a probability space $(\Om, \Fc, \P)$ supporting a sub-$\sigma$-algebra $\Gc \subset \Fc$ then for a Polish space $E$ and any random variable $\xi: \Om \longrightarrow E$, both the notations $\Lc^{\P}( \xi | \Gc)(\om)$ and $\P^{\Gc}_{\om} \circ (\xi)^{-1}$ are used to denote the conditional distribution of $\xi$ knowing $\Gc$ under $\P$.
	
	
	
\medskip
	\noindent $(ii)$	
	For any $(E,\Delta)$ and $(E',\Delta')$ two Polish spaces, we use $C_b(E,E')$ to denote the set of continuous functions $f$ from $E$ into $E'$ such that $\sup_{e \in E} \Delta'(f(e),e'_0) < \infty$ for some $e'_0 \in E'$.
	Let $\N^*$ denote the set of positive integers. Given non--negative integers $m$ and $n$, we denote by $\S^{m \x n}$ the collection of all $m \x n$--dimensional matrices with real entries, equipped with the standard Euclidean norm, which we denote by $|\cdot|$ regardless of the dimensions. 
	We also denote $\S^n:=\S^{n \times n}$, and denote by $0_{m \times n}$ the element in $\S^{m \times n}$ whose entries are all $0$, and by $\mathrm{I}_n$ the identity matrix in $\S^n$. 
	Let $k$ be a positive integer, we denote by $C^k_b(\R^n;\R)$ the set of bounded maps $f: \R^n \longrightarrow \R$, having bounded continuous derivatives of order up to and including $k$. 
	Let $f: \R^n \longrightarrow \R$ be twice differentiable, we denote by $\nabla f$ and $\nabla^2f$ 
	the gradient and Hessian of $f$.


\medskip
    \noindent $(iii)$
	Let $T > 0$ and $(\Sigma,\rho)$ be a Polish space, we denote by $C([0,T]; \Sigma)$ the space of all continuous functions on $[0,T]$ taking values in $\Sigma$.
	Then, $C([0,T]; \Sigma)$ is a Polish space under the uniform convergence topology, and we denote by $\|\cdot\|$ the uniform norm. 
	When $\Sigma=\R^k$ for some $k\in\N$, we simply write $\Cc^k := C([0,T]; \R^k),$ also we shall denote by $\Cc^{k}_{\Wc}:=C([0,T]; \Pc(\R^k)),$ and for $p \ge 1,$ $\Cc^{k,p}_{\Wc}:=C([0,T]; \Pc_p(\R^k)).$ 

\medskip
	With a Polish space $E$, we denote by $\M(E)$ the space of all Borel measures $q( \mathrm{d}t,  \mathrm{d}e)$ on $[0,T] \x E$, 
	whose marginal distribution on $[0,T]$ is the Lebesgue measure $ \mathrm{d}t$, 
	that is to say $q( \mathrm{d}t, \mathrm{d}e)=q(t,  \mathrm{d}e) \mathrm{d}t$ for a family $(q(t,  \mathrm{d}e))_{t \in [0,T]}$ of Borel probability measures on $E$.
	For any $q \in \M(E)$ and $t \in [0,T],$ we define $q_{t \wedge \cdot} \in \M(E)$ by
	\begin{equation}\label{eq:lambda}
	q_{t \wedge \cdot}(\mathrm{d}s, \mathrm{d}e) :=  q(\mathrm{d}s, \mathrm{d}e) \big|_{ [0,t] \x E} + \delta_{e_0}(\mathrm{d}e) \mathrm{d}s \big|_{(t,T] \x E},\; \text{for some fixed $e_0 \in E$.}
	\end{equation}
	We will say that a Borel measurable function $h:[0,T] \x \R^n \x C([0,T];\Sigma) \x \M(E)$ is progressively Borel measurable if it verifies $h(t,x,\mathsf{a},\mathsf{z})=h(t,x,\mathsf{a}_{t \wedge \cdot},\mathsf{z}_{t \wedge \cdot}),$ for any $(t,x,\mathsf{a},\mathsf{z}) \in [0,T]\x \R^n \x C([0,T];\Sigma) \x \M(E).$


\section{Setup and Main results} \label{sec:set-resutls}

In this section, we first introduce a $N$--player game, and the definition of $\varepsilon_N$--Nash and $\varepsilon_N$--Pareto equilibria. Next, we formulate the notions of approximate strong Markovian and measure--valued MFG equilibria which will be essential to describe the limit of the Nash equilibria. Finally, we give the $open$--$loop$ and $closed$--$loop$ formulations of the McKean--Vlasov optimal control or MFC and use them to deal with the limit of Pareto equilibria. 

\medskip
The general assumptions used throughout this paper are now formulated. The dimension $n \in \N^*,$ the nonempty Polish space $(U,d)$, the horizon time $T>0$ are fixed and $\Pc^n_U$ denotes the space of all Borel probability measures on $\R^n \x U$ i.e. $\Pc^n_U:=\Pc(\R^n \x U).$ {\color{black} Also, we set $p \ge 2,$ $\nu \in \Pc_{p'}(\R^n)$ with $p'> p,$ and the probability space $(\Om,\H:=(\Hc_t)_{t \in [0,T]},\Hc,\P)$\footnote{The probability space $(\Om,\H,\P)$ contains as many random variables as we want in the sense that: each time we need a sequence of independent uniform random variables or Brownian motions, we can find them on $\Om$ without mentioning an enlarging of the space. }. } 
We give ourselves the following bounded Borel measurable functions
	\[
		\big[b,L \big]:[0,T] \x \R^n \x \Pc_p(\R^n \x U) \x U \longrightarrow \R^n \x \R,\;\sigma:[0,T] \x \R^n \longrightarrow \S^{n \x n} \;\mbox{and}\; 
		g: \R^n \x \Pc_p(\R^n) \longrightarrow \R.
	\]
		

    \begin{assumption} \label{assum:main1} 
		
		
		$(i)$ $\sigma_0 \in \S^{n \x n}$ is an invertible constant matrix and $U \subset \R^q$, for $q \in \N^*,$ is a compact convex nonempty set;
	
	
	\medskip	
		$(ii)$ The maps $b$ and $\sigma$ are Lipschitz in all their variables.
		Also, the maps $L$ and $g$ are s.t. for each $t,$ $\R^n \x \Pc(\R^n \x U) \x U \x \Pc_p(\R^n) \ni (x,\nub,u,\nu) \to \big(L(t,x,\nub,u), g(x,\nu) \big) \in \R^n \x \R \x \R$ is continuous;

    \medskip
		$(iii)$ ${\rm\underline{Non\mbox{--}degeneracy\;condition}}$: for some constant $\theta >0$, one has, for all $(t,x) \in [0,T] \x \R^n$,
		\begin{align*}
		    ~\theta \mathrm{I}_{n} \le ~\sigma \sigma^{\top}(t,x);
		\end{align*}
		
    \medskip
		$(iv)$ ${\rm \underline{Separability\;condition}}$: There exist Borel functions $(b^\circ, b^\star,L^\circ,L^\star)$ satisfying
		\begin{align*}
		    b ( t,x,\nub, u )
		    :=
		    b^\star (t,\nub ) 
		    +
		    b^\circ ( t,x, u )\;\;\mbox{and}\;\;L ( t,x,\nub, u )
		    :=
		    L^\star(t,x,\nub ) 
		   +
		   L^\circ( t,x, \nu, u ),
		\end{align*}
		for all $(t,x,\nub, u) \in [0,T] \x \R^n \x \Pc^n_U \x U$ where $\nu(\mathrm{d}x):=\nub(\mathrm{d}x,U).$
		
	\end{assumption}

\begin{remark}
    The previous assumptions are standard in the probabilistic approach of mean field game and control problems. The ${\rm \underline{separability\;condition}}$ is more specific to the extended mean field game and control problems $($see {\rm \citeauthor*{carmona2015probabilistic} \cite{carmona2015probabilistic}, \citeauthor*{Lehalle-card} \cite{Lehalle-card}, \citeauthor*{M_Lauriere-Tangpi} \cite{M_Lauriere-Tangpi}, \citeauthor*{tangpipossamai2021} \cite{M_Lauriere-Tangpi}, \citeauthor*{MFD-2020_MFG} \cite{MFD-2020_MFG}}$)$. It is mainly used for technical reasons. Notice that some conditions can be weakened. But, in order to avoid certain unnecessary technicalities in the proofs, we have chosen these.
\end{remark}

\subsection{The $N$--player games} \label{sec:finite-player-game} Let $N \in \N^*.$
We denote by $\Ac^c_N$ the collection of all progressively Borel measurable functions $\alpha:[0,T] \x (\Cc^n)^N \to U.$
On the filtered probability space $(\Om,\H,\Hc,\P),$ let $(W^i)_{i \in \N^*}$ be a sequence of independent $\H$--adapted $\R^n$--valued Brownian motions, $B$ be an $\R^n$--valued $\H$--adapted Brownian motion and $(\xi^i)_{i \in \N^*}$ a sequence of iid $\Hc_0$--random variables of law $\nu.$ Besides, $(W^i)_{i \in \N^*},$ $B$ and $(\xi^i)_{i \in \N^*}$ are independent. Then, given the control rule/strategy $\alphab:=(\alpha^1,\dots,\alpha^N) \in (\Ac^c_N)^N$, denote by $\Xbb^{\alphab}:=(X^{\alphab,1}_{\cdot},\dots,X^{\alphab,N}_{\cdot})$ the processes satisfying: for each $i \in \{1,\dots,N\},$ $\E\big[\|X^{\alphab,i}\|^p \big]< \infty,$
    \begin{align} \label{eq:N-agents_StrongMV_CommonNoise-law-of-controls}
        \mathrm{d}X^{\alphab,i}_t
        =
        b\big(t,X^{\alphab,i}_{t},\overline{\varphi}^{N}_{t}[\alphab] ,\alpha^i(t,\Xbb^{\alphab}) \big) \mathrm{d}t 
        +
        \sigma \big(t,X^{\alphab,i}_{t} \big) \mathrm{d}W^i_t
        +
        \sigma_0 \mathrm{d}B_t,\;X^{\alphab,i}_0=\xi^i
    \end{align}
	with 
	\[
	    \varphi^{N}_{t}[\alphab](\mathrm{d} x) := \frac{1}{N}\sum_{i=1}^N \delta_{X^{\alphab,i}_{t}}(\mathrm{d} x)\;\;\mbox{and}\;\;
	    \overline{\varphi}^{N}_{t}[\alphab](\mathrm{d} x, \mathrm{d} u) := \frac{1}{N}\sum_{i=1}^N \delta_{\big(X^{\alphab,i}_{t},\;\alpha^i(t,\Xbb^{\alphab}) \big)}(\mathrm{d} x, \mathrm{d} u)
	    ,~
	    \mbox{for all}
	    ~~
	    t \in [0,T].
	\] 
The reward value of player $i$ associated with control rule/strategy $\alphab:=(\alpha^1,\dots,\alpha^N)$ is then defined by
	\[
	    J_i(\alphab)
	    :=
	    \E \bigg[
        \int_0^T L\big(t,X^{\alphab,i}_t,\overline{\varphi}^{N}_{t}[\alphab] ,\alpha^i(t,\Xbb^{\alphab}) \big) \mathrm{d}t 
        + 
        g \big( X^{\alphab,i}_T, \varphi^{N}_T[\alphab] \big)
        \bigg].
	\]

\medskip
    Now, we give the precise definition of what we call approximate $closed$--$loop$ Nash and Pareto equilibria.
	
\begin{definition}{\rm((approximate) equilibria)} \label{def:Nplayers--equilibria}

\medskip    
    Let $N \in \N^*$ and $\varepsilon \in \R_{+}.$ We will say that a control rule/strategy $(\alpha^1,\dots,\alpha^N) \in (\Ac^c_N)^N$ is an
	\begin{itemize}
	    \item {\rm$\mathbf{Nash\; equilibrium}$:}  $\varepsilon$--$closed$--$loop$ {\rm Nash equilibrium} if 
	    \[
	        J_i(\alpha^1,\dots,\alpha^N) \ge \sup_{\beta \in \Ac^c_N} J_i\big( \alpha^1,\dots,\alpha^{i-1},\beta,\alpha^{i+1},\dots,\alpha^N \big)-\varepsilon,\;\mbox{for each}\;i \in \{1,\dots,N\}.
	    \]
	    
	    \item {\rm$\mathbf{Pareto\; equilibrium}$:} $\varepsilon$--closed--loop {\rm Pareto equilibrium} if 
	    \[
	        \sum_{i=1}^N J_i(\alpha^1,\cdots,\alpha^N) \ge \sup_{(\beta^1,\dots,\beta^N) \in (\Ac^c_N)^N} \sum_{i=1}^N J_i( \beta^1,\dots,\beta^N )-\varepsilon.
	    \]
	\end{itemize}
	    
\end{definition}

\begin{remark}
    
    $(i)$ Because of the fact that the controls $(\alpha^1,\cdots,\alpha^N)$ are only Borel measurable, the strong existence and uniqueness of \eqref{eq:N-agents_StrongMV_CommonNoise-law-of-controls} are not standard. We refer to {\rm\citeauthor*{Veretennikov_1981} \cite{Veretennikov_1981}} for the well--posedness of this type of SDE when $\alpha^i(t,\mathsf{x}^1,\cdots,\mathsf{x}^N)=\alpha^i(t,\mathsf{x}^1(t),\cdots,\mathsf{x}^N(t)).$ For the general situation, we consider existence and uniqueness in law of \eqref{eq:N-agents_StrongMV_CommonNoise-law-of-controls} by a classical application of the  Girsanov’s theorem. Let us mention that, as weak solution, $\Xbb$ may not be defined only on $(\Om,\H,\P).$ But for ease of reading and to avoid heavy notations, we assumed $(\Om,\H,\P)$ in order to be able to define all our variables on this space.
    
    \medskip
    $(ii)$ While the existence of approximate Pareto equilibria is obvious, that of approximate Nash equilibria is unclear. As a consequence of our results, we will {\color{black}see} that when $\varepsilon >0,$ the approximate Nash equilibria are well--defined under our assumptions.
    
    \medskip
    $(iii)$ Given our framework, the natural shape for our controls should be $Markovian$ that is to say $\alpha^i(t,\mathsf{x}^1,\cdots,\mathsf{x}^N)=\alpha^i(t,\mathsf{x}^1(t),\cdots,\mathsf{x}^N(t))$ and not fully path dependent as we consider. In the MFG setting, although the establishment of the convergence of approximate Markovian Nash equilibria to MFG can be done with our techniques, we are unfortunately only able to establish the $converse$ convergence result for fully path dependent controls $($see {\rm\Cref{thm:converse-limit}}$)$. Therefore, we make the choice to present our article only with fully path dependent controls. This technical limitation seems to be a strange phenomenon highlighted in other articles $($see {\rm \cite{LeflemLacker-2021,lacker2020-closed}}$)$, and only appears for the mean field game setting. 
\end{remark}


\subsection{The mean field game of controls} \label{sec:MFGC}
{\color{black}We first formulate here the formulation} of the extended mean field game (or mean field game of controls). We will call this formulation (approximate) strong Markovian MFG equilibrium. Second, inspired by \cite{MFD-2020_MFG}, we give the notion of measure--valued MFG which generalizes the strong Markovian MFG equilibrium.
\subsubsection{$\varepsilon$--strong Markovian MFG equilibrium}
\label{section:strong_formulation}

Let $(B,W)$ be an $\R^{n} \x \R^n$--valued $\H$--Brownian motion, and $\xi$ be a $\Fc_0$--random variable $\xi$ such that $\Lc(\xi)=\nu.$ We denote by $\F^B:=(\Fc^B_t)_{t \in [0,T]}$ the filtration generated by $B$ i.e. $\Fc^B_t:=\sigma\{B_s: s \le t \}.$

\begin{definition}
        For each $\varepsilon \ge 0,$ we say that a $\F^B$ predictable process $(\mub_t)_{t \in [0,T]}$ is an $\varepsilon$--strong Markovian MFG solution if: 
    \begin{enumerate}
        
        \item[$(i)$] $\mub_t=\Lc(X_t, \alpha(t,X_t,\mu)|\Fc^B_t)$ $\mathrm{d}t \otimes \mathrm{d}\P$--a.e. where $\alpha$ is a progressively Borel measurable function $\alpha: [0,T] \x \R^n \x C([0,T];\Pc(\R^n)) \to U,$ $X$ the unique strong solution of
    \begin{align} \label{eq:MKV_strong-MFG}
		\mathrm{d}X_t
		= 
		b \big(t, X_t, \mub_t, \alpha(t,X_t,\mu) \big) \mathrm{d}t
		+
		\sigma\big(t, X_t \big) \mathrm{d} W_t
		+
		\sigma_0 \mathrm{d}B_t\;\mbox{with}\;X_0=\xi\;\mbox{and}\;\mu_t:=\Lc(X_t|\Fc^B_t);
	\end{align}
	
        \item[$(ii)$] For any progressively Borel  measurable function $\alpha': [0,T] \x \R^n \x C([0,T];\Pc(\R^n)) \to U,$ and $X'$ the solution of 
    \begin{align} \label{eq:fixed-Markovian-SDE-MFG}
		\mathrm{d}X'_t
		= 
		b \big(t, X'_t, \mub_t, \alpha'(t,X'_t,\mu) \big) \mathrm{d}t
		+
		\sigma\big(t, X'_t \big) \mathrm{d} W_t
		+
		\sigma_0 \mathrm{d}B_t,\;\;X'_0=\xi,
	\end{align}    
	one has 
	\begin{align*} 
		    \E \bigg[
				\int_0^T L(t, X_t, \mub_t,\alpha(t,X_t,\mu)) \mathrm{d}t 
				+ 
				g(X_T, \mu_T) 
			\bigg]
			\ge
			\E \bigg[
				\int_0^T L(t, X'_t, \mub_t,\alpha'(t,X'_t,\mu)) \mathrm{d}t 
				+ 
				g(X'_T, \mu_T) 
			\bigg] - \varepsilon.
		\end{align*} 
    \end{enumerate}
\end{definition}   

\begin{remark}
    $(i)$ A reader familiar with MFG of controls would expect another shape for an $($approximate$)$ optimal control $\alpha$ $($see {\rm \Cref{def-epsilon-MFG}} in the Introduction for instance$)$. Indeed, in our setting, the expected shape of an $($approximate$)$ optimal control should be $\alpha(t,X_t,(\mub_s)_{s \in [0,t]})$ and not $\alpha(t,X_t,(\mu_s)_{s \in [0,t]})$ i.e. controls depending on the time, the state and the conditional distribution of control and state, and not controls depending on the time, the state and the conditional distribution of state. However, under our assumptions, in particular the condition $\sigma_0$ is invertible, these two representations are close. Let us briefly explain this fact. First, as $(\mu_s)_{s \in [0,t]}$ is a function of $(\mub_s)_{s \in [0,t]},$ we can always see a control of the form $\alpha(t,X_t,(\mu_s)_{s \in [0,t]})$ as a control of the form $\alpha(t,X_t,(\mub_s)_{s \in [0,t]}).$
    Second, using the fact that $\mub$ is $\F^B$--adapted, so a progressively Borel function of $B,$ as we will see in the proof $($ see for instance {\rm \Cref{prop:open-closed-approx}} or {\rm \Cref{prop:appr_general_closed}} $)$ , it is always possible to approximate a control of type $\alpha(t,X_t,(\mub_s)_{s \in [0,t]})$ by a sequence of controls of type $\alpha(t,X_t,(\mu_s)_{s \in [0,t]}).$   
    
    \medskip
    $(ii)$ There are two other facts that make us choose this shape of control. Firstly, this allows us to have a formulation that fully falls within the framework of MFG without the $($conditional$)$ law of control. Indeed, for the classical MFG, the controls are of type  $\alpha(t,X_t,(\mu_s)_{s \in [0,t]})$ $($see {\rm \cite{cardaliaguet2015master,carmona2018probabilisticII,lacker2020-closed}}$)$. Secondly, for the construction of approximate Nash equilibrium from MFG equilibrium, in MFG of control setting, this formulation gives a more natural way. Indeed, when $\alpha$ is regular enough $($this will be our case$)$, for each $N \in \N^*,$ an approximate Nash equilibrium is constructed by defining: $\alpha^{i}(t,\mathsf{x}^1,\dots,\mathsf{x}^N)
        :=
        \alpha\big(t,\mathsf{x}^i(t),\pi^N\big)\;\mbox{with}\;\pi^N(t):=\frac{1}{N} \sum_{i=1}^N \delta_{\mathsf{x}^i(t)}.$ But if $\alpha$ is of type $\alpha(t,X_t,(\mub_s)_{s \in [0,t]}),$ a natural construction is $\alpha^{i}(t,\mathsf{x}^1,\dots,\mathsf{x}^N)
        :=
        \alpha\big(t,\mathsf{x}^i(t),\overline{\pi}^N\big)\;\mbox{with}\;\overline{\pi}^N(t):=\frac{1}{N} \sum_{i=1}^N \delta_{\mathsf{x}^i(t),\alpha^{i}(t,\mathsf{x}^1,\dots,\mathsf{x}^N)}.$ Due to the appearance of $\alpha^i$ in both side of the equality, it is unclear that this construction is well defined.
        
        \medskip
        $(iii)$ The well--posedness of this notion of equilibrium is not clear at first sight. But, as we will see later $($see {\rm \Cref{prop:existence}} with {\rm \Cref{thm:converse-limit}}$)$, for any $\varepsilon>0,$ there exists an $\varepsilon$--strong Markovian MFG equilibrium.
\end{remark}

\subsubsection{Measure--valued MFG equilibrium}

\begin{definition} [measure--valued equilibrium] \label{def:measure--valued--MFG}
		We say that a term
		\[
			\big( \Om, \Fc,  \Pr,  \F := (\Fc_t)_{0 \le t \le T}, W,B, X, \Lambda, \mu \big)
		\]
		is a measure--valued MFG equilibirum if
		\begin{enumerate}
			\item[$(i)$] $(\Om, \F, \Fc,\Pr)$ is a filtered probability space. $(W,B)$ is an $\R^n \x \R^n$--valued $\F$--Brownian motion, $(X,\mu)$ is an $\R^n \x \Pc(\R^n)$--valued $\F$--adapted continuous process and $(\Lambda_t)_{t \in [0,T]}$  is a $\F$--predictable $\Pc(\Pc^n_U)$--valued process.
			
			\item[$(ii)$] $X_0,$ $W$ and $(B,\Lambda)$ are $\Pr$--independent.
			
			\item[$(iii)$] For each $t \in [0,T],$ $\mu_t=\Lc^{\Pr} \big(X_t \big| \Gc_t \big)$ $\Pr$--a.e. where $\Gc_t:=\sigma\{B_{t \wedge \cdot},\mu_{t \wedge \cdot},\Lambda_{t \wedge \cdot} \},$ $X$ satisfies
			\begin{align*}
			    \mathrm{d}X_t
		        = 
		        \int_{\Pc^n_U}\; \int_{U} b \big(t, X_t, \nub, u \big) \nub^{X_t} (\mathrm{d}u) \;\Lambda_t (\mathrm{d}\nub) \mathrm{d}t
		        +
		        \sigma\big(t, X_t \big) \mathrm{d} W_t
		        +
		        \sigma_0 \mathrm{d}B_t,\;\;X_0=\xi
			\end{align*}
			and 
			\begin{align*}
			    \Lambda_t \big( \{ \nub \in \Pc^n_U:\;\nub(\mathrm{d}x,U)= \mu_t(\mathrm{d}x) \} \big)=1,\;\;\mathrm{d}t \otimes \mathrm{d}\Pr\mbox{--a.e.}
			\end{align*}
			where for each $\nub \in \Pc^n_U,$ $\R^n \ni x \to \nub^x \in \Pc(U)$ is Borel measurable and satisfies $\nub(\mathrm{d}x,\mathrm{d}u)=\nub^x(\mathrm{d}u)\nub(\mathrm{d}x,U).$ 
		
			
			\item[$(iv)$] For any Borel progressive measurable function $\alpha': [0,T] \x \R^n \x \M(\Pc^n_U) \to U,$ and for each $t \in [0,T],$ $\mu'_t=\Lc^{\Pr}(X'_t \big| \Gc_t)$ $\Pr$--a.e. where $X'$ is the solution of 
        \begin{align}         
		    \mathrm{d}X'_t
		    = 
		    \int_{\Pc^n_U}\; b \big(t, X'_t, \nub, \alpha'(t,X'_t,\Lambda) \big) \;\Lambda_t (\mathrm{d}\nub) \mathrm{d}t
		    +
		    \sigma\big(t, X'_t \big) \mathrm{d} W_t
		    +
		    \sigma_0 \mathrm{d}B_t,\;\;X'_0=\xi
	   \end{align}    
	        one has 
	   \begin{align*} 
		    &\E^{\Pr} \bigg[
				\int_0^T \int_{\Pc^n_U} \langle L(t, \cdot, \nub,\cdot), \nub \rangle \Lambda_t (\mathrm{d}\nub) \mathrm{d}t 
				+ 
				\langle g(\cdot, \mu_T), \mu_T \rangle 
			\bigg]
			\\
			&\ge
			\E^{\Pr} \bigg[
				\int_0^T \int_{\Pc^n_U} \langle L(t, \cdot, \nub,\alpha'(t,\cdot,\Lambda)), \mu'_t \rangle \Lambda_t (\mathrm{d}\nub) \mathrm{d}t 
				+ 
				\langle g(\cdot, \mu_T), \mu'_T \rangle
			\bigg].
		\end{align*} 
	\end{enumerate}	
	\end{definition}
	
	\begin{remark}
	    $(i)$ Although formulated differently, the definition of the measure--valued MFG equilibrium given in this paper is quite equivalent to those introduced in {\rm \cite[Definition 2.7]{MFD-2020_MFG}} for $open$--$loop$ setting $($see {\rm \Cref{prop:eq--measure--valuedMFG_control}} and {\rm \Cref{rem:eq--measure--valuedMFG_control}} for details$)$. Moreover, in the classical MFG framework i.e. when there is no law of control, this notion of measure--valued MFG equilibrium turns out to be equivalent to the classical notion of weak MFG equilibrium $($see comparison in {\rm \cite[Comparison Definition 2.7 ...]{MFD-2020_MFG}} after {\rm Remark 2.8}$)$.
	    
	    \medskip
	    $(ii)$ We would like to emphasize the fact that our controls do not need to depend on Brownian motion $B$.
	    The control processes considering only the fixed measures $(\mu,\Lambda)$ and not $B$ are more natural. Intuitively, a player has to consider his position and the distribution of {\color{black} the whole population.} For more mathematical justification, refer to {\rm \cite{cardaliaguet2015master,carmona2018probabilisticII}}.
	\end{remark}

\medskip
With the equivalence between our notion of measure--valued MFG equilibrium and the notion considered in {\rm \cite[Definition 2.7]{MFD-2020_MFG}} for $open$--$loop$ framework that we will prove in \Cref{prop:eq--measure--valuedMFG_control}, thanks to the result proved in \cite[Theorem 7.2.4]{djete2020some}, we have the following result.

\begin{proposition}\label{prop:existence}
    Under \Cref{assum:main1}, there exists at least one measure--valued MFG equilibrium.
\end{proposition}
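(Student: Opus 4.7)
The plan is to derive this existence statement as a corollary of two prior results: the equivalence between our formulation of measure--valued MFG equilibrium (Definition \ref{def:measure--valued--MFG}) and the open--loop measure--valued MFG equilibrium of \cite[Definition 2.7]{MFD-2020_MFG}, together with the existence theorem \cite[Theorem 7.2.4]{djete2020some} established in the open--loop framework. Since the proposition is stated \emph{after} the announcement of these two ingredients, the work essentially reduces to pasting them together.

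More concretely, first I would verify that Assumption \ref{assum:main1} implies the set of hypotheses under which \cite[Theorem 7.2.4]{djete2020some} applies: the compactness of $U$, the Lipschitz continuity and boundedness of $b$ and $\sigma$, the joint continuity of $L$ and $g$ in the measure argument (for the Wasserstein $\Wc_p$ topology, using the integrability $p' > p \vee n$ of $\nu$), the non--degeneracy $\sigma \sigma^\top \ge \theta \mathrm{I}_n$, and the separability decomposition of $b$ and $L$. Once the hypotheses match, \cite[Theorem 7.2.4]{djete2020some} produces at least one measure--valued MFG equilibrium in the sense of \cite[Definition 2.7]{MFD-2020_MFG}.

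Second, I would invoke Proposition \ref{prop:eq--measure--valuedMFG_control} together with Remark \ref{rem:eq--measure--valuedMFG_control}, which asserts that the notion of Definition \ref{def:measure--valued--MFG} is equivalent to the open--loop measure--valued MFG equilibrium of \cite{MFD-2020_MFG}. Applying this equivalence to the equilibrium produced by \cite[Theorem 7.2.4]{djete2020some} yields a measure--valued MFG equilibrium in the sense of Definition \ref{def:measure--valued--MFG}, proving the proposition.

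The only genuine difficulty here lies inside Proposition \ref{prop:eq--measure--valuedMFG_control}, which is proved separately; once that equivalence is available, the present proposition is a routine corollary. A small verification worth flagging is the compatibility between the filtration structures of the two formulations: our Definition \ref{def:measure--valued--MFG} uses the filtration $\Gc_t = \sigma\{B_{t\wedge\cdot},\mu_{t\wedge\cdot},\Lambda_{t\wedge\cdot}\}$ and treats $(\Lambda_t)$ as an $\F$--predictable $\Pc(\Pc^n_U)$--valued process independent of $W$, and one must check that an equilibrium in the sense of \cite{MFD-2020_MFG} can be enlarged (by adjoining $W$ as an additional Brownian motion independent of everything else) to produce a probabilistic setup matching our definition; this is straightforward via a product--space construction.
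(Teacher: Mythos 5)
Your proposal matches the paper's argument exactly: the paper establishes Proposition \ref{prop:existence} as an immediate consequence of the equivalence proved in Proposition \ref{prop:eq--measure--valuedMFG_control} (and Remark \ref{rem:eq--measure--valuedMFG_control}) combined with the existence result of \cite[Theorem 7.2.4]{djete2020some}, and indeed gives no separate proof beyond the preceding sentence announcing precisely these two ingredients. The hypothesis-matching and filtration-compatibility checks you flag are the right things to verify, and the paper implicitly leaves them to the reader in the same way.
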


\subsubsection{Limit theorem and converse limit theorem}  	

We are now ready to formulate the main convergence results of this paper regarding the extended mean field game. We begin with the convergence of sequence of approximate Nash equilbria and approximate strong Markovian MFG equilibria towards the measure--valued MFG equilibria. The proof is given in \Cref{proof_thm_1_MFG} and \Cref{proof_thm_2_MFG}.

\begin{theorem}[limit theorem] \label{thm:limit_thm_closed-loop}
    \medskip Let {\rm \Cref{assum:main1}} hold true. 
    
\begin{itemize}
    \item For each $N \in \N^*$, let $\alphab^N:=(\alpha^{1,N}, \dots, \alpha^{N,N})$ be an $\varepsilon_N$--closed--loop Nash equilibrium, then the sequence $(\mathrm{P}^N)_{N \in \N^*}$ with $\mathrm{P}^N:=\mathrm{P}^N[\alphab^N] \in \Pc \big( \Cc^n_{\Wc} \x \M(\Pc^n_U) \big)$ is relatively compact in $\Wc_p$ where
    \[
        \mathrm{P}^N[\alphab^N]
        :=
        \Lc^{\P}\Big(\varphi^{N}[\alphab^N],\;\delta_{\overline{\varphi}^{N}_t[\alphab^N]}(\mathrm{d}\nub)\mathrm{d}t \Big)
    \]
    and if $\Lim_{N \to \infty} \varepsilon_N=0,$ then for each limit point $\mathrm{P}^\infty$ there exists a measure--valued solution  $\big( \Om, \Fc,  \Pr,  \F, W, B, X, \Lambda, \mu \big)$ such that
    \begin{align*}
        \mathrm{P}^\infty
        =
        \Lc^{\Pr}(\mu, \Lambda).
    \end{align*}
    
    \item For each $\ell \in \N^*,$ let $\mub^\ell$ be an $\varepsilon_\ell$--strong Markovian MFG equilibirum, then $(\Pr^\ell)_{\ell \in \N^*} \subset  \Pc \big( \Cc^n_{\Wc} \x \M(\Pc^n_U) \big)$ is relatively compact in $\Wc_p$ where $\Pr^\ell:=\P \circ (\mu^\ell,\delta_{\mub^\ell_s}(\mathrm{d}u)\mathrm{d}s)^{-1}$ and if $\Lim_{\ell \to \infty} \varepsilon_\ell=0,$ then for each limit point $\mathrm{P}^\infty$ there exists a measure--valued solution  $\big( \Om, \Fc,  \Pr,  \F, W, B, X, \Lambda, \mu \big)$ such that
    \begin{align*}
        \mathrm{P}^\infty
        =
        \Lc^{\Pr}(\mu, \Lambda).
    \end{align*}
\end{itemize}

\end{theorem}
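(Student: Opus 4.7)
The argument splits into three phases, carried out in parallel for both bullets since the Nash case subsumes most of the difficulty. For the \textbf{tightness} phase, boundedness of $b,\sigma,\sigma_0$ together with $U$ compact and $\nu \in \Pc_{p'}(\R^n)$ with $p' > p$ delivers uniform $L^{p'}$--moment and Kolmogorov--type modulus estimates on the trajectories $X^{\alphab^N,i}$ (resp.\ on $X$ in \eqref{eq:MKV_strong-MFG}). Combined with exchangeability of $(X^{\alphab^N,1},\dots,X^{\alphab^N,N})$ conditional on $B$, this yields tightness of $\Lc(\varphi^N[\alphab^N])$ in $\Pc_p(\Cc^n_{\Wc})$, while compactness of $U$ together with the state moments forces $\delta_{\overline{\varphi}^N_t[\alphab^N]}(\mathrm{d}\nub)\mathrm{d}t$ into a tight subset of $\M(\Pc^n_U)$. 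Jointly, this gives relative $\Wc_p$--compactness of $(\mathrm{P}^N)_N$ and $(\Pr^\ell)_\ell$.

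For the \textbf{identification of structure}, pass to a subsequence, apply Skorokhod's theorem on a canonical extension carrying $(W,B,X,\Lambda,\mu)$, and read off the limit term. Independence of $X_0,W$ from $(B,\Lambda)$ is inherited from the prelimit, and the constraint $\Lambda_t(\{\nub : \nub(\cdot,U)=\mu_t\})=1$ follows because at level $N$ the first marginal of $\overline{\varphi}^N_t$ equals $\varphi^N_t$ exactly. The SDE for $X$ in the limit is recovered via a martingale problem: apply It\^o to $\phi(X^{\alphab^N,i})$ for $\phi \in C_b^2(\R^n;\R)$, average over $i$, and pass to the limit against bounded continuous test functionals, using continuity of $b$ and the defining property of $\Lambda$ to identify the drift prescribed in Definition~\ref{def:measure--valued--MFG}(iii). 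Conditional propagation of chaos, which hinges on $\sigma_0$ being invertible so that the filtration generated by $\mu$ is close to that generated by $B$, then yields the fixed--point identity $\mu_t = \Lc^{\Pr}(X_t | \Gc_t)$.

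The \textbf{optimality phase in the Nash case} is the main obstacle. Given a progressively Borel $\alpha' : [0,T] \x \R^n \x \M(\Pc^n_U) \to U$ and $i \in \{1,\dots,N\}$, we test the $\varepsilon_N$--Nash inequality against the deviation in which player $i$ alone replaces $\alpha^{i,N}$ by a control of the form $\beta^{i,N}(t,\mathsf{x}) = \alpha'\bigl(t, \mathsf{x}^i(t), \text{observation of } \overline{\varphi}^N\bigr)$, and then average over $i$. The difficulty is that in the closed--loop regime a single--player deviation can in principle feed back through the empirical measure entering the other players' coefficients; one must show that this feedback effect vanishes as $N\to\infty$. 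Adapting the strategy of \cite{lacker2020-closed} to the present common--noise and measure--of--controls setting, this is achieved via the Aronson--Serrin $L^p$--estimate \cite[Theorem 4]{AronsonSerrin67} for the Fokker--Planck equation governing the one--particle marginals, which relies in an essential way on the non--degeneracy $\sigma\sigma^\top \ge \theta \mathrm{I}_n$. Letting $N\to\infty$ using continuity of $L,g$ and the Skorokhod representation of the previous step then produces the optimality inequality of Definition~\ref{def:measure--valued--MFG}(iv).

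The \textbf{optimality phase for the Markovian MFG case} is considerably simpler: the inequality \eqref{def-epsilon-MFG} is already in the target form along the sequence $\mub^\ell$, so one passes to the limit after approximating any candidate $\alpha'$ depending on $\Lambda$ (or $\mub$) by a sequence of controls depending on $\mu$, exploiting once more that $\mub$ is a Borel functional of $\mu$ under $\sigma_0$ invertible, and then sends $\varepsilon_\ell \to 0$. The expected principal difficulty is thus concentrated in the Aronson--Serrin control of the single--player perturbation within the closed--loop Nash step.
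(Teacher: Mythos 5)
Your three-phase decomposition (tightness, structure identification, optimality) matches the paper's overall architecture, and citing \cite{lacker2020-closed} for the optimality step is the right instinct. However, you misattribute the role of the Aronson--Serrin estimate, and this confusion obscures the actual structure of the Nash-optimality argument.

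In the paper, the ``feedback'' of a single-player deviation through the empirical measure is handled by a \emph{Girsanov change of measure} (\Cref{lemm:convergenceNashEquilibrium}): one constructs a tilted measure $\Q^i$ so that $\Lc^{\Q^i}(\Xbb^{\alphab},B)=\Lc^\P(\Ybb^i,B)$, where $\Ybb^i$ is the state process with player $i$ deviating, and controls the Radon--Nikodym density $Z^i_T$. This is where non-degeneracy of $\sigma$ enters, to make the Girsanov kernel $\phi^i$ well-defined and bounded. The Aronson--Serrin estimate is used in a conceptually different place: in \Cref{lemm:equiv_controls} (via \Cref{prop:approximation_weak_2} and the appendix results \Cref{prop:estimates_FP-density}, \Cref{lemma:approx-control-closed-loop}) to show that an \emph{arbitrary} competing measure-valued control rule $\mathrm{P}\in\Pcb_V$ with $\Lc^{\mathrm{P}}(\mu,\Lambda,B)=\Lc^{\mathrm{P}^\infty}(\mu,\Lambda,B)$ can be approximated by regular closed-loop feedback controls of the form $\widehat\beta^j(t,x,\mu,\Lambda)$, for which the above Girsanov argument applies. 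Without this density-regularity step you cannot quantify over all competitors in \eqref{eq:optimality-relaxed}; and without the Girsanov step the Aronson--Serrin estimate alone does not say anything about the cost of a single-player deviation. Your write-up collapses these two distinct steps into one, so as stated it would not yield the inequality in Definition~\ref{def:measure--valued--MFG}$(iv)$.

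Two further omissions worth noting: (a) you do not mention the preliminary step that every limit point of $(\mathrm{P}^N)$ lies in $\Pcb_V$ with $\mu'=\mu$ and $\Lambda'=\Lambda$ a.s., which is what makes $B$ a continuous functional of $(\mu,\Lambda)$ (\Cref{lemma:measurability_B}) and thereby legitimates the competitor class; (b) for the second bullet, the paper does not directly pass to the limit in \eqref{def-epsilon-MFG}, but first upgrades each $\mub^\ell$ to an approximate \emph{open-loop} MFG equilibrium (\Cref{prop:strongM_as_strongO}) and then invokes \cite[Theorem 2.12]{MFD-2020_MFG}. Your route for that bullet is plausible but is not what the paper does, and it would still require the \Cref{prop:approximation_weak_2}-style approximation of $\alpha'(\cdot,\cdot,\Lambda)$ by $\alpha'(\cdot,\cdot,\mu)$ that you only gesture at.
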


\begin{remark}
    In the framework of extended mean field game, these convergence results seem to be the first of this type. Unlike the recent paper of {\rm \cite{tangpipossamai2021}} where the setting is without common noise, $\sigma$ constant and strong assumptions, in our paper, we treat the case with common noise and the assumptions are less strong.
\end{remark}

Now we present the converse convergence result. That is, any measure--valued MFG equilibrium is the limit of sequence of approximate $closed$--$loop$ Nash equilibria or sequence of approximate strong Markovian MFG equilibria. The proof is given in \Cref{proof_thmconv_1_MFG} and \Cref{proof_thmconv_2_MFG}.


    
    \begin{theorem}[Converse limit theorem] \label{thm:converse-limit}
        Let {\rm \Cref{assum:main1}} hold true. For any measure--valued MFG equilibrium $\big( \Om, \Fc,  \Pr,  \F, W, B, X, \Lambda, \mu \big)$ there exists:
    \begin{itemize}
        
        \item $(\mub^\ell)_{\ell \in \N^*}$ s.t. for each $\ell \in \N^*,$ $\mub^\ell$ is an $\varepsilon_\ell$--strong Markovian MFG equilibrium with $\varepsilon_\ell>0,$ $\Lim_{\ell \to \infty} \varepsilon_\ell=0,$ and
        \begin{align*}
            \Lc^{\Pr}(\mu, \Lambda)
            =
            \Lim_{\ell \to \infty}\Lc^{\P}\big(\mu^\ell,\;\delta_{\mub^\ell}(\mathrm{d}m)\mathrm{d}t \big)\;\mbox{in}\;\Wc_p.
        \end{align*}

        \item $(\alpha^{1,N},\cdots,\alpha^{N,N})_{N \in \N^*}$ s.t. for each $N \in \N^*$, $\alphab^N:=(\alpha^{1,N}, \dots, \alpha^{N,N})$ is an $\varepsilon_N$--$closed$--$loop$ Nash equilibrium  with $\varepsilon_N >0,$ $\Lim_{N \to \infty}\varepsilon_N=0$ and 
        \begin{align*}
            \Lc^{\Pr}(\mu, \Lambda)
            =
            \Lim_{N \to \infty}\Lc^{\P}\Big(\varphi^{N}[\alphab^N],\;\delta_{\overline{\varphi}^{N}_t[\alphab^N]}(\mathrm{d}m)\mathrm{d}t \Big)\;\mbox{in}\;\Wc_p.
        \end{align*}
    \end{itemize}

    \end{theorem}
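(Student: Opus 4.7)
}

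The plan is to build the two approximating sequences in stages: first produce a sequence of \emph{approximate strong Markovian MFG equilibria} with sufficiently regular feedback controls, and then use those regularised feedbacks to define symmetric closed--loop strategies in the $N$--player game and verify the approximate Nash property. Throughout, the invertibility of $\sigma_0$ (Assumption \ref{assum:main1}(i)) is the structural input that lets us replace the auxiliary randomisation encoded in $\Lambda$ by a feedback of the state and the conditional law.

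\textbf{Step 1: From the measure--valued equilibrium to a usable feedback.} Starting from a measure--valued equilibrium $(\Om,\Fc,\Pr,\F,W,B,X,\Lambda,\mu)$, I would first use the equivalence with the open--loop formulation of \cite{MFD-2020_MFG} (to be stated as \Cref{prop:eq--measure--valuedMFG_control}) to disintegrate $\Lambda_t(\mathrm{d}\nub)$ as $\nub_t(\mathrm{d}x,\mathrm{d}u)=\mu_t(\mathrm{d}x)\,q_t(x,\mathrm{d}u)$ for a predictable transition kernel $q$ that realises the optimum. Because $\sigma_0$ is invertible, the common noise $B$ can be reconstructed from $\mu$ via the conditional Fokker--Planck equation, so $(\mub_t)$ is predictable with respect to $\F^B$ and the kernel can be rewritten as a measurable functional of $(t,X_t,\mu_{t\wedge\cdot})$. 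I would then apply a mimicking argument: mollify $q$ by convolving the controls in the direction of an auxiliary Gaussian (or use the $\Ft$--progressive regularisation of \Cref{prop:appr_general_closed}) to obtain, for each $\ell$, a bounded Borel feedback $\alpha^\ell:[0,T]\x\R^n\x\Cc^n_{\Wc}\to U$ depending only on $(t,x,\mu)$. Continuity of $(L,b,g)$ plus Lipschitz continuity of $(b,\sigma)$ (Assumption \ref{assum:main1}(ii)) will make the associated cost converge to the measure--valued cost.

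\textbf{Step 2: The $\varepsilon_\ell$--strong Markovian property.} Plugging $\alpha^\ell$ into \eqref{eq:MKV_strong-MFG} produces a McKean--Vlasov SDE whose strong existence and uniqueness is standard under Assumption \ref{assum:main1}(iii). By Step 1 the resulting $\mub^\ell$ satisfies $\Lc^\Pr(\mu,\Lambda)=\lim_\ell\Lc^\P(\mu^\ell,\delta_{\mub^\ell_s}\mathrm{d}s)$ in $\Wc_p$. To upgrade to an $\varepsilon_\ell$--optimality condition, I would compare any admissible competitor $\alpha'$ in \eqref{eq:fixed-Markovian-SDE-MFG} to its open--loop image against the frozen flow $\mub^\ell$: the fact that $\mub^\ell\to\Lambda$ and the continuity of the value functional along convergent flows (provable by Girsanov plus dominated convergence, using boundedness of $b$ and the non--degeneracy of $\sigma$) show that any feedback deviation cannot outperform $\alpha^\ell$ by more than a vanishing quantity $\varepsilon_\ell\downarrow 0$. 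This yields the first bullet.

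\textbf{Step 3: From strong Markovian MFG to approximate closed--loop Nash equilibria.} Given $\mub^\ell$ and its associated feedback $\alpha^\ell(t,x,\mu)$, I define, for $N$ large relative to $\ell$,
\begin{equation*}
\alpha^{i,N}(t,\mathsf{x}^1,\dots,\mathsf{x}^N)
:=
\alpha^\ell\bigl(t,\mathsf{x}^i(t),\pi^N_{t\wedge\cdot}\bigr),\qquad
\pi^N_t:=\tfrac{1}{N}\sum_{j=1}^N\delta_{\mathsf{x}^j(t)}.
\end{equation*}
The regularity of $\alpha^\ell$ in $\mu$ obtained in Step 1, combined with Aronson--Serrin--type estimates on the conditional density (\cite[Theorem 4]{AronsonSerrin67}) and conditional propagation of chaos in the presence of common noise, gives $\pi^N\to\mu^\ell$ in $\Wc_p$ on $\Cc^n_{\Wc}$. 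Passing to a joint diagonal extraction over $(\ell,N)$ provides $\varepsilon_N\downarrow 0$ such that the empirical distributions converge to $\Lc^\Pr(\mu,\Lambda)$.

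\textbf{Step 4: Verifying the $\varepsilon_N$--Nash property (the main obstacle).} The delicate point is that a single player's unilateral deviation $\beta\in\Ac^c_N$ can, in principle, influence the empirical distribution $\overline\varphi^N$ seen by everyone else. I would show that the effect of any such deviation on $\overline\varphi^N$ is $O(1/N)$ uniformly in $\beta$ (this is where boundedness of $b$ and the separability condition Assumption \ref{assum:main1}(iv) are crucial, since $b^\star(t,\nub)$ varies by $O(1/N)$ under a single--coordinate modification). Freezing the remaining $N-1$ players at $\alpha^\ell$, the deviating player then faces a McKean--Vlasov optimisation against a flow that converges (in $\Wc_p$) to $\mub^\ell$; the $\varepsilon_\ell$--optimality of $\alpha^\ell$ from Step 2 propagates, via Girsanov and the stability lemmas, to an $\varepsilon_\ell+o_N(1)$ optimality of $\alpha^{i,N}$. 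Choosing $\ell=\ell(N)\to\infty$ slowly enough yields $\varepsilon_N\downarrow 0$ such that $(\alpha^{1,N},\dots,\alpha^{N,N})$ is an $\varepsilon_N$--closed--loop Nash equilibrium with the required empirical convergence, completing the second bullet. The principal difficulty is quantifying this propagation uniformly over all admissible deviating controls $\beta$; the non--degeneracy of $\sigma$ and invertibility of $\sigma_0$ are what make such uniform Girsanov estimates available.
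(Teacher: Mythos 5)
Your overall architecture matches the paper's: first produce approximate strong Markovian equilibria with regular feedbacks, then symmetrise them into $N$--player closed--loop strategies $\alpha^{i,N}(t,\mathbf{x})=\beta(t,\mathsf{x}^i(t),\pi^N[\mathbf{x}])$, and verify the Nash property via Girsanov. But Steps 1--2 contain a gap. The paper does \emph{not} Markovianise $\Lambda$ directly from the measure--valued equilibrium: it first invokes an external result (Theorem 2.13 of \cite{MFD-2020_MFG}) to approximate the measure--valued equilibrium by \emph{$\varepsilon_\ell$--open--loop} MFG equilibria, then Markovianises the resulting open--loop controls (\Cref{prop:open-closed-approx}), and only then upgrades to $\varepsilon_\ell$--strong--Markovian optimality (\Cref{prop:open_closed}). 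The essential difficulty you skip is verifying that the Markovianised feedback is $\varepsilon_\ell$--optimal against the \emph{entire} class of feedback competitors. The measure--valued optimality is against a class of auxiliary controls $\Lambda'$; to transfer it, one must also show that any feedback competitor $\alpha'$ in the strong Markovian problem can be embedded in (or dominated by) the class over which the measure--valued equilibrium is optimal. The paper achieves this through \Cref{prop:strongM_as_strongO} and a stability/Gr\"onwall estimate on the frozen--flow dynamics in \Cref{prop:open_closed}; your phrase ``comparing any admissible competitor to its open--loop image'' gestures in the right direction but does not supply this step.

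In Step 4 the claim that a single deviation changes $\overline\varphi^N$ by $O(1/N)$ \emph{uniformly over all Borel deviations $\beta\in\Ac^c_N$} is not what the paper proves, and is not needed. A deviating player's arbitrary path--dependent $\beta$ perturbs the joint state $\mathbf{X}^{\alphab}$, and since the other players' feedbacks $\alpha^\ell(t,\mathsf{x}^j(t),\pi^N)$ react to the whole path of the empirical measure, the feedback loop is not naively $O(1/N)$ without additional regularity and a priori boundedness arguments. The paper sidesteps this entirely with a clean Girsanov argument: it constructs $Z^i_T$ via a drift change on player $i$'s Brownian motion $W^i$ alone, establishes $\Lc^{\Q^i}(\Xbb^{\alphab^N},B)=\Lc^\P(\Ybb^i,B)$, and then bounds $\E^\P\big[\Wc_p(\varphi^N[\alphab^{i,N}],\mu)\big]\le\E^\P[|Z^i_T|^2]^{1/2}\,\E^\P\big[\Wc_p(\varphi^N[\alphab^N],\mu)^2\big]^{1/2}$, which vanishes by propagation of chaos and uniform boundedness of the exponential martingale (this is where boundedness of $b$ and non--degeneracy of $\sigma$ enter). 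You do mention Girsanov, but as an add--on to the $O(1/N)$ heuristic, whereas it is the primary mechanism that makes the argument work uniformly over $\beta$. Also, the separability condition is not used to get $O(1/N)$ variation of $b^\star$; it is used so that the shifted Fokker--Planck equation (after removing $\sigma_0 B$) decouples the mean--field part of the drift from the own--state--and--control part, which is essential for the regularity and uniqueness arguments in the appendix.
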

    
    \begin{remark}
        Under these general assumptions, especially allowing the MFG equilibrium to be potentially non unique, {\rm \Cref{thm:converse-limit}} seems to be the first result about the approximation of any MFG equilibrium via a sequence of approximate $closed$--$loop$ Nash equilibria or approximate strong Markovian MFG equilibria.
        Without counting the fact that we take into account the empirical distribution of states and controls, even for the classical MFG framework, this result appears as new in the literature. As highlighted in {\rm \cite{lacker2020-closed}} and later in {\rm \cite{LeflemLacker-2021}}, in the presence of multiple MFG equilibria, this kind of converse result is delicate to prove without extension of the notion of $closed$--$loop$ Nash equilibrium $($see {\rm \cite{LeflemLacker-2021}}$)$. To bypass this difficulty, the condition $\sigma_0$ is invertible turns out to be quite useful. 
    \end{remark}

\subsection{The McKean--Vlasov control problem}


\subsubsection{The $open$--$loop$ and $closed$--$loop$ formulations} \label{sec:open-closed-McKVl}

    Under \Cref{assum:main1}, we will formulate here the $open$--$loop$ and $closed$--$loop$ formulations of the McKean--Vlasov control problem on the probability space $(\Om, \H, \Hc, \P)$ with the $\R^n \x \R^n$--valued $\H$--adapted Brownian motion $(W,B)$ and the $\Fc_0$--random variable $\xi$ s.t. $\Lc(\xi)=\nu.$  We denote by $\F:=(\Fc_t)_{t \in [0,T]}$ and $\G:=(\Gc_t)_{t \in [0,T]}$ the $\P$--completion filtration of $\big(\sigma\{\xi,B_r,W_r:\;r \le t\} \big)_{t \in [0,T]}$ and $\big(\sigma\{B_r:\;r \le t\} \big)_{t \in [0,T]}$ respectively.
    
	\paragraph*{$Open$--$loop$ formulation}  Let $\Ac^o$ be the collection of all $U$--valued processes $\alpha = (\alpha_s)_{0 \le s \le T}$ which are $\F$--predictable.
	Then, given $\alpha \in \Ac^o$, let $X^{o,\alpha}$ be the unique strong solution of the SDE: $\E^{\P} \big[\|X^{o,\alpha}\|^p \big]< \infty,$
	$X^{o,\alpha}_0= \xi$,
	\begin{align} \label{eq:MKV_strong-law-of-controls}
		\mathrm{d}X^{o,\alpha}_t
		= 
		b \big(t, X^{o,\alpha}_t, \mub^{o,\alpha}_t, \alpha_t \big) \mathrm{d}t
		+
		\sigma\big(t, X^{o,\alpha}_t \big) \mathrm{d} W_t
		+
		\sigma_0 \mathrm{d}B_t\;\mbox{with}\;\mub^{o,\alpha}_t:=\Lc(X^{o,\alpha}_t, \alpha_t | \Gc_t )\;\mbox{and}\;\mu^{o,\alpha}_t:=\Lc(X^{o,\alpha}_t | \Gc_t ).
	\end{align}
	We will say that $X^{o,\alpha}$ is an $open$--$loop$ McKean--Vlasov process associated to $\alpha.$
	Let us now introduce the following $open$--$loop$ McKean--Vlasov control problem by
	    \begin{align} \label{eq:strong_Value}
	    	V^{o}_S
			~:=~
			\sup_{\alpha \in \Ac^o} 
			\Phi^{o}(\alpha)\;\mbox{where}	~~\Phi^{o}(\alpha):=\E \bigg[
				\int_0^T L(t, X^{o,\alpha}_t,\mub^{o,\alpha}_t, \alpha_t) \mathrm{d}t 
				+ 
				g\big(X^{o,\alpha}_T, \mu^{o,\alpha}_T \big) 
			\bigg].
		\end{align}
		
	\paragraph*{$Closed\;loop$ formulation}	
    Let $\Ac^c$ be the collection of all Borel progressively measurable maps $[0,T] \x \R^n \x C([0,T];\Pc(\R^n)) \ni (t,x,\pi) \to \alpha(t,x,\pi) \in U.$ 
	Then, given $\alpha \in \Ac^c$, let $X^{c,\alpha}$ be the process satisfying: $\E^{\P^\star} \big[\|X^{c,\alpha}\|^p \big]< \infty,$
	$X^{c,\alpha}_0= \xi$, and
	\begin{align} \label{eq:MKV_strong-law-of-controls}
		\mathrm{d}X^{c,\alpha}_t
		= 
		b \big(t, X^{c,\alpha}_t, \mub^{c,\alpha}_t, \alpha(t,X^{c,\alpha}_t,\mu^{c,\alpha}) \big) \mathrm{d}t
		+
		\sigma\big(t, X^{c,\alpha}_t \big) \mathrm{d} W_t
		+
		\sigma_0 \mathrm{d}B_t
	\end{align}
	$\;\mbox{with}\,\mub^{c,\alpha}_t:=\Lc(X^{c,\alpha}_t, \alpha(t,X^{c,\alpha}_t,\mu^{c,\alpha})|\Gc_t )\;\mbox{and}\;\mu^{c,\alpha}_t:=\Lc(X^{c,\alpha}_t|\Gc_t ).$
	We will say that $X^{c,\alpha}$ is an $closed$--$loop$ McKean--Vlasov process associated to $\alpha.$
	The $closed$--$loop$ McKean--Vlasov control problem is given by
	    \begin{align} \label{eq:strong_Value}
	    	V^{c}_S
			~:=~
			\sup_{\alpha \in \Ac^c} 
			\Phi^{c}(\alpha)\;\mbox{,}	~~\Phi^{c}(\alpha):=\E \bigg[
				\int_0^T L\big(t, X^{c,\alpha}_t,\mub^{c,\alpha}_t, \alpha(t,X^{c,\alpha}_t,\mu^{c,\alpha}) \big) \mathrm{d}t 
				+ 
				g\big(X^{c,\alpha}_T, \mu^{c,\alpha}_T \big) 
			\bigg].
		\end{align}

\subsubsection{Equivalence and limit theory results} 
We now provide here the equivalence result and the limit theory relating to the McKean--Vlasov control problem. The proofs are in \Cref{proof_thm1_MFC} and \Cref{proof_thm2_MFC}.
\begin{theorem}[equivalence between $closed$--$loop$ and $open$--$loop$ controls] \label{thm:equivalence_mckv_cl}
   Let $X^{o,\alpha}$ be an $open$--$loop$  McKean--Vlasov process  associated to $\alpha \in \Ac^o.$ There exists a sequence $(\alpha^\ell)_{\ell \in \N^*} \subset \Ac^c$ s.t. $X^{c,\alpha^\ell}$ is a $closed$--$loop$ McKean--Vlasov process associated to $\alpha^\ell,$ and one has
   \begin{align*}
       \P \circ \Big( \mu^{o,\alpha}, \delta_{\mub^{o,\alpha}_t}(\mathrm{d}\nub)\mathrm{d}t \Big)^{-1}
       =
       \Lim_{\ell \to \infty }\P \circ \Big( \mu^{c,\alpha^\ell}, \delta_{\mub^{c,\alpha^\ell}_t}(\mathrm{d}\nub)\mathrm{d}t \Big)^{-1}\;\mbox{in}\;\Wc_p.
   \end{align*}
   Consequently,
    \begin{align*}
        V_S^{o}=V_S^{c}.
    \end{align*}
\end{theorem}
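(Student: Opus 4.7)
The plan is to approximate any open-loop control $\alpha\in\Ac^o$ by a sequence of closed-loop controls $(\alpha^\ell)\subset\Ac^c$ so that the joint laws $\P\circ(\mu^{c,\alpha^\ell},\delta_{\mub^{c,\alpha^\ell}_t}\mathrm{d}t)^{-1}$ converge in $\Wc_p$ to $\P\circ(\mu^{o,\alpha},\delta_{\mub^{o,\alpha}_t}\mathrm{d}t)^{-1}$. Once this is done, the equality $V_S^o=V_S^c$ follows from the boundedness and continuity of $L$ and $g$ from \Cref{assum:main1}, since the reverse inclusion is free: a closed-loop control evaluated along its own state is an open-loop control. The whole effort is thus the approximation part.

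The first move is to reduce to the case where $\alpha$ is piecewise constant in time on a dyadic mesh $0=t_0^\ell<\cdots<t_\ell^\ell=T$ and of the form $\alpha^\ell_t=\psi_k^\ell\!\left(\xi,(B_s)_{s\in\Pi_k^\ell},(W_s)_{s\in\Pi_k^\ell}\right)$ on $[t_k^\ell,t_{k+1}^\ell)$, with $\Pi_k^\ell\subset[0,t_k^\ell]$ a finite partition and $\psi_k^\ell$ continuous bounded. These are $\mathrm{d}t\otimes\mathrm{d}\P$-dense in $\Ac^o$ by a standard Doob-type argument, and Lipschitz $L^p$-stability of the McKean--Vlasov SDE together with the continuity assumed on $(L,g)$ upgrades density to $\Wc_p$-convergence of the associated joint laws.

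Then comes the conversion step, which rests on the two non-degeneracy hypotheses. \emph{(a)} Because $\sigma_0$ is invertible, for the regular $\alpha^\ell$ above the filtration generated by $(\mu^{o,\alpha^\ell}_s)_{s\le t}$ coincides up to $\P$-completion with $\Gc_t=\sigma\{B_{\cdot\wedge t}\}$, exactly as discussed after \Cref{def-epsilon-MFG} and formalised in \Cref{prop:appr_general_closed} and \Cref{rm:equal-filtration}; therefore $B_{\cdot\wedge t_k^\ell}$ is a Borel functional of $\mu^{o,\alpha^\ell}_{\cdot\wedge t_k^\ell}$. \emph{(b)} Because $\sigma$ is non-degenerate, conditional on $B$ the state $X^{o,\alpha^\ell}$ is driven by a non-degenerate integrator $W$, so by the same mechanism $(W_{\cdot\wedge t_k^\ell},\xi)$ is a measurable functional of $(X^{o,\alpha^\ell}_{\cdot\wedge t_k^\ell},B_{\cdot\wedge t_k^\ell})$. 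Combining \emph{(a)} and \emph{(b)}, on each interval $[t_k^\ell,t_{k+1}^\ell)$ the control rewrites as $\alpha^\ell_t=G_k^\ell\!\left(X^{o,\alpha^\ell}_{\cdot\wedge t_k^\ell},\mu^{o,\alpha^\ell}_{\cdot\wedge t_k^\ell}\right)$ for some Borel $G_k^\ell$.

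The last hurdle, and the one I expect to be the main obstacle, is that the closed-loop format only permits dependence on the current value $X_t$ and the measure path $\pi_{\cdot\wedge t}$, not on the state path. I would exploit the piecewise constancy of $\alpha^\ell$: on $[t_k^\ell,t_{k+1}^\ell)$ the needed past values $X^{o,\alpha^\ell}_{t_j^\ell}$, $j\le k$, are reconstructed inductively as measurable functionals $\mathsf{X}_k^\ell(x,\pi)$ of the current state and the measure history, again using the non-degeneracy of $\sigma$ and the cascade structure of the closed-loop SDE, and then one sets
\[
\tilde\alpha^\ell(t,x,\pi):=\sum_{k=0}^{\ell-1}\mathbf{1}_{[t_k^\ell,t_{k+1}^\ell)}(t)\,G_k^\ell\!\left(\mathsf{X}_k^\ell(x,\pi),\,\pi_{\cdot\wedge t_k^\ell}\right).
\]
A Gronwall-type stability estimate then yields that $X^{c,\tilde\alpha^\ell}$ produces the same joint law of $(\mu,\mub)$ as $X^{o,\alpha^\ell}$ up to an error vanishing as $\ell\to\infty$, and together with the first step this proves the theorem. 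The delicate point is to keep $\mathsf{X}_k^\ell$ free of circularity and to maintain the filtration identification in \emph{(a)} uniformly along the approximating sequence, so that the closed-loop conditional measure does not drift away from $\mu^{o,\alpha^\ell}$ while the mesh is refined; this is where the combined use of both $\sigma_0$ invertible and $\sigma$ non-degenerate is indispensable.
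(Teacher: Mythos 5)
The step that breaks is the claimed reconstruction of the past values $X^{o,\alpha^\ell}_{t_j^\ell}$, $j\le k$, as measurable functionals $\mathsf{X}_k^\ell(x,\pi)$ of the \emph{current} state $x=X_t$ and the measure history $\pi_{\cdot\wedge t}$. Because $\sigma$ is uniformly elliptic, the transition kernel of the (conditional-on-$B$) state process has a strictly positive density, so the conditional law of $X_{t_j^\ell}$ given $(X_t,\mu_{\cdot\wedge t})$ for $t>t_j^\ell$ is a genuinely spread-out distribution and not a Dirac mass; two different trajectories with different pasts can and do occupy the same position at time $t$. Non-degeneracy of $\sigma$ is therefore working \emph{against} you here, not for you. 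The class $\Ac^c$ in the statement only admits the instantaneous state $X_t$ (its domain is $[0,T]\times\R^n\times C([0,T];\Pc(\R^n))$, not $[0,T]\times\Cc^n\times\cdots$), so any argument that needs the state path must fail. Your step on recovering $(W,\xi)$ from $(X_{\cdot\wedge t},B_{\cdot\wedge t})$ is also circular as written --- inverting $\sigma$ to read off $W$ requires knowing the drift, which contains the control you are trying to express --- but this is secondary to the main obstruction above.

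The paper avoids pathwise reconstruction entirely. Its proof goes through \Cref{prop:open_closed}/\Cref{prop:open-closed-approx}, whose engine is \Cref{prop:approximation_weak_2} (the stochastic chattering lemma) together with \Cref{prop:appr_general_closed}: rather than trying to recover the open-loop control $\alpha_t(\omega)$ from $(X_t,\pi)$, one approximates the \emph{joint conditional law} $\mub_t=\Lc(X_t,\alpha_t\,|\,\Gc_t)$ by laws generated by Markovian feedbacks $\beta^k(t,X^k_t,B,\Lambda)$. This is a Young-measure/measurable-selection argument (via \cite[Lemma 3.1]{NEUFELD20143819} and \cite[Proposition C.1]{Lacker_carmona_delarue_CN}), where the relevant fact is that any $\Pc(U)$-valued kernel $\nub^x(\mathrm{d}u)$ can be mimicked in distribution by a sequence of Dirac feedbacks in $x$; the convergence is at the level of $\P\circ(\mu,\delta_{\mub_t}\mathrm{d}t)^{-1}$, which is exactly what enters the cost. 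The $B$-dependence of these feedbacks is then traded for $\mu$-dependence using the invertibility of $\sigma_0$ and the Fokker--Planck identification of $B$ from $\mu$ (\Cref{prop:appr_general_closed}). The takeaway: the equivalence in \Cref{thm:equivalence_mckv_cl} holds because only the law of $(\mu,\mub)$ is being matched, not because the open-loop control itself can be written as a Markovian feedback --- the latter is false in general, and your plan would require it.
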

    
{\color{black}Recall that the definitions of $\varphi^N$ and $\overline{\varphi}^N$ are given in  \Cref{eq:N-agents_StrongMV_CommonNoise-law-of-controls}.} 
\begin{theorem}[Limit theory] \label{thm:limit_mckv_cl}
    For each $N \in \N^*,$ let $\varepsilon_N \in \R_{+}$ and $\alphab^N:=(\alpha^{1,N},\dots,\alpha^{N,N})$ be an $\varepsilon_N$--$closed$--$loop$ Pareto equilibrium. Then, the sequence $(\mathrm{P}^N)_{N \in \N^*}$ with $\mathrm{P}^N:=\mathrm{P}^N[\alphab^N] \in \Pc \big( \Cc^n_{\Wc} \x \M(\Pc^n_U) \big)$ is relatively compact in $\Wc_p$ where
    \[
        \mathrm{P}^N[\alphab^N]
        :=
        \Lc^{\P}\Big(\varphi^{N}[\alphab^N],\;\delta_{\overline{\varphi}^{N}_t[\alphab^N]}(\mathrm{d}\nub)\mathrm{d}t \Big)
    \]
    and if $\Lim_{N \to \infty} \varepsilon_N=0,$ then for each limit point $\mathrm{P}^\infty$, there exists a sequence $(\alpha^{\star,N})_{N \in \N^*} \subset \Ac^{c}$ s.t if we define for each $N \in \N^*,$ $\alphab^{\star,N}:=(\alpha^{\star,1,N},\dots,\alpha^{\star,N,N})$ where
    \begin{align*}
        \alpha^{\star,i,N}(t,\mathsf{x}^1,\dots,\mathsf{x}^N)
        :=
        \alpha^{\star,N}\big(t,\mathsf{x}^i(t),\pi^N\big)\;\mbox{with}\;\pi^N(t):=\frac{1}{N} \sum_{i=1}^N \delta_{\mathsf{x}^i(t)},\;\mbox{for}\;(\mathsf{x}^1,\dots,\mathsf{x}^N) \in (\Cc^n)^N\;\mbox{and}\;i \in \{1,\dots,N\}
    \end{align*}
    then
    \begin{align*}
        \mathrm{P}^\infty
        =
        \Lim_{N \to \infty} \Lc^{\P} \Big(\varphi^{N}[\alphab^{\star,N}], \delta_{\overline{\varphi}^{N}_t[\alphab^{\star,N}]}(\mathrm{d}\nub)\mathrm{d}t \Big)\;\mbox{in}\;\Wc_p,\;\mbox{therefore}\;V_S^{o}
        =
        V_S^{c}
        =
        \Lim_{N \to \infty}\; \sup_{\alphab \in (\Ac^c_N)^N}\; \frac{1}{N} \sum_{1=1}^N J_i ( \alphab ).
    \end{align*}

\end{theorem}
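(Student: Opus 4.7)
The argument splits naturally into a tightness/identification step, a construction step based on the equivalence \Cref{thm:equivalence_mckv_cl}, and a matching of values.

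First, I would establish the relative compactness of $(\mathrm{P}^N)_{N \in \N^*}$ in $\Wc_p$. Compactness of $U$ and boundedness of the coefficients yield uniform $L^{p'}$ estimates on the state processes (with $p' > p \vee n$ from \Cref{assum:main1}), and a Kolmogorov-type criterion shows that $\varphi^N[\alphab^N]$ is tight in $\Cc^n_{\Wc}$. The $\M(\Pc^n_U)$-valued component $\delta_{\overline\varphi^N_t[\alphab^N]}(\mathrm{d}\nub)\mathrm{d}t$ is tight because its marginal on $[0,T]$ is Lebesgue and $U$ is compact, while the extra integrability $p' > p$ upgrades weak tightness to $\Wc_p$-tightness.

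Next, I would identify any accumulation point $\mathrm{P}^\infty$ with the law $\Lc(\mu,\Lambda)$ of a measure-valued McKean--Vlasov configuration achieving the value $V^c_S$. Proceeding as in the proof of \Cref{thm:limit_thm_closed-loop}, I pass to the limit in the martingale problem satisfied by $\varphi^N[\alphab^N]$ driven by $B$, extracting along a subsequence a limiting object $(\mu,\Lambda)$ and identifying $\mu_t = \Lc(X_t \mid \Gc_t)$ together with the relaxed-control structure. The decisive new ingredient compared with the MFG case comes from the $\varepsilon_N$-Pareto property: for any $\alpha \in \Ac^o$, the fully symmetric strategy in which every player uses $\alpha$ produces, by conditional propagation of chaos, an average reward converging to $\Phi^o(\alpha)$, so the optimality inequality in \Cref{def:Nplayers--equilibria} forces the limit functional to equal $V^o_S$, which by \Cref{thm:equivalence_mckv_cl} equals $V^c_S$.

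To construct the announced approximating symmetric strategies, I start from an optimizing open-loop control realising $\mathrm{P}^\infty$. By \Cref{thm:equivalence_mckv_cl}, its McKean--Vlasov law is the $\Wc_p$-limit of laws associated with a sequence of genuine closed-loop controls in $\Ac^c$; a diagonal extraction then produces $(\alpha^{\star,N})_{N \in \N^*} \subset \Ac^c$ whose closed-loop McKean--Vlasov laws converge to $\mathrm{P}^\infty$. Plugging $\alpha^{\star,N}$ into the symmetric template
\[
\alpha^{\star,i,N}(t,\mathsf{x}^1,\dots,\mathsf{x}^N):=\alpha^{\star,N}\bigl(t,\mathsf{x}^i(t),\pi^N\bigr),
\]
a propagation-of-chaos argument for weakly interacting diffusions with common noise (using Lipschitz continuity of $b,\sigma$, and the invertibility of $\sigma_0$ to synchronise the filtration generated by $\mu$ with that generated by $B$) yields $\Wc_p$-convergence of $\bigl(\varphi^N[\alphab^{\star,N}],\delta_{\overline\varphi^N_t[\alphab^{\star,N}]}(\mathrm{d}\nub)\mathrm{d}t\bigr)$ to the same McKean--Vlasov limit, hence to $\mathrm{P}^\infty$.

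Combining the two halves produces the value convergence: evaluating the reward along $\alphab^{\star,N}$ gives the lower bound $\liminf_N \sup_{\alphab \in (\Ac^c_N)^N} \tfrac{1}{N}\sum_i J_i(\alphab) \ge V^c_S$, while the identification step together with $\varepsilon_N \to 0$ gives the matching upper bound $\limsup_N \tfrac{1}{N}\sum_i J_i(\alphab^N) \le V^c_S$. I expect the main obstacle to be the identification step: passing to the limit in the nonlinear dependence of $b$ and $L$ on the joint conditional law of state and control requires careful relaxation, and is precisely where the separability condition \Cref{assum:main1}$(iv)$ together with the invertibility of $\sigma_0$ do essential work, so that the limiting object is a bona fide closed-loop measure-valued McKean--Vlasov solution rather than some strictly larger randomisation.
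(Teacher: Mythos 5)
Your proposal is correct and follows essentially the same route as the paper: compactness plus identification of limit points as open-loop McKean--Vlasov configurations, conversion to closed-loop controls via \Cref{thm:equivalence_mckv_cl} (and \Cref{prop:open_closed}), approximation by symmetric interacting systems via \Cref{prop:chaos_common}, and the two value inequalities. The only difference is presentational: the paper outsources your first two steps (tightness and the martingale-problem identification) to a citation of \cite[Proposition 3.4]{MFD-2020} rather than redoing them, and it phrases the value convergence as the pair of inequalities $\limsup_N \sup_{\alphab} \frac{1}{N}\sum_i J_i(\alphab) \le V^o_S$ and $V^c_S \le \liminf_N \sup_{\alphab} \frac{1}{N}\sum_i J_i(\alphab)$, combined with $V^o_S=V^c_S$, which is equivalent to your argument.
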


\begin{remark}
    This type of results has been evoked by some authors, see for instance {\rm \cite{lacker2017limit,Mmotte-Pham_2019,djete2019general,Lacker-Shkolnikov-Zhang_2020,MFD-2020}}. However, two points are new here: the absence of convexity assumptions and the presence of the law of controls. Indeed, first, with convexity assumptions, without the law of controls, these results have been established by some authors. Here, we are able to avoid this restriction. Second, in the presence of the law of controls, even with convexity assumptions, not many authors provide this kind of equivalence. The equivalence between $closed$--$loop$ and $open$--$loop$ is usually proved by representing the optimal $open$--$loop$ control as a $closed$--$loop$ control. This representation is done by $``$projecting$"$ the optimal $open$--$loop$ control on the state process $X_t$ and then using the convexity assumptions $($see {\rm \cite{lacker2017limit,Lacker-Shkolnikov-Zhang_2020}}$)$. The problem is, when there is the law of control, after $``$projection$"$, it is not possible to recover the law of control. Some information gets lost along the way.
    Let us mention that the equivalence between $closed$--$loop$ and $open$--$loop$ formulations appears to be false when the coefficients are path--dependent as shown by {\rm \citeauthor*{YONG2021104948} \cite{YONG2021104948}}.

\end{remark}

\section{Proof of main results} \label{sec:proofs}

    \subsection{Measure--valued equilibrium: a canonical formulation} \label{section:relaxed_formulation}

In this section, we present equivalent formulations of the measure--valued MFG equilibrium and McKean--Vlasov control problem over a canonical space. These formulations turn out to be those used in \cite{MFD-2020_MFG} and \cite{MFD-2020} to manage the $open$--$loop$ framework. Therefore, we will see from our proofs that the set of limits of $open$--$loop$ and $closed$--$loop$ controls is the same. These formulations have the advantage to facilitate the presentation of the proofs.

\subsubsection{Measure-valued control rules}

    Denote by $\M := \M \big(\Pc^n_U \big)$ the collection of all finite (Borel) measures $q(\mathrm{d}t, \mathrm{d}e)$ on $[0,T] \x \Pc^n_U$, 
	whose marginal distribution on $[0,T]$ is the Lebesgue measure $\mathrm{d}s$ 
	i.e., $q(\mathrm{d}s,\mathrm{d}e)=q(s,\mathrm{d}e)\mathrm{d}s$ for a measurable family $(q(s, \mathrm{d}e))_{s \in [0,T]}$ of Borel probability measures on $\Pc^n_U.$
	Let $\Lambda$ be the canonical element on $\M$.
	We then introduce a canonical filtration $\F^\Lambda = (\Fc^\Lambda_t)_{0 \le t \le T}$ on $\M$ by
	$$
		\Fc^\Lambda_t := \sigma \big\{ \Lambda(C \x [0,s]) ~: \forall s \le t,\;C \in \Bc(\Pc^n_U) \big\}.
	$$
	For each $q \in \M$, one has the disintegration property: $q(\mathrm{d}t, \mathrm{d}e) = q(t, \mathrm{d}e) \mathrm{d}t$, and there is a version of the disintegration
	such that $(t, q) \mapsto q(t, \mathrm{d}e)$ is $\F^\Lambda$-predictable.
	

    \vspace{4mm}
    \noindent
    The canonical element on $\Omb:=\Cc^n_{\Wc} \x \Cc^n_{\Wc} \x \M \x \M \x \Cc^n$ is denoted by $(\mu', \mu,\Lambda',\Lambda, B).$ Then, the canonical filtration $\Fb = (\Fcb_t)_{t \in [0,T]}$ is defined by: for all $t \in [0,T]$  
    $$ 
        \Fcb_t
        :=
        \sigma 
        \big\{\mu'_{t \wedge \cdot},\mu_{t \wedge \cdot},\Lambda'_{t \wedge \cdot}, \Lambda_{t \wedge \cdot}, B_{t \wedge \cdot} \big\}, 
    $$
    with $\Lambda'_{t \wedge \cdot}$ and $\Lambda_{t \wedge \cdot}$ denote the restriction of $\Lambda'$ and $\Lambda$ on $[0,t] \x \Pc^n_U$ (see definition \ref{eq:lambda}). Notice that we can choose a version of the disintegration $\Lambda(\mathrm{d}\nub,\mathrm{d}t)=\Lambda_t(\mathrm{d}\nub)\mathrm{d}t$ (resp $\Lambda'(\mathrm{d}\nub,\mathrm{d}t)=\Lambda'_t(\mathrm{d}\nub)\mathrm{d}t$) such that $(\Lambda_t)_{t \in [0,T]}$ (resp $(\Lambda'_t)_{t \in [0,T]}$) is a $\Pc(\Pc^n_U)$--valued $\Fb$--predictable process.
    Let us also introduce the filtration $(\Gcb_t)_{t \in [0,T]}$ by
    $$ 
        \Gcb_t
        :=
        \sigma \big\{ \mu_{t \wedge \cdot},\;\Lambda_{t \wedge \cdot},\;B_{t \wedge \cdot} \big\}.
    $$
    
    We consider $\Lc$ the following generator: for $(t,x,\nub,u) \in [0,T] \x \R^n \x \Pc^n_U \x U $, and $ \varphi \in C^2(\R^n)$
    \begin{align}
    \label{eq:def_generator}
        \Lc_t\varphi(x,\nub,u) 
        &:= 
        \Lc^\circ_t\varphi(x,u)
        +
         b^\star(t,\nub)^\top \nabla \varphi(x)
    \end{align}
    where
    \begin{align}
    \label{eq:def_generatorC}
        \Lc^\circ_t\varphi(x,u) 
        &:= 
        \frac{1}{2}  \text{Tr}\big[\sigma \sigma^\top(t,x) \nabla^2 \varphi(x) 
        \big] 
        + b^\circ(t,x,u)^\top \nabla \varphi(x).
    \end{align}
    Also, for every $f \in C^{2}(\R^n),$ let us define $N_t(f):=N_t[\mu',\mu,\Lambda',\Lambda](f)$ by
    \begin{align} \label{eq:FP-equation}
        N_t[\mu',\mu,\Lambda',\Lambda](f)
        :=
        \langle f(\cdot-\sigma_0 B_t),\mu'_t \rangle
	    -
	    \langle f,\mu'_0 \rangle
	    &-\int_0^t \bigg[ \int_{\Pc^n_U} \int_{\R^n}b^\star(r,\nub)^\top \nabla f(x-\sigma_0 B_t) \mu'_r(\mathrm{d}x)\Lambda_r(\mathrm{d}\nub) \nonumber
	    \\
	    &~~~~~~~~~-\int_{\Pc^n_U}  \langle \Lc^\circ_r [f(\cdot- \sigma_0 B_r)](\cdot,\cdot), \nub \rangle \Lambda'_r(\mathrm{d}\nub) \bigg]\mathrm{d}r,
    \end{align}
    and for each $\pi \in \Pc(\R^n),$ the Borel set $\Z_{\pi}$ by
    \begin{align*}
        \Z_{\pi}:=\Big\{ \nub \in \Pc^n_U: \nub(\mathrm{d}x,U)=\pi(\mathrm{d}x) \Big\}.
    \end{align*}
\begin{definition}[measure--valued control rule] \label{def:RelaxedCcontrol}
    We say that $\mathrm{P} \in \Pc(\Omb)$ is a measure--valued control rule if:
    \begin{enumerate}
        \item[$(i)$] $\mathrm{P} \big(\mu'_0=\nu \big)=1$.
        
        \item[$(ii)$] $(B_t)_{t \in [0,T]}$ is a $(\mathrm{P},\Fb)$ Wiener process starting at zero and for $\mathrm{P}$--almost every $\om \in \Omb$, $N_t(f)=0$ for all $f \in C^{2}_b(\R^n)$ and every $t \in [0,T].$
        
        \item[$(iii)$] $(\Lambda'_t)_{t \in [0,T]}$ is a $\Gb$--predictable process.
        
        \item[$(iv)$] For $\mathrm{d}\mathrm{P} \otimes \mathrm{d}t$ almost every $(t,\om) \in [0,T] \x \Omb$, $ {\Lambda}'_t\big(\Z_{\mu'_t} \big)=1.$
    \end{enumerate}

\end{definition}
    
    We shall denote $\Pcb_V$ the set of all measure--valued control rules. 

\begin{remark}
    To do an analogy with {\rm \Cref{def:measure--valued--MFG}}, the variables $(\mu',\mu,\Lambda',\Lambda,B)$ may be seen as follows: $B$ is the common noise, $\mu'$ plays the role of $(\Lc^{\Pr}(X'_t | \Gc_t ))_{t \in [0,T]},$ $\Lambda'$ that of {\color{black}$\delta_{\Lc(X'_s,\;\alpha(s,X'_s,\Lambda) | \Gc_s )}(\mathrm{d}m)\mathrm{d}s,$} $\mu$ and $\Lambda$ represent the fixed measures, in particular $\mu_t=\Lc^{\Pr}(X_t|\Gc_t)$.  
\end{remark}

\begin{remark} \label{rm:com_def}
   The set of measure--valued control rules $\Pcb_V$ that we introduced is the same as the one used in {\rm \cite{MFD-2020_MFG}}. 
   However, as we will see later $($see {\rm \Cref{prop:eq--measure--valuedMFG_control} and \Cref{rem:eq--measure--valuedMFG_control}}$)$, for the definition of measure--valued MFG equilibrium, only the case where $B$ is $(\sigma\{ \mu_{t \wedge \cdot}, \Lambda_{t \wedge \cdot}\})_{t \in [0,T]}$--adapted matters. 
\end{remark} 

{\color{black}The following result is one of the key steps to understanding the measurability property satisfied} by the Brownian motion $B.$
\begin{lemma} \label{lemma:measurability_B}
    For any $\Pr \in \Pcb_V,$ there exists a continuous function $\varphi:\Cc^{n,\ell}_{\Wc} \x \Cc^{n,\ell}_{\Wc} \x \M \x \M \to \Cc^n$ $\mbox{for}\;\;\ell \ge 1$ s.t.
    $$
        \Pr\big(B_t=\varphi_t(\mu'_{t \wedge \cdot},\mu_{t \wedge \cdot},\Lambda'_{t \wedge \cdot},\Lambda_{t \wedge \cdot}),\;t \in [0,T] \big)=1.
    $$
\end{lemma}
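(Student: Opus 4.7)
The plan is to extract $B_t$ explicitly from the Fokker--Planck constraint in $(ii)$ of Definition~\ref{def:RelaxedCcontrol} by choosing test functions that isolate the first moment of the measure $\mu'_t$ translated by $-\sigma_0 B_t$. Since $\sigma_0$ is invertible by Assumption~\ref{assum:main1}$(i)$, knowing $\sigma_0 B_t$ recovers $B_t$ pointwise in $t$, and the candidate map $\varphi$ will not need to depend on $\mu$ at all (it is constant in that slot).

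Fix $\chi \in C^\infty_c(\R^n,[0,1])$ with $\chi \equiv 1$ on $\{|x|\le 1\}$ and $\chi \equiv 0$ off $\{|x|\le 2\}$, and for each $i\in\{1,\ldots,n\}$ and $R>0$ set $f_{R,i}(x):=x_i\,\chi(x/R) \in C_b^2(\R^n)$. Then $\nabla f_{R,i}\to e_i$ pointwise with a uniform-in-$R$ bound, while $\nabla^2 f_{R,i}$ is supported in $\{|x|\le 2R\}$ with sup-norm of order $1/R$. Since $b,\sigma$ are bounded and $\mu'_t\in\Pc_\ell(\R^n)$ for some $\ell\ge 1$, $\mathrm{d}t\otimes \mathrm{d}\Pr$-a.e., dominated convergence in the relation $N_t(f_{R,i})=0$ as $R\to\infty$ yields, $\Pr$-a.s.\ and simultaneously for every $t\in[0,T]$,
\begin{align*}
(\sigma_0 B_t)_i \;=\; M_i(\mu'_t) - M_i(\nu) - \int_0^t\!\int_{\Pc^n_U} b^\star_i(r,\nub)\,\Lambda_r(\mathrm{d}\nub)\,\mathrm{d}r + \int_0^t\!\int_{\Pc^n_U}\!\langle b^\circ_i(r,\cdot,\cdot),\nub\rangle\,\Lambda'_r(\mathrm{d}\nub)\,\mathrm{d}r,
\end{align*}
where $M_i(\pi):=\int x_i\,\pi(\mathrm{d}x)$. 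Applying $\sigma_0^{-1}$ defines $\varphi_t(\mu'_{t\wedge\cdot},\mu_{t\wedge\cdot},\Lambda'_{t\wedge\cdot},\Lambda_{t\wedge\cdot})$ for which $B_t=\varphi_t(\cdot)$ $\Pr$-a.s., jointly in $t$.

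It remains to verify that $\varphi$ is continuous into $\Cc^n$. The map $\mu'\mapsto M(\mu'_\cdot)$ is continuous from $\Cc^{n,\ell}_{\Wc}$ into $\Cc^n$ thanks to $|M_i(\pi)-M_i(\pi')|\le \Wc_1(\pi,\pi')\le \Wc_\ell(\pi,\pi')$, uniformly in $t$. The two $\mathrm{d}r$-integrals are equi-Lipschitz in $t$ because $b^\star,b^\circ$ are bounded and $\Lambda,\Lambda'$ have Lebesgue time-marginals; combined with the bounded continuity of $b^\star$ and of $\nub\mapsto \langle b^\circ(r,\cdot,\cdot),\nub\rangle$, an Arzelà--Ascoli argument delivers uniform-in-$t$ continuity with respect to the weak topology on $\M$.

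The main obstacle is the vanishing in the limit $R\to\infty$ of the second-order contribution $\tfrac{1}{2}\mathrm{Tr}[\sigma\sigma^\top \nabla^2 f_{R,i}(\cdot-\sigma_0 B_r)]$: this rests on the $O(1/R)$ sup-norm bound of $\nabla^2 f_{R,i}$ combined with uniform boundedness of $\sigma$, and on the $\Pr$-a.s.\ finiteness of $\sup_{r\in[0,T]}|B_r|$ so that the cutoff $\chi(\cdot-\sigma_0 B_r)/R)$ is uniformly harmless. A secondary delicate point is the uniform-in-$t$ convergence of the $\Lambda$- and $\Lambda'$-integrals along sequences converging weakly in $\M$, which is exactly where the Lebesgue-marginal structure of $\M$ plays its role.
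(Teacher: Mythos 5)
The route you take is genuinely different from the paper's. The paper constructs an auxiliary particle $X'$ on an extension of the canonical space, solving the SDE with drift $\int\int b(t,X'_t,\nub,u)\,\nub'^{X'_t}(\mathrm{d}u)\,\Lambda'_t(\mathrm{d}\nub')\Lambda_t(\mathrm{d}\nub)$, then appeals to uniqueness to identify $\Lc^{\Prt}(X'_t\,|\,\Gcb_t)=\mu'_t$, and finally reads off $B_t$ by taking the conditional expectation $\E[X'_t\,|\,\Gcb_t]$. You instead extract $B_t$ directly from the Fokker--Planck constraint $N_t(f)=0$ of Definition~\ref{def:RelaxedCcontrol}(ii) by choosing cutoff test functions $f_{R,i}(x)=x_i\chi(x/R)$ that approximate the coordinate functions (which are not in $C^2_b$), and passing to the limit $R\to\infty$ via dominated convergence. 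Both approaches land on an explicit expression for $\varphi$ built from first moments of $\mu'_t$ and time-integrals of the drift against $\Lambda$ and $\Lambda'$, and you correctly observe that $\varphi$ can be taken constant in the $\mu$-slot. What your approach buys is self-containment: you bypass the auxiliary SDE and the uniqueness/identification step that the paper only sketches (it quietly uses the $(H)$-hypothesis structure and the well-posedness of the conditional McKean--Vlasov SDE, pointing to Theorem~\ref{thm:unique_stochastic-FP}). The price is the passage to the limit in the test function, which you handle appropriately. Your continuity argument for $\varphi$ (Lipschitz dependence of $M_i(\cdot)$ via $\Wc_1\le\Wc_\ell$, equi-Lipschitz-in-$t$ time integrals, Arzel\`a--Ascoli) is sound.

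One small sign remark: your formula carries a plus on the $b^\circ$-contribution, which is what literally comes out of the displayed $N_t(f)$ in \eqref{eq:FP-equation} (there is a minus between the $b^\star$-term and the $\Lc^\circ$-term inside the inner bracket), whereas the paper's own derived formula via the auxiliary SDE has both drift pieces entering with a minus. This discrepancy traces to what looks like a sign typo in \eqref{eq:FP-equation} (there is also a $\nabla f(x-\sigma_0 B_t)$ that should presumably read $\nabla f(x-\sigma_0 B_r)$). It is harmless for this lemma---any explicit continuous expression for $\varphi$ suffices---but worth noting if you use the Fokker--Planck constraint as written elsewhere.
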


\begin{proof}
       Let $(\Omt,\Ft,\Prt)$ be an extension of $(\Omb,\Fb,\Pr)$ supporting an $\R^n$--valued $\F$--Brownian motion $W,$ and a $\Fct_0$--random variable $\xi$ s.t. $\Lc^{\Prt}(\xi)=\nu.$ Besides, $W,$ $\xi$ and $\Gcb_T$ are independent. Given $(\mu,\Lambda,\Lambda'),$ let $X'$ be the solution of
    \begin{align*}
        \mathrm{d}X'_t
        =
        \int_{\Pc^n_U} \int_U
        b(t,X'_t,\nub,u) \nub'^{X'_{t}}(\mathrm{d}u) \Lambda'_t(\mathrm{d}\nub')\Lambda_t(\mathrm{d}\nub)\mathrm{d}t
        +
        \sigma(t,X'_t)\mathrm{d}W_t
        +
        \sigma_0 \mathrm{d}B_t\;\mbox{with}\;X'_0=\xi
        ,\;\Prt\mbox{--a.e.}
    \end{align*}
    By the uniqueness of the previous equation, we can check that $\Lc^{\Prt}(X'_t|\Gcb_t)=\mu'_t,$ $\Prt$--a.e. for all $t \in [0,T]$ (see similar arguments in \Cref{thm:unique_stochastic-FP} ). By taking the conditional expectation, we find that
    \begin{align*}
        B_t=\sigma_0^{-1} \bigg[ \int_{\R^n} x \mu'_t(\mathrm{d}x)
        -
        \int_{\R^n} x \mu'_0(\mathrm{d}x)
        -
        \int_0^t\int_{(\Pc^n_U)^2}\; \int_{\R^n \x U} b \big(s, x',\nub, u' \big) \nub'(\mathrm{d}x',\mathrm{d}u') \Lambda'_s (\mathrm{d}\nub') \Lambda_s (\mathrm{d}\nub) \mathrm{d}s \bigg].
    \end{align*}
    Therefore, the function $\varphi$ is defined by
    \begin{align*}
        \varphi_t(\pi',\pi,q',q)
        :=
        \sigma_0^{-1} \bigg[ \int_{\R^n} x \pi'_t(\mathrm{d}x)
        -
        \int_{\R^n} x \pi'_0(\mathrm{d}x)
        -
        \int_0^t\int_{(\Pc^n_U)^2}\; \int_{\R^n \x U} b \big(s, x, \nub, u \big) \nub'(\mathrm{d}u,\mathrm{d}x)
        q'_s (\mathrm{d}\nub')q_s (\mathrm{d}\nub) \mathrm{d}s \bigg].
    \end{align*}
    This is enough to conclude.
\end{proof}

\medskip
Let $\Pr \in \Pcb_V$ s.t. $\Pr[\mu=\mu',\Lambda=\Lambda']=1$ and $\Pr' \in \Pcb_V$ satisfying $\Lc^{\Pr}(\mu,\Lambda,B)=\Lc^{\Pr'}(\mu,\Lambda,B).$ By the previous Lemma, we deduce that $ \Pr\big(B_t=\varphi_t(\mu_{t \wedge \cdot},\mu_{t \wedge \cdot},\Lambda_{t \wedge \cdot},\Lambda_{t \wedge \cdot}),\;t \in [0,T] \big)=1,$ and also that $ \Pr'\big(B_t=\varphi_t(\mu_{t \wedge \cdot},\mu_{t \wedge \cdot},\Lambda_{t \wedge \cdot},\Lambda_{t \wedge \cdot}),\;t \in [0,T] \big)=1.$ In other words, $B$ is a function of $(\mu,\Lambda).$  Let $(\Omt,\Ft,\Prt')$ be an extension of $(\Omb,\Fb,\Pr')$ supporting an $\R^n$--valued $\F$--Brownian motion $W,$ and a $\Fct_0$--random variable $\xi$ s.t. $\Lc(\xi)=\nu.$ Besides, $W,$ $\xi$ and $\Gcb_T$ are independent. 

\medskip
The next Lemma is essentially an application of \Cref{prop:approximation_weak_2}. It basically indicates that we can replace $\Lambda',$ which plays the role of a control, by a sequence of more regular controls. This fact will be useful to show the canonical formulation of the measure--valued MFG equilibrium (see \Cref{prop:eq--measure--valuedMFG_control}) and to deal with the convergence of Nash equilibria (see \Cref{prop:convergenceNashEquilibrium}).


\begin{lemma} \label{lemm:equiv_controls}
    There exists a sequence of continuous functions $(\widehat{\beta}^j)_{j \in \N^*}$ satisfying: for each $j \in \N^*,$
    \begin{align*}
        \widehat{\beta}^j: [0,T] \x \R^n \x \Cc^{n,1}_{\Wc} \x \M \x [0,1] \to  U\;\mbox{with}\; \widehat{\beta}^j(t,x,\pi,q)=\widehat{\beta}^j(t,x,\pi_{t \wedge \cdot},q_{t \wedge \cdot}),
    \end{align*}
    s.t. if we define  $X'^{j}$ the solution of   
\begin{align} \label{eq:appr_Y}
        \mathrm{d}X'^{j}_t
        =
        \int_{\Pc^n_U}
        b\big(t,X'^{j}_t,\nub,\widehat{\beta}^{j}(t,X'^{j}_t,\mu,\Lambda)\big) \Lambda_t(\mathrm{d}\nub) \mathrm{d}t
        +
        \sigma(t,X'^j_t) \mathrm{d}W_t
        +
        \sigma_0 \mathrm{d}B_t,\;X'^j_0=\xi\;\;\;\;\;\Prt'\mbox{--a.s.}
    \end{align}    
    then 
    \begin{align*}
        \Lim_{j \to \infty} \Prt' \circ \big( \mu'^j,\mu, \Lambda'^j, \Lambda, B \big)^{-1}
        =
        \Pr\;\;\mbox{in}\;\Wc_p,
    \end{align*}
    with $\mu'^j_t=\Lc^{\Pr}\big(X'^j_t\big|\Gcb_t\big),$  $\mub'^j_t=\Lc^{\Pr}\big(X'^j_t,\widehat{\beta}^{j}(t,X'^{j}_t,\mu,\Lambda)\big|\Gcb_t \big)$ and $\Lambda'^{j}=\delta_{\mub'^j_t}(\mathrm{d}\nub)\mathrm{d}t.$
\end{lemma}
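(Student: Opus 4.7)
The plan is to exploit the kernel structure of $\Lambda'$ enforced by condition $(iv)$ of \Cref{def:RelaxedCcontrol}, invoke \Cref{prop:approximation_weak_2} to obtain a Borel measurable approximation, and then regularise it to the required continuous form on the extended space. Since $\Lambda'_t(\Z_{\mu'_t})=1$ holds $\mathrm{d}\Pr\otimes\mathrm{d}t$--a.e., each $\nub'$ in the support of $\Lambda'_t$ disintegrates as $\nub'^{x}(\mathrm{d}u)\mu'_t(\mathrm{d}x)$, so $\Lambda'$ only enters the dynamics of $X'$ through the random stochastic kernel $(t,\om,x)\mapsto \int_{\Pc^n_U}\nub'^{x}(\mathrm{d}u)\,\Lambda'_t(\mathrm{d}\nub')$ from $[0,T]\x\Omb\x\R^n$ into $\Pc(U)$. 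It is this kernel that I would approximate, the auxiliary $[0,1]$--slot in the target domain being what allows realising a random measure--valued control as the push--forward of Lebesgue measure on $[0,1]$ under a deterministic map.

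Next, I would apply \Cref{prop:approximation_weak_2} on the extension $\Omt$ to obtain, for each $j$, a Borel measurable function of $(t,x,\pi_{t\wedge\cdot},q_{t\wedge\cdot},u)\in[0,T]\x\R^n\x\Cc^{n,1}_{\Wc}\x\M\x[0,1]$, where the last coordinate is evaluated at an independent uniform variable carried by $\Omt$, and such that the resulting Markovian closed--loop SDE reproduces the law of $(\mu',\Lambda',\mu,\Lambda,B)$ in the limit. To upgrade Borel measurability to joint continuity, I would discretise in time, truncate the path arguments $(\pi_{t\wedge\cdot},q_{t\wedge\cdot})$ to finitely many time--coordinates via linear interpolation of their marginals against a fixed countable separating family, mollify on each resulting finite--dimensional slice, and pass to a diagonal subsequence $\widehat{\beta}^{j}$. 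Progressivity $\widehat{\beta}^{j}(t,x,\pi,q)=\widehat{\beta}^{j}(t,x,\pi_{t\wedge\cdot},q_{t\wedge\cdot})$ is preserved by arranging the time--discretisation stepwise. Strong existence and uniqueness of \eqref{eq:appr_Y} for each continuous $\widehat{\beta}^{j}$ then follow from the Lipschitz and non--degeneracy assumptions in \Cref{assum:main1}, combined with a Girsanov change to absorb the continuous control--dependent part of the drift.

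Finally I would verify $\Lc^{\Prt'}(\mu'^{j},\mu,\Lambda'^{j},\Lambda,B)\to\Pr$ in $\Wc_{p}$. Tightness of $(\mu'^{j})$ in $\Cc^{n,p}_{\Wc}$ is routine from the uniform $L^{p'}$--bound on $\|X'^{j}\|$ coming from the bounded/Lipschitz coefficients and the assumption $\nu\in\Pc_{p'}(\R^{n})$ with $p'>p\vee n$; tightness of $\Lambda'^{j}=\delta_{\mub'^{j}_{t}}(\mathrm{d}\nub)\mathrm{d}t$ in $\M$ follows from compactness of $U$ together with the uniform tightness of the $x$--marginals inherited from $(\mu'^{j})$. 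Identification of accumulation points relies on the Fokker--Planck characterisation \eqref{eq:FP-equation}: passing to the limit in $N_{t}(f)=0$ for $f\in C^{2}_{b}(\R^{n})$ and combining the weak convergence of $\Lambda'^{j}$ with the continuity of $L^{\circ}_{r}[f(\cdot-\sigma_{0}B_{r})]$ forces any limit to satisfy conditions $(i)$--$(iv)$ of \Cref{def:RelaxedCcontrol} and, by \Cref{lemma:measurability_B} applied to the limit, to coincide with $\Pr$. The principal obstacle is not the approximation itself but arranging $\widehat{\beta}^{j}$ to remain jointly continuous on the infinite--dimensional path space $\Cc^{n,1}_{\Wc}\x\M$ while still capturing any measure--valued control in the limit; this is precisely why the extra $[0,1]$--coordinate is indispensable and why the path arguments must be smoothed through the discretisation/truncation/mollification scheme described above.
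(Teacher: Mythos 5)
There is a genuine gap: you never address the mismatch between the argument list that \Cref{prop:approximation_weak_2} supplies and the argument list the lemma requires. Proposition~\ref{prop:approximation_weak_2} already produces \emph{Lipschitz} (hence continuous) maps $\beta^k(t,x,\mathsf{b},v)$ on $[0,T]\x\R^n\x\Cc^n\x\M$ — the third slot is the Brownian path $B$, not a measure path $\mu$ — together with the $\Wc_p$--convergence of the associated pair $(\mu^k,\delta_{\mub^k_t}(\mathrm{d}\nub)\mathrm{d}t)$ to $(\mu,\Lambda)$. So the regularisation programme you build (discretise, truncate, mollify, diagonalise) is solving a problem that does not exist: no upgrade from Borel to continuous is needed, and no tightness/Fokker--Planck identification is needed either, because the proposition hands you both continuity and convergence directly. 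What \emph{is} needed, and what you omit, is the step that converts the $B$--dependence of $\beta^k$ into a $(\mu,\Lambda)$--dependence. This is exactly the purpose of the preceding \Cref{lemma:measurability_B}: since $\Pr'\in\Pcb_V$ with $\Lc^{\Pr'}(\mu,\Lambda,B)=\Lc^{\Pr}(\mu,\Lambda,B)$ and $\Pr(\mu=\mu',\Lambda=\Lambda')=1$, one has $\Pr'(B_t=\varphi_t(\mu_{t\wedge\cdot},\Lambda_{t\wedge\cdot}),\,t\in[0,T])=1$ for a \emph{continuous} $\varphi$. Composing $\widehat{\beta}^j(t,x,\pi,q):=\beta^j(t,x,\varphi(\pi,q),q)$ then yields the continuous, progressively measurable maps of $(t,x,\mu,\Lambda)$ the lemma asks for, and the convergence statement carries over verbatim from \Cref{prop:approximation_weak_2}. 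Without this substitution the lemma simply does not follow, because the SDE \eqref{eq:appr_Y} requires a feedback that sees only $(\mu,\Lambda)$.

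A secondary issue is your reading of the extra $[0,1]$ coordinate as the mechanism for realising the random measure--valued control via a uniform randomisation. Looking at the way $\widehat{\beta}^j$ is actually used in \eqref{eq:appr_Y}, only the four arguments $(t,X'^j_t,\mu,\Lambda)$ are passed; the $[0,1]$ slot in the stated signature is not exercised by the construction and plays no role in the argument, so no structure should be built on it. The extra randomness that \Cref{prop:approximation_weak_2} does require is already supplied by the extension $(\Omt,\Ft,\Prt')$ carrying $W$ and $\xi$ independently of $\Gcb_T$; you do not need a further $[0,1]$--push--forward device. Your instinct to reduce to the kernel $\int_{\Pc^n_U}\nub'^{x}(\mathrm{d}u)\Lambda'_t(\mathrm{d}\nub')$ via condition $(iv)$ is consistent with how the proposition is set up, but the central input you are missing is the identity $B=\varphi(\mu,\Lambda)$ from \Cref{lemma:measurability_B}.
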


\begin{proof}
       It is enough to apply \Cref{prop:approximation_weak_2} and using the information that $ \Pr'\big(B_t=\varphi_t(\mu_{t \wedge \cdot},\mu_{t \wedge \cdot},\Lambda_{t \wedge \cdot},\Lambda_{t \wedge \cdot}),\;t \in [0,T] \big)=1$ i.e. $B$ is a continuous function of $(\mu,\Lambda).$
\end{proof}
    Now, using the measure--valued control rules, we give an equivalent definition of the measure--valued MFG equilibrium.

\subsubsection{MFG equilibrium on the canonical space} 
    Let us introduce the set of measure--valued equilibrium
    \begin{align*}
        \Sc^{\star}
        :=
        \big\{
            \P \circ (\mu,\Lambda)^{-1}:\;\mbox{where}\;( \Om, \Fc,  \P,  \F, W, B, X, \Lambda, \mu )\;\mbox{is a measure--valued MFG equilibrium}
        \big\}.
    \end{align*}

\medskip    
    For all $(\pi',\pi, q', q) \in \Cc^n_{\Wc} \x \Cc^n_{\Wc} \x \M \x \M,$ one defines 
    \begin{align} \label{eq:function_j}
        J\big(\pi', \pi, q', q \big)
        :=
        \int_0^T \bigg[\int_{\Pc^n_U}\langle L^\circ\big(t,\cdot,\cdot \big), \nub' \rangle q'_t(\mathrm{d}\nub') +
        \int_{\Pc^n_U}\langle L^\star\big(t,\cdot,\nub \big), \pi'_t \rangle q_t(\mathrm{d}\nub)
        \bigg]
        \mathrm{d}t 
        +
        \langle g(\cdot,\pi_T),\pi'_T \rangle.
\end{align}

\begin{proposition}  \label{prop:eq--measure--valuedMFG_control}
        The probability measure $\mathrm{Q}^{\star}$ belongs to $\Sc^{\star}$ if and only if $\mathrm{Q}^{\star}=\Pr^\star \circ(\mu,\Lambda)^{-1}$ where $\mathrm{P}^\star \in \Pcb_V$, and for every $\mathrm{P} \in \Pcb_V$ such that $\Lc^{\mathrm{P}^{\star}}\big(\mu,\Lambda,B\big)=\Lc^{\mathrm{P}}\big(\mu,\Lambda, B \big)$, one has
        \begin{align} \label{eq:optimality-relaxed}
            \E^{\mathrm{P}^{\star}}\big[J(\mu',\mu,\Lambda',\Lambda) \big] \ge \E^{\mathrm{P}} \big[J(\mu',\mu,\Lambda',\Lambda) \big],
        \end{align}
        and for $\mathrm{P}^{\star}$--almost every $\om \in \Omb,$
        \begin{align} \label{eq:consistency}
            \Lambda'_t(\mathrm{d}\nub)\mathrm{d}t
            =
            \Lambda_t\big(\mathrm{d}\nub \big)\mathrm{d}t\;\mbox{and}\;\mu'=\mu.
        \end{align}
        
\end{proposition}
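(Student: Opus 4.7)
The plan is to prove the two implications separately, using the canonical pushforward in the forward direction and an enlargement of the canonical space in the reverse direction, with \Cref{lemm:equiv_controls} and \Cref{lemma:measurability_B} providing the bridge between the two classes of admissible alternative controls.

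\textbf{Only if direction.} Starting from a measure--valued MFG equilibrium $\big(\Om,\Fc,\Pr,\F,W,B,X,\Lambda,\mu\big)$, I would define
\[
    \mathrm{P}^\star := \Pr \circ \big(\mu,\mu,\Lambda,\Lambda,B\big)^{-1} \in \Pc(\Omb).
\]
The consistency condition \eqref{eq:consistency} then holds by construction. To check that $\mathrm{P}^\star \in \Pcb_V$: condition (i) follows from $\mu_0=\Lc(X_0|\Gc_0)=\nu$ since $X_0=\xi$ is $\Pr$--independent of $(B,\Lambda)$; condition (iv) is just condition (iii) of \Cref{def:measure--valued--MFG}; condition (iii) holds because $\Lambda'=\Lambda$ is $\Gcb$--predictable by construction. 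For (ii), I would apply It\^o's formula to $f(X_t-\sigma_0 B_t)$ --- which kills the $\sigma_0\mathrm{d}B_t$ term --- and take the $\Gc_t$--conditional expectation: using that $\mu_t=\Lc(X_t|\Gc_t)$, that the conditional expectation of the martingale part vanishes, and that $\Lambda_t(\Z_{\mu_t})=1$ allows rewriting the conditional drift $\int b(t,x,\nub,u)\nub^{x}(\mathrm{d}u)\Lambda_t(\mathrm{d}\nub)$ as $b^\star(t,\nub)+\langle L^\circ_{t}[\cdot],\nub\rangle$ terms and exactly yields $N_t(f)=0$ a.s.

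For the optimality \eqref{eq:optimality-relaxed}, fix $\mathrm{P}\in\Pcb_V$ with $\Lc^{\mathrm{P}}(\mu,\Lambda,B)=\Lc^{\mathrm{P}^\star}(\mu,\Lambda,B)$ and apply \Cref{lemm:equiv_controls} to produce a sequence $(\widehat{\beta}^j)_{j\in\N^*}$ of continuous approximations with the associated control rules converging to $\mathrm{P}$ in $\Wc_p$. By \Cref{lemma:measurability_B} the Brownian motion $B$ is a continuous function of $(\mu,\Lambda)$, so each $\widehat{\beta}^j(t,x,\mu,\Lambda)$ is an admissible alternative control of the form $\alpha'(t,x,\Lambda)$ appearing in \Cref{def:measure--valued--MFG}(iv). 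The MFG optimality applied with $\alpha'=\widehat{\beta}^j$ gives the inequality at level $j$; using the separability \Cref{assum:main1}(iv) to decompose $J$ into terms that are continuous in $(\mu',\mu,\Lambda',\Lambda)$ along $\Wc_p$ convergent sequences (thanks to boundedness and continuity of $L^\circ,L^\star,g$), I pass to the limit to conclude.

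\textbf{If direction.} Conversely, given $\mathrm{P}^\star\in\Pcb_V$ satisfying \eqref{eq:consistency}--\eqref{eq:optimality-relaxed}, I would enlarge $(\Omb,\Fb,\mathrm{P}^\star)$ to a probability space $(\Omt,\Ft,\Prt)$ supporting an independent $\R^n$--valued Brownian motion $W$ and a $\Fct_0$--random variable $\xi$ of law $\nu$, and define $X$ as the unique strong solution of the SDE in \Cref{def:measure--valued--MFG}(iii) driven by the measure--valued drift $\int b(t,X_t,\nub,u)\nub^{X_t}(\mathrm{d}u)\Lambda_t(\mathrm{d}\nub)$. By a standard Fokker--Planck uniqueness argument (of the same flavor as the one invoked in \Cref{thm:unique_stochastic-FP} and in the proof of \Cref{lemma:measurability_B}), the consistency of the FP equation $N_t(f)=0$ under $\mathrm{P}^\star$ with $\mu'=\mu$ forces $\Lc^{\Prt}(X_t|\Gcb_t)=\mu_t$, delivering conditions (i)--(iii) of \Cref{def:measure--valued--MFG}. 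For condition (iv), any admissible $\alpha':[0,T]\times\R^n\times\M(\Pc^n_U)\to U$ in the MFG problem produces a perturbed state $X'$ with $\mu'^{\alpha'}_t=\Lc^{\Prt}(X'_t|\Gcb_t)$; by setting $\Lambda'^{\alpha'}_t:=\delta_{\Lc^{\Prt}(X'_t,\alpha'(t,X'_t,\Lambda)|\Gcb_t)}(\mathrm{d}\nub)\mathrm{d}t$, the induced law on $\Omb$ lies in $\Pcb_V$ and shares the marginal $\Lc(\mu,\Lambda,B)$ with $\mathrm{P}^\star$. Inequality \eqref{eq:optimality-relaxed} applied to this $\mathrm{P}$, combined with the separable form of $L$ and the identity $J(\mu',\mu,\Lambda',\Lambda)$ evaluated on Dirac-type $\Lambda'$, reproduces exactly the MFG optimality inequality.

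\textbf{Main obstacle.} The delicate step is the optimality part of the only-if direction: the competitors in $\Pcb_V$ range over general measure--valued $\Lambda'$ while MFG admits only Markovian $\alpha'(t,x,\Lambda)$. Bridging this gap requires \Cref{lemm:equiv_controls}, whose approximation is only in $\Wc_p$ law, so I must verify that the cost functional $J$ is continuous along such approximating sequences --- this is where the separability condition \Cref{assum:main1}(iv) is essential, since it linearizes the dependence on $\nub$ and permits the use of the continuity of $L^\circ,L^\star,g$ together with uniform boundedness and the $p$-th moment bounds on the state process. A secondary (but necessary) subtlety is the requirement that $B$ be expressible as a continuous function of $(\mu,\Lambda)$, which is exactly the content of \Cref{lemma:measurability_B} and crucially relies on $\sigma_0$ being invertible.
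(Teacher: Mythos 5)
Your proof follows the paper's own argument essentially line by line: push forward the equilibrium onto the canonical space for one direction, use \Cref{lemma:measurability_B} plus the feedback approximation of \Cref{lemm:equiv_controls} to bridge general $\Pcb_V$ competitors to admissible MFG controls and pass to the limit, and for the converse build the state on an enlargement and invoke Fokker--Planck uniqueness. One small slip in the optimality step: the fact that $\widehat{\beta}^j(t,x,\mu,\Lambda)$ is an admissible control of the form $\alpha'(t,x,\Lambda)$ does not follow from $B$ being a function of $(\mu,\Lambda)$ (\Cref{lemma:measurability_B}); it follows from condition (iii) of \Cref{def:measure--valued--MFG}, which forces $\mu_t$ to be the common first marginal of $\Lambda_t$-a.e.\ $\nub$, hence a measurable function of $\Lambda$. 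With that correction, the argument matches the paper's.
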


\begin{remark} \label{rem:eq--measure--valuedMFG_control}
    This definition of equilibrium is exactly the one proposed in {\rm \cite{MFD-2020_MFG}} for the $open$--$loop$ case. 
    But as mention in {\rm \Cref{lemm:equiv_controls}} $($see also {\rm\Cref{rm:com_def}}$),$ as $\Pr^\star[\mu=\mu',\Lambda=\Lambda']=1,$ $B$ is in fact $(\sigma\{ \mu_{t \wedge \cdot}, \Lambda_{t \wedge \cdot}\})_{t \in [0,T]}$--adapted. Therefore, for the definition of measure--valued MFG equilibrium, we only need to focus on measure--valued control rules s.t. $\Lambda'$ is $(\sigma\{ \mu_{t \wedge \cdot}, \Lambda_{t \wedge \cdot}\})_{t \in [0,T]}$--predictable.  
\end{remark}

\begin{proof}
    Let $( \Om, \Fc,  \Pr,  \F, W, B, X, \Lambda, \mu )$ be a measure--valued MFG equilibrium. Let us check that $\Pr^\star:=\Lc^{\P}\big( \mu, \mu, \Lambda, \Lambda, B \big)$ satisfies the desired properties \eqref{eq:optimality-relaxed} and \eqref{eq:consistency}. Recall that, for each $t \in [0,T],$ $\mu_t=\Lc^{\Pr} \big(X_t \big| \Gc_t \big)$ $\Pr$--a.e. where $\Gc_t:=\sigma\{\mu_{t \wedge \cdot},\Lambda_{t \wedge \cdot},B_{t \wedge \cdot} \}$ and $X$ is a weak solution of
			\begin{align*}
			    \mathrm{d}X_t
		        = 
		        \int_{\Pc^n_U}\; \int_{U} b \big(t, X_t, \nub, u \big) \nub^{X_t} (\mathrm{d}u) \;\Lambda_t (\mathrm{d}\nub) \mathrm{d}t
		        +
		        \sigma\big(t, X_t \big) \mathrm{d} W_t
		        +
		        \sigma_0 \mathrm{d}B_t,\;\;X_0=\xi
			\end{align*}
    and $\Lambda_t(\Z_{\mu_t})=1$ $\mathrm{d}\Pr \otimes \mathrm{d}t$--a.e. By definition, the property \eqref{eq:consistency} is obviously verified. It is straightforward that $\Pr^\star \in \Pcb_V.$
    Consequently, by \Cref{lemma:measurability_B}, $B_t$ is a continuous function of $(\mu_{t \wedge \cdot}, \Lambda_{t \wedge \cdot}).$  This means there exists a continuous function $\varphi:\Cc^{n,1}_{\Wc} \x \M \to \Cc^n$ s.t. $\Pr\big(B_t=\varphi_t(\mu_{t \wedge \cdot},\Lambda_{t \wedge \cdot}),\;t \in [0,T] \big)=1.$
    
\medskip    
    Let $\Pr \in \Pcb_V$ with $\Pr \circ (\mu,\Lambda,B)^{-1}=\Pr^\star \circ (\mu,\Lambda,B)^{-1},$ by \Cref{lemm:equiv_controls}, there exists $(\Pr^j)_{j \in \N^*} \subset \Pcb_V$ s.t for all $j \in \N^*,$ $\Pr^j \circ (\mu,\Lambda,B)^{-1}=\Pr^\star \circ (\mu,\Lambda,B)^{-1},$ $\Pr^j:=\P^\star \circ \big( \mu'^j, \mu, \Lambda'^{j}, \Lambda,B \big)^{-1}$ where $(\mu'^j, \mu, \Lambda'^{j}, \Lambda)$ is defined in \Cref{lemm:equiv_controls}  and
    \begin{align*}
        \Lim_{j \to \infty} \E^{\Pr^j} \big[ J(\mu',\mu,\Lambda', \Lambda) \big]
        =
        \E^{\Pr} \big[ J(\mu',\mu,\Lambda', \Lambda) \big].
    \end{align*}
    As $( \Om, \Fc,  \Pr,  \F, W, B, X, \Lambda, \mu )$ is a measure--valued MFG equilibrium, one has the second property i.e. the optimality
    \begin{align*}
        \E^{\Pr} \big[ J(\mu',\mu,\Lambda', \Lambda) \big]
        =
        \Lim_{j \to \infty} \E^{\Pr^j} \big[ J(\mu',\mu,\Lambda', \Lambda) \big]
        \le 
        \E^{\Pr^\star} \big[ J(\mu',\mu,\Lambda', \Lambda) \big].
    \end{align*}
    This is enough to conclude one part.
    
\medskip    
    Now, let $\Qr^\star \in \Pcb_V$ satisfying \eqref{eq:optimality-relaxed} and \eqref{eq:consistency}. Using \Cref{lemma:measurability_B}, we can verify that $B$ is a continuous function of $(\mu,\Lambda).$ Let $\mu,$  $\Lambda$ and $B$ be random variables on $(\Om,\H,\P)$ s.t. $\Lc^{\P} (\mu,\Lambda,B,W,\xi)=\Lc^{\Qr^\star}(\mu,\Lambda,B) \otimes \Lc^{\P}(W,\xi).$ Let $X$ be the solution of
    \begin{align*}
        \mathrm{d}X_t
        =
        \int_{\Pc^n_U} \int_U
        b(t,X_t,\nub,u) \nub^{X_{t}}(\mathrm{d}u) \Lambda_t(\mathrm{d}\nub)\mathrm{d}t
        +
        \sigma(t,X_t)\mathrm{d}W_t
        +
        \sigma_0 \mathrm{d}B_t,\;X_0=\xi
        ,\;\P\mbox{--a.e.}
    \end{align*}
    then by uniqueness $\mu_t=\Lc^{\P}(X_t | \mu,\Lambda)=\Lc^{\P}(X_t |\mu_{t \wedge \cdot}, \Lambda_{t \wedge \cdot}),$ $\P$--a.e. for all $t \in [0,T].$ It is then easy to verify that $( \Om, \Fc,  \P,  \F, W, B, X, \Lambda, \mu )$ is a measure--valued MFG equilibrium and $\Lc^{\P}(\mu,\Lambda)=\Lc^{\Qr}(\mu,\Lambda)$. 
\end{proof}
    Next, using the same measure--valued control rules, we define an equivalent formulation of the McKean--Vlasov optimal control.
\subsubsection{McKean--Vlasov optimal control on the canonical space}
Let us define
\begin{align*}
    \Vc
        :=
        \big\{
            \Pr \in \Pcb_V:\;\;\Pr  \big( \mu=\mu', \Lambda=\Lambda' \big)=1
        \big\},
\end{align*}
$\Kc:=\big\{
            \Pr \in \Pcb_V:\exists\;\gamma: [0,T] \x \Cc^n \to \Pc(\R^n)\;\mbox{s.t.}\;\;\mathrm{d}\Pr \otimes \mathrm{d}t\mbox{--a.e.}\;(t,\om), \mu_t=\gamma(t,B_{t \wedge \cdot})
        \big\}$ and
\begin{align*}
    \Vc^c
        :=
        \Big\{
            \Pr \in \Vc \cap \Kc: \exists\; \alpha \in \Ac^c\;\mbox{s.t.}\;\mathrm{d}\Pr \otimes \mathrm{d}t\mbox{--a.e.}\;(t,\om),\;\Lambda_t(\om) \big(\nub:\;\nub=\delta_{\alpha(t,x,\mu(\om))}(\mathrm{d}u) \mu_t(\om)(\mathrm{d}x) \big)=1
        \Big\}    
\end{align*}

\begin{proposition} \label{prop:canonical_McKV}
    We have the following reformulation of $V^{c}_S$
    \begin{align*}
        V^{c}_S
        =
        \sup_{\Pr \in \Vc^c} \E^{\Pr} \big[J (\mu',\mu,\Lambda,\Lambda') \big].
    \end{align*}
    where $J$ is defined in {\rm \Cref{eq:function_j}}.  Besides, under {\rm \Cref{assum:main1}}, one has
    \begin{align*}
        V^{o}_S
        =
        \sup_{\Pr \in \Vc} \E^{\Pr} \big[J (\mu',\mu,\Lambda,\Lambda') \big].
    \end{align*}
\end{proposition}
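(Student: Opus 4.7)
My plan is to establish each of the two equalities by a pair of inequalities: one mapping strong controls to canonical measure-valued rules, and the other extracting a strong control back out of any canonical rule. For the first direction, fix $\alpha\in\Ac^c$ and set
$$
\Pr^{c,\alpha}:=\P\circ\bigl(\mu^{c,\alpha},\mu^{c,\alpha},\delta_{\mub^{c,\alpha}_t}(\mathrm{d}\nub)\mathrm{d}t,\delta_{\mub^{c,\alpha}_t}(\mathrm{d}\nub)\mathrm{d}t,B\bigr)^{-1}.
$$
I would check membership in $\Vc^c$ by verifying clauses (i)--(iv) of \Cref{def:RelaxedCcontrol} together with the consistency $\mu=\mu'$, $\Lambda=\Lambda'$, the $\Kc$-property, and the closed-loop structure. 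Clauses (i), (iii), (iv) and the two consistency conditions are immediate from the construction; the closed-loop decomposition is built in; and the Fokker--Planck identity $N_t(f)=0$ of (ii) follows from It\^o's formula applied to $f(X^{c,\alpha}_t-\sigma_0 B_t)$ followed by conditioning on $\Gc_t$. Membership in $\Kc$ uses that $\mu^{c,\alpha}$ is $\G$-adapted, hence a measurable function of $B$. The separability condition \Cref{assum:main1}$(iv)$ then unfolds $J$ into $\Phi^c(\alpha)$, giving $\E^{\Pr^{c,\alpha}}[J]=\Phi^c(\alpha)$ and thus $V^c_S\le\sup_{\Pr\in\Vc^c}\E^{\Pr}[J]$. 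The identical recipe applied to $\alpha\in\Ac^o$ yields $\Pr^{o,\alpha}\in\Vc$ (dropping the $\Kc$- and closed-loop-structure requirements, which an arbitrary open-loop control need not satisfy) with $\E^{\Pr^{o,\alpha}}[J]=\Phi^o(\alpha)$, and so $V^o_S\le\sup_{\Pr\in\Vc}\E^{\Pr}[J]$.

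For the converse inequalities, on the closed-loop side any $\Pr\in\Vc^c$ comes by definition with some $\alpha\in\Ac^c$ such that $\Lambda_t$ is $\mathrm{d}\Pr\otimes\mathrm{d}t$-a.e.\ the Dirac mass at $\delta_{\alpha(t,x,\mu)}(\mathrm{d}u)\mu_t(\mathrm{d}x)$. Plugging this $\alpha$ into \eqref{eq:MKV_strong-law-of-controls} on $(\Om,\H,\Hc,\P)$ produces $X^{c,\alpha}$ with conditional distributions $\mu^{c,\alpha}$ and $\mub^{c,\alpha}$. Comparing the nonlinear stochastic Fokker--Planck equation satisfied by $\Lc(X^{c,\alpha}_\cdot\mid\G)$ against the identity $N_t(f)=0$ imposed on $\Pr$, and invoking a Yamada--Watanabe-type uniqueness argument in the spirit of \Cref{lemma:measurability_B} (valid because $\sigma\sigma^\top\ge\theta \mathrm{I}_n$ and $\sigma_0$ is invertible), forces $\Lc^{\P}(\mu^{c,\alpha},\mub^{c,\alpha},B)=\Lc^{\Pr}(\mu,\mub,B)$ with $\mub_t:=\delta_{\alpha(t,\cdot,\mu)}(\mathrm{d}u)\mu_t(\mathrm{d}x)$; separability then yields $\Phi^c(\alpha)=\E^{\Pr}[J]$. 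For the open-loop converse, given $\Pr\in\Vc$, I would construct an open-loop control on $(\Om,\H,\Hc,\P)$ by a two-step randomization using independent auxiliary uniforms, available because $\Om$ is assumed sufficiently rich: first sample $\nub_t\sim\Lambda_t$, then sample $\alpha_t\sim\nub_t^{X_t}$, the conditional kernel of $\nub_t$ given its state coordinate. The resulting $\alpha$ is $\F$-predictable, and thanks to the separability $b=b^\star+b^\circ$ and $L=L^\star+L^\circ$, the averaged drift and running reward collapse to those of the open-loop McKean--Vlasov equation driven by $\alpha$, so that $\Phi^o(\alpha)=\E^{\Pr}[J]$.

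The delicate step is the closed-loop converse: one has to identify the joint law of $(\mu^{c,\alpha},\mub^{c,\alpha},B)$ from the nonlinear stochastic Fokker--Planck equation encoded by clause (ii) of \Cref{def:RelaxedCcontrol}, which is exactly where non-degeneracy of $\sigma$ and invertibility of $\sigma_0$ are used essentially, via the same mechanism as in \Cref{lemma:measurability_B}. Everything else is either direct verification of the defining clauses or a routine unfolding of $J$ through the separability assumption \Cref{assum:main1}$(iv)$.
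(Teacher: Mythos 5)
The paper's own proof is two lines: the closed-loop identity is called ``just a reformulation,'' and the open-loop identity is delegated wholesale to \cite[Theorem 3.1]{MFD-2020}. Your treatment of the closed-loop identity unpacks what that ``reformulation'' means — constructing $\Pr^{c,\alpha}$ from $\alpha\in\Ac^c$, checking clauses (i)--(iv) of \Cref{def:RelaxedCcontrol} plus the $\Vc$, $\Kc$ and closed-loop conditions, unfolding $J$ via the separability hypothesis, and for the converse invoking uniqueness of the conditional nonlinear Fokker--Planck equation (as in \Cref{thm:unique_stochastic-FP}) to identify the law of $(\mu^{c,\alpha},\mub^{c,\alpha},B)$ with the one encoded by $\Pr$. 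That is a faithful fleshing-out of what the paper leaves implicit, and you correctly single out the converse as the place where non-degeneracy of $\sigma$ and invertibility of $\sigma_0$ are used.

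There is, however, a genuine gap in your open-loop converse. You assert that the control produced by the two-step randomization (sample $\nub_t\sim\Lambda_t$ with an auxiliary uniform, then sample $\alpha_t\sim\nub_t^{X_t}$ with another) ``is $\F$-predictable.'' It is not: by the paper's Section~\ref{sec:open-closed-McKVl}, $\F$ is the $\P$-completion of $\big(\sigma\{\xi,W_{r},B_{r}: r\le t\}\big)_{t\in[0,T]}$, and a control built from additional independent uniforms is predictable only for the strictly larger filtration generated by $(\xi,W,B)$ and those uniforms. Reducing such a randomized control to an $\F$-predictable one, without perturbing the conditional joint law $\Lc(X_t,\alpha_t\mid\Gc_t)$ entering the drift and reward, is precisely the nontrivial content of \cite[Theorem 3.1]{MFD-2020} (via a density/approximation argument, not a one-shot disintegration). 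So your $\le$ inequality $V^o_S\le\sup_{\Pr\in\Vc}\E^{\Pr}[J]$ is fine, but your sketch does not establish the reverse inequality; the paper's citation is doing real work there that your argument currently skips.
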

\begin{proof}
    The first result is just a reformulation. 
    {\color{black}The second one comes from \cite[Theorem 3.1]{MFD-2020}. An idea of the proof of \cite[Theorem 3.1]{MFD-2020} is provided in \cite[Section 5]{MFD-2020}. The proof consists of an approximation of the Fokker--Planck control equation by a sequence of $open$--$loop$ McKean--Vlasov equations}.
\end{proof}

\subsection{Limit of Nash equilibria} \label{sec:limit_NashEquilibria}

This section is devoted to the analysis of the behavior of sequence of $closed$--$loop$ Nash equilibria when the number of players goes to infinity. At the end of this section, we will show that any limit point of sequence of Nash equilibria is a measure--valued MFG equilibrium.

\subsubsection{Technical results}

Recall that $(\Om, \Hc,\P)$ is a filtered probability space supporting a $\Hc_0$--random variable $\xi$ s.t. $\Lc^{\P}(\xi)=\nu,$ and an $\R^{n} \x \R^n$--valued $\H$--Brownian motion $(W,B).$  Besides, let $\mu$ be a $\H$--adapted $\Pc(\R^n)$--valued continuous process and $\Lambda$ be a $\Pc(\Pc^n_U)$--valued $\H$ predictable process s.t. $B$ is $\G:=(\Gc_t)_{t \in [0,T]}$--adapted where $\Gc_t:=\sigma\{\mu_{t \wedge \cdot}, \Lambda_{t \wedge \cdot}\}.$ Besides, $(\mu,\Lambda)$ is $\P$--independent of $(\xi,W).$ For a continuous function 
\begin{align*}
    \beta: [0,T] \x \R^n \x \Cc^{n,1}_{\Wc} \x \M \to U,\;\mbox{satisfying}\; \beta(t,x,\pi,q)=\beta(t,x,\pi_{t \wedge \cdot},q_{t \wedge \cdot}),
\end{align*}
let $X$ be the unique strong solution of: $\E^{\P}[\|X\|^{p'}]< \infty$ with $p' > p,$ for all $t \in [0,T]$
    \begin{align} \label{eq:aux:rules}
        \mathrm{d}X'_t
        =
        \int_{\Pc^n_U}
        b(t,X'_t,\nub,\beta(t,X'_t,\mu,\Lambda)) \Lambda_t(\mathrm{d}\nub) \mathrm{d}t
        +
        \sigma(t,X'_t) \mathrm{d}W_t
        + \sigma_0 \mathrm{d}B_t\;\mbox{with}\;X_0=\xi.
    \end{align}
Also, we denote for all $t \in [0,T],$ $\mu'_t:=\Lc(X'_t|\Gc_t),$ $\mub'_t:=\Lc(X'_t, \beta(t,X'_t,\mu,\Lambda)|\Gc_t)$ a.e. and $\Lambda':=\delta_{\mub'_t}(\mathrm{d}\nub)\mathrm{d}t.$

\medskip
Notice that the process \eqref{eq:aux:rules} is exactly the one used for the approximation of measure--valued control rule in \Cref{lemm:equiv_controls}. In what follows, we will show that it is possible to approximate this type of process by a sequence of interacting processes when a certain condition (see \Cref{cond:converg}) is satisfied (see \Cref{lemm:convergenceNashEquilibrium}).

\medskip
For each $N \in \N^*,$ and any $N$--progressively Borel measurable functions $\alphab:=(\alpha^1,\cdots,\alpha^N)$  s.t. $\alpha^i: [0,T] \x (\Cc^n)^N \to U$ and define  
\begin{align} \label{eq:def-controlBeta}
    \beta^{\alphab,i}(t,\xb)
    :=
    \beta \big(t, \xf^i(t), \pi[\xb], q^{\alphab}[\xb]  \big),
\end{align}
where
\begin{align*}
    \xb:=(\x^1,\cdots,\x^N),\;
    \pi[\xb]:=\frac{1}{N} \sum_{j=1}^N \delta_{\xf^j},\; \overline{m}^{\alphab}(t,\xb):=\frac{1}{N} \sum_{j=1}^N \delta_{\big(\xf^j(t),\;\alpha^j(t,\xb) \big)},\;\mbox{and}\;q^{\alphab}[\xb]:=\delta_{\overline{m}^{\alphab}(t,\xb)}(\mathrm{d}\nub)\mathrm{d}t.
\end{align*}

\begin{lemma}
\label{lemm:convergenceNashEquilibrium}
    Let {\rm \Cref{assum:main1}} hold true and a sequence $(\alpha^i)_{i \in \N^*}$ s.t. for each $N \in \N^*,$ $\alphab^N:=(\alpha^1,\dots,\alpha^N) \in (\Ac^c_N)^N$ and
    \begin{align} \label{cond:converg}
        \Lim_{N \to \infty} \P \circ \Big( \varphi^{N}[\alphab^N], \delta_{\big( \overline{\varphi}^{N}_t[\alphab^N] \big)} (\mathrm{d}\nub')\mathrm{d}t, B \Big)^{-1}
        =
        \P \circ ( \mu,\Lambda, B )^{-1}\;\;\mbox{in}\;\Wc_p.
    \end{align}
    Then
\begin{align*}
    &\Lim_{N \to \infty}
    \frac{1}{N} \sum_{i=1}^N
    J_i \big( \alpha^1, \cdots, \alpha^{i-1}, \beta^{\alphab^N,i}, \alpha^{i+1},\cdots, \alpha^N \big)
        =
        \E^{\P}\big[J(\mu', \mu,\Lambda',\Lambda) \big].
\end{align*}
\end{lemma}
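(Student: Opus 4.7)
The plan is to compare the deviated $N$-player systems with the original system $\Xbb^{\alphab^N}$, to introduce a family of decoupled auxiliary processes driven by the empirical data of the original system, and then to pass to the limit using the hypothesis \eqref{cond:converg} together with the separability of $b$ and $L$ and a conditional law of large numbers.

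For each $N$ and $i\in\{1,\dots,N\}$, let $\mathbf{Y}^{N,i}=(Y^{N,i,j})_{j=1}^N$ denote the $N$-player state when player $i$ is switched from $\alpha^i$ to $\beta^{\alphab^N,i}$ and all others keep $\alpha^j$. Since replacing one player's strategy perturbs the empirical measures by weight $O(1/N)$, a Gronwall estimate---carried out after a Girsanov change of measure in order to accommodate the merely Borel regularity of the $\alpha^j$'s, in the spirit of \cite{lacker2020-closed}---yields, as $N\to\infty$,
\begin{equation*}
\max_{j\neq i}\,\E\big[\|Y^{N,i,j}-X^{\alphab^N,j}\|^p\big]\longrightarrow 0,
\end{equation*}
together with the $\Wc_p$-closeness of the deviated empirical measure on $\R^n\times U$ to $\overline{\varphi}^{N}[\alphab^N]$. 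Next, introduce the decoupled processes $\widehat X^{N,i}$, $i=1,\dots,N$, with $\widehat X^{N,i}_0=\xi^i$ and
\begin{equation*}
\mathrm{d}\widehat X^{N,i}_t=b\bigl(t,\widehat X^{N,i}_t,\overline{\varphi}^{N}_t[\alphab^N],\beta(t,\widehat X^{N,i}_t,\varphi^N[\alphab^N],q^{\alphab^N}[\Xbb^{\alphab^N}])\bigr)\,\mathrm{d}t+\sigma(t,\widehat X^{N,i}_t)\,\mathrm{d}W^i_t+\sigma_0\,\mathrm{d}B_t,
\end{equation*}
in which the empirical measures are \emph{frozen} to those of the original system. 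By the previous estimate combined with the Lipschitz continuity of $b,\sigma$ and the continuity of $\beta$, a second Gronwall argument gives $\E\big[\|Y^{N,i,i}-\widehat X^{N,i}\|^p\big]\to 0$.

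Using the separability of $b$ and $L$, decompose $\frac{1}{N}\sum_{i=1}^N J_i(\alpha^1,\dots,\beta^{\alphab^N,i},\dots,\alpha^N)$ and replace $Y^{N,i,i}$ by $\widehat X^{N,i}$ and the deviated empirical measures by those of the original system; the resulting errors vanish by the two steps above and the continuity/boundedness of $L,g,\beta$. Conditionally on the shared environment $\Sigma^N:=\bigl(\varphi^N[\alphab^N],\delta_{\overline{\varphi}^{N}_t[\alphab^N]}(\mathrm{d}\nub)\mathrm{d}t,q^{\alphab^N}[\Xbb^{\alphab^N}],B\bigr)$, the processes $\widehat X^{N,i}$ are i.i.d., driven by the independent pairs $(\xi^i,W^i)$, so the conditional law of large numbers yields $\frac{1}{N}\sum_{i=1}^N\delta_{\widehat X^{N,i}}\to \mu'$ (the conditional law of $X'$ given $(\mu,\Lambda,B)$). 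Combined with Skorokhod's representation applied to \eqref{cond:converg} and the continuity of $L,g$, this delivers the limit $\E^{\P}[J(\mu',\mu,\Lambda',\Lambda)]$ identified through \eqref{eq:function_j}.

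The main obstacle is the first step: because the strategies $\alpha^j$ are only Borel measurable, the effect of deviating a single player cannot be controlled by a naive pathwise Gronwall argument on the state trajectories, and in particular the deviated empirical measure on $\R^n\times U$ (which enters $L^\star$) need not be Wasserstein-close to the original one. The resolution is a change-of-measure argument exploiting the non-degeneracy of $\sigma$ (together with the invertibility of $\sigma_0$ used elsewhere in the paper), under which the dependence on $\alpha^j$ disappears from the dynamics and the comparison reduces to an estimate involving only the continuous map $\beta$; this is precisely the technical input carried over from \cite{lacker2020-closed} and adapted to the common-noise mean-field-of-controls setting.
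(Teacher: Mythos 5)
Your plan diverges from the paper's proof in a way that opens a genuine gap at the very first step. You claim that, after a Girsanov change of measure, a Gronwall argument gives
\begin{equation*}
\max_{j\neq i}\,\E\big[\|Y^{N,i,j}-X^{\alphab^N,j}\|^p\big]\longrightarrow 0
\end{equation*}
and, later, $\E[\|Y^{N,i,i}-\widehat X^{N,i}\|^p]\to 0$. But Girsanov's theorem does not yield pathwise closeness between $\Ybb^i$ and $\Xbb^{\alphab^N}$: it produces a measure $\Q^i$ on the \emph{same} path space under which $\Lc^{\Q^i}(\Xbb^{\alphab^N},B)=\Lc^{\P}(\Ybb^i,B)$, i.e.\ an equality of laws, not a coupling estimate. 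A direct comparison of $Y^{N,i,j}$ and $X^{\alphab^N,j}$ as random variables on $(\Om,\P)$ forces you to control the drift difference through $\alpha^j(t,\Ybb^i)-\alpha^j(t,\Xbb^{\alphab^N})$, and with $\alpha^j$ merely Borel this term is not small even when the trajectories of all other players are arbitrarily close; no version of Gronwall survives this, and the Girsanov step does not remove the dependence on $\alpha^j$ for $j\neq i$ from the \emph{paths}. Consequently Steps 3--5 (frozen-environment processes, the second Gronwall estimate, the conditional LLN) all rest on an unjustified premise.

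The paper avoids any pathwise comparison entirely. It uses Girsanov only to rewrite
\begin{equation*}
J_i(\alpha^1,\dots,\beta^{\alphab,i},\dots,\alpha^N)
=
\E^{\P}\Big[Z^i_T\Big(\int_0^T L\big(t,X^{\alphab,i}_t,\overline\zeta^{i,N,\beta}_t,\beta^{\alphab,i}(t,\Xbb^{\alphab})\big)\mathrm{d}t+g\big(X^{\alphab,i}_T,\varphi^N_T[\alphab]\big)\Big)\Big],
\end{equation*}
so that the only $i$-dependent objects are the original trajectories $X^{\alphab,i}$ (not the deviated ones), the density $Z^i_T$, and the auxiliary controls; the deviated empirical measure $\overline\zeta^{i,N,\beta}$ differs from $\overline\varphi^N[\alphab]$ only in one control label and is therefore $O(1/N)$-close in $\Wc_p$ (this is \eqref{eq:equal_limit}), which requires no regularity of the $\alpha^j$'s. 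It then forms the empirical measure $\mathrm{Q}^N$ of the augmented tuples $(X^{\alphab,i}-\sigma_0 B,W^i,Z^i,\varphi^N[\alphab],\Phi^i,\gamma^N,B)$, proves tightness, characterizes the limit $\mathrm{Q}^\infty$ via a martingale problem, and then applies a \emph{second} Girsanov change \emph{at the limit} to identify the limiting state as the solution $X'$ of \eqref{eq:aux:rules}. The averaged cost then converges to $\E^{\P}[J(\mu',\mu,\Lambda',\Lambda)]$ by uniqueness of that SDE and by bounded-continuous convergence. Your idea of comparing with a frozen-environment process and using a conditional LLN is a sensible template when the controls are Lipschitz; with Borel controls you need to work at the level of laws and densities as the paper does, not paths.
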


\begin{proof}

    The proof is largely inspired by \cite[Proof of Proposition 5.6]{lacker2020-closed}. We use some probability changes on $(\Om,\H,\P).$ Given $\alphab:=(\alpha^1,\dots,\alpha^N) \in (\Ac^c_N)^N,$ $\Xbb^{\alphab}:=(X^{\alphab,1},\dots,X^{\alphab,N})$ satisfies \eqref{eq:N-agents_StrongMV_CommonNoise-law-of-controls}.
    For each $i \in \{1,\cdots,N\},$ we define 
    $$
        \alphab^i:=(\alphab^{[-i]},\beta^{\alphab,i})\;\;\mbox{and}\;\;\Ybb^i:=(Y^{i,1},\cdots,Y^{i,N}):=(X^{\alphab^i,1},\dots,X^{\alphab^i,N}).
    $$
    Notice that, $Y^{i,i}$ satisfies
    \begin{align*} 
        \mathrm{d}Y^{i,i}_t
        =
        b\big(t,Y^{i,i}_{t},\mub^{i,N}_{t} ,\beta^{\alphab,i}(t,\Ybb^{i}) \big) \mathrm{d}t 
        +
        \sigma \big(t,Y^{i,i}_t \big) \mathrm{d}W^i_t
        +
        \sigma_0 \mathrm{d}B_t\;\mbox{with}\;Y^{i,i}_0=X^i_0
    \end{align*}
	with 
	\[
	    \mu^{i,N}_{t}(\mathrm{d} x) := \frac{1}{N}\sum_{j=1}^N \delta_{Y^{i,j}_{t} }(\mathrm{d} x),\;\;
	    \mub^{i,N}_{t}(\mathrm{d} x, \mathrm{d} u) := \frac{1}{N} \big(\sum_{j \neq i}^N \delta_{(Y^{i,j}_{t} ,\;\alpha^j(t,\Ybb^{i}) )}(\mathrm{d} x, \mathrm{d} u) + \delta_{(Y^{i,i}_{t} ,\;\beta^{\alphab,i}(t,\Ybb^{i})) }(\mathrm{d} x, \mathrm{d} u) \big)
	\] 
	$\mbox{and}\;\;\Lambda^{i,N}:=\delta_{\mub^{i,N}_t}(\mathrm{d}\nu)\mathrm{d}t.$

\medskip
Now, let us introduce
	\begin{align*}
	    Z^i_t
	    :=
	    \exp\bigg\{ \int_0^t \phi^i_r \mathrm{d}W^i_r - \frac{1}{2} \int_0^t |\phi^i_r|^2 \mathrm{d}r \bigg\}\;\mbox{for all}\;t \in [0,T],\; \mbox{and}\;\frac{\mathrm{d}\Q^i}{\mathrm{d}\P}:=Z^i_T
	\end{align*}
	with 
	\begin{align*}
	    \phi^i_t:=\sigma(t,X^{\alphab,i}_t)^{-1}\Big(b\big(t,X^{\alphab,i}_{t},\overline{\zeta}^{i,N,\beta}_{t} ,\beta^{\alphab,i}(t,\Xbb^{\alphab}) \big) - b\big(t,X^{\alphab,i}_{t},\overline{\varphi}^{N}_{t}[\alphab] ,\alpha^i(t,\Xbb^{\alphab}) \big) \Big)
	\end{align*}
	and
	\begin{align*}
	    \overline{\zeta}^{i,N,\beta}_t
	    :=
	    \frac{1}{N} \big(\sum_{j \neq i}^N \delta_{(X^{\alphab,j}_{t} ,\;\alpha^j(t,\Xbb^{\alphab}) )}(\mathrm{d} x, \mathrm{d} u) + \delta_{(X^{\alphab,i}_{t} ,\;\beta^{\alphab,i}(t,\Xbb^{\alphab})) }(\mathrm{d} x, \mathrm{d} u) \big).
	\end{align*}
	 By uniqueness, we can check that $\Lc^{\Q^i} (\Xbb^{\alpha},B)=\Lc^{\P}(\Ybb^i,B)$ for each $i.$ Notice that
	\begin{align*}
	    J_i \big( \alpha^1, \cdots, \alpha^{i-1}, \beta^{\alphab,i}, \alpha^{i+1},\cdots, \alpha^N \big)
	    &=
	    \E^{\P} \bigg[
        \int_0^T L\big(t,Y^{i,i}_t,\mub^{i,N}_{t} ,\beta^{\alphab,i}(t,\Ybb^i) \big) \mathrm{d}t 
        + 
        g \big( Y^{i,i}_T, \mu^{i,N}_T \big)
        \bigg]
        \\
        &=
        \E^{\Q^i} \bigg[
        \int_0^T L\big(t,X^{\alphab,i}_t,\overline{\zeta}^{i,N,\beta}_t ,\beta^{\alphab,i}(t,\Xbb^{\alphab}) \big) \mathrm{d}t 
        + 
        g \big( X^{\alphab,i}_T, \varphi^N_T[\alphab] \big)
        \bigg]
        \\
        &=
        \E^{\P} \bigg[ Z^i_T \bigg(
        \int_0^T L\big(t,X^{\alphab,i}_t,\overline{\zeta}^{i,N,\beta}_t ,\beta^{\alphab,i}(t,\Xbb^{\alphab}) \big) \mathrm{d}t 
        + 
        g \big( X^{\alphab,i}_T, \varphi^N_T[\alphab]\big) \bigg)
        \bigg].
	\end{align*}
	Next, let us consider the sequence $(\mathrm{Q}^N)_{N \in \N^*} \subset \Pc \Big( \Pc\big( \Cc^n \x \Cc^n \x \Cc^1 \x \Cc^n_{\Wc} \x \M(\Pc^n_U \x U) \x \Cc^n \big) \x \Cc^n_{\Wc} \x \M(\Pc^n_U) \x \Cc^n \Big)$
	\begin{align*}
	    \mathrm{Q}^N
	    :=
	    \P \circ \bigg( \frac{1}{N} \sum_{i=1}^N \delta_{\big(X^{\alphab,i}-\sigma_0B_t,\;W^i,\;Z^i,\;\varphi^N[\alphab],\;\Phi^i,\;\gamma^N,\;B \big)},\;\;\varphi^N[\alphab],\;\gamma^N,\;\;B \bigg)^{-1}
	\end{align*}
	where
	\begin{align*}
	    \Phi^i:=\delta_{\big(\overline{\zeta}^{i,N,\beta}_{t},\; \alpha^i(t,\Xbb^{\alphab}) \big)}(\mathrm{d}m,\mathrm{d}u)\mathrm{d}t\;\mbox{and}\;\gamma^N:=\delta_{{\overline{\varphi}}^{N}_{t}[\alphab]}(\mathrm{d}\nub)\mathrm{d}t.
	\end{align*}
	By using the fact that $(b,\sigma)$ is bounded and the non--degeneracy of $\sigma,$ with similar arguments to \cite[Proposition A.1, Proposition A.2, Proposition B.1]{Lacker_carmona_delarue_CN}, it is straightforward to check that the sequence $(\mathrm{Q}^N)_{N \in \N^*}$ is relatively compact in $\Wc_p$ (recall that $\Lc^\P(\xi)=\nu \in \Pc_{p'}(\R^n) $ with $p'>p$). Denote $\mathrm{Q}^\infty$ the limit of a sub--sequence, for sake of simplicity, we will use the same notation for the sequence and its sub--sequence. We now want to identify the limit.
	Mention that $Z^i$ verifies: $\mathrm{d}Z^i_t= Z^i_t \phi^i_t \mathrm{d}W^i_t$ with $Z^i_0=1.$ For any function twice differentiable $f: \R^n \x \R^n \x \R \to \R,$ let us define
	\begin{align*}
	    &\Ac f(t,x,w,h,z,\pi,q,m,u)
	    \\
	    &:=
	    \nabla_x f(x,w,z) b\big(t,x+\sigma_0 h,m ,a \big)
	    +
	    \frac{1}{2} \mathrm{Tr} \big[\nabla^2_x f(x,w,z)  \sigma(t,x+\sigma_0 h)\sigma(t,x+\sigma_0 h)^\top\big]
	    \\
	    &+
	    \frac{1}{2} \mathrm{Tr} \big[ \nabla^2_w f(x,w,z) \big]
	    +
	    \frac{1}{2} \mathrm{Tr} \big[ \nabla^2_z f(x,w,z) \big|\sigma(t,x+\sigma_0 h)^{-1}\big(b\big(t,x+\sigma_0 h,m ,\beta(t,x+\sigma_0 h,\pi,q) \big) - b\big(t,x+\sigma_0 h,m ,u \big) \big) \big|^2 \big] |z|^2
	    \\
	    &+
	    \nabla_w\nabla_x f(x,w,z) \sigma(t,x+\sigma_0 h)
	    +
	    \nabla_z\nabla_x f(x,w,z) z \big(b\big(t,x+\sigma_0 h,m ,\beta(t,x+\sigma_0 h,\pi,q) \big) - b\big(t,x+\sigma_0 h,m ,u \big) \big)
	    \\
	    &+
	    \nabla_z\nabla_w f(x,w,z)z \sigma(t,x+\sigma_0 h)^{-1} \big(b\big(t,x+\sigma_0 h,m ,\beta(t,x+\sigma_0 h,\pi,q) \big)- b\big(t,x+\sigma_0 h,m ,u \big) \big)
	\end{align*}
	and for $(t,\mathsf{x},\mathsf{w},\mathsf{z},\mathsf{b},\pi,q,\widehat{\qb}) \in [0,T] \x \Cc^n \x \Cc^n \x \Cc^1 \x \Cc^n \x \Cc^n_{\Wc} \x \M(\Pc^n_U) \x \M(\Pc^n_U \x U) $
	\begin{align*}
	    \mathrm{M}f[t,\mathsf{x},\mathsf{w},\mathsf{b},\mathsf{z},\pi,q,\widehat{\qb}]
	    :=
	    f(\mathsf{x}(t),\mathsf{w}(t),\mathsf{z}(t))
	    -
	    \int_0^t \int_{\Pc^n_U \x U}
	    \Ac f(s,\mathsf{x}(s),\mathsf{w}(s),\mathsf{z}(s),\mathsf{b}(s),\pi,\qb,m,u) \widehat{\qb}_s(\mathrm{d}m,\mathrm{d}u)\mathrm{d}s.
	\end{align*}
	We denote by $\Omh:=\Cc^n \x \Cc^n \x\Cc^n \x \Cc^1\x \Cc^n_{\Wc} \x \M(\Pc^n_U \x U),$ $(\widehat{X},\widehat{W},\widehat{B},\widehat{Z},\widehat{\mu},\widehat{\Phi},\widehat{\gamma})$ the canonical processes, and $\widehat{\F}$ its canonical filtration. Also, we set $(\Pi,\mu,\gamma,B)$ the canonical element of $\Om:=\Pc(\Omh) \x \Cc^n_{\Wc} \x \M(\Pc^n_U) \x \Cc^n.$ Let $0 \le s \le t \le T $ and a continuous bounded function $h: \Omh \to \R,$ we define
	\begin{align*}
	    h_s:=
	    h(\Xh_{s \wedge \cdot},\Wh_{s \wedge \cdot},\Bh_{s \wedge \cdot},\Zh_{s \wedge \cdot}, \muh_{s \wedge \cdot},\Phih_{s \wedge \cdot}).
	\end{align*}
	Let $\widehat{X}^{\alphab,i}=X^{\alphab,i}-\sigma_0 B.$ By Itô formula, we know that
	\begin{align*}
	    \mathrm{d}f(\widehat{X}^{\alphab,i}_t,W^i_t,Z^i_t)
	    &=
	    \nabla_x f(\widehat{X}^{\alphab,i}_t,W^i_t,Z^i_t) \sigma(t,X^{\alphab,i}_t) \mathrm{d}W^i_t
	    +
	    \nabla_w f(\widehat{X}^{\alphab,i}_t,W^i_t,Z^i_t) \mathrm{d}W^i_t
	    +
	    \nabla_z f(\widehat{X}^{\alphab,i}_t,W^i_t,Z^i_t) Z^i_t \phi^i_t \mathrm{d}W^i_t
	    \\
	    &+
	    \nabla_x f(\widehat{X}^{\alphab,i}_t,W^i_t,Z^i_t) b\big(t,X^{\alphab,i}_{t},\overline{\varphi}^{N}_{t}[\alphab] ,\alpha^i(t,\Xbb^{\alphab}) \big) \mathrm{d}t
	    +
	    \frac{1}{2} \mathrm{Tr} \big[\nabla^2_x f(\widehat{X}^{\alphab,i}_t,W^i_t,Z^i_t)  \sigma(t,X^{\alphab,i}_t)\sigma(t,X^{\alphab,i}_t)^\top\big] \mathrm{d}t
	    \\
	    &+
	    \frac{1}{2} \mathrm{Tr} \big[ \nabla^2_w f(\widehat{X}^{\alphab,i}_t,W^i_t,Z^i_t) \big] \mathrm{d}t
	    +
	    \frac{1}{2} \mathrm{Tr} \big[ \nabla^2_z f(\widehat{X}^{\alphab,i}_t,W^i_t,Z^i_t) \phi^i_t (\phi^i_t)^\top \big] |Z^i_t|^2 \mathrm{d}t
	    \\
	    &+
	    \nabla_w\nabla_x f(\widehat{X}^{\alphab,i}_t,W^i_t,Z^i_t) \sigma(t,X^{\alphab,i}_t) \mathrm{d}t
	    +
	    \nabla_z\nabla_x f(\widehat{X}^{\alphab,i}_t,W^i_t,Z^i_t) \sigma(t,X^{\alphab,i}_t) Z^i_t \phi^i_t \mathrm{d}t
	    \\
	    &+
	    \nabla_z\nabla_w f(\widehat{X}^{\alphab,i}_t,W^i_t,Z^i_t) Z^i_t \phi^i_t \mathrm{d}t.
	\end{align*}
	Let us introduce $h^i_s:=h(\widehat{X}^{\alphab,i}_{s \wedge \cdot},W^i_{s \wedge \cdot},Z^i_{s \wedge \cdot}, \varphi^N_{s \wedge \cdot}[\alphab],\Phi^i_{s \wedge \cdot})$ and
	\begin{align*}
	    \mathrm{M}^i_tf
	    &:=
	    f(\widehat{X}^{\alphab,i}_t,W^i_t,Z^i_t)
	    \\
	    &-
	    \int_0^t \bigg[
	    \nabla_x f(\widehat{X}^{\alphab,i}_r,W^i_r,Z^i_r) b\big(r,X^{\alphab,i}_{r},\overline{\varphi}^{N}_r[\alphab] ,\alpha^i(r,X^{\alphab}) \big)
	    +
	    \frac{1}{2} \mathrm{Tr} \big[\nabla^2_x f(\widehat{X}^{\alphab,i}_r,W^i_r,Z^i_r)  \sigma(r,X^{\alphab,i}_r)\sigma(r,X^{\alphab,i}_r)^\top\big]
	    \\
	    &\;\;+
	    \frac{1}{2} \mathrm{Tr} \big[ \nabla^2_w f(\widehat{X}^{\alphab,i}_r,W^i_r,Z^i_r) \big]
	    +
	    \frac{1}{2} \mathrm{Tr} \big[ \nabla^2_z f(\widehat{X}^{\alphab,i}_r,W^i_r,Z^i_r) \phi^i_r (\phi^i_r)^\top \big] |Z^i_r|^2
	    \\
	    &\;\;+
	    \nabla_w\nabla_x f(\widehat{X}^{\alphab,i}_r,W^i_r,Z^i_r) \sigma(r,X^{\alphab,i}_r)
	    +
	    \nabla_z\nabla_x f(\widehat{X}^{\alphab,i}_r,W^i_r,Z^i_r) \sigma(r,X^{\alphab,i}_r) Z^i_r \phi^i_r
	    +
	    \nabla_z\nabla_w f(\widehat{X}^{\alphab,i}_r,W^i_r,Z^i_r) Z^i_r \phi^i_r \bigg] \mathrm{d}r.
	\end{align*}
	
	Notice that for $\mathrm{d}t \otimes \mathrm{d}\P$--a.e.
	\begin{align} \label{eq:equal_limit}
	    \Lim_{N \to \infty} \frac{1}{N} \sum_{i=1}^N \Wc_p \Big( \overline{\zeta}^{i,N,\beta}_{t}, \overline{\varphi}^{N}_t[\alphab] \Big)
	    =
	    0.
	\end{align}
	Then, in the sub--sequence $(\mathrm{Q}^N)_{N \in \N^*},$ we can use $\overline{\zeta}^{i,N,\beta}_{t}$ or $\overline{\varphi}^{N}_t[\alphab]$ without {\color{red}affecting } the limit $\Qr^\infty.$ With all the previous observations, one can check that 
	\begin{align*}
	    &\E^{\mathrm{Q}^\infty} \Big[ \Big|\E^{\Pi} \Big[\Big(\mathrm{M}f[t,\Xh,\Wh,\Bh,\Zh,\muh,\gammah,\Phih]-  \mathrm{M}f[s,\Xh,\Wh,\Bh,\Zh,\muh,\gammah,\Phih] \Big) h_s \Big]\Big|^2 \Big]
	    \\
	    &=
	    \Lim_N \E^{\mathrm{Q}^N} \Big[ \Big|\E^{\Pi} \Big[\Big(\mathrm{M}f[t,\Xh,\Wh,\Bh,\Zh,\muh,\gammah,\Phih]-  \mathrm{M}f[s,\Xh,\Wh,\Bh,\Zh,\muh,\gammah,\Phih] \Big) h_s \Big]\Big|^2 \Big]
	    =
	    \Lim_N \E^{\P} \Big[ \Big|\frac{1}{N} \sum_{i=1}^N \big(\mathrm{M}^i_t f-  \mathrm{M}^i_s f \big) h^i_s\Big|^2 \Big]
	    \\
	    &=
	    \Lim_N \frac{1}{N^2} \sum_{i=1}^N \E^{\P} \Big[ \Big| \big( \mathrm{M}^i_t f-  \mathrm{M}^i_s f \big) h^i_s\Big|^2 \Big]
	    =0.
	\end{align*}
	 Notice {\color{blue}that the bounded character of the coefficients} $(b,\sigma)$ and test functions $(f,h)$ is important for passing to the limit. {\color{black}This is true for all $(f,h)$. We can then use an appropriate countable set of maps of type $f$ and $g$, and deduce that $\mathrm{Q}^\infty$ $\om$--a.e., for each $\varphi$ bounded twice differentiable,  $\mathrm{M}\varphi$ is a $(\Pi(\om),\widehat{\F})$--martingale} (see similar method in \cite[Proof of Proposition 5.1]{lacker2017limit}, \cite[Proof of Proposition 5.6]{lacker2020-closed}, \cite[roposition 4.17]{djete2019general}). Also, using \Cref{eq:equal_limit}, it is easy to check that
	\begin{align*}
	    \Pi(\om) \big[ \muh=\mu(\om),\;\gammah=\gamma(\om),\;\Bh=B(\om) \big]=1,\;\mathrm{Q}^\infty\;\om\mbox{--a.e.}
	\end{align*}
	
	\medskip
	Consequently, $\mathrm{Q}^\infty\;\om\mbox{--a.e.},$ on an extension $\big( \widehat{\Om}, \widehat{\F}, \widehat{\P}_{{\om}} \big):=\big( \widehat{\Om}\x [0,1], (\widehat{\Fc}_t \otimes \Bc([0,1]))_{t \in [0,T]}, \Pi({\om}) \otimes \lambda \big)$ of $\big( \widehat{\Om}, (\widehat{\Fc}_t)_{t \in [0,T]}, \Pi({\om}) \big),$ there exists $\widehat{M}$ a $(\widehat{\F}, \widehat{\P}_{{\om}})$--martingale measure with quadratic variation $\Phih$ s.t. $\widehat{\P}_{\om}$--a.e.
	\begin{align*}
	    &\mathrm{d}\Xh_t
        =
        \int_{\Pc^n_U}
        b^\star(t,\nub) \gamma_t(\om)(\mathrm{d}\nub) \mathrm{d}t
        +
        \int_{U}
        b^\circ(t,\Xt_t,u) \Phih_t(\Pc^n_U,\mathrm{d}u) \mathrm{d}t
        +
        \sigma(t,\Xt_t) \mathrm{d}\Wh_t,\;\Wh_t=\Mh(\Pc^n_U \x U \x [0,t]),
        \\
        &\mathrm{d}\Zh_t
        = \int_{\Pc^n_U \x U} \Zh_t \; \sigma(t,\Xt_t)^{-1}\Big(b\big(t,\Xt_{t},\nub ,\beta(t,\Xt_t,\mu(\om),\gamma(\om)) \big) - b\big(t,\Xt_{t},\nub ,u \big) \Big) \Mh(\mathrm{d}\nub,\mathrm{d}u,\mathrm{d}t)\;\;\mbox{with}\;\Xt:=\Xh+\sigma_0 B(\om),
	\end{align*}
	and $\widehat{\P}_{\om} \circ \big(\Xh,\Wh,\Bh,\Zh, \mu(\om), \Phi,\gamma(\om) \big)^{-1}=\Pi(\om).$

\medskip	
	Now, we define the following change of probability
	\begin{align*}
	    \frac{\mathrm{d}\widehat{\Q}_\om}{\mathrm{d}\widehat{\P}_\om}
	    :=
	    \Zh_T=\exp\Big\{ \Delta_T - \frac{1}{2} \langle \Delta \rangle_T \Big\}
	\end{align*}
	where
	\begin{align*}
	    \Delta_t:=\int_0^t \int_{\Pc^n_U \x U} \sigma(r,\Xt_r)^{-1}\Big(b\big(r,\Xt_{r},\nub ,\beta(r,\Xt_r,\mu(\om),\gamma(\om)) \big) - b\big(r,\Xt_{r},\nub ,u \big) \Big) \Mh(\mathrm{d}\nub,\mathrm{d}u,\mathrm{d}r).
	\end{align*}
	Let us introduce 
	\begin{align*}
	    \widehat{V}_t
	    :=
	    \Wh_t
	    -
	    \int_0^t \int_{\Pc^n_U \x U} \Big(b\big(r,\Xt_{r},\nub ,\beta(r,\Xt_r,\mu(\om),\gamma(\om)) \big) - b\big(r,\Xt_{r},\nub ,u \big) \Big) \Mh(\mathrm{d}\nub,\mathrm{d}u,\mathrm{d}r),
	\end{align*}
	then, by Girsanov's theorem, $\widehat{V}$ is a $(\widehat{\F},\widehat{\Q}_\om)$--Brownian motion, and $\Xh$ satisfies
	\begin{align} \label{eq:change-alter}
	    \Xh_t
        =
        \xi
        +
        \int_0^t \int_{\Pc^n_U}
        b\big(r,\Xh_{r}+\sigma_0 B_r(\om),\nub ,\beta(r,\Xh_r+\sigma_0 B_r(\om),\mu(\om),\gamma(\om)) \big) \gamma_r(\om)(\mathrm{d}\nub) \mathrm{d}r
        +
        \int_0^t
        \sigma(r,\Xh_r+\sigma_0 B_r(\om)) \mathrm{d}\widehat{V}_r.
	\end{align}
	By \Cref{eq:equal_limit}, one finds that
    \begin{align*}
        \mathrm{Q}^\infty \circ \big( \mu,\gamma, B \big)^{-1}
        =
        \Lim_N \P \circ \Big(\varphi^{N}[\alphab], \delta_{\overline{\varphi}^{N}_t[\alphab]}(\mathrm{d}\nub)\mathrm{d}t , B \Big)^{-1}
        =
        \P \circ (\mu,\Lambda, B)^{-1}.
    \end{align*}
    By uniqueness of \eqref{eq:change-alter} (and \Cref{eq:aux:rules}), we deduce that
    \begin{align*}
        \int_{\Om} \Lc^{\widehat{\Q}_{\om}} \big( \Xh, \mu(\om), \gamma(\om), B(\om) \big) \mathrm{Q}^\infty(\mathrm{d} \om)
        =
        \E^{\P} \big[ \Lc^{\P} \big( X' - \sigma_0 B, \mu, \Lambda, B \big| \Gc_T \big)
        \big]
        =
        \P \circ \big( X' - \sigma_0 B,\mu,\Lambda, B \big)^{-1}.
    \end{align*}
    This is true for any limit point $\mathrm{Q}^\infty,$ then we have the convergence of the entire sequence $(\mathrm{Q}^N)_{N \in \N^*}.$ Therefore
    \begin{align*}
	    &\Lim_{N \to \infty}\frac{1}{N} \sum_{i=1}^N J_i \big( \alpha^1, \cdots, \alpha^{i-1}, \beta^{\alphab,i}, \alpha^{i+1},\cdots, \alpha^N \big)
	    \\
	    &=
        \Lim_{N\to \infty} \frac{1}{N} \sum_{i=1}^N \E^{\P} \bigg[ Z^i_T \bigg(
        \int_0^T L\big(t,X^{\alphab,i}_t,\overline{\zeta}^{i,N,\beta}_{t} ,\beta^{\alphab,i}(t,\Xbb^{\alphab}) \big) \mathrm{d}t 
        + 
        g \big( X^{\alphab,i}_T, \varphi^{N}_T[\alphab] \big) \bigg)
        \bigg]
        \\
        &=
        \int_{\Om} \E^{\Pi(\om)} \bigg[ \Zh_T \bigg(
        \int_0^T \int_{\Pc^n_U} L\big(t,\Xt_t,\nub ,\beta(t,\Xt_t,\mu(\om),\gamma(\om) \big) \gamma_t(\om)(\mathrm{d}\nub) \mathrm{d}t 
        + 
        g \big( \Xt_T, \mu_T(\om) \big) \bigg)
        \bigg] \mathrm{Q}^\infty(\mathrm{d}\om)
        \\
        &=
        \int_{\Om} \E^{\widehat{\Q}(\om)} \bigg[         \int_0^T \int_{\Pc^n_U} L\big(t,\Xt_t,\nub ,\beta(t,\Xt_t,\mu(\om),\gamma(\om) \big) \gamma_t(\om)(\mathrm{d}\nub) \mathrm{d}t 
        + 
        g \big( \Xt_T, \mu_T(\om) \big)
        \bigg] \mathrm{Q}^\infty(\mathrm{d}\om)
        \\
        &=
        \E^{\P} \bigg[         \int_0^T \int_{\Pc^n_U} L\big(t,X'_t,\nub ,\beta(t,X'_t,\mu,\Lambda \big) \Lambda_t(\mathrm{d}\nub) \mathrm{d}t 
        + 
        g \big( X'_T, \mu_T \big)
        \bigg].
	\end{align*}
\end{proof}


\medskip

\medskip
Now, by combining the previous proposition and \Cref{lemm:equiv_controls}, we show that any measure--valued control rule satisfying a certain condition ( see \Cref{cond:converg-2}) is the limit of a sequence of interacting processes.
\begin{proposition}
\label{prop:convergenceNashEquilibrium}
    Let $\Pr \in \Pcb_V$ s.t. there exists a continuous function $\varphi:\Cc^{n,1}_{\Wc} \x \M \to \Cc^n$ s.t. $\Pr(B_t=\varphi_t(\mu_{t \wedge \cdot},\Lambda_{t \wedge \cdot}),\;t \in [0,T])=1,$ and a sequence $(\alpha^i)_{i \in \N^*}$ s.t. for each $N \in \N^*,$ $\alphab^N:=(\alpha^1,\dots,\alpha^N) \in (\Ac^c_N)^N$ and
    \begin{align} \label{cond:converg-2}
        \Lim_{N \to \infty} \P \circ \Big( \varphi^{N}[\alphab^N], \delta_{\big( \overline{\varphi}^{N}_t[\alphab^N] \big)} (\mathrm{d}\nub)\mathrm{d}t, B \Big)^{-1}
        =
        \Pr \circ \big( \mu,\Lambda, B \big)^{-1}.
    \end{align}
    Then there exists a sequence $(\beta^{i,N})_{(i,N) \in \{1,\cdots,N\} \x \N^*}$ satisfying for each $(i,N) \in \{1,\cdots,N\} \x \N^*,$ $\beta^{i,N} \in \Ac^c_N,$ and
\begin{align*}
    &\Lim_{N \to \infty}
    \frac{1}{N} \sum_{i=1}^N
    J_i \big( \alpha^1, \cdots, \alpha^{i-1}, \beta^{i,N}, \alpha^{i+1},\cdots, \alpha^N \big)
        =
        \E^{\Pr}\big[J(\mu',\mu,\Lambda',\Lambda) \big].
\end{align*}

\end{proposition}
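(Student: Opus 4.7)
The plan is to combine Lemma \ref{lemm:equiv_controls}, which approximates any $\Pr \in \Pcb_V$ by McKean--Vlasov processes driven by continuous feedback controls $\widehat{\beta}^j$, with Lemma \ref{lemm:convergenceNashEquilibrium}, which identifies the asymptotic Cesàro value of single--player perturbations of $\alphab^N$ when the perturbation is induced by a continuous feedback $\beta$. Using the hypothesis that $B_t$ is a continuous function of $(\mu_{t\wedge\cdot}, \Lambda_{t\wedge\cdot})$ under $\Pr$, together with \eqref{cond:converg-2}, one realizes on $(\Om, \H, \P)$ an $\H$--adapted pair $(\mu, \Lambda)$ whose joint law with $B$ equals $\Pr \circ (\mu, \Lambda, B)^{-1}$ and which is $\P$--independent of $(\xi, W^i)_{i \in \N^*}$; this places us in the framework of Lemma \ref{lemm:convergenceNashEquilibrium} while preserving \eqref{cond:converg-2}.

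Next, apply Lemma \ref{lemm:equiv_controls} to extract continuous functions $(\widehat{\beta}^j)_{j \in \N^*}$ whose associated processes $(\mu'^j, \Lambda'^j)$ solving \eqref{eq:appr_Y} fulfill
\[
\Lim_j \P \circ (\mu'^j, \mu, \Lambda'^j, \Lambda, B)^{-1} = \Pr \quad \text{in } \Wc_p.
\]
Since $L$ and $g$ are bounded continuous by Assumption \ref{assum:main1}, the functional $J$ of \eqref{eq:function_j} is continuous on $\Cc^n_{\Wc} \times \Cc^n_{\Wc} \times \M \times \M$, and hence
\[
\Lim_j \E^{\P}\big[J(\mu'^j, \mu, \Lambda'^j, \Lambda)\big] = \E^{\Pr}\big[J(\mu', \mu, \Lambda', \Lambda)\big].
\]
For each fixed $j$, invoke Lemma \ref{lemm:convergenceNashEquilibrium} with $\beta := \widehat{\beta}^j$ and set, in the spirit of \eqref{eq:def-controlBeta},
\[
\beta^{j,\alphab^N,i}(t, \xb) := \widehat{\beta}^j\big(t, \xf^i(t), \pi[\xb], q^{\alphab^N}[\xb]\big),
\]
which belongs to $\Ac^c_N$ by continuity of $\widehat{\beta}^j$ and Borel measurability of the empirical maps. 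Lemma \ref{lemm:convergenceNashEquilibrium} then yields
\[
\Lim_N \frac{1}{N}\sum_{i=1}^N J_i\big(\alpha^1, \dots, \alpha^{i-1}, \beta^{j, \alphab^N, i}, \alpha^{i+1}, \dots, \alpha^N\big) = \E^{\P}\big[J(\mu'^j, \mu, \Lambda'^j, \Lambda)\big].
\]

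A standard diagonal extraction finishes the argument: choose an increasing sequence $(N_j)_{j \in \N^*}$ so that for every $N \ge N_j$ the Cesàro average at stage $(j, N)$ differs from $\E^{\P}[J(\mu'^j, \mu, \Lambda'^j, \Lambda)]$ by at most $1/j$, set $j(N) := \max\{j : N_j \le N\}$ (so that $j(N) \uparrow \infty$), and define $\beta^{i,N} := \beta^{j(N), \alphab^N, i}$. Combining the two convergences produces the asserted limit. The real substantive difficulty is already absorbed by Lemma \ref{lemm:convergenceNashEquilibrium}, whose proof relies on a Girsanov change of measure to decouple player $i$'s perturbation and on the non--degeneracy of $\sigma$ to identify the limiting martingale problem; the present proposition is essentially approximation (via Lemma \ref{lemm:equiv_controls}) plus diagonal extraction layered on top of that result, so the hardest step sits one level deeper than the statement itself.
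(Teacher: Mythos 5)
Your proof is correct and follows essentially the same route as the paper: approximate $\Pr$ by the feedback-driven measure-valued control rules of Lemma \ref{lemm:equiv_controls}, apply Lemma \ref{lemm:convergenceNashEquilibrium} to each fixed feedback $\widehat{\beta}^j$, and conclude by continuity of $J$ plus a diagonal extraction over $(j,N)$. The paper's own proof phrases the first step as ``we can assume that $\Pr = \P\circ(\mu',\mu,\Lambda',\Lambda,B)^{-1}$'' and leaves the diagonal argument implicit, so your write-up is actually somewhat more explicit about the bookkeeping, but the two proofs are the same in substance.
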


\begin{proof}
    
    By \Cref{lemm:equiv_controls}, we can assume that $\Pr= \P\circ \big( \mu', \mu, \Lambda', \Lambda,B\big)^{-1}$ where for each $t \in [0,T],$ $\mu'_t=\Lc^{\P}\big(X'_t\big|\Gc_t\big),$  $\mub'_t=\Lc^{\P}\big(X'_t,\widehat{\beta}(t,X'_t,\mu,\Lambda)\big|\Gc_t \big)$ and $\Lambda'=\delta_{\mub'_t}(\mathrm{d}\nub)\mathrm{d}t,$ with
    
    \begin{align*} 
        \mathrm{d}X'_t
        =
        \int_{\Pc^n_U}
        b(t,X'_t,\nub^\prime,\widehat{\beta}(t,X'_r,\mu,\Lambda)) \Lambda_r(\mathrm{d}\nub^\prime) \mathrm{d}t
        +
        \sigma(t,X'_t) \mathrm{d}W_t
        +
        \sigma_0 \mathrm{d}B_t\;\mbox{with}\;X'_0=\xi
    \end{align*}
    and $\widehat{\beta}$ is a continuous function satisfying:
    \begin{align*}
        \widehat{\beta}: [0,T] \x \R^n \x \Cc^{n,1}_{\Wc} \x \M \to  U\;\mbox{with}\; \widehat{\beta}(t,x,\pi,q)=\widehat{\beta}(t,x,\pi_{t \wedge \cdot},q_{t \wedge \cdot}).
    \end{align*}

    Then by \Cref{lemm:convergenceNashEquilibrium}, if $(\widehat{\beta})^{\alpha,i}$ is defined in \eqref{eq:def-controlBeta}, one has
\begin{align*}
    &\Lim_{N \to \infty}
    \frac{1}{N} \sum_{i=1}^N
    J_i \big( \alpha^1, \cdots, \alpha^{i-1}, (\widehat{\beta})^{\alphab^N,i}, \alpha^{i+1},\cdots, \alpha^N \big)
        =
        \E^{\P^\star_u}\big[J(\mu',\mu,\Lambda',\Lambda) \big].
\end{align*}
By combining our previous results, we get the proof of the proposition.

\end{proof}

\subsubsection{Proof of \Cref{thm:limit_thm_closed-loop} (limit set of approximate Nash equilibria)}\label{proof_thm_1_MFG}

By using \cite[Proposition 5.4.]{MFD-2020} 
, one finds $(\mathrm{P}^N)_{N \in \N^*}$ is relatively compact in $\Wc_p$ where
    \begin{align*}
        \mathrm{P}^N
        :=
        \P^\star \circ \Big( (\varphi^{N}_t[\alphab^N])_{t \in [0,T]},(\varphi^{N}_t[\alphab^N])_{t \in [0,T]},\delta_{\overline{\varphi}^{N}_s[\alphab^N]}(\mathrm{d}\nub)\mathrm{d}s, \delta_{\overline{\varphi}^{N}_s[\alphab^N]}(\mathrm{d}\nub')\mathrm{d}s, B  \Big)^{-1},
    \end{align*}
    and each limit point $\mathrm{P}^\infty$ of any sub--sequence belongs to $\Pcb_V.$ Next, let us show that $\mathrm{P}^\infty \circ (\mu,\Lambda)^{-1} \in \Sc^\star.$ To simplify, the sequence $(\mathrm{P}^N)_{N \in \N^*}$ and its sub--sequence share the same notation. It is straightforward that $\Pr^\infty [\mu=\mu',\Lambda=\Lambda']=1.$ Therefore, by \Cref{lemma:measurability_B}, there exists a continuous function $\varphi:\Cc^{n,1}_{\Wc} \x \M \to \Cc^n$ s.t. $\Pr^\infty(B_t=\varphi_t(\mu_{t \wedge \cdot},\Lambda_{t \wedge \cdot}),\;t \in [0,T])=1.$

\medskip    
    Let $\mathrm{P} \in \Pcb_V$ such that $\Lc^{\mathrm{P}} \big(\mu, \Lambda,B \big)=\Lc^{\mathrm{P}^\infty} \big(\mu, \Lambda,B \big).$ By \Cref{prop:convergenceNashEquilibrium}, there exists a sequence $(\beta^{i,N})_{(i,N) \in \{1,\cdots,N\}\x \N^* }$ satisfying for each $(i,N) \in \{1,\cdots,N\}\x \N^*,$ $\beta^{i,N} \in \Ac^c_N,$
    and
\begin{align*}
    &
    {\color{red}\Lim_{N \to \infty}}\frac{1}{N} \sum_{i=1}^N
    J_i \big( \alpha^1, \cdots, \alpha^{i-1}, \beta^{i,N}, \alpha^{i+1},\cdots, \alpha^N \big)
        =
        \E^{\Pr}\big[J(\mu',\mu,\Lambda',\Lambda) \big].
\end{align*}
Therefore
\begin{align*}
    \E^{\mathrm{P}^\infty}\big[J(\mu',\mu,\Lambda',\Lambda) \big]
    &=
    \Lim_{N \to \infty} \frac{1}{N} \sum_{i=1}^N J_i (\alphab^N)
    \\
    &\ge 
    \Lim_{N \to \infty} \bigg(\frac{1}{N} \sum_{i=1}^N
    J_i \big( \alpha^1, \cdots, \alpha^{i-1}, \beta^{i,N}, \alpha^{i+1},\cdots, \alpha^N \big) - \varepsilon_N \bigg)
    =
    \E^{\mathrm{P}}\big[J(\mu',\mu,\Lambda',\Lambda) \big],
\end{align*}
then $\E^{\mathrm{P}^\infty}\big[J(\mu',\mu,\Lambda',\Lambda) \big] \ge \E^{\mathrm{P}}\big[J(\mu,\mu',\Lambda,\Lambda') \big],$ for any $\mathrm{P} \in \Pcb_V$ such that $\Lc^{\mathrm{P}} \big(\mu, \Lambda, B \big)=\Lc^{\mathrm{P}^\infty} \big(\mu, \Lambda, B \big).$ We conclude that $\mathrm{P}^\infty \circ (\mu,\Lambda)^{-1} \in \Sc^\star.$

\subsection{Limit set of approximate strong Markovian MFG equilibria and approximation of measure--valued MFG equilibrium} \label{sec:limitset_converse}

In this section, first, we will prove the second part of \Cref{thm:limit_thm_closed-loop} namely showing that the limit set of approximate strong Markovian MFG solution is the measure--valued MFG equilibrium. Second, we will prove the results of \Cref{thm:converse-limit}. 

\medskip
To achieve these goals, we need to introduce the notion of approximate $open$--$loop$ MFG equilibrium.
Let $\F^B:=(\Fc^B_t)_{t \in [0,T]}$ be the natural filtration of $B$ i.e.  $\Fc^B_t:=\sigma\{B_s: s \le t \}.$ Let $\alpha$ be a $(\sigma\{X_0, W_{t \wedge \cdot}, B_{t \wedge \cdot}\})_{t \in [0,T]}$--predictable process. We denote by $X$ the unique solution of
\begin{align} \label{eq:MKV_strong_op-MFG}
		\mathrm{d}X_t
		= 
		b \big(t, X_t, \mub_t, \alpha_t \big) \mathrm{d}t
		+
		\sigma(t, X_t ) \mathrm{d} W_t
		+
		\sigma_0 \mathrm{d}B_t\;\;\;\mbox{with}\;X_0=\xi,\;\mu_t:=\Lc(X_t|\Fc^B_t)\;\mbox{and}\;\mub_t:=\Lc(X_t, \alpha_t|\Fc^B_t).
	\end{align}
	
	Given $\mub,$ let $\alpha'$ be another $(\sigma\{X_0, W_{t \wedge \cdot}, B_{t \wedge \cdot}\})_{t \in [0,T]}$--predictable process and $X'$ be the solution of 
\begin{align} \label{eq:open_loop-aux}
		\mathrm{d}X'_t
		= 
		b \big(t, X'_t, \mub_t, \alpha'_t \big) \mathrm{d}t
		+
		\sigma(t, X'_t) \mathrm{d} W_t
		+
		\sigma_0 \mathrm{d}B_t\;\;\;\mbox{with}\;X'_0=\xi.
	\end{align}
	We denote $\mu^{\alpha'}_t:=\Lc(X'_t|\Fc^B_t)$ and $\mub^{\alpha'}_t:=\Lc(X'_t,\alpha'_t|\Fc^B_t)$ a.e. Let $\varepsilon \ge 0,$ we say that the McKean--Vlasov process $X$ associated to the control $\alpha$ is an  $\varepsilon$--$open$--$loop$ MFG equilibrium if : recall that $J$ is defined in \Cref{eq:function_j},
	\begin{align*}
	    \E\big[J\big(\mu,\mu,\delta_{\mub_t}(\mathrm{d}\nub)\mathrm{d}t, \delta_{\mub_t}(\mathrm{d}\nub)\mathrm{d}t\big)\big]
	    \ge 
	    \sup_{\alpha'}\E\big[J\big(\mu^{\alpha'},\mu,\delta_{\mub^{\alpha'}_t}(\mathrm{d}\nub)\mathrm{d}t, \delta_{\mub_t}(\mathrm{d}\nub)\mathrm{d}t\big)\big] - \varepsilon.
	\end{align*}

\subsubsection{Approximate strong Markovian MFG equilibrium as an approximate $open$--$loop$ MFG equilibrium}

The next Proposition show that any approximate strong Markovian MFG equilibrium can be seen as an approximate $open$--$loop$ MFG equilibrium.

\begin{proposition} \label{prop:strongM_as_strongO}
    For any $\varepsilon$--strong Markovian MFG equilibrium $(\mub_t)_{t \in [0,T]}=(\Lc(X_t,\alpha(t,X_t,\mu)|\Fc^B_t))_{t \in [0,T]},$ the process $X$ associated to the control $(\alpha(t,X_t,\mu))_{t \in [0,T]}$ is an $\varepsilon$--$open$--$loop$ MFG equilibrium.
\end{proposition}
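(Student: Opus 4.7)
The plan is to leverage the $\varepsilon$-optimality of the strong Markovian equilibrium against feedbacks of the form $\alpha'(t,x,\mu)$ and extend it to any open-loop alternative $\alpha'$ that is $\sigma\{X_0, W_{t\wedge\cdot}, B_{t\wedge\cdot}\}$-predictable. The extension proceeds by approximating each open-loop alternative by a sequence of strong Markovian feedbacks whose costs converge, using the canonical measure-valued formulation together with \Cref{lemm:equiv_controls}.

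Given an open-loop alternative $\alpha'$ with state $X'$ solving \eqref{eq:open_loop-aux}, set $\mu^{\alpha'}_t := \Lc(X'_t|\Fc^B_t)$, $\mub^{\alpha'}_t := \Lc(X'_t,\alpha'_t|\Fc^B_t)$ and $\Lambda_t := \delta_{\mub_t}(\mathrm{d}\nub)\mathrm{d}t$. Lift the situation onto the canonical space by defining
\[
\Pr^{\alpha'} := \P \circ \big(\mu^{\alpha'},\, \mu,\, \delta_{\mub^{\alpha'}_t}(\mathrm{d}\nub)\mathrm{d}t,\, \Lambda,\, B\big)^{-1}.
\]
A routine application of the conditional It\^o formula to $X'$ (together with the separability condition) verifies that $\Pr^{\alpha'} \in \Pcb_V$, and its $(\mu,\Lambda,B)$-marginal coincides with that of the canonical equilibrium rule $\Pr^\star := \P \circ (\mu,\mu,\Lambda,\Lambda,B)^{-1}$.

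I would then apply \Cref{lemm:equiv_controls} to $\Pr^{\alpha'}$, obtaining a sequence $(\Pr^j)_{j \in \N^*}$ of measure-valued control rules induced by continuous Markovian feedbacks $\widehat{\beta}^j(t,x,\pi,q)$, with $\Pr^j \to \Pr^{\alpha'}$ in $\Wc_p$ and hence $\E^{\Pr^j}[J(\mu',\mu,\Lambda',\Lambda)] \to \E^{\Pr^{\alpha'}}[J(\mu',\mu,\Lambda',\Lambda)]$. Although $\widehat{\beta}^j$ depends on the auxiliary variable $q$, \Cref{lemma:measurability_B} applied to $\Pr^\star$ ensures that $B$ is a continuous function of $(\mu,\Lambda)$; combined with the $\F^B$-adaptedness of $\Lambda$, this permits writing $\Lambda = F(\mu)$ for some measurable functional $F$ on the equilibrium's support. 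Setting $\alpha'^j(t,x,\pi) := \widehat{\beta}^j(t,x,\pi, F(\pi))$ then yields admissible strong Markovian feedbacks whose driving SDE coincides with the one defining $\Pr^j$ (after collapsing the integral against $\Lambda_t = \delta_{\mub_t}$), so that the cost induced by $\alpha'^j$ equals $\E^{\Pr^j}[J(\mu',\mu,\Lambda',\Lambda)]$.

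Finally, invoking the $\varepsilon$-strong Markovian optimality against each $\alpha'^j$ and letting $j \to \infty$ yields
\[
\E\Big[\int_0^T L\big(t, X_t, \mub_t, \alpha(t,X_t,\mu)\big)\,\mathrm{d}t + g(X_T, \mu_T)\Big]
\;\geq\;
\E^{\Pr^{\alpha'}}[J(\mu',\mu,\Lambda',\Lambda)] - \varepsilon,
\]
and recasting the right-hand side in terms of $(X', \alpha', \mub, \mu)$ delivers the open-loop $\varepsilon$-optimality. The main technical obstacle is the construction of the measurable functional $F$ in the passage $\widehat{\beta}^j \rightsquigarrow \alpha'^j$; it hinges on the invertibility of $\sigma_0$ through \Cref{lemma:measurability_B} and on the $\F^B$-adaptedness of $\mub$. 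If the equilibrium feedback $\alpha$ is not regular enough for this measurable extension to be immediate, one would preface the argument by first approximating $\alpha$ by smoother feedbacks in the spirit of the approximation procedure referenced in the paper's introduction.
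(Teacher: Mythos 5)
Your proposal follows the same strategy as the paper's proof: reduce an arbitrary open--loop alternative $\alpha'$ to a sequence of Markovian feedbacks via an approximation result ultimately rooted in \Cref{prop:approximation_weak_2}, use the invertibility of $\sigma_0$ through \Cref{lemma:measurability_B} to recover $B$ from $(\mu,\Lambda)$, reduce the feedback's dependence to $\mu$ alone, apply the $\varepsilon$--strong Markovian optimality to each approximating feedback, and pass to the limit in the cost. The only structural difference is where the approximation lives: you push everything onto the canonical space and invoke \Cref{lemm:equiv_controls}, whereas the paper stays on $(\Om,\H,\P)$, first introduces the mimicking process $R'$ (replacing the open--loop drift $b(t,\cdot,\mub_t,\alpha'_t)$ by $\int_U b(t,\cdot,\mub_t,u)\,\mub'^{\cdot}_t(\mathrm{d}u)$ so that the hypotheses of \Cref{prop:approximation_weak_2} apply), and then applies that proposition directly to obtain $\beta^k(t,x,B,\Lambda)$, which it rewrites as $\tilde\beta^k(t,x,\mu,\Lambda)$. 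Since \Cref{lemm:equiv_controls} is itself deduced from \Cref{prop:approximation_weak_2}, the two routes coincide up to bookkeeping; yours is marginally cleaner since the mimicking is absorbed into the canonical--space machinery.

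One point needs tightening. Your justification for writing $\Lambda = F(\mu)$ --- namely $B=\varphi(\mu,\Lambda)$ from \Cref{lemma:measurability_B} combined with $\F^B$--adaptedness of $\Lambda$ --- only yields the implicit relation $\Lambda = G(\varphi(\mu,\Lambda))$, not an explicit functional of $\mu$ alone. The actual (and simpler) reason the reduction works is structural: for a strong Markovian equilibrium, $\mub_t(\mathrm{d}x,\mathrm{d}u) = \delta_{\alpha(t,x,\mu)}(\mathrm{d}u)\,\mu_t(\mathrm{d}x)$, so $\mub$ (hence $\Lambda = \delta_{\mub_t}(\mathrm{d}\nub)\mathrm{d}t$) is already a Borel functional of the path of $\mu$ by the very form of the equilibrium control. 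This makes $\alpha'^j(t,x,\pi) := \widehat\beta^j\big(t,x,\pi, F(\pi)\big)$ a legitimate strong Markovian feedback. (The paper's proof is equally terse at this step, passing from $\tilde\beta^k(t,x,\mu,\Lambda)$ directly to the $\varepsilon$--optimality, and implicitly relies on the same observation.)
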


\begin{proof}
       As $X$ is a strong solution of \Cref{eq:MKV_strong-MFG} and $\mu$ is $\F^B$--adapted, $(\alpha(t,X_t,\mu))_{t \in [0,T]}$ is a $(\sigma\{X_0, W_{t \wedge \cdot}, B_{t \wedge \cdot}\})_{t \in [0,T]}$--predictable process. To prove our proposition, it suffices to show the optimality condition. Let $\alpha'$ be  a $(\sigma\{X_0, W_{t \wedge \cdot}, B_{t \wedge \cdot}\})_{t \in [0,T]}$--predictable process and $X'$ be the solution of \Cref{eq:open_loop-aux}. Let us introduce $\mu'_t:=\Lc(X'_t|\Fc^B_t)$ and $\mub'_t:=\Lc(X'_t,\alpha'_t|\Fc^B_t).$ We denote by $R'$ the solution of  
       \begin{align*}
           \mathrm{d}R'_t
		    = 
		    \int_{U}b \big(t, R'_t, \mub_t, u \big) \mub'^{R'_t}_t \mathrm{d}t
		    +
		    \sigma(t, R'_t) \mathrm{d} W_t
		    +
		    \sigma_0 \mathrm{d}B_t\;\;\;\mbox{with}\;R'_0=\xi,
       \end{align*}
       where $(\mub'^x_t)_{x \in \R^n}$ is the disintegration of $\mub'_t$ i.e. $\mub'_t(\mathrm{d}u,\mathrm{d}x)=\mub'^x_t(\mathrm{d}u)\mu_t(\mathrm{d}x).$ By uniqueness, we can check that $\mu'_t=\Lc(R'_t|\Fc^B_t)$ a.e. Now, we can apply \Cref{prop:approximation_weak_2}. By \Cref{prop:approximation_weak_2}, there exists a sequence of functions $(\beta^k)_{k \in \N^*}$ satisfying: for each $k \in \N^*,$ $[0,T] \x \R^n \x \Cc^n \x \M \ni (t,x,b,q) \to \beta^k(t,x,b,q) \in U$ is progressively measurable and Lipschitz in $(x,b,q)$ uniformly in $t$ such that if we let $X'^k$ be the unique strong solution of:
    \begin{align*}
        \mathrm{d}X'^{k}_t
        =
        b(t,X'^{k}_t,\mub_t,\beta^k(t,X'^{k}_t,B,\Lambda)) \mathrm{d}t
        +
        \sigma(t,X'^k_t) \mathrm{d}W_t
        +
        \sigma_0 \mathrm{d}B_t,\;X'^k_0=\xi,\;\Lambda:=\delta_{\mub_t}(\mathrm{d}\nub)\mathrm{d}t,
    \end{align*}
    with $\mu'^k_t:=\Lc(X'^{k}_t|\Fc^B_t)$ and $\mub'^k_t:=\Lc\big(X'^{k}_{t},\beta^k(t,X^{k}_t, B,\Lambda) | \Fc^B_t\big)$  then
    \begin{align*}
        \Lim_{k \to \infty} \big(\mu'^k, \delta_{\mub'^k_t}(\mathrm{d}\nub) \mathrm{d}t \big)= \big(\mu', \delta_{\mub'_t}(\mathrm{d}\nub)\mathrm{d}t \big),\;\P\mbox{--a.e.},\;\mbox{for the Wasserstein metric}\;\Wc_p.
    \end{align*}
    With the same techniques used in \Cref{lemma:measurability_B}, we can show that $B$ is $(\sigma\{ \mu_{t \wedge \cdots}, \Lambda_{t \wedge \cdot}\})_{t \in [0,T]}$--adapted. As a result, we can find a progressively Borel measurable function $\tilde \beta^k:[0,T] \x \R^n \x \Cc^n_\Wc \x \M \to U$ such that $\beta^k(t,X'^k_t,B,\Lambda)=\tilde \beta^k(t,X'^k_t,\mu,\Lambda)$ a.e.   Consequently, as $\mub$ is an $\varepsilon$--strong Markovian MFG equilibrium, one has
    \begin{align*}
        \E\big[J\big(\mu,\mu,\delta_{\mub_t}(\mathrm{d}\nub)\mathrm{d}t, \delta_{\mub_t}(\mathrm{d}\nub)\mathrm{d}t\big)\big]
        &\ge \Lim_{k \to \infty} 
	    \E\big[J\big(\mu'^k,\mu,\delta_{\mub'^k_t}(\mathrm{d}\nub)\mathrm{d}t, \delta_{\mub_t}(\mathrm{d}\nub)\mathrm{d}t\big)\big] - \varepsilon
	    \\
	    &\ge 
	    \E\big[J\big(\mu',\mu,\delta_{\mub'_t}(\mathrm{d}\nub)\mathrm{d}t, \delta_{\mub_t}(\mathrm{d}\nub)\mathrm{d}t\big)\big] - \varepsilon.
    \end{align*}
    This is enough to conclude.
\end{proof}

\subsubsection{Proof of \Cref{thm:limit_thm_closed-loop} (limit set of approximate strong Markovian MFG equilibria)}\label{proof_thm_2_MFG}

Let $(\mub^\ell)_{\ell \in \N*}$ be a sequence s.t. for each $\ell \ge 1,$ $\mub^\ell$ is an $\varepsilon_\ell$--strong Markovian MFG equilibrium for $\varepsilon_\ell \ge 0.$ By \Cref{prop:strongM_as_strongO}, $\mub^\ell$ can be seen as an $\varepsilon_\ell$ $open$--$loop$ MFG equilibrium. Let $(\Pr^\ell)_{\ell \in \N^*}$ be the sequence defined by
\begin{align*}
    \Pr^\ell
    :=
    \P \circ (\mu^\ell,\mu^\ell,\delta_{\mub^\ell_t}(\mathrm{d}\nub)\mathrm{d}t, \delta_{\mub^\ell_t}(\mathrm{d}\nub)\mathrm{d}t, B)^{-1}.
\end{align*}
It is enough to apply \cite[Theorem 2.12]{MFD-2020_MFG} to conclude that the sequence $(\Pr^\ell)_{\ell \in \N^*}$ of approximate $open$--$loop$ MFG equilibria is relatively compact in $\Wc_p$ and that, when $\lim_{\ell \to \infty} \varepsilon_\ell =0,$ each limit point $\Pr$ is such that $\Pr \circ (\mu,\Lambda)^{-1} \in \Sc^\star.$ An idea of the proof of \cite[Theorem 2.12]{MFD-2020_MFG} is provided in \cite[Section 3]{MFD-2020_MFG}. First, there is an identification of any limit point as a measure--valued control rule satisfying $\mu=\mu'$ and $\Lambda=\Lambda$ a.e. under the corresponding measure. Second, any measure--valued control rule can be approximated by a sequence of controlled processes of type \eqref{eq:open_loop-aux}. The second point is used to show the optimality condition.

\subsubsection{Approximation of approximate $open$--$loop$ MFG equilibrium by approximate strong Markovian MFG equilibrium}

In this part, we show that any approximate $open$--$loop$ MFG equilibrium is the limit of a sequence of approximate strong Markovian MFG equilibria.

\medskip
Let $X$ be an $\varepsilon$--$open$--$loop$ MFG equilibrium associated to the control $\alpha.$ Recall that $\mu$ and $\mub$ are defined in \Cref{eq:MKV_strong_op-MFG}.
\begin{proposition} \label{prop:open-closed-approx}
    There exists a sequence of Borel functions $(\beta^k)_{k \in \N^*}$ satisfying: for each $k \in \N^*,$
    \begin{align*}
        \beta^k: [0,T] \x \R^n \x C([0,T];\Pc_p(\R^n)) \to  U\;\mbox{with}\; \beta^k(t,x,\pi)=\beta^k(t,x,\pi_{t \wedge \cdot}).
    \end{align*}
    such that $\beta^k$ is lipschitz in $(x,\pi)$ uniformly in $t,$ and if we let $X^k$ be the unique strong solution of:
    \begin{align*}
        \mathrm{d}X^{k}_t
        =
        b(t,X^{k}_t,\mub^k_t,\beta^k(t,X^{k}_t,\mu^k_t)) \mathrm{d}t
        +
        \sigma(t,X^k_t) \mathrm{d}W_t
        +
        \sigma_0 \mathrm{d}B_t
    \end{align*}
    with $\mu^k_t:=\Lc(X^{k}_t|\Fc^B_t)$ and $\mub^k_t:=\Lc\big(X^{k}_{t},\beta^k(t,X^{k}_t,\mu^k) | \Fc^B_t\big)$  then
    \begin{align*}
        \Lim_{k \to \infty} \big(\mu^k, \delta_{\mub^k_t}(\mathrm{d}\nub) \mathrm{d}t \big)= \big(\mu, \delta_{\mub_t}(\mathrm{d}\nub)\mathrm{d}t \big),\;\P\mbox{--a.e.},\;\mbox{for the Wasserstein metric}\;\Wc_p.
    \end{align*}
\end{proposition}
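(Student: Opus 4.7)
The plan is to construct the feedback $\beta^k$ in three stages: first realize $\mub$ as a measurable functional of the conditional state distribution $\mu$, then produce a randomized feedback kernel encoding $\alpha$, and finally regularize to obtain deterministic Lipschitz approximations valued in $U$.

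Step 1 (reduction to feedback form): Reproducing the argument of \Cref{lemma:measurability_B}, the invertibility of $\sigma_0$ combined with taking $\Fc^B_t$-conditional expectations in \eqref{eq:MKV_strong_op-MFG} shows that $B_t$ equals a continuous function of $\mu_{t\wedge\cdot}$. Hence $\Fc^B_t = \sigma(\mu_{t\wedge\cdot})$ up to $\P$-null sets, and the $\F^B$-adapted process $\mub$ admits a representation $\mub_t = \Gamma_t(\mu_{t\wedge\cdot})$ for some progressively Borel measurable $\Gamma$. Disintegrating $\mub_t(\mathrm{d}x,\mathrm{d}u) = \mub^x_t(\mathrm{d}u)\,\mu_t(\mathrm{d}x)$ provides a measurable randomized feedback kernel $(t,x,\mu_{t\wedge\cdot}) \mapsto \mub^x_t$.

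Step 2 (Markovian projection): A mimicking/Markovian-projection argument of Gyöngy type, adapted to the common-noise McKean--Vlasov setting, shows that the same triple $(X,\mu,\mub)$ solves the randomized closed-loop SDE with drift $\int_U b(t,X_t,\mub_t,u)\,\mub^{X_t}_t(\mathrm{d}u)$, since this drift and the original $b(t,X_t,\mub_t,\alpha_t)$ produce processes with the same $\Fc^B$-conditional time-marginals. Thus the whole analysis reduces to approximating the randomized feedback $\mub^x_t$ by deterministic Lipschitz maps into $U$.

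Step 3 (regularization to Lipschitz feedback): The non-degeneracy $\sigma\sigma^\top \ge \theta \mathrm{I}_n$ together with the Aronson--Serrin estimate invoked in the introduction guarantees that $\mu_t$ admits a jointly continuous, strictly positive density $p_t$. I would first approximate the kernel $\mub^x_t$ by a measurable piecewise-constant Borel selection valued in a dense countable subset of $U$; then convolve in both $x$ and the path $\pi$ with smooth mollifiers of scale $1/k$; and finally project pointwise onto $U$ by a Borel retraction from a neighborhood of $U$ in $\R^q$ onto $U$. This yields $\beta^k: [0,T] \x \R^n \x C([0,T];\Pc_p(\R^n)) \to U$, progressively Borel measurable, Lipschitz in $(x,\pi)$ uniformly in $t$, such that for each $t$ the pushforward $\delta_{\beta^k(t,x,\mu_{t\wedge\cdot})}(\mathrm{d}u)\,\mu_t(\mathrm{d}x)$ converges weakly in $(x,u)$ to $\mub^x_t(\mathrm{d}u)\,\mu_t(\mathrm{d}x)$, the density of $\mu_t$ ensuring that the convolution in $x$ does not distort the integrated quantities.

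Step 4 (passage to the limit): For each $k$, Lipschitz-coefficients theory gives a unique strong solution $X^k$ of the closed-loop SDE. Stability of McKean--Vlasov SDEs with common noise, combined with the uniform density control from Aronson--Serrin and dominated convergence, yields $(\mu^k, \delta_{\mub^k_t}(\mathrm{d}\nub)\mathrm{d}t) \to (\mu, \delta_{\mub_t}(\mathrm{d}\nub)\mathrm{d}t)$ in $\Wc_p$, $\P$-a.s. The main obstacle is Step 3: producing genuinely deterministic Lipschitz feedbacks valued in the possibly non-convex compact set $U$ while preserving the joint conditional law sufficiently well for Step 4 to succeed. The Aronson--Serrin density bound is essential there, since it allows the mollification to respect integration against $\mu_t$ and justifies the Borel retraction step into $U$.
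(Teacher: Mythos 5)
Your plan has a genuine gap in Step 1 that propagates to the rest of the argument. You assert that the invertibility of $\sigma_0$ gives $\Fc^B_t = \sigma(\mu_{t\wedge\cdot})$ up to $\P$-null sets, and you then propose to express the randomized feedback kernel $\mub^x_t$ as a functional of $\mu$ alone. But taking $\Fc^B_t$-conditional expectations in \eqref{eq:MKV_strong_op-MFG} gives
\begin{align*}
\sigma_0 B_t = \int_{\R^n} x\,\mu_t(\mathrm{d}x) - \int_{\R^n} x\,\nu(\mathrm{d}x) - \int_0^t \int_{\R^n\times U} b(s,x,\mub_s,u)\,\mub_s(\mathrm{d}x,\mathrm{d}u)\,\mathrm{d}s,
\end{align*}
so $B$ is a functional of the \emph{pair} $(\mu,\mub)$, not of $\mu$ alone. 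Since $\alpha$ is an arbitrary open-loop control, $\mub_t=\Lc(X_t,\alpha_t\mid\Fc^B_t)$ genuinely carries $\Fc^B_t$-information beyond $\mu$, and there is no a priori measurable map $\mu\mapsto\mub$. Consequently your disintegrated kernel $(t,x,\omega)\mapsto \mub^x_t(\omega)$ is $\Fc^B_t$-measurable but need not factor through $\mu_{t\wedge\cdot}$, and Steps 3--4 — which require $\beta^k$ to depend on the path only through $\pi=\mu^k$ — cannot be carried out starting from your Step 1.

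What actually makes the proposition true is the two-stage elimination performed in the paper. First, one approximates the relaxed drift by Lipschitz feedbacks $\gamma^k(t,x,B,\mu,\Lambda)$ depending jointly on $(x, B, \mu, \Lambda)$ (this is \Cref{prop:approximation_weak_2}), then uses $\F^B$-predictability of $(\mu,\Lambda)$ together with a Lipschitz approximation to collapse the control to $\widetilde\gamma^k(t,x,B)$. Only now, for the \emph{modified} dynamics (not the original $X$), is the drift an explicit Lipschitz function of $(t,x,\pi,\mathsf{b})$; the recursive construction of \Cref{prop:appr_general_closed} — piecewise-freezing $B$ on a partition, expressing it over $(t^k_\ell,t^k_{\ell+1}]$ through the conditional mean of $X^j$ and the previously built $\phi_\ell$ — then inverts the Fokker--Planck map so that $B_t=\phi^j(t,\zeta^j)$, where $\zeta^j$ is the conditional state marginal of a new process $X^j$ (which differs from $\mu$ but converges to it in $L^1(\mathrm{d}t;\mathrm{TV})$). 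Setting $\beta^k(t,x,\pi):=\widetilde\gamma^k(t,x,(\phi^j(s,\pi))_{s\le t})$ produces the desired feedback. That inversion, and the fact that it must be done for the approximating process rather than the original one, is the missing piece in your argument; replacing it by a direct claim that $\mu$ generates $\F^B$ is where the proof breaks.

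Two smaller points. Your Step 2 invokes a Gyöngy mimicking theorem; what is actually used here is conditional uniqueness for the McKean--Vlasov Fokker--Planck equation (\Cref{thm:unique_stochastic-FP}), which is the right tool since one needs to match the $\Fc^B_t$-conditional marginals, not one-dimensional time marginals. And in Step 3, a Borel retraction of a neighborhood onto a general compact $U\subset\R^q$ is typically not Lipschitz when $U$ is non-convex, so that step as written would not preserve the Lipschitz property you need; the paper sidesteps this by building measurable selections that are $U$-valued from the outset (via the construction of \cite[Prop.\ C.1]{Lacker_carmona_delarue_CN}) and only mollifying afterwards.
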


\begin{proof}
       This is essentially an application of \Cref{prop:approximation_weak_2} and \Cref{prop:appr_general_closed}. Indeed, since $\mu$ and $\mub$ are adapted to the canonical filtration of B, by \Cref{prop:approximation_weak_2}, there exists a sequence of functions $(\gamma^k)_{k \in \N^*}$ Lipschitz in $(x,b)$ uniformly in $t$ satisfying: for each $k \in \N^*,$
    \begin{align*}
        \gamma^k: [0,T] \x \R^n \x \Cc^n \to  U\;\mbox{with}\; \gamma^k(t,x,b)=\gamma^k(t,x,b_{t \wedge \cdot})
    \end{align*}
    such that if we let $X^k$ be the unique strong solution of:
    \begin{align*}
        \mathrm{d}X^{k}_t
        =
        b(t,X^{k}_t,\mub^k_t,\gamma^k(t,X^{k}_t,B)) \mathrm{d}t
        +
        \sigma(t,X^k_t) \mathrm{d}W_t
        +
        \sigma_0 \mathrm{d}B_t\;\mbox{where}\;\Lambda:=\delta_{\mub_t}(\mathrm{d}\nub)\mathrm{d}t,
    \end{align*}
    with $\mu^k_t:=\Lc(X^{k}_t|B_{t \wedge \cdot},\mu_{t \wedge \cdot},\Lambda_{t \wedge \cdot})=\Lc(X^{k}_t|B_{t \wedge \cdot})$ and $\mub^k_t:=\Lc\big(X^{k}_{t},\gamma^k(t,X^{k}_t, B) | B_{t \wedge \cdot},\mu_{t \wedge \cdot},\Lambda_{t \wedge \cdot}\big)=\Lc\big(X^{k}_{t},\gamma^k(t,X^{k}_t, B) | B_{t \wedge \cdot}\big)$  then
    \begin{align*}
        \Lim_{k \to \infty} \big(\mu^k, \delta_{\mub^k_t}(\mathrm{d}\nub) \mathrm{d}t \big)= \big(\mu, \Lambda_t(\mathrm{d}\nub)\mathrm{d}t \big),\;\P\mbox{--a.e.},\;\mbox{for the Wasserstein metric}\;\Wc_p.
    \end{align*} 
    We can rewrite $X^k$ as 
    \begin{align*}
        \mathrm{d}X^{k}_t
        =
        b\big(t,X^{k}_t,\delta_{\widetilde{\gamma}^k(t,x,B)}(\mathrm{d}u)\mu^k_t(\mathrm{d}x),\widetilde{\gamma}^k(t,X^{k}_t,B) \big) \mathrm{d}t
        +
        \sigma(t,X^k_t) \mathrm{d}W_t
        +
        \sigma_0 \mathrm{d}B_t.
    \end{align*}
    Now, we take $k$ as fixed. By \Cref{prop:appr_general_closed}, there exists a sequence of functions $(\phi^j)_{j \in \N^*}$ s.t. for each $j \in \N^*,$ $\phi^j: [0,T] \x \Cc^{n,p}_{\Wc} \ni (t,\pi) \to \phi^j(t,\pi_{t \wedge \cdot}) \in  \Cc^n$ with $\phi^j$ Lipschitz in $\pi$ uniformly in $t$ satisfying for each $j \in \N^*$
    \begin{align*}
        B_t
        =
        \phi^j(t,\zeta^j)
    \end{align*}
    where $\zeta^j_t=\Lc(X^j_t|\Fc^B_t)=\Lc(X^j_t|\Fc^B_T)$ with $X^j$ satisfying
\begin{align*} 
    \mathrm{d}X^j_t
    =
     b\big(t,X^{j}_t,\delta_{\widetilde{\gamma}^k(t,x,B)}(\mathrm{d}u)\zeta^j_t(\mathrm{d}x),\widetilde{\gamma}^k(t,X^{j}_t,B) \big)\mathrm{d}t 
    +
    \sigma(t,X^j_t)\mathrm{d}W_t
    +
    \sigma_0 \mathrm{d}B_t,
\end{align*}
 and $\Lim_{j \to \infty} \E \bigg[\int_0^T\| \zeta^j_t-\mu^k_t \|_{\rm TV}\;\mathrm{d}t\bigg]=0.$ If we define $\beta^k(t,x,\pi):=\widetilde{\gamma}^k\big(t,x, (\phi^j(s,\pi))_{s \in [0,t]} \big).$ This is enough to conclude.
 
\end{proof}

\begin{proposition}\label{prop:open_closed}
    Let us stay in the context in {\rm \Cref{prop:open-closed-approx}}. There exists a sequence $(\varepsilon_k)_{k \in \N^*} \subset (0,\infty)$ s.t. $\Limsup_{k \to \infty} \varepsilon_k \in [0,\varepsilon]$ and $\mu^k$ is an $\varepsilon_k$--strong Markovian MFG equilibrium.
\end{proposition}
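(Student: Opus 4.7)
The plan is to derive the $\varepsilon_k$-strong Markovian optimality of $\mub^k$ from the $\varepsilon$-open-loop optimality of the reference $\mub$. The key observation I would exploit is that every admissible strong Markovian deviation for the $k$-th problem is automatically an open-loop control in the filtration generated by $(\xi,W,B)$, because $\mu^k$ is $\F^B$-adapted by construction. This allows me to test any such deviation against the reference equilibrium $X$ with a gap of at most $\varepsilon$.

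First, from the a.s.\ convergence $(\mu^k, \delta_{\mub^k_t}(\mathrm{d}\nub)\mathrm{d}t) \to (\mu, \delta_{\mub_t}(\mathrm{d}\nub)\mathrm{d}t)$ in $\Wc_p$ provided by \Cref{prop:open-closed-approx}, together with continuity and boundedness of $L^\star, L^\circ, g$, bounded convergence gives convergence of the equilibrium value:
\begin{align*}
    \E\big[J(\mu^k, \mu^k, \delta_{\mub^k_t}(\mathrm{d}\nub)\mathrm{d}t, \delta_{\mub^k_t}(\mathrm{d}\nub)\mathrm{d}t)\big] \longrightarrow \E\big[J(\mu, \mu, \delta_{\mub_t}(\mathrm{d}\nub)\mathrm{d}t, \delta_{\mub_t}(\mathrm{d}\nub)\mathrm{d}t)\big].
\end{align*}
Next, for any progressively Borel measurable deviation $\alpha': [0,T] \x \R^n \x C([0,T];\Pc(\R^n)) \to U$, I would let $X'^k$ solve the SDE against $\mub^k$ with control $\alpha'(t, X'^k_t, \mu^k)$ and set $\gamma^k_t := \alpha'(t, X'^k_t, \mu^k)$. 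Since $\mu^k$ is $\F^B$-adapted, $\gamma^k$ is open-loop admissible. Letting $Y^k$ solve the SDE against the fixed reference $\mub$ with open-loop control $\gamma^k$, the assumed $\varepsilon$-open-loop optimality of $X$ yields
\begin{align*}
    \E\big[J(\mu^{Y^k}, \mu, \delta_{\mub^{Y^k}_t}\mathrm{d}t, \delta_{\mub_t}\mathrm{d}t)\big] \le \E\big[J(\mu, \mu, \delta_{\mub_t}\mathrm{d}t, \delta_{\mub_t}\mathrm{d}t)\big] + \varepsilon,
\end{align*}
with $\mu^{Y^k}_t := \Lc(Y^k_t|\Fc^B_t)$ and $\mub^{Y^k}_t := \Lc(Y^k_t, \gamma^k_t|\Fc^B_t)$.

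The decisive step is to show that the reward of $X'^k$ against $\mub^k$ and the reward of $Y^k$ against $\mub$ are close \emph{uniformly in $\alpha'$}. This is where the separability condition $b(t,x,\nub,u) = b^\star(t,\nub) + b^\circ(t,x,u)$ is essential. Writing
\begin{align*}
    X'^k_t - Y^k_t = \int_0^t [b^\star(s,\mub^k_s) - b^\star(s,\mub_s)]\,\mathrm{d}s + \int_0^t [b^\circ(s,X'^k_s, \gamma^k_s) - b^\circ(s,Y^k_s, \gamma^k_s)]\,\mathrm{d}s + \int_0^t [\sigma(s,X'^k_s) - \sigma(s,Y^k_s)]\,\mathrm{d}W_s,
\end{align*}
the Lipschitz continuity of $b^\circ$ and $\sigma$ in $x$ (uniformly in $u$) combined with BDG and Gronwall yields
\begin{align*}
    \E\big[\|X'^k - Y^k\|^p_T\big] \le C_T\,\E\bigg[\int_0^T |b^\star(s,\mub^k_s) - b^\star(s,\mub_s)|^p\,\mathrm{d}s\bigg] \xrightarrow[k\to\infty]{} 0,
\end{align*}
and the right-hand side is independent of $\alpha'$. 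Continuity and boundedness of $L^\circ, L^\star, g$, compactness of $U$, and bounded convergence then give
\begin{align*}
    \Big|\E\big[J(\mu'^k, \mu^k, \delta_{\mub'^k_t}\mathrm{d}t, \delta_{\mub^k_t}\mathrm{d}t)\big] - \E\big[J(\mu^{Y^k}, \mu, \delta_{\mub^{Y^k}_t}\mathrm{d}t, \delta_{\mub_t}\mathrm{d}t)\big]\Big| \le \eta_k,
\end{align*}
with $\eta_k \to 0$ independent of $\alpha'$. Combining the three displayed estimates yields $\sup_{\alpha'} \E[J(\mu'^k, \mu^k, \delta_{\mub'^k_t}\mathrm{d}t, \delta_{\mub^k_t}\mathrm{d}t)] - \E[J(\mu^k, \mu^k, \delta_{\mub^k_t}\mathrm{d}t, \delta_{\mub^k_t}\mathrm{d}t)] \le \varepsilon + \tilde\eta_k$ with $\tilde\eta_k \to 0$; setting $\varepsilon_k := \varepsilon + \tilde\eta_k + 1/k > 0$ then gives $\limsup_k \varepsilon_k \le \varepsilon$ and establishes the $\varepsilon_k$-strong Markovian MFG equilibrium property of $\mub^k$.

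The hard part will be the uniformity of the stability estimate in $\alpha'$, since the class of strong Markovian deviations is very rich. The separability of $b$ is precisely what removes the coupling between the mean-field discrepancy $b^\star(\mub^k) - b^\star(\mub)$ and the arbitrary control; without it, the drift difference $b(s,X'^k_s, \mub^k_s, \gamma^k_s) - b(s,Y^k_s, \mub_s, \gamma^k_s)$ would mix the measure gap with $\gamma^k$, and the Gronwall bound would depend on $\alpha'$ in an uncontrolled way.
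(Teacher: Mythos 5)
Your proposal is correct and takes essentially the same route as the paper's own proof. Both arguments rest on the same three ingredients: (i) any strong Markovian deviation $\alpha'(t,X'^k_t,\mu^k)$ for the $k$-th problem is automatically an admissible open-loop control because $\mu^k$ is $\F^B$-adapted and the deviation state is a function of $(\xi,W,B)$; (ii) a Gr\"onwall comparison between the deviation state driven against $\mub^k$ and the one driven against $\mub$ by the same $U$-valued process, where the separability of $b$ confines the mean-field gap to the $b^\star$ term and yields a bound independent of the deviation; (iii) the $\varepsilon$-open-loop optimality of $\mub$ transferred back via continuity and boundedness of $(L^\circ,L^\star,g)$. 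The only cosmetic difference is how the supremum over deviations is handled: the paper fixes a nearly-optimal $\beta'^k$ (within $1/2^k$ of the sup) and passes to the limit along that sequence, whereas you establish uniform convergence of the stability and reward estimates in $\alpha'$ before taking the sup. These are equivalent, and your explicit uniformity is correctly justified since the Gr\"onwall bound depends only on $\int_0^T |b^\star(s,\mub^k_s)-b^\star(s,\mub_s)|^p\,\mathrm{d}s$, not on $\alpha'$.
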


\begin{proof}
       Let $X'^k$ be the solution of 
    \begin{align*}
        \mathrm{d}X'^{k}_t
        =
        b(t,X'^{k}_t,\mub^k_t,\beta'^k(t,X'^{k}_t,\mu^k_t)) \mathrm{d}t
        +
        \sigma(t,X'^k_t) \mathrm{d}W_t
        +
        \sigma_0 \mathrm{d}B_t\;\mbox{with}\;\mu'^k_t:=\Lc(X'^k_t|\Fc^B_t),\;\mub'^k_t:=\Lc(X'^k_t,\beta'^k(t,X'^{k}_t,\mu^k_t)|\Fc^B_t)
    \end{align*}
    where $(\beta^k)_{k \in \N^*}$ is a sequence s.t.
    \begin{align*}
	    &\E\big[J\big(\mu'^k,\mu^k,\delta_{\mub'^k_t}(\mathrm{d}\nub)\mathrm{d}t, \delta_{\mub^k_t}(\mathrm{d}\nub)\mathrm{d}t\big)\big]
	    -
	    \E\big[J\big(\mu^k,\mu^k,\delta_{\mub^k_t}(\mathrm{d}\nub)\mathrm{d}t, \delta_{\mub^k_t}(\mathrm{d}\nub)\mathrm{d}t\big)\big]
	    \\
	    &\ge
	    \sup_{\alpha \in \Ac^c}
	    \E\big[J\big(\mu^\alpha,\mu^k,\delta_{\mub^\alpha_t}(\mathrm{d}\nub)\mathrm{d}t, \delta_{\mub^k_t}(\mathrm{d}\nub)\mathrm{d}t\big)\big]
	    -
	    \E\big[J\big(\mu^k,\mu^k,\delta_{\mub^k_t}(\mathrm{d}\nub)\mathrm{d}t, \delta_{\mub^k_t}(\mathrm{d}\nub)\mathrm{d}t\big)\big] - 1/2^k.
	\end{align*}
	 Given $\mub,$ $\mu^k$ and $X'^k,$ let $\Xt'^k$ be the solution of 
    \begin{align*}
        \mathrm{d}\Xt'^{k}_t
        =
        b(t,\Xt'^{k}_t,\mub_t,\beta'^k(t,X'^{k}_t,\mu^k_t)) \mathrm{d}t
        +
        \sigma(t,\Xt'^k_t) \mathrm{d}W_t
        +
        \sigma_0 \mathrm{d}B_t\;\mbox{with}\;\widetilde{\mu}'^k_t:=\Lc(\Xt'^k_t|\Fc^B_t),\;\widetilde{\mub}'^k_t:=\Lc(\Xt'^k_t,\beta'^k(t,X'^{k}_t,\mu^k_t)|\Fc^B_t).
    \end{align*}
    There is a constant $C$ independent of $k$ s.t. for each $t \in [0,T],$
    \begin{align*}
        \E\Big[\sup_{s \in [0,t]}|\Xt'^k_s-X'^k_s|^2 \Big]
        \le C \;\;
        \E\bigg[ \bigg| \int_0^t b^\star(s,\mub^k_s) \mathrm{d}s - \int_0^t b^\star(s,\mub_s) \mathrm{d}s  \bigg|^2 + \int_0^t \sup_{r \in [0,s]} |\Xt'^k_r-X'^k_r|^2 \mathrm{d}r \bigg],
    \end{align*}
    by applying Gr\"{o}nwall's lemma and using {\rm \Cref{prop:open-closed-approx}}, it is easy to check that $\Lim_{k \to \infty}\E\Big[\sup_{t \in [0,T]}|\Xt'^k_t-X'^k_t| \Big]=0.$ This leads to 
    $$
        \Lim_{k \to \infty} \Wc_p \big( (\mu'^k,\delta_{\mub'^k_t}(\mathrm{d}\nub)\mathrm{d}t) , (\widetilde{\mu}'^k,\delta_{\widetilde{\mub}'^k_t}(\mathrm{d}\nub)\mathrm{d}t) \big)=0,\;\P\mbox{--a.e.}.
    $$
    Then, as $X$ is an $\varepsilon$--$open$--$loop$ MFG equilibrium,
    \begin{align*}
        &\Lim_{k \to \infty} 
        \E\big[J\big(\mu'^k,\mu^k,\delta_{\mub'^k_t}(\mathrm{d}\nub)\mathrm{d}t, \delta_{\mub^k_t}(\mathrm{d}\nub)\mathrm{d}t\big)\big]
        -
	    \E\big[J\big(\mu^k,\mu^k,\delta_{\mub^k_t}(\mathrm{d}\nub)\mathrm{d}t, \delta_{\mub^k_t}(\mathrm{d}\nub)\mathrm{d}t\big)\big]
	    \\
	    &\Lim_{k \to \infty} 
	    \E\big[J\big(\widetilde{\mu}'^k,\mu,\delta_{\widetilde{\mub}'^k_t}(\mathrm{d}\nub)\mathrm{d}t, \delta_{\mub_t}(\mathrm{d}\nub)\mathrm{d}t\big)\big]
	    -
	    \E\big[J\big(\mu,\mu,\delta_{\mub_t}(\mathrm{d}\nub)\mathrm{d}t, \delta_{\mub_t}(\mathrm{d}\nub)\mathrm{d}t\big)\big]
	    \le \varepsilon.
    \end{align*}
    To conclude, it is enough to define $(\varepsilon_k)_{k \in \N^*}$ by 
    \begin{align*}
        \varepsilon_k
        :=
        \sup_{\alpha \in \Ac^c}
	    \E\big[J\big(\mu^\alpha,\mu^k,\delta_{\mub^\alpha_t}(\mathrm{d}\nub)\mathrm{d}t, \delta_{\mub^k_t}(\mathrm{d}\nub)\mathrm{d}t\big)\big]
	    -
	    \E\big[J\big(\mu^k,\mu^k,\delta_{\mub^k_t}(\mathrm{d}\nub)\mathrm{d}t, \delta_{\mub^k_t}(\mathrm{d}\nub)\mathrm{d}t\big)\big].
    \end{align*}
\end{proof}

\paragraph*{Proof of \Cref{thm:converse-limit} (measure--valued MFG via approximate strong markovian MFG)}\label{proof_thmconv_1_MFG}

Let $\Pr \in \Pcb_V$ such that $\Pr \circ (\mu,\Lambda)^{-1} \in \Sc^\star.$ By \cite[Theorem 2.13]{MFD-2020_MFG}, there exists a sequence of $(\sigma\{X_0, W_{t \wedge \cdot}, B_{t \wedge \cdot}\})_{t \in [0,T]}$--predictable processes $(\alpha^\ell)_{\ell \in \N^*}$ such that $X^{\alpha^\ell}$ the McKean--Vlasov process solution of \Cref{eq:MKV_strong_op-MFG} associated to $\alpha^\ell$ is an $\varepsilon^\ell$--$open$--$loop$ MFG equilibrium with $\varepsilon_\ell>0,$  $\Lim_{\ell \to \infty} \varepsilon_\ell=0,$ and
\begin{align*}
    \Lim_{\ell \to \infty} \P \circ \big(\mu^\ell,\mu^\ell,\delta_{\mub^\ell_t}(\mathrm{d}\nub)\mathrm{d}t, \delta_{\mub^\ell_t}(\mathrm{d}\nub)\mathrm{d}t\big)^{-1}
    =
    \Pr\;\mbox{in}\;\Wc_p,\;\mbox{where}\;\mu^\ell_t:=\Lc(X^{\alpha^\ell}_t|\Fc^B_t)\;\mbox{and}\;\mub^\ell_t=\Lc(X^{\alpha^\ell}_t, \alpha^\ell_t|\Fc^B_t).
\end{align*}

To conclude, it is enough to approximate $open$--$loop$ MFG equilibrium by sequence of approximate strong markovian MFG equilibrium. This is done by \Cref{prop:open_closed}.

\subsubsection{Approximation of approximate strong Markovian MFG equilibrium by approximate $closed$--$loop$ Nash equilibria}
Now, in this part, we show that any approximate strong Markovian MFG equilibrium is the limit of sequence of approximate $closed$--$loop$ Nash equilibria.

\medskip
Let $\mub$ be an $\varepsilon$--strong markovian MFG equilibrium associated to control $\beta$ where $\beta:[0,T] \x \R^n \x \Cc^{n,p}_{\Wc} \ni (t,x,\pi) \to \beta(t,x,\pi_{t \wedge \cdot}) \in U$ is Lipschitz in $(x,\pi)$ uniformly in $t.$ We denote by $X$ the McKean--Vlasov process associated to $\mub,$ 
\begin{align*}
        \mathrm{d}X_t
        =
        b(t,X_t,\mub_t,\beta(t,X_t,\mu)) \mathrm{d}t
        +
        \sigma(t,X_t) \mathrm{d}W_t
        +
        \sigma_0 \mathrm{d}B_t\;\;\mbox{with}\;\;\mu_t:=\Lc(X_t|\Fc^B_t),\;\mub_t:=\Lc\big(X_{t},\beta(t,X_t,\mu) | \Fc^B_t\big).
    \end{align*}

For each $N \in \N^*,$ let us define 
\begin{align} \label{eq:def_beta}
    \beta^{i,N}(t,\xb):=\beta(t,\xf^i(t),\pi[\xb])
\end{align}
for $\xb:=(\xf^1,\cdots,\xf^N) \in (\Cc^n)^N$ with $\pi_t[\xb]:=\frac{1}{N} \sum_{i=1}^N \delta_{\xf^i(t)}.$ We denote $\alphab^N:=(\beta^{1,N},\cdots,\beta^{N,N}).$ 
\begin{proposition} \label{prop:chaos_common}
    Under {\rm \Cref{assum:main1}}, one has
    \begin{align*}
        \Lim_{N \to \infty} \varphi^N[\alphab^N]
        =
        \mu\;\mbox{in}\;\Wc_p,\;\P\mbox{--a.e.}.
    \end{align*}
\end{proposition}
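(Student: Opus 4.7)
The plan is a standard synchronous-coupling propagation-of-chaos argument in the common-noise setting, crucially exploiting the Lipschitz regularity of $\beta$ in $(x,\pi)$ uniformly in $t$ supplied by \Cref{prop:open-closed-approx}, the separability condition, and the boundedness and Lipschitz regularity of the coefficients. First I introduce on $(\Om,\H,\P)$ an auxiliary family $(\widetilde X^i)_{i \ge 1}$ of McKean--Vlasov copies driven by the same Brownian motions $(W^i,B)$ and initial data $\xi^i$ as the $N$-player system,
\begin{align*}
    \mathrm d\widetilde X^i_t
    =
    b\big(t,\widetilde X^i_t,\mub_t,\beta(t,\widetilde X^i_t,\mu)\big)\mathrm dt
    + \sigma(t,\widetilde X^i_t)\mathrm dW^i_t
    + \sigma_0\,\mathrm dB_t,
    \qquad \widetilde X^i_0=\xi^i.
\end{align*}
Since $\mu$ and $\mub$ are prescribed $\F^B$-adapted processes and the coefficients in front are Lipschitz, this SDE has a unique strong solution; by strong uniqueness $\Lc(\widetilde X^i_t \mid \Fc^B_t) = \mu_t$ $\P$-a.s., and conditionally on $\Fc^B_T$ the sequence $(\widetilde X^i)$ is i.i.d. Setting $\widetilde\varphi^N_t := \frac{1}{N}\sum_{i=1}^N \delta_{\widetilde X^i_t}$ and $\overline{\widetilde\varphi}^N_t := \frac{1}{N}\sum_{i=1}^N \delta_{(\widetilde X^i_t,\beta(t,\widetilde X^i_t,\mu))}$, a conditional-on-$B$ Fournier--Guillin-type quantitative propagation of chaos, together with the extra moment $\nu \in \Pc_{p'}(\R^n)$ for $p'>p$ and the boundedness of $(b,\sigma)$, yields $\P$-almost surely the (polynomially rated) convergences
\begin{align*}
    \Lim_{N\to\infty}\sup_{t \in [0,T]} \Wc_p\big(\widetilde\varphi^N_t,\mu_t\big)=0
    \qquad\text{and}\qquad
    \Lim_{N\to\infty}\sup_{t \in [0,T]} \Wc_p\big(\overline{\widetilde\varphi}^N_t,\mub_t\big)=0.
\end{align*}

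I would then set up the synchronous coupling between $X^{\alphab^N,i}$ and $\widetilde X^i$ using the same noises. Subtracting the two SDEs, using the Lipschitz continuity of $b, \sigma$ in all their variables, the Lipschitz continuity of $\beta$ in $(x,\pi)$ (uniform in $t$ for the $\Wc_p$-metric on $\Cc^{n,p}_\Wc$), and the elementary synchronous bound
\begin{align*}
    \Wc_p\big(\varphi^N_s[\alphab^N],\widetilde\varphi^N_s\big)^p \le \frac{1}{N}\sum_{i=1}^N |X^{\alphab^N,i}_s - \widetilde X^i_s|^p,
\end{align*}
together with the analogous one for $\Wc_p(\overline\varphi^N_s[\alphab^N],\overline{\widetilde\varphi}^N_s)^p$ (which costs only an additional Lipschitz-of-$\beta$ factor), the Burkholder--Davis--Gundy inequality applied to the diffusion part and Gronwall's lemma yield $R^N_T \le C\,\delta_N$ for
\begin{align*}
    R^N_t := \E\bigg[\sup_{s\le t}\frac{1}{N}\sum_{i=1}^N |X^{\alphab^N,i}_s-\widetilde X^i_s|^p\bigg],
    \qquad
    \delta_N := \E\Big[\sup_{s\le T} \Wc_p(\widetilde\varphi^N_s,\mu_s)^p + \sup_{s\le T} \Wc_p(\overline{\widetilde\varphi}^N_s,\mub_s)^p\Big],
\end{align*}
where $C$ depends only on $T$ and the Lipschitz constants. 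Uniform integrability, obtained from the $L^{p'}$ bound on $\nu$ and the boundedness of $(b,\sigma)$, upgrades the almost-sure convergence of the previous step into $\delta_N \to 0$ in $L^1$, hence $R^N_T \to 0$.

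Combining the two steps by the triangle inequality gives $\E[\sup_{t\in[0,T]} \Wc_p(\varphi^N_t[\alphab^N],\mu_t)^p] \to 0$, and the claimed $\P$-almost sure convergence of $\varphi^N[\alphab^N]$ to $\mu$ in $\Cc^{n,p}_\Wc$ then follows by a standard Borel--Cantelli argument based on the polynomial rate carried over from step one. The potentially delicate point is the feedback of $\overline\varphi^N[\alphab^N]$ inside the drift, which a priori threatens a circular estimate; it is resolved precisely because $\beta$ depends on its measure argument only through the state marginal, so the extended-measure error is controlled by the state-empirical error plus $R^N$, and the Gronwall loop closes without difficulty. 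The main remaining task is bookkeeping: keeping track of the path-dependent nature of $\beta(t,x,\pi_{t\wedge\cdot})$ in the $\Wc_p$-metric on $\Cc^{n,p}_\Wc$ used throughout, which is exactly what \Cref{prop:open-closed-approx} has been engineered to provide.
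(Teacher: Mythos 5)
Your proof follows essentially the same route as the paper's: a synchronous-coupling propagation-of-chaos argument in the common-noise setting. The key observation you both rely on is that the law-of-controls dependence collapses onto the state marginal once $\beta$ is Lipschitz in its state-measure argument; you state this at the end as the resolution of the apparent circularity. The paper makes this reduction explicit upfront, defining
$b(t,x,\pi) := b\big(t,x,\delta_{\beta(t,y,\pi)}(\mathrm{d}u)\pi_t(\mathrm{d}y),\beta(t,x,\pi)\big)$
so that the $N$-player SDE becomes literally a classical interacting particle system with state-measure interaction only, and then cites an off-the-shelf propagation-of-chaos-with-common-noise result. Your proof unrolls that citation into the i.i.d.\ coupling copies, the conditional Fournier--Guillin step, and the Gronwall loop — exactly the standard content of the cited reference — so there is no genuinely different route here.

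One technical soft spot worth flagging: you obtain $\E\big[\sup_t \Wc_p(\varphi^N_t[\alphab^N],\mu_t)^p\big]\to 0$ and then appeal to Borel--Cantelli on "the polynomial rate carried over from step one" to reach the $\P$-almost-sure conclusion. The Fournier--Guillin rate is of order $N^{-\theta}$ with $\theta$ depending on $n,p,p'$; when $n$ is large relative to $p$ one has $\theta<1$, so $\sum_N N^{-\theta}$ diverges and Borel--Cantelli does not yield full-sequence almost-sure convergence by itself. The usual fix is that step one is already almost sure without any rate (conditionally on $\Fc^B_T$ the $\widetilde X^i$ are i.i.d.\ with marginal $\mu_t$, and the conditional strong law of large numbers gives a.s.\ $\Wc_p$-convergence directly), and the coupling step must then be closed pathwise conditionally on $B$, controlling the BDG stochastic-integral error by a separate Doob-plus-Borel--Cantelli argument on that term alone. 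This is bookkeeping rather than a conceptual hole, and it is exactly the sort of detail the paper's one-line invocation of the reference elides; but as written, your last sentence does not quite justify the almost-sure mode of convergence claimed in the statement.
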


\begin{proof}
       We define
       \begin{align*}
           b(t,x,\pi)
           :=
           b(t,x,\delta_{\beta(t,y,\pi)}(\mathrm{d}u)\pi_t(\mathrm{d}y), \beta(t,x,\pi))\;\mbox{for each}\;(t,x,\pi) \in [0,T] \x \R^n \x \Cc^n_{\Wc}.
       \end{align*}
       As $\beta$ is Lipschitz in $(x,\pi)$ uniformly in $t,$ under \Cref{assum:main1}, we can apply \cite[Proposition 4.15]{djete2019general} (a classical propagation of chaos with common noise, see also \cite{KURTZ1999103}) and find the desired result. 
\end{proof}
Recall that $(\beta^{1,N},\cdots,\beta^{N,N})$ is defined in \Cref{eq:def_beta}.
\begin{proposition}\label{prop:closed_Nash}
    There exists a sequence $(\varepsilon_{N})_{N^*} \subset (0,\infty)$ s.t. $\Limsup_{N \to \infty} \varepsilon_{N} \in [0,\varepsilon]$ and $\alphab^N:=(\beta^{1,N},\cdots,\beta^{N,N})$ is an $\varepsilon_N$--$closed$--$loop$ Nash equilibria for each $N \in \N^*$.
\end{proposition}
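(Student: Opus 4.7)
The strategy is to reduce the deviation analysis for player $1$ (by symmetry all players are treated identically) to a McKean--Vlasov optimization problem against the frozen environment $(\mub_t)_{t \in [0,T]}$, and then invoke the $\varepsilon$-strong Markovian MFG property, which by \Cref{prop:strongM_as_strongO} transfers to the $open$--$loop$ formulation where arbitrary deviations are allowed. Define
\[
\varepsilon_N := \sup_{\gamma \in \Ac^c_N} J_1\big(\gamma, \beta^{2,N},\dots,\beta^{N,N}\big) - J_1(\alphab^N);
\]
by the symmetric construction of $(\beta^{i,N})_{i}$ from the single map $\beta$, this quantity controls the Nash deviation parameter for every $i$. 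First, \Cref{prop:chaos_common} combined with the Lipschitz continuity of $\beta$ and the continuity and boundedness of $(L,g)$ yields
\[
\Lim_{N \to \infty} J_1(\alphab^N) = \E\bigg[\int_0^T L\big(t,X_t,\mub_t,\beta(t,X_t,\mu)\big)\mathrm{d}t + g(X_T,\mu_T)\bigg] =: V_0.
\]

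Next, fix for each $N$ a near-optimizer $\gamma^N \in \Ac^c_N$ of the supremum above, up to $1/N$, and write $\Ybb^N := (Y^{N,1},\dots,Y^{N,N})$ for the corresponding states with empirical measures $\pi^N := \frac{1}{N}\sum_j \delta_{Y^{N,j}}$ and $\overline{\pi}^N_t := \frac{1}{N}\sum_j \delta_{(Y^{N,j}_t, \cdot)}$ (where the controls of players $j \ge 2$ are $\beta(t,Y^{N,j}_t, \pi^N)$ and that of player $1$ is $\gamma^N(t,\Ybb^N)$). Consider
\[
\Qr^N := \Lc^{\P}\Big(Y^{N,1},\;\delta_{\gamma^N(t, \Ybb^N)}(\mathrm{d}u)\mathrm{d}t,\;\pi^N,\;\delta_{\overline{\pi}^N_t}(\mathrm{d}\nub)\mathrm{d}t,\;B,\;W^1\Big)
\]
on the appropriate canonical space. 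Via the Girsanov-style change of measure from the proof of \Cref{lemm:convergenceNashEquilibrium} (applied only to player $1$, which converts the deviation into a bounded Doléans exponential thanks to the non--degeneracy of $\sigma$ and boundedness of $b$), together with the standard compactness arguments therein, the sequence $(\Qr^N)_{N \in \N^*}$ is relatively compact in $\Wc_p$.

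The key intermediate claim is that for any limit point $\Qr^\infty$: $(i)$ the $(\pi^N, \overline{\pi}^N, B)$ marginal converges to $(\mu, \delta_{\mub_t}\mathrm{d}t, B)$, because the single-player deviation contributes only $O(1/N)$ mass to the empirical measures, so the $N-1$ non--deviating particles (which follow the same McKean--Vlasov-type dynamics as in \Cref{prop:chaos_common} up to a vanishing perturbation of $\overline{\pi}^N$) recover the MFG limit; and $(ii)$ the deviator's state $Y^{N,1}$ converges to some process $Y'$ satisfying the relaxed SDE
\[
\mathrm{d}Y'_t = \int_U b\big(t,Y'_t,\mub_t,u\big)\Phi_t(\mathrm{d}u)\mathrm{d}t + \sigma(t,Y'_t)\mathrm{d}W_t + \sigma_0\mathrm{d}B_t
\]
for a measure-valued control $\Phi$ adapted to an appropriate filtration. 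In particular $(\Phi, Y')$ is an admissible competitor in the $open$--$loop$/relaxed MFG formulation with environment $\mub$. By \Cref{prop:strongM_as_strongO}, $\mub$ is also an $\varepsilon$-$open$--$loop$ MFG equilibrium, and the approximation results of \Cref{prop:approximation_weak_2} allow the $\varepsilon$-optimality to be extended to relaxed deviations; hence the limit value satisfies
\[
\Limsup_N J_1\big(\gamma^N,\beta^{2,N},\dots,\beta^{N,N}\big) \le V_0 + \varepsilon = \Lim_N J_1(\alphab^N) + \varepsilon,
\]
which gives $\Limsup_N \varepsilon_N \le \varepsilon$. The required $\varepsilon_N$ is then obtained by taking $\varepsilon_N := \max_i\big[\sup_{\gamma} J_i(\cdots) - J_i(\alphab^N)\big] \vee 1/N$ to ensure strict positivity.

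The main obstacle is step $(ii)$: rigorously identifying the limit of the deviating player's dynamics as an admissible MFG competitor, even though $\gamma^N(t, \Ybb^N)$ depends a priori on the whole $N$-particle system. The required ingredients---the Girsanov change-of-measure as used in \Cref{lemm:convergenceNashEquilibrium}, the martingale-problem characterization for the limit dynamics, the Aronson--Serrin type regularity used to control $\pi^N$, and the approximation \Cref{prop:approximation_weak_2} that transfers $\varepsilon$-optimality from Markovian to relaxed controls---are precisely the tools developed in the preceding sections, so the argument reduces to a careful assembly of these pieces.
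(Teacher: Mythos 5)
Your proposal captures the same overall strategy as the paper's proof: symmetry reduces to one player, a Girsanov change of measure reduces the $(N-1)$ non-deviating players to the undeviated situation where \Cref{prop:chaos_common} applies, the empirical environment is replaced by the frozen MFG environment $\mub$, and the $\varepsilon$--optimality of the approximate strong Markovian (hence $open$--$loop$) MFG equilibrium then bounds the deviation gain. The ingredients you identify (Girsanov à la \Cref{lemm:convergenceNashEquilibrium}, Aronson--Serrin regularity behind \Cref{prop:chaos_common}, \Cref{prop:strongM_as_strongO}) are exactly the ones the paper uses.

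The one place where your route genuinely diverges is the treatment of the deviator's dynamics. You propose passing to the limit of the laws $\Qr^N$ and identifying a limiting \emph{relaxed} control $\Phi$ together with a state process $Y'$, and then using \Cref{prop:approximation_weak_2} to push the $\varepsilon$--optimality through relaxed controls. This is what you correctly flag as the main obstacle, because one must rigorously identify the limit of $\gamma^N(t,\Ybb^N)$ as an admissible competitor. The paper avoids this limit-identification step entirely: it introduces, at each fixed $N$, the auxiliary process $\Yt^{i,N}$ solving the same SDE as $Y^{i,i}$ but with $\overline{\varphi}^{N}_t[\alphab^{i,N}]$ replaced by the frozen $\mub_t$, and shows via Gr\"onwall and the Girsanov bound $\E[(Z^i_T)^2]^{1/2}<\infty$ that $\E[\sup_t|\Yt^{i,N}_t-Y^{i,N}_t|]\to 0$. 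The $\varepsilon$--open--loop MFG bound is then applied \emph{before} any limit is taken, at the level of the pre-limit $\Yt^{i,N}$, so no compactness or identification of a relaxed limit is needed. This is a cleaner way to close the argument than the route you sketch, and it also sidesteps the relaxed-control machinery: \Cref{prop:approximation_weak_2} is already baked into \Cref{prop:strongM_as_strongO}, so you do not need to invoke it a second time. Modulo this difference (and the harmless change of averaging over $i$ rather than picking $i=1$ and using symmetry afterwards), your decomposition and the paper's agree.
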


\begin{proof}

    Let us define
    \begin{align*}
        c^{i,N}
        :=
        \sup_{\alpha' \in \Ac_N^c}
			J_i\big(( (\alphab^{N})^{-i},\alpha')\big)
			-
			J_i(\alphab^{N}).
    \end{align*}
    There exists a sequence of controls $(\kappa^{i,N})_{(i,N) \in \{1,\dots,N\} \x \N^*}$ satisfying $J_i\big(( (\alphab^{N})^{-i},\kappa^{i,N})\big)
	-J_i(\alphab^{N})\ge c^{i,N}-2^{-N},$ for each $i \in \{1,...,N\}.$
    We define 
$$
    \alphab^{i,N}:=(\beta^{1,N},\dots,\beta^{,i-1,N},\kappa^{i,N},\beta^{i+1,N},\dots,\beta^{N,N}).
$$
By using the same technique as in the proof of \Cref{lemm:convergenceNashEquilibrium}, one has for each $i \in \{1,\dots,N\},$
\begin{align} \label{eq:chaos_strongCN}
    \Lim_{N \to \infty} \E\big[\Wc_p(\varphi^N[\alphab^{i,N}],\mu) \big].
\end{align}
Indeed, let $\Ybb^i:=(Y^{i,1},\cdots,Y^{i,N}):=(X^{\alphab^{i,N},1},\dots,X^{\alphab^{i,N},N}),$ 
   and we introduce
	\begin{align*}
	    Z^i_t
	    :=
	    \exp\bigg\{ \int_0^t \phi^i_r \mathrm{d}W^i_r - \frac{1}{2} \int_0^t |\phi^i_r|^2 \mathrm{d}r \bigg\}\;\mbox{for all}\;t \in [0,T],\; \mbox{and}\;\frac{\mathrm{d}\Q^i}{\mathrm{d}\P}=Z^i_T
	\end{align*}
	with 
	\begin{align*}
	    \phi^i_t:=\sigma(t,X^{\alphab^N,i}_t)^{-1}\Big(b\big(t,X^{\alphab^N,i}_{t},\overline{\varphi}^{N}_{t}[\alphab^{i,N}] ,\kappa^{i,N}(t,\Xbb^{\alpha}) \big) - b\big(t,X^{\alphab^N,i}_{t},\overline{\varphi}^{N}_{t}[\alphab^N] ,\alpha^{\star,i,N}(t,\Xbb^{\alphab^N}) \big) \Big).
	\end{align*}
	 By uniqueness, we can check that $\Lc^{\Q^i} (\Xbb^{\alphab^N},B)=\Lc^{\P}(\Ybb^i,B)$ for each $i.$ Then,
	 \begin{align*}
	     \Lim_{N \to \infty} \E^{\P}\big[\Wc_p(\varphi^N[\alphab^{i,N}],\mu) \big]
	     =
	     \Lim_{N \to \infty} \E^{\P}\big[Z^i_T\Wc_p(\varphi^N[\alphab^{N}],\mu) \big]
	     \le 
	     \Lim_{N \to \infty}
	     \E^{\P}\big[|Z^i_T|^2 \big]^{1/2}\E^{\P}\big[\Wc_p(\varphi^N[\alphab^{N}],\mu)^2 \big]^{1/2}=0
	 \end{align*}
    where we used \Cref{prop:chaos_common}. A consequence of \eqref{eq:chaos_strongCN} is $\Lim_{N \to \infty} \E\big[\Wc_p(\overline{\varphi}^N_t[\alphab^{i,N}],\mub_t) \big]$ $\mathrm{d}\P \otimes \mathrm{d}t$--a.e. Next, let $\widetilde{Y}^{i,N}$ be the solution of
    \begin{align*}
        \mathrm{d}\Yt^{i,N}_t
        =
        b(t,\Yt^{i,N}_t,\mub_t,\kappa^{i,N}(t,\Ybb^i)) \mathrm{d}t
        +
        \sigma(t,\Yt^{i,N}_t) \mathrm{d}W_t
        +
        \sigma_0 \mathrm{d}B_t.
    \end{align*}
    With the previous results, one has $\Lim_{N \to \infty} \E^{\P}\bigg[\sup_{t \in [0,T]} |\Yt^{i,N}_t-Y^{i,N}_t| \bigg]=0.$

Therefore, using all previous observations and as $\mub$ is  an $\varepsilon$--strong markovian MFG equilibrium, so can be seen as an $\varepsilon$--$open$--$loop$ MFG equilibrium,
    \begin{align*}
        \varepsilon
        &\ge
        \Big( \Limsup_{N \to \infty} 
        \Big(\frac{1}{N} \sum_{i=1}^{N} \E \bigg[
        \int_0^T L\big(t,\Yt^{i,N}_t,\mub_t ,\kappa^i(t,\Ybb^{i}) \big) \mathrm{d}t 
        + 
        g \big(\Yt^{i,N}_T, \mu \big)
        \bigg]
        -
        \E\big[J\big(\mu,\mu,\delta_{\mub_t}(\mathrm{d}\nub)\mathrm{d}t, \delta_{\mub_t}(\mathrm{d}\nub)\mathrm{d}t\big)\big]
         \Big)
        \\
        &\ge 
        \Limsup_{N \to \infty} 
        \Big(\frac{1}{N} \sum_{i=1}^{N} J_i\big(( (\alphab^{N})^{-i},\kappa^{i,N})\big)
        -
        \frac{1}{N} \sum_{i=1}^{N} J_i(\alphab^{N}) \Big)
        \ge
        \Limsup_{N \to \infty}
        \frac{1}{N} \sum_{i=1}^{N} c^{i,N}.
    \end{align*}
    To obtain the result as formulated, observe that by symmetry $\varepsilon_N:=c^{i,N}=c^{1,N}.$ This is enough to conclude.

\end{proof}

\paragraph*{Proof of \Cref{thm:converse-limit} (measure--valued MFG via approximate Nash equilibria)}\label{proof_thmconv_2_MFG}

Let $\Pr \in \Pcb_V$ such that $\Pr \circ (\mu,\Lambda)^{-1} \in \Sc^\star.$ By \Cref{thm:converse-limit} (first part) (see also \Cref{prop:open_closed}), there exist $(\varepsilon_k)_{k \in \N^*} \subset (0,\infty)$ with $\Lim_{k \to \infty} \varepsilon_k=0$ and a sequence $(\beta^k)_{k \in \N^*}$ s.t. $\mub^k$ is an $\varepsilon_k$--strong Markovian MFG equilibrium associated to the control $\beta^k$ with $\beta^k$ Lipschitz in $(x,\pi)$ uniformly in $t,$ and 
\begin{align*}
    \Lim_{k \to \infty} \P \circ \big(\mu^k,\mu^k,\delta_{\mub^k_t}(\mathrm{d}\nub)\mathrm{d}t, \delta_{\mub^k_t}(\mathrm{d}\nub)\mathrm{d}t\big)^{-1}
    =
    \Pr\;\mbox{in}\;\Wc_p.
\end{align*}
To conclude, it is enough to approximate approximate strong Markovian MFG equilibrium by approximate Nash equilibria. This is done by \Cref{prop:closed_Nash}.

\subsection{McKean--Vlasov optimal control} \label{sec:proof_Mc-Vl}

\subsubsection{Proof of \Cref{thm:equivalence_mckv_cl}}\label{proof_thm1_MFC}

Thanks to \Cref{prop:open_closed}, we can approximate any $open$--$loop$ McKean--Vlasov process by a sequence of $closed$--$loop$ McKean--Vlasov processes. Also, by \Cref{prop:canonical_McKV}, we can easily verify that $V^c_S \le V^o_S.$ We therefore find our result and $V^c_S=V^o_S.$ We deduce that
\begin{align*}
        V^c_S
        =
        V^{o}_S
        =
        \sup_{\Pr \in \Vc} \E^{\Pr} \big[J (\mu',\mu,\Lambda,\Lambda') \big].
    \end{align*}

\subsubsection{Proof of \Cref{thm:limit_mckv_cl}}\label{proof_thm2_MFC}

Let $(\Prt^N)_{N \in \N^*} \subset \Pc(\Cc^n_{\Wc} \x \M \x \Cc^n)$ be the sequence defined by
\begin{align*}
    \Prt^N
    :=
    \P \circ \big( \varphi^N[\alphab^N], \delta_{\overline{\varphi}^N_t[\alphab^N]}(\mathrm{d}\nub)\mathrm{d}t, B\big)^{-1}.
\end{align*}
If we denote by $(\mut,\widetilde{\Lambda},\Bt)$ the canonical process of $\Cc^n_{\Wc} \x \M \x \Cc^n.$ Notice that $\Pr^N=\Prt^N \circ (\mut,\Lambda)^{-1},$ where $\Pr^N$ is the sequence given in \Cref{thm:limit_mckv_cl}. By \cite[Proposition 3.4]{MFD-2020},  $(\Prt^N)_{N \in \N^*}$ is relatively compact in $\Wc_p$, and for each limit point $\Prt^\infty,$ there exists a sequence of $(\sigma\{\xi,W_{t \wedge \cdot}, B_{t \wedge \cdot}\})_{t \in [0,T]}$--predictable processes $(\alpha^k)_{k \in \N^*}$ s.t. if $X^k$ is the solution of
\begin{align*} 
		\mathrm{d}X^k_t
		= 
		b \big(t, X^k_t, \mub^k_t, \alpha^k_t \big) \mathrm{d}t
		+
		\sigma(t, X^k_t ) \mathrm{d} W_t
		+
		\sigma_0 \mathrm{d}B_t\;\;\;\mbox{with}\;X_0=\xi,\;\mu^k_t:=\Lc(X^k_t|\Fc^B_t)\;\mbox{and}\;\mub^k_t:=\Lc(X^k_t, \alpha^k_t|\Fc^B_t),
	\end{align*}
	then
	\begin{align*}
	    \Prt^\infty
	    =
	    \Lim_{k \to \infty} \P \circ (\mu^k,\delta_{\mub^k_t}(\mathrm{d}\nub)\mathrm{d}t, B)^{-1}\;\;\mbox{in}\;\Wc_p.
	\end{align*}
	We see that, for each $k \in \N^*,$ $X^k$ is an $open$--$loop$ McKean--Vlasov process associated to $\alpha^k.$ By \Cref{thm:equivalence_mckv_cl} (see also \Cref{prop:open_closed}), we can approximate this $open$--$loop$ McKean--Vlasov process by a sequence of $closed$--$loop$ McKean--Vlasov processes. To conclude the proof, it is enough to see that by \Cref{prop:chaos_common}, we can approach any $closed$--$loop$ McKean--Vlasov process by a sequence of interacting processes. \Cref{prop:chaos_common} we give exactly the sequence that we want for our \Cref{thm:limit_mckv_cl}.

\medskip
For proving $V_S^{o}=V_S^{c} =
        \Lim_{N \to \infty}\; \sup_{\alphab \in (\Ac^c_N)^N}\; \frac{1}{N} \sum_{1=1}^N J_i ( \alphab ),$ first of all, by the previous result, we observe that we have 
$$
    \Limsup_{N \to \infty}\sup_{\alphab \in (\Ac^c_N)^N} \frac{1}{N}\sum_{i=1}^N
        \E^{\P} \bigg[
        \int_0^T L\big(t,X^{\alphab,i}_{t},\overline{\varphi}^{N}_{t}[\alphab] ,\alpha^i(t,\Xbb^{\alphab}) \big) \mathrm{d}t 
        + 
        g \big( X^{\alphab,i}_{T}, \varphi^{N}[\alphab] \big)
        \bigg]
    \le 
    V^o_S.
$$

By \Cref{prop:chaos_common}, we can approximate any $closed$--$loop$ McKean--Vlasov process by a sequence of interacting processes.  Consequently, one gets 
$$
    V^c_S
    \le
    \Liminf_{N \to \infty}\sup_{\alphab \in (\Ac^c_N)^N} \frac{1}{N}\sum_{i=1}^N
        \E^{\P} \bigg[
        \int_0^T L\big(t,X^{\alphab,i}_{t},\overline{\varphi}^{N}_{t}[\alphab] ,\alpha^i(t,\Xbb^{\alphab}) \big) \mathrm{d}t 
        + 
        g \big( X^{\alphab,i}_{T}, \varphi^{N}[\alphab] \big)
        \bigg].
$$
As by \Cref{thm:equivalence_mckv_cl}, $V^o_S=V^c_S,$ we can conclude.





\bibliographystyle{plain}

\bibliography{Extended_MFG-C-loop_Arxiv_08-23}

\begin{thebibliography}{55}
\providecommand{\natexlab}[1]{#1}
\providecommand{\url}[1]{\texttt{#1}}
\expandafter\ifx\csname urlstyle\endcsname\relax
  \providecommand{\doi}[1]{doi: #1}\else
  \providecommand{\doi}{doi: \begingroup \urlstyle{rm}\Url}\fi

\bibitem[Alasseur et~al.(2020)Alasseur, Ben~Taher, and Matoussi]{Alasseur2020}
C.~Alasseur, I.~Ben~Taher, and A.~Matoussi.
\newblock An extended mean field game for storage in smart grids.
\newblock \emph{Journal of Optimization Theory and Applications}, 184:\penalty0
  644--670, 2020.
\newblock \doi{10.1007/s10957-019-01619-3}.
\newblock URL \url{https://doi.org/10.1007/s10957-019-01619-3}.

\bibitem[Aronson and Serrin(1967)]{AronsonSerrin67}
D.~G. Aronson and J.~Serrin.
\newblock Local behavior of solutions of quasilinear parabolic equations.
\newblock \emph{Archive for Rational Mechanics and Analysis volume},
  25:\penalty0 81–122, 1967.
\newblock \doi{10.1137/S0363012996313549}.
\newblock URL \url{https://doi.org/10.1007/BF00281291}.

\bibitem[Bayraktar et~al.(2021)Bayraktar, Cecchin, Cohen, and
  Delarue]{BAYRAKTAR202198}
E.~Bayraktar, A.~Cecchin, A.~Cohen, and F.~Delarue.
\newblock Finite state mean field games with wright–fisher common noise.
\newblock \emph{Journal de Mathématiques Pures et Appliquées}, 147:\penalty0
  98--162, 2021.
\newblock ISSN 0021-7824.
\newblock \doi{https://doi.org/10.1016/j.matpur.2021.01.003}.
\newblock URL
  \url{https://www.sciencedirect.com/science/article/pii/S002178242100012X}.

\bibitem[Bensoussan and Yam(2019)]{bensoussanYam-2019}
A.~Bensoussan and S.~C.~P. Yam.
\newblock Control problem on space of random variables and master equation.
\newblock \emph{ESAIM: COCV}, 25:\penalty0 10, 2019.
\newblock \doi{10.1051/cocv/2018034}.
\newblock URL \url{https://doi.org/10.1051/cocv/2018034}.

\bibitem[Bertucci(2021{\natexlab{a}})]{bertucci_2020_mastercN}
C.~Bertucci.
\newblock Monotone solutions for mean field games master equations : finite
  state space and optimal stopping.
\newblock \emph{arXiv preprint arXiv:2007.11854}, 2021{\natexlab{a}}.

\bibitem[Bertucci(2021{\natexlab{b}})]{bertucci_2021_mastercN}
C.~Bertucci.
\newblock Monotone solutions for mean field games master equations : continuous
  state space and common noise.
\newblock \emph{arXiv preprint arXiv:2107.09531}, 2021{\natexlab{b}}.

\bibitem[Bogachev et~al.(2015)Bogachev, Krylov, R\"{o}ckner, and
  Shaposhnikov]{FK-PL-equations}
V.~I. Bogachev, N.~V. Krylov, M.~R\"{o}ckner, and S.~V. Shaposhnikov.
\newblock \emph{Fokker--Planck--Kolmogorov equations}.
\newblock Mathematical Surveys and Monographs. American Mathematical Society,
  2015.

\bibitem[Bonnans et~al.(2019)Bonnans, Hadikhanloo, and
  Pfeiffer]{bonnans_pfeifer2019}
J.~F. Bonnans, S.~Hadikhanloo, and L.~Pfeiffer.
\newblock Schauder estimates for a class of potential mean field games of
  controls.
\newblock \emph{Applied Mathematics \& Optimization}, 83:\penalty0 1431–1464,
  2019.

\bibitem[Cardaliaguet(2010)]{cardaliaguet2010notes}
P.~Cardaliaguet.
\newblock Notes on mean field games (from {P}.-{L}. {L}ions' lectures at
  {C}oll\`ege de {F}rance).
\newblock Lecture given at Tor Vergata, April--May 2010, 2010.

\bibitem[Cardaliaguet and Lehalle(2018)]{Lehalle-card}
P.~Cardaliaguet and C.-A. Lehalle.
\newblock Mean field game of controls and an application to trade crowding.
\newblock \emph{Mathematics and Financial Economics}, 12\penalty0 (3):\penalty0
  335--363, 2018.

\bibitem[Cardaliaguet and Rainer(2020)]{Cardaliaguet2020}
P.~Cardaliaguet and C.~Rainer.
\newblock {An example of multiple mean field limits in ergodic differential
  games}.
\newblock \emph{Nonlinear Differential Equations and Applications NoDEA}, 27,
  2020.
\newblock \doi{10.1007/s00030-020-00628-w}.
\newblock URL \url{https://doi.org/10.1007/s00030-020-00628-w}.

\bibitem[Cardaliaguet et~al.(2019)Cardaliaguet, Delarue, Lasry, and
  Lions]{cardaliaguet2015master}
P.~Cardaliaguet, F.~Delarue, J.-M. Lasry, and P.-L. Lions.
\newblock \emph{The master equation and the convergence problem in mean field
  games}, volume 201 of \emph{Annals of mathematics studies}.
\newblock Princeton University Press, 2019.

\bibitem[Carmona and Delarue(2018{\natexlab{a}})]{carmona2018probabilisticI}
R.~Carmona and F.~Delarue.
\newblock \emph{Probabilistic theory of mean field games with applications
  {I}}, volume~83 of \emph{Probability theory and stochastic modelling}.
\newblock Springer International Publishing, 2018{\natexlab{a}}.

\bibitem[Carmona and Delarue(2018{\natexlab{b}})]{carmona2018probabilisticII}
R.~Carmona and F.~Delarue.
\newblock \emph{Probabilistic theory of mean field games with applications
  {II}}, volume~84 of \emph{Probability theory and stochastic modelling}.
\newblock Springer International Publishing, 2018{\natexlab{b}}.

\bibitem[Carmona and Lacker(2015)]{carmona2015probabilistic}
R.~Carmona and D.~Lacker.
\newblock A probabilistic weak formulation of mean field games and
  applications.
\newblock \emph{The Annals of Applied Probability}, 25\penalty0 (3):\penalty0
  1189--1231, 2015.

\bibitem[Carmona et~al.(2016)Carmona, Delarue, and
  Lacker]{Lacker_carmona_delarue_CN}
R.~Carmona, F.~Delarue, and D.~Lacker.
\newblock {Mean field games with common noise}.
\newblock \emph{The Annals of Probability}, 44\penalty0 (6):\penalty0 3740 --
  3803, 2016.
\newblock \doi{10.1214/15-AOP1060}.
\newblock URL \url{https://doi.org/10.1214/15-AOP1060}.

\bibitem[Castaing et~al.(2004)Castaing, Raynaud~de Fitte, and
  Valadier]{CastaingCharles2004YMoT}
C.~Castaing, P.~Raynaud~de Fitte, and M.~Valadier.
\newblock \emph{Young Measures on Topological Spaces With Applications in
  Control Theory and Probability Theory / by Charles Castaing, Paul Raynaud de
  Fitte, Michel Valadier.}
\newblock Mathematics and Its Applications ; 571. Springer Netherlands :
  Imprint: Springer, Dordrecht, 1st ed. 2004. edition, 2004.
\newblock ISBN 1-4020-1963-7.

\bibitem[Cobza{\c{s}} et~al.()Cobza{\c{s}}, Miculescu, and Nicolae]{Cobza2019}
{\c{S}}.~Cobza{\c{s}}, R.~Miculescu, and A.~Nicolae.
\newblock \emph{Approximations Involving Lipschitz Functions}, pages 317--334.
\newblock Springer International Publishing, Cham.
\newblock ISBN 978-3-030-16489-8.
\newblock \doi{10.1007/978-3-030-16489-8_6}.
\newblock URL \url{https://doi.org/10.1007/978-3-030-16489-8_6}.

\bibitem[Delarue(2019)]{Delarue2019}
F.~Delarue.
\newblock {Restoring uniqueness to mean-field games by randomizing the
  equilibria.}
\newblock \emph{Stochastics and Partial Differential Equations: Analysis and
  Computations}, 7:\penalty0 598--678, 2019.
\newblock \doi{10.1007/s40072-019-00135-9}.
\newblock URL \url{https://doi.org/10.1007/s40072-019-00135-9}.

\bibitem[Delarue and Vasileiadis(2021)]{delarueVas2021}
F.~Delarue and A.~Vasileiadis.
\newblock Exploration noise for learning linear-quadratic mean field games.
\newblock \emph{arXiv preprint arXiv:2107.00839}, 2021.

\bibitem[Delarue et~al.(2019)Delarue, Lacker, and
  Ramanan]{delarueLackerRamanan2020--2}
F.~Delarue, D.~Lacker, and K.~Ramanan.
\newblock {From the master equation to mean field game limit theory: a central
  limit theorem}.
\newblock \emph{Electronic Journal of Probability}, 24:\penalty0 1 -- 54, 2019.
\newblock \doi{10.1214/19-EJP298}.
\newblock URL \url{https://doi.org/10.1214/19-EJP298}.

\bibitem[Delarue et~al.(2020)Delarue, Lacker, and
  Ramanan]{delarueLackerRamanan2020}
F.~Delarue, D.~Lacker, and K.~Ramanan.
\newblock {From the master equation to mean field game limit theory: Large
  deviations and concentration of measure}.
\newblock \emph{The Annals of Probability}, 48\penalty0 (1):\penalty0 211 --
  263, 2020.
\newblock \doi{10.1214/19-AOP1359}.
\newblock URL \url{https://doi.org/10.1214/19-AOP1359}.

\bibitem[Djete(2020{\natexlab{a}})]{MFD-2020_MFG}
M.~F. Djete.
\newblock Mean field games of controls: on the convergence of nash equilibria.
\newblock \emph{arXiv Preprint arXiv:2006.12993}, 2020{\natexlab{a}}.

\bibitem[Djete(2020{\natexlab{b}})]{djete2020some}
M.~F. Djete.
\newblock \emph{Some results on the {M}c{K}ean--{V}lasov optimal control and
  mean field games: limit theorems, dynamic programming principle and numerical
  approximations}.
\newblock PhD thesis, Universit{\'e} Paris Dauphine PSL, 2020{\natexlab{b}}.

\bibitem[Djete(2022)]{MFD-2020}
M.~F. Djete.
\newblock {Extended mean field control problem: a propagation of chaos result}.
\newblock \emph{Electronic Journal of Probability}, 27\penalty0
  (none):\penalty0 1 -- 53, 2022.
\newblock \doi{10.1214/21-EJP726}.
\newblock URL \url{https://doi.org/10.1214/21-EJP726}.

\bibitem[Djete et~al.(0)Djete, Possama{\"\i}, and Tan]{djete2019general}
M.~F. Djete, D.~Possama{\"\i}, and X.~Tan.
\newblock Mckean–vlasov optimal control: Limit theory and equivalence between
  different formulations.
\newblock \emph{Mathematics of Operations Research}, 0\penalty0 (0):\penalty0
  null, 0.
\newblock \doi{10.1287/moor.2021.1232}.
\newblock URL \url{https://doi.org/10.1287/moor.2021.1232}.

\bibitem[El~Karoui et~al.(1987)El~Karoui, Huu~Nguyen, and
  Jeanblanc-Picqu{\'e}]{el1987compactification}
N.~El~Karoui, D.~Huu~Nguyen, and M.~Jeanblanc-Picqu{\'e}.
\newblock Compactification methods in the control of degenerate diffusions:
  existence of an optimal control.
\newblock \emph{Stochastics}, 20\penalty0 (3):\penalty0 169--219, 1987.

\bibitem[Filippov(1962)]{Filippov}
A.~F. Filippov.
\newblock On certain questions in the theory of optimal control.
\newblock \emph{Journal of the Society for Industrial and Applied Mathematics
  Series A Control}, 1\penalty0 (1):\penalty0 76--84, 1962.
\newblock \doi{10.1137/0301006}.
\newblock URL \url{https://doi.org/10.1137/0301006}.

\bibitem[Fisher(2017)]{M-Fisher}
M.~Fisher.
\newblock On the connection between symmetric $n$--player games and mean field
  games.
\newblock \emph{The Annals of Applied Probability}, 27\penalty0 (2):\penalty0
  757--810, 2017.

\bibitem[Gangbo and Świech(2015)]{GANGBO20156573}
W.~Gangbo and A.~Świech.
\newblock Existence of a solution to an equation arising from the theory of
  mean field games.
\newblock \emph{Journal of Differential Equations}, 259\penalty0 (11):\penalty0
  6573--6643, 2015.
\newblock ISSN 0022-0396.
\newblock \doi{https://doi.org/10.1016/j.jde.2015.08.001}.
\newblock URL
  \url{https://www.sciencedirect.com/science/article/pii/S0022039615003988}.

\bibitem[Gomes and Voskanyan(2016)]{DiogoVardan-ExtMFG}
D.~A. Gomes and V.~K. Voskanyan.
\newblock Extended deterministic mean--field games.
\newblock \emph{SIAM Journal on Control and Optimization}, 54\penalty0
  (2):\penalty0 1030--1055, 2016.

\bibitem[Gomes et~al.(2014)Gomes, Patrizi, and Voskanyan]{GOMES201449}
D.~A. Gomes, S.~Patrizi, and V.~Voskanyan.
\newblock On the existence of classical solutions for stationary extended mean
  field games.
\newblock \emph{Nonlinear Analysis: Theory, Methods \& Applications},
  99:\penalty0 49--79, 2014.
\newblock ISSN 0362-546X.
\newblock \doi{https://doi.org/10.1016/j.na.2013.12.016}.
\newblock URL
  \url{https://www.sciencedirect.com/science/article/pii/S0362546X13004446}.

\bibitem[Graber(2016)]{P-JamesonGraber}
P.~J. Graber.
\newblock Linear quadratic mean field type control and mean field games with
  common noise, with application to production of an exhaustible resource.
\newblock \emph{Applied Mathematics and Optimization}, 74\penalty0
  (3):\penalty0 459--486, 2016.

\bibitem[Huang et~al.(2003)Huang, Caines, and Malham{\'e}]{huang2003individual}
M.~Huang, P.~Caines, and R.~Malham{\'e}.
\newblock Individual and mass behaviour in large population stochastic wireless
  power control problems: centralized and {N}ash equilibrium solutions.
\newblock In C.~Abdallah and F.~Lewis, editors, \emph{Proceedings of the 42nd
  IEEE conference on decision and control, 2003}, pages 98--103. IEEE, 2003.

\bibitem[Huang et~al.(2006)Huang, Malham{\'e}, and Caines]{huang2006large}
M.~Huang, R.~Malham{\'e}, and P.~Caines.
\newblock Large population stochastic dynamic games: closed--loop
  {M}c{K}ean--{V}lasov systems and the {N}ash certainty equivalence principle.
\newblock \emph{Communications in Information \& Systems}, 6\penalty0
  (3):\penalty0 221--252, 2006.

\bibitem[Kobeissi(2019)]{Kobeissi}
Z.~Kobeissi.
\newblock On classical solutions to the mean field game system of controls.
\newblock \emph{arXiv preprint arXiv:1904.11292}, 2019.

\bibitem[Kurtz and Xiong(1999)]{KURTZ1999103}
T.~G. Kurtz and J.~Xiong.
\newblock Particle representations for a class of nonlinear spdes.
\newblock \emph{Stochastic Processes and their Applications}, 83\penalty0
  (1):\penalty0 103--126, 1999.
\newblock ISSN 0304-4149.
\newblock \doi{https://doi.org/10.1016/S0304-4149(99)00024-1}.
\newblock URL
  \url{https://www.sciencedirect.com/science/article/pii/S0304414999000241}.

\bibitem[Lacker(2016)]{lacker2016general}
D.~Lacker.
\newblock A general characterization of the mean field limit for stochastic
  differential games.
\newblock \emph{Probability Theory and Related Fields}, 165\penalty0
  (3-4):\penalty0 581--648, 2016.

\bibitem[Lacker(2017)]{lacker2017limit}
D.~Lacker.
\newblock Limit theory for controlled {M}c{K}ean--{V}lasov dynamics.
\newblock \emph{SIAM Journal on Control and Optimization}, 55\penalty0
  (3):\penalty0 1641--1672, 2017.

\bibitem[Lacker(2018)]{lacker-strong-PC-2018}
D.~Lacker.
\newblock {On a strong form of propagation of chaos for McKean-Vlasov
  equations}.
\newblock \emph{Electronic Communications in Probability}, 23:\penalty0 1--11,
  2018.
\newblock \doi{10.1214/18-ECP150}.
\newblock URL \url{https://doi.org/10.1214/18-ECP150}.

\bibitem[Lacker(2020)]{lacker2020-closed}
D.~Lacker.
\newblock {On the convergence of closed-loop Nash equilibria to the mean field
  game limit}.
\newblock \emph{The Annals of Applied Probability}, 30\penalty0 (4):\penalty0
  1693--1761, 2020.
\newblock \doi{10.1214/19-AAP1541}.
\newblock URL \url{https://doi.org/10.1214/19-AAP1541}.

\bibitem[Lacker and Flem(2021)]{LeflemLacker-2021}
D.~Lacker and L.~L. Flem.
\newblock Closed--loop convergence for mean field games with common noise.
\newblock \emph{arXiv preprint arXiv:2107.03273}, 2021.

\bibitem[Lacker et~al.(2020)Lacker, Shkolnikov, and
  Zhang]{Lacker-Shkolnikov-Zhang_2020}
D.~Lacker, M.~Shkolnikov, and J.~Zhang.
\newblock Superposition and mimicking theorems for conditional
  {M}ckean-{V}lasov equations.
\newblock \emph{arXiv preprint arXiv:2004.00099}, 2020.

\bibitem[Lasry and Lions(2006)]{lasry2006jeux}
J.-M. Lasry and P.-L. Lions.
\newblock Jeux {\`a} champ moyen. {I}--{L}e cas stationnaire.
\newblock \emph{Comptes Rendus Math{\'e}matique}, 343\penalty0 (9):\penalty0
  619--625, 2006.

\bibitem[Lasry and Lions(2007)]{lasry2007mean}
J.-M. Lasry and P.-L. Lions.
\newblock Mean field games.
\newblock \emph{Japanese Journal of Mathematics}, 2\penalty0 (1):\penalty0
  229--260, 2007.

\bibitem[Lauri\`{e}re and Tangpi(2020)]{M_Lauriere-Tangpi}
M.~Lauri\`{e}re and L.~Tangpi.
\newblock Convergence of large population games to mean field games with
  interaction through controls.
\newblock \emph{arXiv preprint arXiv:2004.08351}, 2020.

\bibitem[Motte and Pham(2022)]{Mmotte-Pham_2019}
M.~Motte and H.~Pham.
\newblock {Mean--field Markov decision processes with common noise and
  open-loop controls}.
\newblock \emph{The Annals of Applied Probability}, 32\penalty0 (2):\penalty0
  1421 -- 1458, 2022.
\newblock \doi{10.1214/21-AAP1713}.
\newblock URL \url{https://doi.org/10.1214/21-AAP1713}.

\bibitem[Neufeld and Nutz(2014)]{NEUFELD20143819}
A.~Neufeld and M.~Nutz.
\newblock Measurability of semimartingale characteristics with respect to the
  probability law.
\newblock \emph{Stochastic Processes and their Applications}, 124\penalty0
  (11):\penalty0 3819--3845, 2014.
\newblock ISSN 0304-4149.
\newblock \doi{https://doi.org/10.1016/j.spa.2014.07.006}.
\newblock URL
  \url{https://www.sciencedirect.com/science/article/pii/S0304414914001616}.

\bibitem[Possama\"{i} and Tangpi(2021)]{tangpipossamai2021}
D.~Possama\"{i} and L.~Tangpi.
\newblock Non-asymptotic convergence rates for mean-field games: weak
  formulation and mckean--vlasov bsdes.
\newblock \emph{arXiv preprint arXiv:2105.00484}, 2021.

\bibitem[Roxin(1962)]{Roxin}
E.~Roxin.
\newblock {The existence of optimal controls.}
\newblock \emph{Michigan Mathematical Journal}, 9\penalty0 (2):\penalty0
  109--119, 1962.
\newblock \doi{10.1307/mmj/1028998668}.
\newblock URL \url{https://doi.org/10.1307/mmj/1028998668}.

\bibitem[Stroock and Varadhan(1997)]{stroock2007multidimensional}
D.~Stroock and S.~Varadhan.
\newblock \emph{Multidimensional diffusion processes}, volume 233 of
  \emph{Grundlehren der mathematischen Wissenschaften}.
\newblock Springer--Verlag Berlin Heidelberg, 1997.

\bibitem[Tchuendom(2018)]{Foguen2018}
R.~F. Tchuendom.
\newblock {Uniqueness for Linear-Quadratic Mean Field Games with Common Noise}.
\newblock \emph{Dynamic Games and Applications}, 8\penalty0 (1):\penalty0
  199--210, March 2018.
\newblock \doi{10.1007/s13235-016-0200-8}.
\newblock URL
  \url{https://ideas.repec.org/a/spr/dyngam/v8y2018i1d10.1007_s13235-016-0200-8.html}.

\bibitem[Veretennikov(1981)]{Veretennikov_1981}
A.~J. Veretennikov.
\newblock On strong solutions and explicit formulas for solutions of stochastic
  integral equations.
\newblock \emph{Mathematics of the {USSR}-Sbornik}, 39\penalty0 (3):\penalty0
  387--403, apr 1981.
\newblock \doi{10.1070/sm1981v039n03abeh001522}.
\newblock URL \url{https://doi.org/10.1070/sm1981v039n03abeh001522}.

\bibitem[Villani(2008)]{villani2008optimal}
C.~Villani.
\newblock \emph{Optimal transport: old and new}, volume 338 of
  \emph{Grundlehren der Mathematischen Wissenschafte}.
\newblock Springer, 2008.

\bibitem[Yong and Zhang(2021)]{YONG2021104948}
J.~Yong and J.~Zhang.
\newblock Non--equivalence of stochastic optimal control problems with open and
  closed loop controls.
\newblock \emph{Systems \& Control Letters}, 153:\penalty0 104948, 2021.
\newblock ISSN 0167-6911.
\newblock \doi{https://doi.org/10.1016/j.sysconle.2021.104948}.
\newblock URL
  \url{https://www.sciencedirect.com/science/article/pii/S0167691121000785}.

\end{thebibliography}

\begin{appendix}
\section{Technical results}

\subsection{Some compactness results}

This first part is devoted to providing results of compactness and convergence related to the Fokker--Planck equation. We start by showing estimates for the density of the Fokker--Planck equation. A key tool here is \cite[Theorem 4]{AronsonSerrin67}.

\medskip
    Let $(\Phi^k)_{k \in \in \N}$ be a sequence of Borel functions s.t. for each $ k\in \N^*,$ $\Phi^k: [0,T] \x \R^n \x C([0,T];\Pc(\R^n)) \ni (t,x,\pi) \to \Phi^k(t,x,\pi_{t \wedge \cdot}) \in \R^n$. Besides, there is $c>0$ satisfying: for each $k \in \N,$
    \begin{align}
        \label{assum:general_drift}
       |\Phi^k| \le c\;\;\mbox{and}\;\; \frac{1}{c}\;|\Phi^k(t,x,\pi^1)
        -
        \Phi^k(t,x,\pi^2)|
        \le\;\sup_{s \in [0,T]}\; \|\pi^1_s - \pi^2_s \|_{{\rm TV}},\;\mbox{for all}\;(t,x),
    \end{align}
    where $\|\pi^1_s - \pi^2_s \|_{{\rm TV}}:=\sup_{f \in C(\R^n;[-1,1])} |\langle f,\pi^1_s \rangle-\langle f,\pi^2_s \rangle|$ is the total variation distance.
    The map $\sigma$ always satisfies {\rm \Cref{assum:main1}}. For each $k \in \N,$ let $X^k$ be the weak solution of
    \begin{align*}
        \mathrm{d}X^k_t
        =
        \Phi^k(t,X^k_t,\mu^k) \mathrm{d}t 
        +
        \sigma(t,X^k_t)\mathrm{d}W_t\;\mbox{with}\;X^k_0=\xi\;\mbox{and}\;\mu^k_t=\Lc(X^k_t).
    \end{align*}
We denote by $f^k(t,x)$ the density of $\mu^k_t$ i.e. $\mu^k_t(\mathrm{d}x)=f^k(t,x)\mathrm{d}x$ which is well--defined for $t \in (0,T)$ (see for instance \cite[Theorem 6.3.1, Corollary 6.3.2, Remark 6.3.4]{FK-PL-equations}, and aslo \cite{stroock2007multidimensional}).
\begin{proposition} \label{prop:estimates_FP-density}
    For each compact $[s,t] \x Q \subset (0,T) \x \R^n,$ there exists $\alpha \in (0,1)$ s.t.
    \begin{align*}
        \sup_{k \in \N^*}\bigg[\sup_{ (x,r)\in [s,t] \x Q} |f^k(r,x)|
        +
        \sup_{(x,r) \neq (x',r'),\;(x,r) \x (x',r')\in ([s,t] \x Q)^2 } \frac{|f^k(r,x)-f^k(r',x')|}{|r-r'|^{\alpha/2} + |x-x'|^\alpha} \bigg]
        < \infty.
    \end{align*}
\end{proposition}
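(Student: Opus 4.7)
The plan is to cast the Fokker--Planck equation governing $f^k$ in divergence form, verify that all structural quantities are controlled uniformly in $k$, and then invoke \cite[Theorem 4]{AronsonSerrin67} to obtain the desired H\"older estimate. Throughout, let $a(t,x) := \sigma\sigma^\top(t,x)$; by \Cref{assum:main1}, $a$ is bounded, Lipschitz in $x$ uniformly in $t$, and satisfies $a \ge \theta \mathrm{I}_n$. Since $\Phi^k$ is uniformly bounded by $c$ and $\sigma$ is Lipschitz, the SDE for $X^k$ admits a density $f^k(t,x)$ for $t>0$ (see e.g.\ \cite[Corollary 6.3.2]{FK-PL-equations}).

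\textbf{Step 1: Divergence form of the Fokker--Planck equation.} The density $f^k$ solves, in the distributional sense on $(0,T)\times\R^n$,
\begin{align*}
\partial_t f^k \;=\; \tfrac{1}{2}\sum_{i,j}\partial^2_{ij}\bigl(a_{ij}\,f^k\bigr) \;-\; \sum_i\partial_i\bigl(\Phi^{k}_i\, f^k\bigr) \;=\; \sum_i \partial_i\!\left(\tfrac{1}{2}\sum_j a_{ij}\,\partial_j f^k \,+\, \Bigl(\tfrac{1}{2}\sum_j \partial_j a_{ij} - \Phi^{k}_i\Bigr) f^k\right).
\end{align*}
This is a linear, homogeneous, divergence-form parabolic equation for $f^k$ with principal matrix $\tfrac12 a$, zero source, and lower-order coefficient vector $\tilde b^k_i := \tfrac12 \sum_j \partial_j a_{ij} - \Phi^k_i$. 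Since $a$ is Lipschitz and $|\Phi^k|\le c$, the bound $\sup_k \|\tilde b^k\|_{L^\infty} < \infty$ holds, and $\tfrac12 a$ is uniformly elliptic with ellipticity constant $\theta/2$ independent of $k$.

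\textbf{Step 2: Uniform local $L^\infty$ bound on $f^k$.} This is the only step where the nonlocal nature of $\Phi^k$ matters (through $\mu^k$), and it is the main obstacle. The cleanest route is to use Aronson's Gaussian two-sided bound for fundamental solutions of parabolic equations in divergence form with bounded measurable coefficients: there exist constants $C,\lambda>0$ depending only on $\theta$, $\|a\|_\infty$, the Lipschitz constant of $\sigma$, and $c$, such that
\begin{align*}
f^k(t,x) \;\le\; \int_{\R^n} \frac{C}{t^{n/2}}\,\exp\!\Bigl(-\lambda\,\tfrac{|x-y|^2}{t}\Bigr)\,\nu(\mathrm{d}y),\qquad (t,x)\in(0,T)\times\R^n.
\end{align*}
Since $\nu$ is a probability measure, the right-hand side is bounded by $C\,t^{-n/2}$, which gives $\sup_k \|f^k\|_{L^\infty([s,t]\times\R^n)} \le C\,s^{-n/2} < \infty$ for any $0<s<t<T$. (As an alternative, one may use the local $L^\infty$ estimate of \cite[Theorem 1]{AronsonSerrin67}, together with the fact that $\int f^k\,\mathrm{d}x=1$ gives an $L^\infty_t L^1_x$ bound.)

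\textbf{Step 3: Apply Aronson--Serrin.} Fix the compact cylinder $[s,t]\times Q \subset (0,T)\times\R^n$ and a slightly larger compact $[s',t']\times Q'$ with $0<s'<s$ and $Q\subset \mathrm{int}(Q')$. On $[s',t']\times Q'$, the bound from Step 2 gives a $k$-uniform $L^\infty$ bound for $f^k$, and Step 1 provides a uniformly elliptic, uniformly bounded divergence-form equation satisfied by $f^k$ with no source and no zeroth-order coefficient. Then \cite[Theorem 4]{AronsonSerrin67} yields an exponent $\alpha\in(0,1)$ and a constant $K$, depending only on $\theta$, the $L^\infty$ bounds on $a$ and $\tilde b^k$, $\|f^k\|_{L^\infty([s',t']\times Q')}$, and the geometry of the inclusion $[s,t]\times Q \subset [s',t']\times Q'$, such that
\begin{align*}
\sup_{(r,x)\in[s,t]\times Q} |f^k(r,x)| \;+\; \sup_{\substack{(r,x),(r',x')\in[s,t]\times Q\\ (r,x)\neq(r',x')}} \frac{|f^k(r,x)-f^k(r',x')|}{|r-r'|^{\alpha} + |x-x'|^{\alpha}} \;\le\; K.
\end{align*}
Because $K$ depends on $k$ only through quantities we have bounded uniformly in Steps 1--2, taking the supremum in $k$ on the left concludes. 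The Lipschitz assumption \eqref{assum:general_drift} on $\pi \mapsto \Phi^k$ is not actually needed in this argument; it is useful later (for stability / convergence statements), but the H\"older estimate here only requires the uniform boundedness of $\Phi^k$.
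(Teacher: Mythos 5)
Your argument reaches the same conclusion as the paper's and shares its core tool, namely \cite[Theorem 4]{AronsonSerrin67} applied to the divergence--form Fokker--Planck equation with coefficient bounds that are uniform in $k$; but the route is genuinely different, and the difference is worth noting. The paper proceeds by mollifying both $\Phi^k$ and $a=\sigma\sigma^\top$ by $\rho_\delta$, solving the mollified SDE to get a smooth density $f^k_\delta$, applying Aronson--Serrin there, and then recovering the estimate for $f^k$ by an Arzel\`a--Ascoli plus Cantor--diagonal compactness argument as $\delta\to 0$; the mollification step guarantees that the object to which Aronson--Serrin is applied is a classical solution, so no a priori weak--Sobolev regularity of $f^k$ itself needs to be invoked. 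Your version skips the mollification and instead applies Aronson--Serrin directly to $f^k$, supplying the required uniform $L^\infty$ bound via Aronson's Gaussian upper bound for fundamental solutions. This is more direct and avoids the compactness extraction, but it silently uses two facts that deserve a citation: (i) that $f^k$ is a weak solution of the divergence--form equation in the function class to which \cite[Theorem 4]{AronsonSerrin67} applies (i.e.\ locally in $L^2_t H^1_x$), which is exactly what the mollification sidesteps and which one would instead justify via \cite[Theorem 6.3.1, Corollary 6.3.2]{FK-PL-equations}; and (ii) that $f^k(t,\cdot)$ equals the fundamental solution of this divergence--form operator integrated against $\nu$, which is the uniqueness statement for the linear Fokker--Planck equation once $\mu^k$ is frozen. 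With those two references added your proof is complete. Your closing remark is also accurate, with one nuance: the Lipschitz--in--$\pi$ hypothesis \eqref{assum:general_drift} is not needed for the H\"older estimate of this proposition, but it is used upstream to make the McKean--Vlasov SDE defining $\mu^k$ well posed (so that ``$f^k$'' is unambiguous) and again downstream in \Cref{cor:converg_density}.
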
 

\begin{proof}
       Although the presentation looks different, this proof is largely based on the proof of \cite[Theorem 6.6.4]{FK-PL-equations}.
       For simplification, we display the proof for $n=1.$ Let $\rho$ be a density probability function, symmetric about $0$ and belongs to $C^\infty(\R^n)$ with compact support.  
       In a first time, we consider $k$ as fixed. Let $\delta>0$ and $\rho_{\delta}(x):=\delta^{-1}\rho(\delta^{-1} x),$ 
       \begin{align*}
           \Phi^k_{\delta}(t,x)
           :=
           \int_{\R} \Phi^k(t,y,\mu^k) \rho_{\delta}(x-y) \mathrm{d}y\;\;\mbox{and}\;\;
           a_{\delta}(t,x)
           :=
           \int_{\R} \sigma \sigma^\top(t,y) \rho_{\delta}(x-y) \mathrm{d}y.
       \end{align*}
       It is well known that $\Lim_{\delta \to 0} (\Phi^k_\delta,a_\delta)=(\Phi^k,\sigma \sigma^\top).$
       We introduce $X^{k,\delta}$ the solution of 
       \begin{align*}
           \mathrm{d}X^{k,\delta}_t
            = \Phi^k_{\delta}(t,X^{k,\delta}_t,\mu^k) \mathrm{d}t 
            + (a_\delta)^{1/2}(t,X^{k,\delta}_t)\mathrm{d}W_t.
       \end{align*}
       Using a martingale problem or \cite[Theorem 11.1.4]{stroock2007multidimensional}, it is straightforward that $\Lim_{\delta \to 0}\Lc(X^{k,\delta})=\Lc(X^k)$ in $\Wc_p.$ 
       If we note $\mu^{k,\delta}_t:=\Lc(X^{k,\delta}_t),$ By applying It\^{o} formula and taking the expectation, for each $\varphi \in C_b^2(\R),$
       \begin{align*}
           \mathrm{d} \langle \varphi, \mu^{k,\delta}_t \rangle
           =
           \langle \varphi'(\cdot)\Phi^k_{\delta}(t,\cdot), \mu^{k,\delta}_t \rangle \mathrm{d}t
           +
           \frac{1}{2}\langle \varphi''(\cdot)a_{\delta}(t,\cdot), \mu^{k,\delta}_t \rangle \mathrm{d}t.
       \end{align*}
       
       By \cite[Theorem 6.6.1]{FK-PL-equations}, for each $\delta>0,$ we know that there is $f^k_\delta$ such that $\mu^{k,\delta}_t(\mathrm{d}x)=f^k_\delta(t,x)\mathrm{d}x$ for $t \in (0,T)$ where $f^k_\delta(t,\cdot) \in C^2_b(\R).$ Then, in a weak sense, we have 
       \begin{align*}
           \partial_t f^k_\delta(t,\cdot)
           =
           \partial_x \Big( a^k_\delta(t,\cdot)\partial_x f^k_\delta(t,\cdot) - \Phi^k_\delta(t,\cdot)f^k_\delta(t,\cdot)
           +
           \frac{1}{2} \partial_x a_\delta(t,\cdot)f^k_\delta(t,\cdot)
           \Big).
       \end{align*}
       By \cite[Theorem 4]{AronsonSerrin67} (see also \cite[Theorem 6.2.7]{FK-PL-equations}), for each compact $[s,t] \x Q \subset (0,T) \x \R^n,$ there exists $C>0$ and $\alpha \in (0,1)$ depending only on $(\|\Phi^k_\delta\|_{\ell,q},$ $\|a_\delta\|_{\ell,q}$,  $\|\partial_x a_\delta\|_{\ell,q})$ and $[s,t] \x Q$ where
       \begin{align*}
            \|\psi\|_{\ell,q}
            :=
            \bigg(\int_s^t \bigg( \int_Q |\psi(t,x)|^\ell \mathrm{d}x \bigg)^{q/\ell} \mathrm{d}t \bigg)^{1/q}\;\;\mbox{with}\;\ell>2,\;\frac{1}{2 \ell} + \frac{1}{q} < 1/2
       \end{align*}
       s.t. $\mbox{for}\;(r,r',x,x') \in [s,t] \x [s,t] \x Q \x Q,$
       \begin{align} \label{eq:holder_estimates}
           |f^k_\delta(r,x)-f^k_\delta(r',x')| 
           \le
           C \Big( |r-r'|^{\alpha/2} + |x-x'|^{\alpha} \Big)\;\mbox{and}\;\sup_{(r,x) \in [s,t] \x Q}|f^k_\delta(r,x)| \le C. 
       \end{align}
       
       Notice that $\Phi^k$ is bounded uniformly in $k.$ Besides, $\sigma \sigma^\top$ is uniformly bounded and Lipschitz, so by Rademacher's theorem $\sigma \sigma^\top$ admits a weak derivative. then, $\partial_x a_\delta$ converges towards the weak derivative of $\sigma \sigma^\top$ when $\delta \to 0.$ Consequently,
        $$
            \sup_{(k,\delta) \in \N^* \x (0,\infty)}\|\Phi^k_\delta\|_{\ell,q} + \|a_\delta\|_{\ell,q} +
            \|\partial_x a_\delta\|_{\ell,q}< \infty.
        $$
       Consequently, we can take $C$ and $\alpha$ independent of $k$ and $\delta.$
       By Arzelà–-Ascoli Theorem, for each $M \in \N^*,$ there exists $u^{k,M} \in C([0,T] \x \R^n)$ and a sub--sequence $(\delta^M_l)_{l \in \N^*}$ such that
        \begin{align*}
            \Lim_{l \to \infty}\sup_{(r,x) \in [T/M,T(1-1/M)] \x B(M)} \big|f^{k}_{\delta^M_l}(r,x)-u^{k,M}(r,x) \big|=0.
        \end{align*}
        where $B(M):=\{x \in \R^n: |x| \le M\}.$ By Cantor's diagonal argument, we can find $u^k \in C([0,T] \x \R^n)$ and a sub--sequence $(\delta_l)_{l \in \N^*}$ such that $f^k_{\delta_l}$ converges uniformly to $u^k$ on each compact set of type $[s,t] \x \subset (0,T) \x \R^n.$ As we know that $\Lim_{\delta \to 0} f^k_{\delta}(t,x)=f^k(t,x),$  we can deduce that $u^k=f^k$. Next, as $C$ and $\alpha$ are independent of $k$ and $\delta,$ by passing to the limit over $\delta$ in \Cref{eq:holder_estimates}, we find our result.

\end{proof}
    
    Now, we will check that when the sequence of functions $(\Phi^k)_{k \in \N}$ satisfies a condition (see below), the sequence of densities $(f^k)_{k \in \N}$ converges to an identifiable limit.

\medskip    
    Let $(\pi^k)_{k \in \N} \subset \Cc^n_{\Wc}$ be a sequence s.t. $\Lim_{k \to \infty}\|\pi^k_t-\pi^0_t\|_{\rm TV}=0,$ for any $t \in (0,T).$ For any sequence of this type, we assume that:
    \begin{align} \label{cond:convergence}
        \Lim_{k \ge 1,\;k \to \infty}\int_0^T \int_{\R^n} \varphi(t,x) \Phi^k(t,x,\pi^k) \pi^k_t(\mathrm{d}x) \mathrm{d}t
        =
        \int_0^T \int_{\R^n} \varphi(t,x) \Phi^0(t,x,\pi^0) \pi^0_t(\mathrm{d}x) \mathrm{d}t,
    \end{align}
    for any continuous function $\varphi$ with compact support.
    
    \begin{corollary} \label{cor:converg_density}
        With the previous considerations, one has for each compact $[s,t] \x Q \subset (0,T) \x \R^n,$
        \begin{align*}
            \Lim_{k \ge 1,\;k \to \infty}
             \sup_{(x,r) \x (x',r')\in ([s,t] \x Q)^2 } |f^k(r,x)-f^0(r',x')|
            =
            0.
        \end{align*}
    \end{corollary}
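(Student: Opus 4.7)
The plan is to combine the uniform Hölder estimates from Proposition A.1 with a subsequence/uniqueness argument: extract a convergent subsequence of $(f^k)$, pass to the limit in the Fokker--Planck equation using hypothesis (A.3), and identify the limit as $f^0$ by McKean--Vlasov uniqueness. Concretely, by Proposition A.1 and Arzelà--Ascoli applied on an exhaustion of $(0,T) \times \R^n$ by compact sets, together with Cantor's diagonal extraction, any subsequence of $(f^k)$ admits a further subsequence (still denoted $(f^k)$) converging uniformly on every compact subset $[s,t] \times Q \subset (0,T) \times \R^n$ to some continuous limit $\tilde f$.

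The next step is to upgrade this compact convergence of densities to total-variation convergence of the associated measures $\mu^k_t$, which is what (A.3) requires. Since $\Phi^k$ and $\sigma$ are uniformly bounded in $k$, classical moment estimates yield uniform bounds $\sup_k \E[|X^k_t|^p] < \infty$ for every $p \ge 1$, so the family $\{\mu^k_t\}_k$ is tight for each $t \in (0,T)$. Combining tightness (which controls the tails uniformly in $k$) with uniform convergence of the densities on each compact set, a Scheffé-type argument shows that $\tilde f(t,\cdot)$ is a probability density and that $\|\mu^k_t - \tilde \mu_t\|_{{\rm TV}} \to 0$ for every $t \in (0,T)$, where $\tilde \mu_t := \tilde f(t,\cdot)\,\mathrm{d}x$. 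Now, each $\mu^k$ satisfies the weak Fokker--Planck identity
\begin{align*}
\langle \varphi, \mu^k_t \rangle = \langle \varphi, \nu \rangle + \int_0^t \Big\langle \Phi^k(s,\cdot,\mu^k) \cdot \nabla \varphi + \tfrac{1}{2}{\rm Tr}\big(\sigma \sigma^\top(s,\cdot)\, \nabla^2 \varphi\big),\ \mu^k_s \Big\rangle\,\mathrm{d}s
\end{align*}
for all $\varphi \in C_b^2(\R^n)$. The second-order diffusion term passes to the limit by narrow convergence of $\mu^k_s$ (integrating the pointwise TV-convergence against the bounded test function $\tfrac{1}{2}{\rm Tr}(\sigma \sigma^\top \nabla^2 \varphi)$); the delicate first-order drift term is handled precisely by applying (A.3) to the sequence $\pi^k := \mu^k$, whose TV-limit is $\pi^0 := \tilde \mu$ by the previous step. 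One concludes that $\tilde \mu$ solves the same McKean--Vlasov Fokker--Planck equation as $\mu^0$, with identical initial datum $\nu$.

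To conclude, under (A.1) the drift $\Phi^0$ is Lipschitz in its measure argument with respect to TV, which combined with the Lipschitz non-degenerate $\sigma$ yields uniqueness in law for the McKean--Vlasov SDE with drift $\Phi^0$ and initial law $\nu$, hence uniqueness for the associated McKean--Vlasov Fokker--Planck equation (by a standard Banach fixed-point argument in $C([0,T]; \Pc(\R^n))$ equipped with $\sup_t \|\cdot\|_{{\rm TV}}$, on small enough intervals, then iterated). Therefore $\tilde \mu = \mu^0$, i.e.\ $\tilde f = f^0$. Since every subsequence of $(f^k)$ admits a sub-subsequence converging to this same limit $f^0$, the full sequence converges to $f^0$ uniformly on every compact subset of $(0,T) \times \R^n$, which is the claim. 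The main obstacle in this program is the second step: the upgrade from uniform-on-compacts convergence of densities (delivered by Proposition A.1 via Arzelà--Ascoli) to TV-convergence of probability measures (required by (A.3)). The combination of tightness, coming from the uniform boundedness of the coefficients, with the equi-continuity estimate of Proposition A.1 is exactly what makes this upgrade possible.
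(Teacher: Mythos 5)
Your proposal is correct and follows essentially the same route as the paper's proof: Arzelà--Ascoli plus Cantor diagonal extraction using the uniform Hölder estimates of Proposition A.1, then passage to the limit in the Fokker--Planck identity via condition (A.3), then uniqueness of the limiting McKean--Vlasov Fokker--Planck equation (which the paper handles by citing Lacker's Theorem 2.3, while you sketch the underlying Banach fixed-point argument), and finally a subsequence-of-subsequences conclusion. The one step where you go meaningfully beyond the paper is the upgrade from uniform-on-compacts convergence of the densities to total-variation convergence of the probability measures: the paper dismisses this with "it is easy to check," whereas you correctly identify that one needs uniform tightness (from the bounded coefficients) together with a Scheffé-type argument — first to verify that the local-uniform limit $\tilde f(t,\cdot)$ is again a probability density, and then to deduce $L^1$-convergence — and this is indeed the right way to fill that gap.
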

    
    \begin{proof}
            By \Cref{prop:estimates_FP-density} and  by Arzelà–Ascoli Theorem, for each $M \in \N^*,$ there exists $u^M \in C([0,T] \x \R^n)$ and a sub--sequence $(k^M_l)_{l \in \N^*}$ such that
        \begin{align*}
            \Lim_{l \to \infty}\sup_{(r,x) \in [T/M,T(1-1/M)] \x B(M)} \big|f^{k^M_l}(r,x)-u^M(r,x) \big|=0.
        \end{align*}
        By Cantor's diagonal argument, we can find $u \in C([0,T] \x \R^n)$ and a sub--sequence $(k_l)_{l \in \N^*}$ such that $f^{k_l}$ converges uniformly to $u$ on each compact set of type $[s,t] \x \subset (0,T) \x \R^n.$ Now, let us show that $u=f^0$. Notice that, for each $k \in \N^*,,$ $\mu^k$ (or $f^k$) satisfies the Fokker--Planck equation
        \begin{align*}
            \mathrm{d}\langle \varphi,\mu^k_t\rangle
            =
            \langle \varphi'(\cdot) \Phi^k(t,\cdot,\mu^k),\mu^k_t \rangle \mathrm{d}t + \frac{1}{2}\langle {\rm Tr} \big[ \varphi''(\cdot) \sigma\sigma^\top(t,\cdot) \big],\mu^k_t \rangle \mathrm{d}t.
        \end{align*}
        Let us define $\nu_0:=\mu^k_0=\Lc(\xi),$ and $\nu_t(\mathrm{d}y):=u(t,y)\mathrm{d}y$ for $t \in (0,T].$ As $f^{k_l}$ converges uniformly to $u$ on each compact set of type $[s,t] \x Q \subset (0,T) \x \R^n,$ it is easy to check that $\Lim_{k \to \infty}\|\mu^k_t-\nu_t\|_{\rm TV}=0,$ for any $t \in (0,T).$ Using our assumptions and passing to the limit in the Fokker--Planck equation, one finds that
        \begin{align*}
            \mathrm{d}\langle \varphi,\nu_t\rangle
            =
            \langle \varphi'(\cdot) \Phi^0(t,\cdot,\nu),\nu_t \rangle \mathrm{d}t + \frac{1}{2}\langle {\rm Tr} \big[ \varphi''(\cdot) \sigma\sigma^\top(t,\cdot) \big],\nu_t \rangle \mathrm{d}t.
        \end{align*}
        As $\Phi^0$ satisfies \eqref{assum:general_drift}, this Fokker--Planck equation has a unique solution (see for instance \cite[Theorem 2.3]{lacker-strong-PC-2018}). Therefore $\nu=\mu^0.$ This is true for any sub--sequence, then the all sequence converges. This is enough to conclude.
    \end{proof}

In the next part, {\color{black} in a more general framework than that considered in the article, we give an approximation of the controls}. The result proved here is quite useful for the approximation through a sequence of $closed$--$loop$ controls or Markovian controls.

\medskip    
    We say that $(m,\widehat{q}) \in C([0,T];\Pc(\R^n)) \x \M((\Pc^n_U)^2)$ satisfies a generalized Fokker--Planck equation if: $m_0:=\Lc(\xi),$ and 
    \begin{align} \label{eq:FK-PL-closed-loop}
	    \mathrm{d}\langle f,m_t \rangle
	    ~=~
	    \int_{(\Pc^n_U)^2} \int_{\R^n \x U}  \Ac_t f(x,\nub^{x'}(\mathrm{d}u') m_t(\mathrm{d}x'),\nub^\star,u) \nub^x(\mathrm{d}u)m_t(\mathrm{d}x)\widehat{q}_t(\mathrm{d}\nub,\mathrm{d}\nub^\star)\mathrm{d}t,
	\end{align}
    for all $f\in C^{2}_b(\R^n),$ where $(\nub^x)_{x \in \R^n}$ is the disintegration of $\nub$ in $\R^n$ i.e $\nub(\mathrm{d}x,\mathrm{d}u)=\nub^x(\mathrm{d}u)\nub(\mathrm{d}x,U),$ and the generator $\Ac$ is defined by
    \begin{align}
        \Ac_t\varphi(x,\pi,\nub,\nub^\star,u) 
        &:= 
        \frac{1}{2}  \text{Tr}\big[ \sigma  \sigma^\top(t,x) \nabla^2 \varphi(x) 
        \big] 
        + \widehat{b}(t,x,\nub,\nub^\star,u)^\top \nabla \varphi(x),
    \end{align}
    with $\widehat{b}:[0,T] \x \R^n \x \Pc^n_U \x \Pc^n_U \x U \ni (t,x,\nub,\nub^\star,u) \to \widehat{b}(t,x,\nub,\nub^\star,u) \in \R^n$ a Borel map continuous in $(x,\nub,\nub^\star,u)$ and Lipschitz in $\nub$ uniformly in $(t,x,\nub^\star,u).$ Recall that $\sigma$ always satisfies {\rm \Cref{assum:main1}}. Notice that, under this assumption, given $\widehat{q},$ the process $(m_t)_{t \in [0,T]}$ is uniquely defined (\cite[Theorem 2.3]{lacker-strong-PC-2018}).
    For each density of probability $u$ on $\R^n,$ and $\widehat{\beta} \in \M((\Pc^n_U)^2),$ we define $\widehat{\beta} \in \M((\Pc^n_U)^2)$ by
    \begin{align*}
        \widehat{\beta}_t[u](\mathrm{d}\nub,\mathrm{d}\nub^\star)\mathrm{d}t
        :=
        \int_{\Pc^n_U} \delta_{\big(e^x(\mathrm{d}u')u(x)\mathrm{d}x \big)} (\mathrm{d}\nub) \widehat{\beta}_t(\mathrm{d}e,\mathrm{d}\nub^\star)\mathrm{d}t.
    \end{align*}
    
    Let $(m^k,\widehat{q}^k) \subset C([0,T];\Pc(\R^n)) \x \M((\Pc^n_U)^2)$ be a sequence satisfying: for each $k \in \N^*,$ $(m^k,\widehat{q}^k)$ satisfies a generalized Fokker--Planck equation. 
    Let $G$ be a density of probability continuous on $\R^n$ with $G >0.$ Let us assume that: 
    \begin{align} \label{eq:assum-weakly-lemma}
        \Lim_{k \to \infty}
        \widehat{q}^k_t[G](\mathrm{d}\nub,\mathrm{d}\nub^\star)\mathrm{d}t
        =
        \widehat{q}^\infty_t[G](\mathrm{d}\nub,\mathrm{d}\nub^\star)\mathrm{d}t,\;\mbox{in weakly sense,}
    \end{align}
    for some  $\widehat{q}^\infty \in \M((\Pc^n_U)^2).$ Then, we have the following result.
    \begin{proposition} \label{lemma:approx-control-closed-loop}
        Let $f^k(t,x)$ be the density of $m^k_t$ i.e. $m^k_t(\mathrm{d}x)=f^k(t,x)\mathrm{d}x,$   one has for each compact $[s,t] \x Q \subset (0,T) \x \R^n,$
    \begin{align*}
             \Lim_{\;k \to \infty}
             \sup_{(x,r) \x (x',r')\in ([s,t] \x Q)^2 } |f^k(r,x)-f^\infty(r',x')|
            =
            0
    \end{align*}
    $\mbox{and}$
    \begin{align*}
        \Lim_{k \to \infty}
        \widehat{q}^k_t[f^k(t,\cdot)](\mathrm{d}\nub,\mathrm{d}\nub^\star)\mathrm{d}t
        =
        \widehat{q}^\infty_t[f(t,\cdot)](\mathrm{d}\nub,\mathrm{d}\nub^\star)\mathrm{d}t\;\mbox{in the weak sense},
    \end{align*}
        where $f^\infty$ is the density of $m^\infty$ with $(m^\infty,\widehat{q}^\infty)$ satisfies a generalized Fokker--Planck equation.
    \end{proposition}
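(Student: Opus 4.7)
\emph{Stage 1 — uniform parabolic regularity and extraction of a limit.} I would extend the argument of Corollary~\ref{cor:converg_density} by rewriting \eqref{eq:FK-PL-closed-loop} as a linear Fokker--Planck equation for $m^k$ with effective drift
\begin{align*}
\Phi^k(t,x) := \int_{(\Pc^n_U)^2}\int_{U}\widehat b\bigl(t,x,\nub^{x'}(\mathrm{d}u')m^k_t(\mathrm{d}x'),\nub^\star,u\bigr)\,\nub^x(\mathrm{d}u)\,\widehat q^k_t(\mathrm{d}\nub,\mathrm{d}\nub^\star).
\end{align*}
Because $\widehat b$ is bounded, $\|\Phi^k\|_\infty$ is controlled uniformly in $k$, and $\sigma$ already satisfies Assumption~\ref{assum:main1}. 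The mollification and Aronson--Serrin step in the proof of Proposition~\ref{prop:estimates_FP-density} only uses the uniform $L^\infty$ bound on the drift (together with the non-degeneracy and Lipschitz continuity of $\sigma\sigma^\top$), hence produces constants $C$ and $\alpha\in(0,1)$ that depend on $[s,t]\x Q\subset(0,T)\x\R^n$ but not on $k$, such that $f^k$ is uniformly $\alpha$-Hölder on $[s,t]\x Q$. Arzelà--Ascoli together with a Cantor diagonal argument then extract a subsequence (not relabelled) and a function $u\in C((0,T)\x\R^n)$ with $f^k\to u$ uniformly on every compact subset of $(0,T)\x\R^n$. Setting $\nu_t(\mathrm{d}x):=u(t,x)\mathrm{d}x$, tightness of $(m^k_t)_k$ combined with the uniform convergence of densities on compacts upgrades this to $\|m^k_t-\nu_t\|_{\mathrm{TV}}\to 0$ for every $t\in(0,T]$.

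\emph{Stage 2 — passage to the limit in $\widehat q^k_t[f^k]$.} This will be the main obstacle, because the index $k$ enters both through $\widehat q^k$ itself and through its evaluation at $f^k$. For $\psi\in C_b([0,T]\x\Pc^n_U\x\Pc^n_U)$, I would split
\begin{align*}
\int_0^T\!\!\int \psi\,\mathrm{d}\widehat q^k_t[f^k(t,\cdot)]\,\mathrm{d}t \;-\; \int_0^T\!\!\int \psi\,\mathrm{d}\widehat q^\infty_t[u(t,\cdot)]\,\mathrm{d}t \;=\; A^k + B^k,
\end{align*}
where $A^k$ replaces $f^k$ by $u$ inside $\widehat q^k_t[\cdot]$ and $B^k$ then replaces $\widehat q^k$ by $\widehat q^\infty$. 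For $A^k$, the map $e\mapsto \psi(t,e^x\rho(x)\mathrm{d}x\,\mathrm{d}u',\nub^\star)$ depends on $\rho$ with a modulus controlled by the total variation of $(\rho-\rho')\mathrm{d}x$ uniformly in $e$, so the TV convergence from Stage~1 yields $A^k\to 0$. For $B^k$ the positivity and continuity of $G$ is decisive: I would approximate $u(t,\cdot)\,\mathrm{d}x$ in total variation by densities of the form $(\chi G)/\int\chi G$ with $\chi$ bounded continuous, reducing the convergence to hypothesis \eqref{eq:assum-weakly-lemma} applied to each such approximation, modulo an error that vanishes in $k$ and in the approximation parameter.

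\emph{Stage 3 — identification of the limit.} Finally, I would pass to the limit in \eqref{eq:FK-PL-closed-loop}: the left-hand side $\langle f,m^k_t\rangle$ converges by Stage~1, the diffusion term converges thanks to the TV convergence of $m^k_t$ and the boundedness of $\sigma\sigma^\top$, and the drift term converges by Stage~2 applied to the test function $(t,\nub,\nub^\star)\mapsto \int_{\R^n\x U}\nabla f(x)^\top\widehat b(t,x,\nub,\nub^\star,u)\,\nub(\mathrm{d}x,\mathrm{d}u)$, after the appropriate rearrangement using the disintegration of $\nub$. Thus $(\nu,\widehat q^\infty)$ satisfies a generalized Fokker--Planck equation, and since $\widehat b$ is Lipschitz in its measure argument, the uniqueness result \cite[Theorem~2.3]{lacker-strong-PC-2018} forces $\nu=m^\infty$ and $u=f^\infty$. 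Uniqueness of the limit also upgrades subsequential convergence to convergence of the whole sequence, which completes the proof.
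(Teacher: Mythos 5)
Your proposal follows essentially the same route as the paper: Stage~1 is the uniform Aronson--Serrin estimate and diagonal extraction of Corollary~\ref{cor:converg_density}, and Stage~3 is the identification via the limiting Fokker--Planck equation and the uniqueness result of \cite[Theorem~2.3]{lacker-strong-PC-2018}, upgrading subsequential convergence to convergence of the whole sequence. The one place where your packaging differs is Stage~2. The paper does not approximate $u(t,\cdot)\,\mathrm{d}x$ by normalized densities $(\chi G)/\int \chi G$; it writes $m^{k}_t(\mathrm{d}x)=\frac{f^{k}(t,x)}{G(x)}\,G(x)\mathrm{d}x$ exactly, observes that $f^{k}/G\to u/G$ locally uniformly (with $u/G$ continuous and bounded on the compact $x$-support of the test functions, since $G$ is continuous and strictly positive), and then applies hypothesis~\eqref{eq:assum-weakly-lemma} directly to tensor-product test functions $\prod_{e}\int h^{e}\,\mathrm{d}\nub$ with $h^{e}$ compactly supported. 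This exact reweighting makes your $B^k$-approximation layer unnecessary, and it also dispenses with the claim in your $A^k$ step that the map $e\mapsto \psi(t,e^x\rho(x)\mathrm{d}x\,\mathrm{d}u',\nub^\star)$ has a modulus of continuity in total variation \emph{uniformly in} $e$, which would fail for general $\psi\in C_b$ and requires restricting to a suitable dense subclass of test functions --- precisely what the paper's explicit choice of $h^{e}$'s accomplishes. Both arguments lead to the same conclusion; the paper's is more economical.
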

    
    \begin{proof}
        Using the proof of \Cref{cor:converg_density}, there exists a sub--sequence $(k_\ell)_{\ell \in \N^*}$ such that $f^{k_\ell}$ converges uniformly to $u$ on each compact set of type $[s,t] \x \subset (0,T) \x \R^n.$ Let $q \in \N^*,$ and $(h^1,\cdots,h^q) \subset C(\R^n \x U;\R)$ $q$--bounded continuous function with compact support
        \begin{align*}
            &\Lim_{\ell \to \infty} \int_0^T \int_{(\Pc^n_U)^2}  \prod_{e=1}^q  \int_{U \x \R^n} h^e(x,u) \nub^x(\mathrm{d}u) m^{k_\ell}_t(\mathrm{d}x) \widehat{q}^{k_\ell}_t(\mathrm{d}\nub,\mathrm{d}\nub^\star)\mathrm{d}t
            \\
            &=\Lim_{\ell \to \infty} \int_0^T \int_{(\Pc^n_U)^2} \prod_{e=1}^q  \int_{U \x \R^n} h^e(x,u) \frac{1}{G(x)}f^{k_\ell}(t,x) \nub^x(\mathrm{d}u)  G(x) \mathrm{d}x\;\widehat{q}^{k_\ell}_t(\mathrm{d}\nub,\mathrm{d}\nub^\star)\mathrm{d}t
            \\
            &=\Lim_{\ell \to \infty} \int_0^T \int_{(\Pc^n_U)^2} \prod_{e=1}^q  \int_{U \x \R^n} h^e(x,u) \frac{1}{G(x)}u(t,x) \nub^x(\mathrm{d}u)  G(x) \mathrm{d}x\;\widehat{q}^{k_\ell}_t(\mathrm{d}\nub,\mathrm{d}\nub^\star)\mathrm{d}t
            \\
            &= \int_0^T \int_{(\Pc^n_U)^2} \prod_{e=1}^q  \int_{U \x \R^n} h^e(x,u) u(t,x) \nub^x(\mathrm{d}u) \mathrm{d}x\;\widehat{q}^\infty_t(\mathrm{d}\nub,\mathrm{d}\nub^\star)\mathrm{d}t,
        \end{align*}
        where we use the uniform convergence of $f^{k_\ell}$ to $u$ on each compact set, and for the last equality, the fact that $(t,x) \to h^e(t,x)u(t,x)\frac{1}{G(x)}$ is continuous and bounded because $h^e$ has a compact support. By similar arguments to \cite[Proposition A.3]{djete2019general}, this result allows to say that $\Lim_{\ell \to \infty}
        \widehat{q}^{k_\ell}_t[f^{k_\ell}(t,\cdot)](\mathrm{d}\nub,\mathrm{d}\nub^\star)\mathrm{d}t
        =
        \widehat{q}^\infty_t[u(t,\cdot)](\mathrm{d}\nub,\mathrm{d}\nub^\star)\mathrm{d}t\;\mbox{in the weak sense}.$ Now, for any continuous function $\varphi$ with compact support,
        \begin{align*}
            &\Lim_{\ell \to \infty} \int_0^T \int_{(\Pc^n_U)^2} \int_{U \x \R^n} \varphi(t,x) \widehat{b}(t,x,\nub^{x'}(\mathrm{d}u') m^{k_\ell}_t(\mathrm{d}x'),\nub^\star,u) \nub^x(\mathrm{d}u) m^{k_\ell}_t(\mathrm{d}x) \widehat{q}^{k_\ell}_t(\mathrm{d}\nub,\mathrm{d}\nub^\star)\mathrm{d}t
            \\
            &= \int_0^T \int_{(\Pc^n_U)^2} \int_{U \x \R^n} \varphi(t,x) \widehat{b}(t,x,\nub^{x'}(\mathrm{d}u') u(t,x')\mathrm{d}x',\nub^\star,u) u(t,x) \nub^x(\mathrm{d}u) \mathrm{d}x\;\widehat{q}^\infty_t(\mathrm{d}\nub,\mathrm{d}\nub^\star)\mathrm{d}t,
        \end{align*}
         By applying \Cref{cor:converg_density}, one has that $\Lim_{\ell \to \infty} f^{k_\ell}=u$ on each compact and $(m^\infty,\widehat{q}^\infty)$ satisfies a generalized Fokker--Planck equation where $m^\infty_0:=\Lc(\xi)$ and $m^\infty_t(\mathrm{d}x):=u(t,x)\mathrm{d}x$ for $t \in (0,T].$ Given $\widehat{q}^\infty,$ $m^\infty$ is uniquely defined. Therefore, any convergent sub--sequence of $(f^k)_{k \in \N^*}$ converges towards $u.$ We can deduce the convergence of the all sequence and then our result.
        

    \end{proof}

\subsection{Markovian approximation of controlled Fokker--Planck equation}

\medskip
Let $q \in \M$ and $m_t:=\Lc(X_{t})$ with $X$ solution of
\begin{align} \label{eq:SDE_McK_feeback}
        \mathrm{d}X_t
        =
        \int_{(\Pc^n_U)^2} \int_U
        \widehat{b}(t,X_t,\nub^{x'}(\mathrm{d}u')m_t(\mathrm{d}x'),\nub^\star,u) \nub^{X_{t}}(\mathrm{d}u) q_t(\mathrm{d}\nub) q^\star_t(\mathrm{d}\nub^\star)\mathrm{d}t
        +
        \sigma(t,X_t) \mathrm{d}W_t,\;X_0=\xi\;\;\mbox{a.e.}
    \end{align}
    where the map $\widehat{b}$ is s.t. $\widehat{b}(t,x,\nub,\nub^\star,u)=b^0(t,\nub)+b^1(t,\nub^\star)+b^2(t,x,u)$ and also bounded and Lipschitz in all variables. Recall that {\rm \Cref{assum:main1}} is still satisfied for $\sigma$ and for $t \in (0,T)$, $m_t$ has a density $f(t,x)$ i.e. $m_t=f(t,x) \mathrm{d}x.$
Let us assume that: there exists a sequence of Borel measurable functions $(\beta^k)_{k \in \N^*}$ satisfying: for each $k \ge 1$, $\beta^k:[0,T] \x \R^n \x \M \to U$ and
\begin{align*}
    \Lim_{k \to \infty} \delta_{H^k_t}(\mathrm{d}\nub)\mathrm{d}t
    =
    q_t[G](\mathrm{d}\nub)\mathrm{d}t\;\;\mbox{where}\;\;H^k_t:=\delta_{\beta^k(t,x,q_{t \wedge \cdot})}(\mathrm{d}u)G(x)\mathrm{d}x,
\end{align*}
$G$ is a density of probability continuous on $\R^n$ with $G>0.$

\begin{proposition}{$($Deterministic case$)$} \label{prop:approximation_weak}
    If we let $X^k$ be the unique strong solution of:
    \begin{align*}
        \mathrm{d}X^{k}_t
        =
        \int_{\Pc^n_U}
        \widehat{b}(t,X^{k}_t,\overline{m}^k_t,\nub^\star,\beta^k(t,X^{k}_t,q)) q^\star_t(\mathrm{d}\nub^\star) \mathrm{d}t
        +
        \sigma(t,X^k_t) \mathrm{d}W_t
        +
        \sigma_0 \mathrm{d}B_t
    \end{align*}
    with $m^k_t:=\Lc(X^{k}_t)$ and $\overline{m}^k_t:=\Lc\big(X^{k}_{t},\beta^k(t,X^{k}_t,q)\big)$  then
    \begin{align*}
        \Lim_{k \to \infty} \big(m^k, \delta_{\overline{m}^k_t}(\mathrm{d}\nub) \mathrm{d}t \big)= \big(m, q_t(\mathrm{d}\nub)\mathrm{d}t \big),\;\mbox{for the Wasserstein metric}\;\Wc_p.
    \end{align*}
    
\end{proposition}

\begin{proof}
This is a direct application of \Cref{lemma:approx-control-closed-loop}. Indeed,  let us define $q^k_t(\mathrm{d}\nub)\mathrm{d}t:=\delta_{\overline{m}^k_t}(\mathrm{d}\nub)\mathrm{d}t.$ Then, one has $\Lim_{k \to \infty} q^k_t[G](\mathrm{d}\nub')\mathrm{d}t
    =
    \Lim_{k \to \infty}\delta_{H^k_t}(\mathrm{d}\nub)\mathrm{d}t
    =
    q_t[G](\mathrm{d}\nub')\mathrm{d}t.$ Therefore, we can apply \Cref{lemma:approx-control-closed-loop}, for $f^k(t,x)$ the density of $m^k_t,$ one has
    \begin{align*}
             \Lim_{k \to \infty} m^k=m\;\mbox{and}\;\Lim_{k \to \infty}
        q^k_t[f^k(t,\cdot)](\mathrm{d}\nub)\mathrm{d}t
        =
        q_t[f(t,\cdot)](\mathrm{d}\nub)\mathrm{d}t\;\mbox{in}\;\Wc_p.
    \end{align*}
\end{proof}

\medskip
Now, we provide an approximation result close to the previous one when the Fokker--Planck equation is stochastic. More precisely, let $(\Lambda^\star_t)_{t \in [0,T]}$ be a $\Pc^n_U$--valued $\F$--predictable process s.t. $W,$ $X_0$ and $(\Lambda^\star,B)$ are independent. We denote by $\G$ the natural filtration of $(\Lambda^\star,B)$ i.e. $\Gc_t:=\sigma \{\Lambda^\star_{t \wedge \cdot}, B_{t \wedge \cdot} \}.$  Let $\Lambda$ be $\G$--predictable process satisfying $\Lambda_t(\Z_{\mu_t})=1$ $\mathrm{d}\P \otimes \mathrm{d}t$--a.e. where $\mu_t:=\Lc(X_{t}|\Gc_t)$ with $X$ solution of
\begin{align} \label{eq:SDE_McK_feeback}
        \mathrm{d}X_t
        =
        \int_{(\Pc^n_U)^2} \int_U
        \widehat{b}(t,X_t,\nub,\nub^\star,u) \nub^{X_{t}}(\mathrm{d}u) \Lambda_t(\mathrm{d}\nub) \Lambda^\star_t(\mathrm{d}\nub^\star)\mathrm{d}t
        +
        \sigma(t,X_t) \mathrm{d}W_t
        +
        \sigma_0 \mathrm{d}B_t,\;X_0=\xi,\;\;\mbox{a.e.}
    \end{align}

\begin{proposition}{$($Stochastic case$)$} \label{prop:approximation_weak_2}
    There exists a sequence of functions $(\beta^k)_{k \in \N^*}$ satisfying: for each $k \in \N^*,$ $[0,T] \x \R^n \x \Cc^n \x \M \ni (t,x,b,q) \mapsto \beta^k(t,x,b,q) \in U$ is progressively measurable i.e. $\beta^k(t,x,b,v)=\beta^k(t,x,b_{t \wedge \cdot},v_{t \wedge \cdot})$, Lipschitz in $(x,b,q)$ uniformly in $t$, and if we let $X^k$ be the unique strong solution of:
    \begin{align*}
        \mathrm{d}X^{k}_t
        =
        \int_{\Pc^n_U}
        \widehat{b}(t,X^{k}_t,\mub^k_t,\nub^\star,\beta^k(t,X^{k}_t,B,\Lambda^\star)) \Lambda^\star_t(\mathrm{d}\nub^\star) \mathrm{d}t
        +
        \sigma(t,X^k_t) \mathrm{d}W_t
        +
        \sigma_0 \mathrm{d}B_t
    \end{align*}
    with $\mu^k_t:=\Lc(X^{k}_t|\Gc_t)$ and $\mub^k_t:=\Lc\big(X^{k}_{t},\beta^k(t,X^{k}_t, B,\Lambda^\star) | \Gc_t\big)$  then
    \begin{align*}
        \Lim_{k \to \infty} \big(\mu^k, \delta_{\mub^k_t}(\mathrm{d}\nub) \mathrm{d}t \big)= \big(\mu, \Lambda_t(\mathrm{d}\nub)\mathrm{d}t \big),\;\P\mbox{--a.e.},\;\mbox{for the Wasserstein metric}\;\Wc_p.
    \end{align*}

\end{proposition}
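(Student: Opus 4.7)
The plan is to reduce the stochastic case to the deterministic case (Proposition \ref{prop:approximation_weak}) by conditioning on the pair $(B,\Lambda^\star)$, exploiting the crucial observation in Remark \ref{rm:prop:deterministic} that the approximating feedbacks constructed in the deterministic case depend only on $q$ and not on the coefficients $(\widehat{b},\sigma)$.

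First I would use the $\G$-predictability of $\Lambda$ to write $\Lambda_t = \Psi(t,B_{t\wedge\cdot},\Lambda^\star_{t\wedge\cdot})$ for some progressively Borel measurable $\Psi:[0,T]\x\Cc^n\x\M\to\Pc^n_U$, so that $\Lambda$ becomes a deterministic functional of the pair $(B,\Lambda^\star)$. Next I would perform the change of variable $Y_t:=X_t-\sigma_0 B_t$ and rewrite the McKean--Vlasov SDE \eqref{eq:SDE_McK_feeback} for $Y$; because $(W,\xi)$ is independent of $(B,\Lambda^\star)$, conditional on a realization $(b,\lambda^\star)$ of $(B,\Lambda^\star)$ the process $Y$ solves a McKean--Vlasov equation with deterministic (but $b$-dependent) coefficients and with $q=\Psi(\cdot,b,\lambda^\star)\in\M$ playing the role of the deterministic input in Proposition \ref{prop:approximation_weak}. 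Since the conditional law of $X_t$ given $\Gc_t$ equals the conditional law of $Y_t+\sigma_0 B_t$ given $(B,\Lambda^\star)$, proving the $\P$-almost-sure convergence of conditional laws will amount to verifying convergence for almost every frozen realization.

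Applying the construction of Step~2 in the proof of Proposition \ref{prop:approximation_weak} pointwise to $q=\Lambda(\om)$, I obtain a sequence of Lipschitz maps $\beta^k_{\rm det}(\om;t,x,v)$ achieving the deterministic approximation for each $\om$. Because Remark \ref{rm:prop:deterministic} guarantees that this construction is coefficient-free and depends measurably on $q$, I can globally set $\beta^k(t,x,b,v):=\beta^k_{\rm det}(t,x,\Psi(\cdot,b,v))$, obtaining a progressively Borel map on $[0,T]\x\R^n\x\Cc^n\x\M$ that is Lipschitz in $x$ and, after a further mollification via \cite[Theorem 6.4.1]{Cobza2019}, Lipschitz in $(b,v)$ as well. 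The strong solution $X^k$ of the resulting closed-loop SDE is then well-defined, and by uniqueness of the stochastic Fokker--Planck equation (\cite[Theorem 2.3]{lacker-strong-PC-2018}, applied conditionally) the conditional laws $\mu^k_t=\Lc(X^k_t|\Gc_t)$ coincide $\P$-a.s. with the conditional laws produced by the deterministic approximation applied to the frozen realization.

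The main obstacle is Step~3: ensuring that the pathwise deterministic construction can genuinely be made measurable (and then Lipschitz) in the parameters $(B,\Lambda^\star)$ without destroying the convergence. The construction in Proposition \ref{prop:approximation_weak} invokes Borel selection devices (\cite[Lemma 3.1]{NEUFELD20143819}, \cite[Theorem 2.2.3]{CastaingCharles2004YMoT}) that operate on the single input $q$, so composing with $\Psi$ gives a Borel object on the right domain, but one must then apply \Cref{lemma:approx-control-closed-loop} to the \emph{conditional} Fokker--Planck equation to pass from the uniform density convergence it provides to the almost sure $\Wc_p$-convergence of $(\mu^k,\delta_{\mub^k_t}(\mathrm{d}\nub)\mathrm{d}t)$ toward $(\mu,\Lambda_t(\mathrm{d}\nub)\mathrm{d}t)$. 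The key technical input is that the conditional-law process $\mu^k$ coincides a.s. with the deterministic solution built from the frozen realization, a fact that again follows from the conditional version of \cite[Theorem 2.3]{lacker-strong-PC-2018} and the independence of $(W,\xi)$ from $(B,\Lambda^\star)$.
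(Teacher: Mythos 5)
Your approach is essentially the paper's own: both reduce the stochastic case to \Cref{prop:approximation_weak} by the translation $Y_t = X_t - \sigma_0 B_t$, condition on a realization of $(B,\Lambda^\star)$, invoke \Cref{rm:prop:deterministic} to make the pathwise construction coefficient-free (hence selectable uniformly over realizations), use the $\G$-predictability of $\Lambda$ to represent it as a progressively measurable functional of $(B,\Lambda^\star)$, and close via uniqueness of the conditional Fokker--Planck equation. That said, the shift bookkeeping in your writeup is imprecise in a way that matters. After passing to $Y$, the deterministic input fed into \Cref{prop:approximation_weak} must be the \emph{shifted} flow of measures $\Theta := \Lambda[-B]$ (and, on the state side, $\vartheta := \mu[-B]$), not $\Lambda = \Psi(\cdot,B,\Lambda^\star)$ itself; correspondingly the generator must be the shifted one $\widehat{\Lc}$. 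Your final definition $\beta^k(t,x,b,v) := \beta^k_{\rm det}(t,x,\Psi(\cdot,b,v))$ omits both shifts: the first argument should be $x - \sigma_0 b(t)$ (because $\beta^k_{\rm det}$ acts on the $Y$-coordinate) and the functional $\Psi$ must be replaced by its $-b$-shift $\Gamma(s,b_{s\wedge\cdot},v_{s\wedge\cdot})[-b]$. As written, applying \cite[Theorem 2.3]{lacker-strong-PC-2018} conditionally would identify $\mu^k_t$ with $m^k_t(\om)[B(\om)]$ only if these shifts are in place; without them the conditional laws of $X^k$ do not match the deterministic construction and the a.s.\ $\Wc_p$-convergence does not follow. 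Once the shifts are tracked (yielding exactly $\beta^k(t,x,\mathsf{b},v) = \widetilde\beta^k(t,x-\sigma_0\mathsf{b}(t),\widetilde\Gamma_{t\wedge\cdot})$ with $\widetilde\Gamma$ the shifted functional), the rest of your argument --- the Borel/Lipschitz measurability of the selection and the conditional application of \Cref{lemma:approx-control-closed-loop} --- is correct and follows the paper.
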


\begin{remark}
    When $\Lambda^\star$ is adapted to the canonical filtration of the Brownian motion $B$, $\beta^k$ which is a map of $(t,x,b,q)$ can be taken as a map of $(t,x,b)$ Lipschitz in $(x,b)$ uniformly in $t$ $($see {\rm \Cref{prop:approx_regularize}} $)$.
\end{remark}

\begin{proof}

We start by introducing some $``$shifted$"$ measures. Let us define, for all $(t,\mathsf{b},\pi,m) \in [0,T] \x \Cc^n \x \Cc^n_{\Wc} \x \Pc^n_U,$
\begin{align} \label{eq:shift-proba-initial}
    \pi_t[\mathsf{b}](\mathrm{d}y):= \int_{\R^n} \delta_{\big(y'+\sigma_0 \mathsf{b}(t) \big)}(\mathrm{d}y) \pi_t(\mathrm{d}y'),\;\; m[\mathsf{b}(t)](\mathrm{d}u,\mathrm{d}y):=\int_{\R^n \x U} \delta_{(y'+\sigma_0 \mathsf{b}(t))}(\mathrm{d}y)m(\mathrm{d}u,\mathrm{d}y')
\end{align}
and any $q \in \M,$
\begin{align} \label{eq:shift-proba-M}
        q_t[\mathsf{b}](\mathrm{d}m)\mathrm{d}t:=\int_{\Pc^n_U} \delta_{\big(\tilde m[\mathsf{b}(t)] \big)}(\mathrm{d}m) q_t(\mathrm{d}\widetilde m)\mathrm{d}t.
    \end{align}
In the same way, let us consider the {\color{black}``shifted''} generator $\widehat{\Lc},$
\begin{align} \label{eq:shift-generator}
        \widehat{\Lc}_t \varphi(y,\mathsf{b},\nub,\nub^\star,u):= \frac{1}{2}  \mathrm{Tr}\big[\sigma \sigma^\top(t,y+\sigma_0 \mathsf{b}(t)) \nabla^2 \varphi(y) \big] +\widehat{b}(t,y+\sigma_0\mathsf{b}(t),\nub[\mathsf{b}(t)],\nub^\star,u)^{\top} \nabla \varphi(y).
    \end{align}
Let us introduce $\vartheta_t:=\mu_t[-B]$ and $\Theta:=\Lambda[-B].$ The couple $(\vartheta,\Lambda)$ satisfies: $\P$--a.e. $\om \in \Om$,  $\Theta_t(\om)(\Z_{\vartheta_t(\om)})=1$ $\mathrm{d}t$--a.e. and
\begin{align*}
    \mathrm{d}\langle f,\vartheta_t(\om) \rangle
	~=~
	\int_{(\Pc^n_U)^2} \int_{\R^n \x U}  \widehat{\Lc}_t f(x,B(\om),\nub,\nub^\star,u) \nub(\mathrm{d}u,\mathrm{d}x)\Theta_t(\om)(\mathrm{d}\nub)\Lambda^\star_t(\om)(\mathrm{d}\nub)\mathrm{d}t.
\end{align*}

\medskip
By ??(see below), there is a sequence of maps $(\beta^k)_{k \ge 1}$ s.t. for each $k \ge 1$, $[0,T] \x \R^n \x \Cc^n \x \M \ni (t,x,b,q) \to \beta^k(t,x,b,q) \in U$ is Lipschitz in $(x,b,q)$ uniformly in $t$, $\beta^k(t,x,B,\Lambda^\star)=\beta^k(t,x,B_{t \wedge \cdot},\Lambda^\star_{t \wedge \cdot})$ and for $\P$--a.e. $\om \in \Om,$ we have, in weakly sense,
\begin{align*}
    \Lim_{k \to \infty} \delta_{H^k_t(\om)}(\mathrm{d}\nub)\mathrm{d}t
    =
    \Lambda_t(\om)[G](\mathrm{d}\nub)\mathrm{d}t\;\;\mbox{where}\;\;H^k_t(\om):=\delta_{\beta^k\big(t,x,\;B(\om),\;\Lambda^\star(\om)\big)}(\mathrm{d}u)G(x)\mathrm{d}x,
\end{align*}
$G$ is a density of probability continuous on $\R^n$ with $G>0.$ It is straightforward to check that $\P$--a.e. $\om \in \Om,$
\begin{align*}
    \Lim_{k \to \infty} \delta_{\tilde H^k_t(\om)}(\mathrm{d}\nub)\mathrm{d}t
    =
    \Theta_t(\om)[G](\mathrm{d}\nub)\mathrm{d}t\;\;\mbox{where}\;\;\widetilde{H}^k_t(\om):=\delta_{\beta^k\big(t,\;x+B_t(\om),\;B(\om),\;\Lambda^\star(\om)\big)}(\mathrm{d}u)G(x+\sigma_0B_t(\om))\mathrm{d}x.
\end{align*}

\medskip
By \Cref{prop:approximation_weak} (deterministic case), if we let $\Xt^{\om,k}:=\Xt^k$ be the unique strong solution of:
    \begin{align*}
        \mathrm{d}\Xt^{k}_t
        =
        \int_{\Pc^n_U}
        \widehat{b}\Big(t,\;\Xt^{k}_t+\sigma_0B_t(\om),\;\overline{m}^k_t(\om),\;\nub^\star,&\;{\beta}^k(t,\Xt^{k}_t+\sigma_0B_t(\om),\;\Lambda^\star(\om)) \Big) \Lambda^\star_t(\om)(\mathrm{d}\nub^\star) \mathrm{d}t
        \\
        &~~~~~~+
        \sigma(t,\Xt^k_t + \sigma_0 B_t(\om)) \mathrm{d}W_t
    \end{align*}
    with $m^k_t(\om):=\Lc(\Xt^{\om,k}_t)$ and $\overline{m}^k_t(\om):=\Lc\big(\Xt^{\om,k}_{t},{\beta}^k(t,\Xt^{\om,k}_t+\sigma_0B_t(\om),\Lambda^\star(\om))\big)$  then
    \begin{align*}
        \Lim_{k \to \infty} \big(m^k(\om), \delta_{\overline{m}^k_t(\om)}(\mathrm{d}\nub) \mathrm{d}t \big)= \big(\vartheta(\om), \Theta_t(\om)(\mathrm{d}\nub)\mathrm{d}t \big),\;\mbox{for the Wasserstein metric}\;\Wc_p.
    \end{align*}
    Now, let us introduce $X^k$ the solution of
    \begin{align*}
        \mathrm{d}X^{k}_t
        =
        \int_{\Pc^n_U}
        \widehat{b}(t,X^{k}_t,\mub^k_t,\nub^\star,\widetilde{\beta}^k(t,X^{k}_t,B,\Lambda^\star)) \Lambda^\star_t(\mathrm{d}\nub^\star) \mathrm{d}t
        +
        \sigma(t,X^k_t) \mathrm{d}W_t +\sigma_0 \mathrm{d}B_t\;\mbox{with}\;\mub^{k}_t:=\Lc\big(X^k_{t},\widetilde{\beta}^k(t,X^{k}_t,B,\Lambda^\star)| \Gc_t\big).
    \end{align*}
    By uniqueness (see \Cref{thm:unique_stochastic-FP} or \cite[Theorem 2.3]{lacker-strong-PC-2018}), it is straightforward to check that: $\P$--a.e. $\om,$
    $$
        \mu^k
        = m^k(\om)[B(\om)],\;\mbox{and}\;\delta_{\mub^k_t}(\mathrm{d}\nub) \mathrm{d}t
        = \delta_{\overline{m}^k_t(\om)[B(\om)]}(\mathrm{d}\nub) \mathrm{d}t,\;\;\P^{\Gc_T}_\om\;\mbox{--a.e}
    $$
    Since the function 
\begin{align*}
    (\pi,q,\mathsf{b}) \in \Cc^n_{\Wc} \x \M \x \Cc^n \to \big(\pi[\mathsf{b}],q_t[\mathsf{b}](\mathrm{d}m)\mathrm{d}t,\mathsf{b} \big) \in \Cc^n_{\Wc} \x \M \x \Cc^n
\end{align*}
is continuous, consequently, $\P$--a.e. $\om,$
\begin{align*}
    \Lim_{k \to \infty} \big(\mu^k(\om), \delta_{\mub^k_t(\om)}(\mathrm{d}\nub) \mathrm{d}t \big)
    =
    \Lim_{k \to \infty} \big(m^k(\om)[B(\om)], \delta_{\overline{m}^k_t(\om)[B(\om)]}(\mathrm{d}\nub) \mathrm{d}t \big)= \big(\vartheta'(\om)[B(\om)], \Theta_t(\om)[B(\om)](\mathrm{d}\nub)\mathrm{d}t \big)
    =
    (\mu,\Lambda).
\end{align*}
This is enough to conclude.
    
\end{proof}

{
\color{black}

\medskip

\begin{proposition} \label{prop:approx_regularize}
    There is a sequence of maps $(\beta^k)_{k \ge 1}$ s.t. for each $k \ge 1$, $[0,T] \x \R^n \x \Cc^n \x \M \ni (t,x,\bb,q) \to \beta^k(t,x,\bb,q) \in U$ is Lipschitz in $(x,\bb,q)$ uniformly in $t$, $\beta^k(t,x,B,\Lambda^\star)=\beta^k(t,x,B_{t \wedge \cdot},\Lambda^\star_{t \wedge \cdot})$ and for $\P$--a.e. $\om \in \Om,$ we have, in weakly sense,
\begin{align*}
    \Lim_{k \to \infty} \delta_{H^k_t(\om)}(\mathrm{d}\nub)\mathrm{d}t
    =
    \Lambda_t(\om)[G](\mathrm{d}\nub)\mathrm{d}t\;\;\mbox{where}\;\;H^k_t(\om):=\delta_{\beta^k\big(t,x,\;B(\om),\;\Lambda^\star(\om)\big)}(\mathrm{d}u)G(x)\mathrm{d}x,
\end{align*}
and $G$ is a density of probability continuous on $\R^n$ with $G>0.$ In addition, when $\Lambda^\star$ is adapted to the natural filtration of the Brownian motion $B$, $\beta^k$ can be taken as a map of $(t,x,\bb)$ Lipschitz in $(x,\bb)$ uniformly in $t$.
\end{proposition}

\begin{proof}
       $\underline{Step\;1:approximation\;of\;any\;elements\;of\;\M}$ Let $q \in \M$. Given $G,$ by an application of \cite[Lemma 3.1]{NEUFELD20143819}, there exists a Borel function $\Rc: \Pc^n_U \to \Pc^n_U$ satisfying: for all $\nub \in \Pc^n_U,$ $\Rc(\nub)=\nub^{x}(\mathrm{d}u)G(x)\mathrm{d}x.$
By \cite[Proposition C.1]{Lacker_carmona_delarue_CN} (see the construction in \cite[Theorem 2.2.3]{CastaingCharles2004YMoT}), there exists a sequence of Borel functions $(a^j)_{j \in \N^*}$ with for each $j \in \N^*,$ $a^j: [0,T] \x \R^n \x \M(\Pc^n_U) \to U$ s.t. for each $\nub \in \Pc^n_U,$
    \begin{align} \label{eq:first_app_control}
        \Lim_{j \to \infty} \delta_{a^j(t,x, \Rc(\nub) )}(\mathrm{d}u)G(x)\mathrm{d}x
        =
        \Rc(\nub)(\mathrm{d}x,\mathrm{d}u)
        =
       \nub^{x}(\mathrm{d}u)G(x)\mathrm{d}x,\;\mbox{in the weak sense.}
    \end{align}    
Also, there exists a sequence of Borel functions $(c^k)_{k \in \N^*}$ such that for each $k \in \N^*,$  $c^k:[0,T] \x \M(\Pc^n_U) \to \Pc^n_U$ is progressively measurable i.e. $c^k(t,q)=c^k(t,q_{t \wedge \cdot})$ and
    \begin{align} \label{eq:second_app_control}
        \Lim_{k \to \infty} \delta_{c^k (t, q_{t \wedge \cdot} )}(\mathrm{d}\nub)\mathrm{d}t
        =
       q_t(\mathrm{d}\nub)\mathrm{d}t\;\mbox{in weakly sense}.
    \end{align}
    \cite[Lemma 4.7]{el1987compactification} provides an example of construction guaranteeing the fact that $c^k$ is progressively measurable.
    Let us denote $h^k_t:=c^k (t, q_{t \wedge \cdot}).$ By combining \Cref{eq:first_app_control} and \Cref{eq:second_app_control}, if we set $\beta^{k,j}(t,x,q):=a^{j}(t,x,\Rc(c^k (t, q_{t \wedge \cdot}))),$ we find that $\beta^{k,j}$ is progressively measurable and
    \begin{align*}
        \Lim_{k \to \infty}
        \lim_{j \to \infty}\delta_{L^{k,j}_t}(\mathrm{d}\nub)\mathrm{d}t
        =
        q_t[G](\mathrm{d}\nu)\mathrm{d}t\;\;\mbox{where}\;\;L^{k,j}_t(\mathrm{d}x,\mathrm{d}u):=\delta_{\beta^{k,j}(t,x,q)}(\mathrm{d}u)G(x)\mathrm{d}x.
    \end{align*}
     
     $\underline{Step\;2:approximation\;of\;progressively\;measurable\;\beta}$ Let $\widetilde{\beta}:[0,T] \x \R^n \x \M \to U$ be a progressively measurable map. We know that $\Lambda$ is $(\Gc_t)_{t \in [0,T]}$--predictable process where $\Gc_t:=\sigma \{\Lambda^\star_{t \wedge \cdot}, B_{t \wedge \cdot} \}.$ Then, there is a progressively measurable map ${\beta}:[0,T] \x \R^n \x \Cc^n \x \M \to U$ satisfying $\widetilde{\beta}(t,x,\Lambda)=\beta(t,x,B,\Lambda^\star).$ Notice that when $\Lambda^\star$ is adapted to the filtration of $B$, the map $\beta$ can be chosen as a function of $(t,x,\bb).$ We will provide in the following the approximation for $\beta$ a map of $(t,x,\bb,q)$. The case $\beta$ a map of $(t,x,\bb)$ follows from this.

     \medskip
     Let $0=t_0^J<\cdots <t^J_J=T$ be a subdivision of $[0,T]$ s.t. $\Lim_{J \to \infty}\sup_{1 \le j \le J-1}|t^J_j-t^J_{j+1}|=0$. We introduce the notation for each $t \in (0,T]$, $[t]^J=t^J_j$ where $t^J_j < t \le t^J_{j+1}$ and $[0]^J=0$.
      For each $(\bb,q) \in \Cc^n \x \M$, and any $(\varepsilon,\delta) \in (0,1) \x (0,1)$, we set
      \begin{align*}
         \beta^{\varepsilon,\delta,J}(t,x,\bb,q)
          :=
          \frac{1}{\varepsilon}\int_{([t]^J-\varepsilon)\vee 0}^{[t]^J} \beta(t,y,\bb,q) G_{\delta}(x-y)\;\mathrm{d}y
      \end{align*}
      where $G_\delta$ is a convolution kernel on $\R^n$. It is classical to check that, for each $(\bb,q)$, 
      \begin{align*}
         \beta^{\varepsilon,\delta,J}(t,x,\bb,q)=\beta^{\varepsilon,\delta,J}(t,x,\bb_{t \wedge \cdot},q_{t \wedge \cdot})\;\;\mbox{and}\;\;\lim_{\varepsilon \to 0}\lim_{\delta \to 0}\lim_{J \to \infty}\beta^{\varepsilon,\delta,J}(t,x,\bb,q)=\beta(t,x,\bb,q)\;\mbox{a.e.}\;(t,x) \in [0,T] \x \R^n.
      \end{align*}
      
      Let us fix $\varepsilon>0$, $\delta >0$ and $J \in \N^*$.
      We observe that $\beta^{\varepsilon,\delta,J}(t,x,\bb,q)$ is piece--wise constant in $t$  i.e. $\beta^{\varepsilon,\delta,J}(t,x,\bb,q)=\beta^{\varepsilon,\delta,J}(t^J_j,x,\bb,q)$ for each $t^J_j < t \le t^J_{j+1}$, and Lipschitz in $x$ uniformly in $t$. We are going to approximate $\beta^{\varepsilon,\delta,J}$ by a sequence piece--wise constant in $x$. Let $(\kappa_i)_{i \ge 1}$ be a sequence of positive number s.t. $\lim_{i \to \infty} \kappa_i=0$. For each $i \ge 1$, we consider a sequence of countable disjoint balls $(U^i_\ell)_{\ell \ge 1}$ of radius $\kappa_i$, center in $x^i_\ell \in \R^n$, and $\cup_{\ell \ge 1} U^i_\ell=\R^n$. We define for each $i \ge 1$, $\beta^{\varepsilon,\delta,J,i}$
      \begin{align*}
          \beta^{\varepsilon,\delta,J,i}(t,x,\bb,q):=\beta^{\varepsilon,\delta,J}(t,x^i_\ell,\bb,q)\;\;\mbox{for}\;\;x \in U^i_\ell.
      \end{align*}
      Since $\beta^{\varepsilon,\delta,J}(t,x,\bb,q)$ is Lipschitz in $x$ uniformly in $t$  and $\lim_{i \to \infty} \kappa_i=0$, we get that, for each $(t,x,\bb,q) \in [0,T] \x \R^n \x \Cc^n \x \M$, 
      \begin{align*}
          \beta^{\varepsilon,\delta,J,i}(t,x,\bb,q)=\beta^{\varepsilon,\delta,J,i}(t,x,\bb_{t \wedge \cdot},q_{t \wedge \cdot})\;\;\mbox{and}\;\;\lim_{i \to \infty}\beta^{\varepsilon,\delta,J,i}(t,x,\bb,q):=\beta^{\varepsilon,\delta,J}(t,x,\bb,q).
      \end{align*}
      For each $i \ge 1$, $\beta^{\varepsilon,\delta,J,i}$ is piece--wise in $(t,x)$.
      
     \medskip 
     Since $B$ is a Brownian motion, for each $1 \le j \le J$, $\Lc(B_{t^J_t \wedge \cdot}, \Lambda^\star_{t^J_j \wedge \cdot})$ is non--atomic. We know that $\beta^{\varepsilon,\delta,J,i}(t^J_j,x^i_\ell,\bb,q)=\beta^{\varepsilon,\delta,J,i}(t^J_j,x^i_\ell,\bb_{t^J_j \wedge \cdot},q_{t^J_j \wedge \cdot})$. By combining the approximation of measurable function by continuous functions in \cite[Proposition C.1]{Lacker_carmona_delarue_CN} and the approximation of continuous function by Lipschitz functions in \cite[Theorem 6.4.1]{Cobza2019}, for each $1 \le j \le J$, each $i \ge 1$ and $\ell \ge 1$, there exists a sequence of Lipschitz maps $(H^{j,i,\ell,v})_{v \ge 1}$ s.t.  $H^{j,i,\ell,v}:\Cc^n \x \M \to U$ and 
      \begin{align*}
          \Lim_{v \to \infty}  H^{j,i,\ell,v}(B_{t^J_j \wedge \cdot},\Lambda^\star_{t^J_j \wedge \cdot})=\beta^{\varepsilon,\delta,J}(t^J_j,x^i_\ell,B_{t^J_j \wedge \cdot},\Lambda^\star_{t^J_j \wedge \cdot}),\;\;\P\mbox{--a.e.}
      \end{align*}
      There is a Borel set $A_{J,i}$ s.t. $\P(A_{J,i})=1$ and for each $\om \in A_{J,i}$, 
      \begin{align*}
          \Lim_{v \to \infty}  H^{j,i,\ell,v}(B_{t^J_j \wedge \cdot}(\om),\Lambda^\star_{t^J_j \wedge \cdot}(\om))=\beta^{\varepsilon,\delta,J}(t^J_j,x^i_\ell,B_{t^J_j \wedge \cdot}(\om),\Lambda^\star_{t^J_j \wedge \cdot}(\om))\;\mbox{for each }1 \le j \le J\;\mbox{and any}\;\ell \ge1.
      \end{align*}

      Notice that, for $u_0 \in U$, 
      \begin{align*}
          \lim_{K \to \infty}u_0 \mathbf{1}_{x \notin [-K,K]^n} 
          + \beta^{\varepsilon,\delta,J,i}(t,x,\bb,q) \mathbf{1}_{x \in [-K,K]^n}=\beta^{\varepsilon,\delta,J,i}(t,x,\bb,q).
      \end{align*}
      For each $K>0,$ we define $\Gamma^{\varepsilon,\delta,J,i,v,K}$ by: $\Gamma^{\varepsilon,\delta,J,i,v,K}(t,x,\bb,q)
          :=u_0\;\;\mbox{for}\;\;(t,x) \in [0,T] \x (\R^n \setminus [-K,K]^n)$ or $(t,x) \in \{0\} \x \R^n$,
      \begin{align*}
          \Gamma^{\varepsilon,\delta,J,i,v,K}(t,x,\bb,q)
          :=
          H^{j,i,\ell,v}(\bb_{t^J_j \wedge \cdot},q_{t^J_j \wedge \cdot})\;\mbox{for}\;(t,x)\in (t^J_j,t^J_{j+1}] \x (U^i_\ell \cap [-K,K]^n).
      \end{align*}
      Since the definition of $\Gamma^{\varepsilon,\delta,J,i,v,K}$ involves only a finite number of  $\{ H^{j,i,\ell,v},\;\ell \ge 1,\;1 \le j \le J\}$, the map $\Gamma^{\varepsilon,\delta,J,i,v}$ is Lipschitz in $(\bb,q)$ uniformly in $(t,x)$. $\Gamma^{\varepsilon,\delta,J,i,v,K}(t,x,\bb,q)=\Gamma^{\varepsilon,\delta,J,i,v,K}(t,x,\bb_{t \wedge \cdot},q_{t \wedge \cdot})$ and for each $\om \in A_{J,i}$, 
      \begin{align*}
          \Lim_{v \to \infty} \Lim_{K \to \infty}  \Gamma^{\varepsilon,\delta,J,i,v,K}(t,x,B(\om),\Lambda^\star(\om))=\beta^{\varepsilon,\delta,J,i}(t,x^i_\ell,B_{t^J_j \wedge \cdot}(\om),\Lambda^\star_{t^J_j \wedge \cdot}(\om))\;\mbox{for each }(t,x) \in (t^J_j,t^J_{j+1}] \x U^i_\ell.
      \end{align*}
      We set $A:=\cap_{J \ge 1,i \ge 1} A_{J,i}$. By combining all the result, we have $\P(A)=1$ and for each $\om \in A$, 
      \begin{align*}
          \Lim_{\varepsilon \to 0} \Lim_{\delta \to 0} \Lim_{J \to \infty} \Lim_{i \to \infty} \Lim_{v \to \infty} \Lim_{K \to \infty} \Gamma^{\varepsilon,\delta,J,i,v,K}(t,x,B(\om),\Lambda^\star(\om))=\beta(t,x,B(\om),\Lambda^\star(\om))\;\mbox{for a.e. }(t,x) \in [0,T] \x \R^n.
      \end{align*}
      To recover the Lipschitz continuity in $x$, it is enough to do another regularization by convolution. 
      Therefore, we find a sequence $(\beta^k)_{k \ge 1}$ of Lipschitz functions in $(x,\bb,q)$ unifformly in $t$ s.t. $\beta^k(t,x,B,\Lambda^\star)=\beta^k(t,x,B_{t \wedge \cdot},\Lambda^\star_{t \wedge \cdot})$, $\P(A)=1$, and for $\om \in A,$ we have
      \begin{align*}
          \lim_{k \to \infty} \beta^k(t,x,B(\om),\Lambda^\star(\om))=\beta(t,x,B(\om),\Lambda^\star(\om))\;\mbox{a.e. }(t,x) \in [0,T] \x \R^n.
      \end{align*}

    \medskip
    The proof of the proposition is just a combination of step $1$ and step $2$.
\end{proof}
\begin{remark}
    Notice that, as we saw in the previous proof, we can take $\beta$ Lipschitz in $t$ as well. We chose the regularity mentioned in the proposition because it is largely enough to establish our results. 
\end{remark}

}

\subsection{Uniqueness and equivalence of filtration}

In this section, we first provide the uniqueness property of the stochastic Fokker--Planck equation that we use in this paper. In a second time, we show an approximation result of the Brownian motion $B$ by a sequence of Fokker--Planck equations. 

\medskip
Recall that $(\Om,\F,\Fc,\P)$ is a given probability space. On this space, we have an $\R^{n \x n}$--valued $\F$--Brownian motion $(W,B)$ and a $\Fc_0$--random variable $X_0$ with $\Lc(X_0)=\nu.$ Let us set $\Gt:=(\Gct_t)_{t \in [0,T]} \subset \F$ a sub--filtration s.t. $B$ is $\Gt$ adapted and, $(W,X_0)$ and $\Gt$ are independent. Besides, $\Gt$ satisfies the $($H$)$--hypothesis with $\F$ i.e. for every $t \in [0,T],$ for each $A \in \Fc_t,$ one has $\E[\mathbf{1}_A|\Gct_t]=\E[\mathbf{1}_A|\Gct_T].$ We give ourselves the bounded Borel map 
$$
    \Phi: [0,T] \x \R^n \x C([0,T];\Pc_p(\R^n)) \x \Cc^\ell \ni (t,x,\pi,\mathsf{b}) \to \Phi(t,x,\pi_{t \wedge \cdot},\mathsf{b}(t \wedge \cdot)) \in \R^n.
$$

\medskip
Let $(X,\mu)$ be a solution of 
\begin{align} \label{eq:uniq-mc_vl}
    \mathrm{d}X_t
    =
    \Phi(t,X_t,\mu,B) \mathrm{d}t 
    +
    \sigma(t,X_t)\mathrm{d}W_t
    +
    \sigma_0 \mathrm{d}B_t\;\;\mbox{with}\;\;\mu_t=\Lc(X_t|\Gct_t)=\Lc(X_t|\Gct_T).
\end{align}

Recall that the sub--filtration $\Gt$ is fixed. We are first interested in the uniqueness of the conditional law of $X_t$ given the $\sigma$--field $\Gct_t$ i.e. $\Lc(X_t|\Gct_t).$ We say that there is uniqueness of the marginal law of \eqref{eq:uniq-mc_vl} if for $(X^1,\mu^1)$ and $(X^2,\mu^2)$ s.t. for $i=1,2,$ $(X^i,\mu^i)$ is solution of \eqref{eq:uniq-mc_vl} then $\mu^1=\mu^2$ $\P$--a.s.

\begin{proposition} \label{thm:unique_stochastic-FP}
    If $\Phi$ is uniformly Lipschitz in $\pi,$ the uniqueness of the marginal law of \eqref{eq:uniq-mc_vl} is true.
\end{proposition}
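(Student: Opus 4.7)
The plan is to characterize every solution $(X,\mu)$ of \eqref{eq:uniq-mc_vl} as a fixed point of a map $\Psi$ on $\Gt$-adapted measure-valued processes, and to show $\Psi$ has at most one fixed point by a Gr\"onwall argument in total variation exploiting the Lipschitz-in-$\pi$ hypothesis on $\Phi$. This is exactly the strategy of \cite[Theorem 2.3]{lacker-strong-PC-2018} (cited at the end of the statement); I will adapt it to the common-noise setting by using the $(H)$-hypothesis.

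For any $\Gt$-predictable $\Pc_p(\R^n)$-valued process $\pi$, the SDE obtained by freezing the measure argument,
\begin{align*}
    \mathrm{d}X^\pi_t = \Phi(t, X^\pi_t, \pi, B) \mathrm{d}t + \sigma(t, X^\pi_t) \mathrm{d}W_t + \sigma_0 \mathrm{d}B_t,\;\;X^\pi_0 = X_0,
\end{align*}
admits a unique strong solution by the Veretennikov–Zvonkin theory (bounded Borel drift, Lipschitz non-degenerate diffusion from {\rm \Cref{assum:main1}}). Define $\Psi(\pi)_t := \Lc(X^\pi_t \mid \Gct_T)$; by the $(H)$-hypothesis combined with $X^\pi_t \in \Fc_t$, this coincides with $\Lc(X^\pi_t \mid \Gct_t)$, so that a pair $(X,\mu)$ solves \eqref{eq:uniq-mc_vl} if and only if $\mu = \Psi(\mu)$ and $X = X^\mu$. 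In particular, uniqueness of the marginal law of \eqref{eq:uniq-mc_vl} is equivalent to uniqueness of the fixed point of $\Psi$.

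To prove the latter, I take two fixed points $\mu^1, \mu^2$ realized on the same probability space with common $(W, B, X_0)$ and compare the corresponding processes $X^1, X^2$. Since $\Phi$ is bounded and $\sigma$ is uniformly non-degenerate, the Girsanov densities
\begin{align*}
    Z^i_T := \exp\Big(\int_0^T \phi^i_s \cdot \mathrm{d}W_s - \frac{1}{2} \int_0^T |\phi^i_s|^2 \mathrm{d}s\Big),\;\;\phi^i_s := -\sigma^{-1}(s, X^i_s) \Phi(s, X^i_s, \mu^i, B),
\end{align*}
are uniformly bounded in $L^q(\P)$ for every $q \ge 1$. Under $\mathrm{d}\mathrm{Q}^i := Z^i_T \mathrm{d}\P$, $X^i$ satisfies $\mathrm{d}X^i_t = \sigma(t, X^i_t) \mathrm{d}\tilde W^i_t + \sigma_0 \mathrm{d}B_t$ for a $\mathrm{Q}^i$-Brownian motion $\tilde W^i$ independent of $B$, an SDE whose law conditional on $\Gct_T$ is $i$-independent by pathwise uniqueness. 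Expressing $\mu^i_t$ in terms of a $\mathrm{Q}^i$-conditional expectation via Bayes' rule and using $|\Phi(\cdot,\mu^1,\cdot)-\Phi(\cdot,\mu^2,\cdot)| \le L \sup_{r\le\cdot} \|\mu^1_r-\mu^2_r\|_{\rm TV}$, a Cauchy--Schwarz/Pinsker estimate yields
\begin{align*}
    \E\big[\|\mu^1_t - \mu^2_t\|^2_{\rm TV}\big] \le C \int_0^t \E\Big[\sup_{r \le s} \|\mu^1_r - \mu^2_r\|^2_{\rm TV}\Big] \mathrm{d}s,
\end{align*}
and Gr\"onwall forces $\mu^1 \equiv \mu^2$.

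The main obstacle is careful bookkeeping of the conditioning structure under the change of measure: one must verify that the $(H)$-hypothesis and the independence of $(W,X_0)$ from $\Gt$ survive the passage to $\mathrm{Q}^i$, which is the case because Girsanov acts only on the $W$-component while leaving $(B, X_0)$ and the $\sigma$-algebra $\Gct_T$ invariant in joint law. Apart from these technicalities, the argument is a direct transposition of \cite[Theorem 2.3]{lacker-strong-PC-2018} to the common-noise setting with additive drift $\sigma_0 \mathrm{d}B_t$.
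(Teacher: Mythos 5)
Your proof is correct in spirit, but it takes a genuinely different, and heavier, route than the paper.

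The paper reduces to the known no-common-noise result by the change of variable $\widetilde{X}_t := X_t - \sigma_0 B_t$. Conditionally on $\Gct_T$, the process $B$ is frozen as a deterministic path $\mathsf{b}$ while $(W,X_0)$ keep their unconditional law (by the assumed independence of $(W,X_0)$ and $\Gt$); under that regular conditional probability, $\widetilde{X}$ solves a \emph{standard} McKean--Vlasov SDE without common noise, with coefficient $\widetilde{\Phi}(t,\cdot,\cdot,\mathsf{b})$ and diffusion $\sigma(t,\cdot+\sigma_0\mathsf{b}(t))$. Uniqueness of the conditional marginal then follows by applying \cite[Theorem 2.3]{lacker-strong-PC-2018} as a black box, path by path, and one un-shifts to get $\mu^1=\mu^2$. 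You, by contrast, do not shift: you keep the $\sigma_0\,\mathrm{d}B_t$ term and re-derive the mechanism behind \cite[Theorem 2.3]{lacker-strong-PC-2018} in the common-noise setting — removing the full drift by Girsanov, showing $(\tilde W^i, B)$ is a joint Brownian motion under $\mathrm{Q}^i$, arguing that $\Lc^{\mathrm{Q}^i}(X^i\mid\Gct_T)$ is $i$-independent, then using Bayes plus the $\Wc$-Lipschitz bound on $\Phi$ and Pinsker/Cauchy--Schwarz to get a Gr\"onwall inequality in total variation. Both strategies are sound; yours is self-contained at the cost of re-proving the core argument, while the paper's shift delegates the analysis to the cited theorem and is therefore cleaner.

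One caution on the part you flag as ``bookkeeping'': the needed statement is not quite ``Girsanov leaves $(B,X_0)$ and $\Gct_T$ invariant in joint law.'' The density $Z^i_T$ depends on $B$ through $\phi^i$, so the joint law on $\Fc_T$ does change; what is true, and what you must actually prove, is that $\E^{\P}[Z^i_T\mid\Gct_T]=1$ (a \emph{conditional} martingale argument, using that $W$ is a Brownian motion under the regular conditional probability $\P(\cdot\mid\Gct_T)$ and that $\phi^i$ is adapted), so that the $\Gct_T$-marginal of $\mathrm{Q}^i$ equals that of $\P$ and, moreover, that under $\mathrm{Q}^i(\cdot\mid\Gct_T)$ the pair $(\tilde W^i, X_0)$ still has law $\mathrm{Wiener}\otimes\nu$ independently of $\omega$ and of $i$. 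This is exactly the step the paper's shift-then-condition argument sidesteps. As stated your sketch is correct, but you should make this conditional Girsanov step explicit rather than invoke a symmetry that does not literally hold.
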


\begin{proof}
     
Let us define, for all $(t,x,\mathsf{b},\pi) \in [0,T] \x \R^n \x \Cc^n \x \Cc^n_{\Wc},$
\begin{align*} 
    \pi_t[\mathsf{b}](\mathrm{d}y):= \int_{\R^n} \delta_{\big(y'+\sigma_0 \mathsf{b}(t) \big)}(\mathrm{d}y) \pi_t(\mathrm{d}y')\;\mbox{and}\;\widetilde{\Phi}(t,x,\pi,\mathsf{b})
    :=
    \Phi(t,x+\sigma_0 \mathsf{b}(t),\pi[\mathsf{b}],\mathsf{b}).
\end{align*}
Then, for each $\mathsf{b} \in \Cc^n,$ by \cite[Theorem 2.3]{lacker-strong-PC-2018}, the equation
\begin{align} \label{eq:uniq-mc_vl_nocommonnoise}
    \mathrm{d}\Xt_t
    =
    \widetilde{\Phi}(t,\Xt_t,m,\mathsf{b}) \mathrm{d}t 
    +
    \sigma(t,\Xt_t + \sigma_0 \mathsf{b}(t))\mathrm{d}W_t
    \;\;\mbox{with}\;\;{\color{black}m_t=\Lc(X_t)}
\end{align}
is unique in law. Therefore, for $(X^1,\mu^1)$ and $(X^2,\mu^2)$ two solution of \eqref{eq:uniq-mc_vl}, one has, by uniqueness of \Cref{eq:uniq-mc_vl_nocommonnoise}, that $\Lc(X^1_t-\sigma_0 B_t|\Gct_T)=\Lc(X^2_t- \sigma_0 B_t|\Gct_T,)$ $\P$--a.e. for each $t \in [0,T].$ This allow us to deduce our uniqueness result.
\end{proof}

Notice that, this proof shows that $\mu$ the unique conditional distribution given in \Cref{eq:uniq-mc_vl_nocommonnoise} is in fact adapted to the filtration of $B.$

\medskip
For each $k \in \N^*,$ we denote by  $[t]^k=t^k_\ell$ for all $t \in (t^k_\ell,t^k_{\ell+1}]$ and $[0]^k=0$ where $0=t^k_0<\cdots<t^k_k=T$ is a subdivision such that $\Lim_{k \to \infty}\; \sup_{\ell \in \{1,\cdots,k \}}\;\big|t^k_\ell-t^{k-1}_{\ell}\big|=0.$
\begin{proposition} \label{prop:appr_general_closed}
    Let us assume that $\Phi$ is uniformly Lipschitz in $(x,\pi,\mathsf{b}).$ There exists a sequence of functions $(\phi^k)_{k \in \N^*}$ s.t. for each $k \in \N^*,$ $\phi^k: [0,T] \x \Cc^{n,p}_{\Wc} \ni (t,\pi) \to \phi^k(t,\pi_{t \wedge \cdot}) \in  \Cc^n$ is Lipschitz in $\pi$ uniformly in $t,$ and satisfies
    \begin{align*}
        B_t
        =
        \phi^k(t,\mu^k)
    \end{align*}
    where $\mu^k_t:=\Lc(X^k_t|\Fc^B_t)$ with $X^k$ satisfying
\begin{align*} 
    \mathrm{d}X^k_t
    =
    \Phi\big(t,X^k_t,\mu^k,\phi^k([t]^k,\mu^k) \big) \mathrm{d}t 
    +
    \sigma(t,X^k_t)\mathrm{d}W_t
    +
    \sigma_0 \mathrm{d}B_t
\end{align*}
 and
\begin{align*}
    \Lim_{k \to \infty} \E \bigg[\int_0^T \| \mu_t-\mu^k_t \|_{\rm TV}\; \mathrm{d}t\bigg]=0.
\end{align*}
\end{proposition}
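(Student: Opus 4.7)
\textbf{Proof plan for Proposition \ref{prop:appr_general_closed}.}

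The plan is to exploit the invertibility of $\sigma_0$ to recover $B$ from the Fokker--Planck identity, and then replace the occurrence of $B$ inside the drift by a piecewise-frozen surrogate built iteratively on the grid $\{t^k_\ell\}_{\ell=0}^k$. The starting observation: if $X$ solves the McKean--Vlasov SDE with conditional law $\mu$, then testing with $f(x)=x$ in the conditional Fokker--Planck equation (taking $\E[\,\cdot\,|\Fc^B_t]$ to kill the $\sigma dW$ term) yields
\begin{align*}
\sigma_0 B_t = \int_{\R^n} x\,\mu_t(dx) - \int_{\R^n} x\,\mu_0(dx) - \int_0^t \int_{\R^n} \Phi(s,x,\mu,B)\,\mu_s(dx)\,ds.
\end{align*}
This lets me solve algebraically for $B_t$ provided I know the drift. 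Reading this the other way around, if I know $\phi^k$ (the surrogate for $B$) on $[0,t^k_{\ell-1}]$, then I can define it on $(t^k_{\ell-1},t^k_\ell]$ via the same formula with $\phi^k([s]^k,\pi)$ substituted for $B_s$ in the drift; since $[s]^k\le t^k_{\ell-1}$ for $s<t^k_\ell$ only uses already-defined values of $\phi^k$, the recursion is non-circular.

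\textbf{Step 1 (construction).} Initialize $\phi^k(0,\pi)\equiv 0\in\Cc^n$. Inductively, for $t\in(t^k_{\ell-1},t^k_\ell]$, set
\begin{align*}
\phi^k(t,\pi)(s) := \sigma_0^{-1}\Big[\langle x,\pi_{s\wedge t}\rangle - \langle x,\pi_0\rangle - \int_0^{s\wedge t}\!\!\int_{\R^n} \Phi(r,x,\pi,\phi^k([r]^k,\pi))\,\pi_r(dx)\,dr\Big],
\end{align*}
with the convention that $\phi^k(t^k_{\ell-1},\pi)$ is extended constantly beyond $t^k_{\ell-1}$. By induction on $\ell$ one checks that $\pi\mapsto\phi^k(t,\pi)$ is Lipschitz for the sup-norm on $\Cc^n$, uniformly in $t\in[0,T]$ (the Lipschitz constant depends on $k$ and on the Lipschitz constant of $\Phi$, and is propagated through Gr\"onwall on the grid). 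Existence and uniqueness of $X^k$ with conditional law $\mu^k$ then follows by solving the McKean--Vlasov SDE piece by piece on each $[t^k_\ell,t^k_{\ell+1})$, since on each such slab the drift uses $\phi^k(t^k_\ell,\mu^k)$ which is completely determined by $\mu^k|_{[0,t^k_\ell]}$ (standard well-posedness under the Lipschitz-in-$\pi$ assumption, cf.\ \Cref{thm:unique_stochastic-FP}).

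\textbf{Step 2 (identity $B_t=\phi^k(t,\mu^k)$).} Applying the same Fokker--Planck identity to $\mu^k$ with its own drift gives exactly the formula defining $\phi^k(t,\pi)$ evaluated at $\pi=\mu^k$. Hence $B_t=\phi^k(t,\mu^k)(t)$ (and as paths on $[0,t]$) a.s., which is the announced relation.

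\textbf{Step 3 (convergence in total variation).} This is the step I expect to be the main obstacle, because the topology is strictly stronger than Wasserstein. I would proceed in two moves. First, a stability estimate: removing the common noise by setting $Y^k_t:=X^k_t-\sigma_0 B_t$ and $Y_t:=X_t-\sigma_0 B_t$, the drifts differ only through the pair $(\mu^k-\mu,\phi^k([t]^k,\mu^k)-B)$. On $[0,[t]^k]$ the second discrepancy is zero by Step 2, while on $([t]^k,t]$ it is bounded by the oscillation of $B$ over an interval of size $|t^k_\ell-t^k_{\ell-1}|\to 0$, which tends to $0$ a.s.\ by uniform continuity of Brownian paths. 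A standard Gr\"onwall argument using the Lipschitz assumption on $\Phi$ then gives $\E[\sup_t|Y^k_t-Y_t|^p]\to 0$, hence $\E[\Wc_p(\mu^k_t,\mu_t)^p]\to 0$ for each $t$.

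Second move: upgrade from Wasserstein to total variation. Here I would invoke the Aronson--Serrin type estimates from \Cref{prop:estimates_FP-density} and \Cref{cor:converg_density}, applied conditionally on $B$ to the driftless-shifted process $Y^k$. Since $\sigma\sigma^\top$ is uniformly elliptic (\Cref{assum:main1}$(iii)$) and the drifts $\Phi^k(s,y,\pi):=\Phi(s,y+\sigma_0 B_s,\pi[\cdot+\sigma_0 B],\phi^k([s]^k,\mu^k))$ are uniformly bounded in $k$, the conditional densities of $Y^k_t$ enjoy a uniform H\"older estimate on compacts of $(0,T)\times\R^n$. Together with the Wasserstein convergence of the previous step and uniqueness of the limit Fokker--Planck equation (Lipschitzness of $\Phi$ in $\pi$), one obtains locally uniform convergence of the densities, which combined with the uniform moment bound transfers to $L^1(\R^n)$, i.e.\ total variation convergence. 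Integrating in $(t,\om)\in[0,T]\times\Om$ and using dominated convergence gives the desired $\lim_k \E[\int_0^T\|\mu_t-\mu^k_t\|_{\rm TV}dt]=0$. The main technical care is in handling the degeneration of the density estimates near $t=0$, which I would circumvent by splitting the integral on $[0,\eps]\cup[\eps,T]$, controlling the first piece by the total mass bound $\|\mu^k_t-\mu_t\|_{\rm TV}\le 2$ and sending $\eps\to 0$ after $k\to\infty$.
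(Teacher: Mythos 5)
Your proposal is correct and follows essentially the same strategy as the paper: recover $B$ from the first-moment Fokker--Planck identity (solvable thanks to the invertibility of $\sigma_0$), build $\phi^k$ recursively on the grid $\{t^k_\ell\}$ so that the $\mathsf{b}$-input to $\Phi$ is only used through already-defined values, and then pass to the total-variation limit via the Aronson--Serrin estimates of \Cref{prop:estimates_FP-density}/\Cref{cor:converg_density} applied conditionally on $B$. The only cosmetic differences are that the paper first establishes the identity for a piecewise-frozen drift $\Phi(t,x,\pi,\mathsf{b}([t]^k\wedge\cdot))$ and then glues the $\phi_\ell$'s rather than writing the recursion in one formula as you do, and that your extra Wasserstein/Gr\"onwall step provides an alternative (equally valid) route to identifying the limit density, which the paper obtains instead by passing to the limit in the Fokker--Planck equation and invoking uniqueness as in \Cref{cor:converg_density}.
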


\begin{remark} \label{rm:equal-filtration}
    Notice that, this result leads to the observation that: for each $k \in \N^*,$ the natural filtration of $(\mu^k_t)_{t \in [0,T]}$ is equal to the natural filtration of $(B_t)_{t \in [0,T]}.$
\end{remark}
\begin{proof}
       \underline{$Step\;1$: $piece$--$wise$ $case$} Let us assume first that $\Phi$ is s.t. $\Phi(t,x,\pi,\mathsf{b})=\Phi\big(t,x,\pi,\mathsf{b}([t]^k \wedge \cdot) \big)$ for some $k \in \N^*.$
       Let $\mu$ be the unique solution of 
\begin{align*} 
    \mathrm{d}X_t
    =
    \Phi(t,X_t,\mu,B) \mathrm{d}t 
    +
    \sigma(t,X_t)\mathrm{d}W_t
    +
    \sigma_0 \mathrm{d}B_t\;\;\mbox{with}\;\;\mu_t=\Lc(X_t|\Gct_t)=\Lc(X_t|\Gct_T).
\end{align*}
Notice that $\mu$ is adapted to the filtration of $B.$ Now, by recurrence, we construct the function $\phi^k.$ Let $\ell=0,$ for each $t \in (0,t^k_1],$ by taking the conditioning expectation
\begin{align*}
    \sigma_0B_t
    =
    \int_{\R^n} x \mu_t(\mathrm{d}x)
    -
    \int_{\R^n} x \mu_0(\mathrm{d}x)
    -
    \int_0^t \int_{\R^n}  \Phi(s,x,\mu_{s \wedge \cdot},0) \mu_s(\mathrm{d}x) \mathrm{d}s.
\end{align*}
Therefore, one has $B_t=\phi_0(t,\mu)$ where
\begin{align*}
    \phi_0(t,\pi)
    :=
    \sigma^{-1}_0 \bigg[\int_{\R^n} x \pi_t(\mathrm{d}x)
    -
    \int_{\R^n} x \pi_0(\mathrm{d}x)
    -
    \int_0^t \int_{\R^n}  \Phi(s,x,\pi_{s \wedge \cdot},0) \pi_s(\mathrm{d}x) \mathrm{d}s \bigg].
\end{align*}    
    Notice that, as $\Phi$ is uniformly Lipschitz in $(x,\pi,\mathsf{b}),$ $\phi_0:[0,T] \x \Cc^{n,p}_{\Wc} \to \Cc^n$ is Lipschitz in $\pi$  uniformly in $t.$ For $\ell \in \{0,\cdots,k-1\},$ let us assume that we construct the functions $(\phi_0,\cdots,\phi_\ell),$ a sequence of Lipschitz  functions in $\pi$  uniformly in $t.$ Now, for $t \in (t^k_\ell,t^k_{\ell+1}],$
\begin{align*}
    \sigma_0B_t
    =
    \int_{\R^n} x \mu_t(\mathrm{d}x)
    -
    \int_{\R^n} x \mu_{t^k_\ell}(\mathrm{d}x)
    -
    \int_{t^k_\ell}^t \int_{\R^n}  \Phi\big(s,x,\mu_{s \wedge \cdot},\phi_\ell(t^k_\ell,\mu) \big) \mu_s(\mathrm{d}x) \mathrm{d}s.
\end{align*}    
Then, $B_t=\phi_{\ell+1}(t,\mu)$ where
\begin{align*}
    \phi_{\ell+1}(t,\pi)
    :=
    \sigma^{-1}_0 \bigg[\int_{\R^n} x \pi_t(\mathrm{d}x)
    -
    \int_{\R^n} x \pi_{t^k_\ell}(\mathrm{d}x)
    -
    \int_{t^k_\ell}^t \int_{\R^n}  \Phi\big(s,x,\pi_{s \wedge \cdot},\phi_\ell(t^k_\ell,\pi) \big) \pi_s(\mathrm{d}x) \mathrm{d}s \bigg].
\end{align*} 
We then construct by recurrence the functions $(\phi_0,\cdots,\phi_k).$ If we define $\phi^k(t,\pi):=\phi_\ell(t,\pi)$ for each $t \in [t^k_\ell,t^k_{\ell+1}]$ and all $\ell \in \{0,\cdots,k-1\}.$ We find the function we are looking for.

\medskip
\underline{$Step\;2$: $General$ $case$} By continuity of $\Phi$ in $\mathsf{b},$ we know that $\Lim_{k \to \infty} \Phi(t,x,\pi,\mathsf{b})=\Phi\big(t,x,\pi,\mathsf{b}([t]^k \wedge \cdot) \big).$ For each $k \in \N^*,$ let $\mu^k$ verify
\begin{align*} 
    \mathrm{d}X^k_t
    =
    \Phi\big(t,X^k_t,\mu^k,B_{[t]^k \wedge \cdot} \big) \mathrm{d}t 
    +
    \sigma(t,X^k_t)\mathrm{d}W_t
    +
    \sigma_0 \mathrm{d}B_t\;\;\mbox{with}\;\;\mu^k_t=\Lc(X^k_t|\Gct_t)=\Lc(X^k_t|\Gct_T).
\end{align*}
By the previous case, there exists $\phi^k$ Lipschitz in $\pi$ uniformly in $t$ s.t. $B_t=\phi^k(t,\mu^k).$ To finish, by (an obvious extension in the stochastic case of) \Cref{cor:converg_density} , we check that $\Lim_{k \to \infty} \E \bigg[\int_0^T \| \mu_t-\mu^k_t \|_{\rm TV}\; \mathrm{d}t\bigg]=0.$

\end{proof}

\end{appendix}

\end{document}